\newcommand{\mvhide}[1]{}
\newcommand{\Y}{\mathcal{Y}}
\newcommand{\X}{\mathcal{X}}
\newcommand{\HKuniv}{\HKop}
\newcommand{\DD}{\mathbb{D}^2}
\newcommand{\ellen}{\ell en}
\newcommand{\q}{\mathtt{q}}
\newcommand{\Sq}{\boldsymbol{q}} 
\newcommand{\St}{{t}} 
\newcommand{\Spi}{\overline{\pi}} 
\newcommand{\Q}{\mathbb{Q}} 
\newcommand{\K}{\mathcal{K}}
\newcommand{\AffSym}{\widehat{\mathfrak{S}}_n}
\newcommand{\Sym}{{\mathfrak{S}}}
\newcommand{\Zprod}{\boldsymbol{Z}} 
\newcommand{\Sn}{{\Sym}_n}
\newcommand{\SN}{{\Sym}_{N}}
\newcommand{\Sm}[1]{{\Sym}_{{#1}}}
\newcommand{\id}{\operatorname{Id}}
\newcommand{\cA}{\mathcal{A}}
\newcommand{\SkCat}{\mathrm{SkCat}}
\newcommand{\SkMod}{\mathrm{SkMod}}
\newcommand{\SkAlg}{\mathrm{SkAlg}}
\newcommand{\SkAlgN}[1]{\mathrm{SkAlg}_{#1}}
\newcommand{\SkAlgNn}[2]{\mathrm{SkAlg}_{#1,#2}}
\newcommand{\DAHA}[2]{\mathrm{\mathbb{H}_{#2}}}
\newcommand{\DAHAN}[1]{\mathrm{\mathbb{H}_{#1}}(N)}
\newcommand{\DN}{\HH_N} 
\newcommand{\SLat}{\Lambda_{\SLN}}  
\newcommand{\GLat}{\Lambda_{\GLN}} 
\newcommand{\SymX}{S(\X)} 
\newcommand{\SymY}{S(\Y)}
\newcommand{\stab}[2]{Z_{{#1}}(#2)} 
\DeclareMathOperator{\HHz}{HH_0}
\newcommand{\dl}{g_\lambda}
\newcommand{\SWfin}{\mathrm{SW}}
\newcommand{\SWaff}{\dot{\mathrm{SW}}}
\newcommand{\SWdaff}{\ddot{\mathrm{SW}}}
\newcommand{\Uw}{U_\sigma}
\newcommand{\Uwp}{\Uw^{\perp}}
\renewcommand{\Im}{\mathrm{Im}} 
\newcommand{\ZZ}{\mathbb{Z}}
\newcommand{\cD}{\mathcal{D}}
\newcommand{\Dq}{{\cD_\q(G)}}
\newcommand{\DqG}[1]{{\cD_\q(#1)}}
\newcommand{\DqH}{\DqG{H}}
\newcommand{\Uq}{U_\q(\g)}
\newcommand{\Uqgl}{U_\q(\glN)}
\newcommand{\Uqsl}{U_\q(\slN)}
\newcommand{\g}{\mathfrak{g}}
\newcommand{\into}{\to} 
\newcommand{\HH}{\mathbb{H}}
\newcommand{\HG}{\HH_n^{\GL}({q,t})}
\newcommand{\HS}{\HH_n^{\SL}(\Sq,t)}
\newcommand{\HStwo}{\HH_2^{\SL}(\Sq,t)}
\newcommand{\HSweak}{\widetilde \HH_n^{\SL}(\Sq,t)}
\newcommand{\Schmaha}[1]{\widetilde\HH_{#1}^{\SL}(\Sq,t)}
\newcommand{\SchmahaN}[1]{\widetilde\HH_{#1}(N)}
\newcommand{\Schmahan}{\widetilde\HH_{n}}
\newcommand{\Schmaham}[1]{\widetilde\HH_{#1}}
\newcommand{\LCM}{\mathtt{LCM}}
\renewcommand{\H}{\operatorname{H}} 
\newcommand{\Hf}{\operatorname{H}^{\mathrm{fin}}}  
\newcommand{\Hfn}{\Hf_n}
\newcommand{\Hfm}{\Hf_m}
\newcommand{\Rep}{\operatorname{Rep}}
\newcommand{\Ann}{\operatorname{Ann}}
\newcommand{\ot}{\otimes}
\newcommand{\rt}[1]{\underset{#1}{\otimes}}
\newcommand{\modu}{\textrm{-mod}}
\newcommand{\Hom}{\operatorname{Hom}}
\newcommand{\SL}{\mathrm{SL}}
\newcommand{\SLN}{\SL_N}
\newcommand{\SLn}{\SL_n}
\newcommand{\GL}{\mathrm{GL}}
\newcommand{\GLN}{\GL_N}
\newcommand{\GLn}{\GL_n}
\newcommand{\slN}{\mathfrak{sl}_N}
\newcommand{\glN}{\mathfrak{gl}_N}
\DeclareMathOperator{\Res}{Res}
\DeclareMathOperator{\Ind}{Ind} 
\DeclareMathOperator{\End}{End}
\newcommand{\HKe}{\HKop(\epsilon)}
\newcommand{\HKop}{\operatorname{HK}}
\newcommand{\Dist}{\operatorname{Dist}}
\newcommand{\Repq}{\mathrm{Rep}_{\q}}
\newcommand{\sgna}[1]{\sgni(#1)}
\newcommand{\lambdatup}{{\underline{{\mathbf \lambda}} }}
\newcommand{\ek}[1]{\mathrm{e}_{{#1}}} 
\newcommand{\eN}{\ek{N}}
\newcommand{\en}{\ek{n}}
\newcommand{\eNj}{\ek{N^j}}
\newcommand{\eNk}{\ek{N^k}}
\newcommand{\emj}{\ek{m^j}}
\newcommand{\eml}{\ek{m^{\ell}}}
\newcommand{\ee}{\mathrm{e}} 
\newcommand{\etrivn}{\etriv{n}}
\newcommand{\etriv}[1]{\mathrm{e}^{+}_{#1}}
\newcommand{\Coinv}{\K_\omega[\Lambda \oplus \Lambda]^{\SN}}  
\newcommand{\SmashD}{\K_\omega[\Lambda \oplus \Lambda] \# \SN}
\newcommand{\KLam}{\K_\omega[\Lambda \oplus \Lambda]}
\DeclareMathOperator{\rev}{rev}
\DeclareMathOperator{\sgni}{\mathbf{sgn}}
\DeclareMathOperator{\trivial}{\operatorname{triv}}
\newcommand{\Dqstr}[1]{\mathcal{D}_\q({#1}){\modu}^{\hspace{0.5pt} {#1}}}
\newcommand{\Vect}{\operatorname{Vect}}
\DeclareMathOperator{\Id}{Id}
\newcommand{\seg}[2]{[#1;#2]}
\newcommand{\HX}{\H(\cX)}
\newcommand{\HY}{\H(\Y)}
\renewcommand{\r}{\mathfrak{r}}
\newcommand{\cC}{\mathcal C}
\newcommand{\cK}{\mathcal K}
\newcommand{\cP}{\mathcal P}
\newcommand{\cX}{\mathcal X}
\newcommand{\cZ}{\mathcal Z}
\newcommand{\B}{\mathrm B}
\newcommand{\D}{\mathbb D}
\newcommand{\Z}{\mathbb Z}
\newcommand*\leftdash{\rotatebox[origin=c]{-45}{$\dabar@\dabar@\dabar@$}}
\newcommand*\rightdash{\rotatebox[origin=c]{45}{$\dabar@\dabar@\dabar@$}}
\newcommand{\omitt}[1]{ }
\newcommand{\Conj}[1]{[#1]}  
\newcommand{\tSpecht}{S}
\newcommand{\Slam}{\tSpecht^\lambda(t)}
\newcommand{\Sblah}[1]{\tSpecht^{#1}(t)}
\tikzstyle{V}=[draw, fill =black, circle, inner sep=0pt, minimum size=1.
\tikzstyle{bV}=[draw, fill =black, circle, inner sep=0pt, minimum size=4
\tikzstyle{over}=[draw=white,double=black, double distance=1.75pt]
\tikzstyle{diagram}=[line width=.75pt, scale=\SCALE]
\def\Over[#1,#2][#3,#4]{ 
        \draw[over]   (#1,#2) .. controls ++(0,#4*.5-#2*.5) and ++(0,-#4*.5+#2*.5) .. (#3,#4);}
\def\Under[#1,#2][#3,#4]{ 
        \draw  (#1,#2) .. controls ++(0,#4*.5-#2*.5) and ++(0,-#4*.5+#2*.5) .. (#3,#4);}
\def\Cross[#1,#2][#3,#4]{
        \Under[#3,#2][#1,#4]\Over[#1,#2][#3,#4]}
\newcommand{\cupcap}{    \begin{tikzpicture}
\begin{scope}[shift={(0,0)}, scale=.5, line width=2pt]
        \draw[line width=2pt] (0,1) .. controls (.15,.55) and (.35, .55)   .. (0.5,1);
        \draw[line width=2pt] (0,0) .. controls (0.15,0.45) and (0.35, 0.45)  .. (0.5,0);
        \node[V] at (.5,1){}; \node[V] at (0.5,0){}; 
        \node[V] at (0,1){}; \node[V] at (0,0){}; 
\end{scope}
\end{tikzpicture} }
\numberwithin{equation}{section}
\newtheorem*{theorem*}{Theorem}
\newtheorem{theorem}{Theorem}[section]
\newtheorem{theorem/def}[theorem]{Theorem/Definition}
\newtheorem{lemma}[theorem]{Lemma}
\newtheorem{proposition}[theorem]{Proposition}
\newtheorem{claim}[theorem]{Claim}
\newtheorem{conjecture}[theorem]{Conjecture}
\newtheorem{corollary}[theorem]{Corollary}
\theoremstyle{definition}
\newtheorem{definition}[theorem]{Definition}
\newtheorem{remark}[theorem]{Remark}
\newtheorem{notation}[theorem]{Notation}
\newtheorem{example}[theorem]{Example}
\newtheorem{def/prop}[theorem]{Definition/Proposition}
\newtheorem{question}[theorem]{Question}
\newcommand{\defterm}[1]{\textbf{#1}}
\newcommand{\dom}{\unlhd} 
\DeclareMathOperator{\sort}{sort}
        \newcommand{\SLGL}{({\color{blue}\diamond})} 
\title[Skeins on tori]{Skeins on tori}
\author{Sam Gunningham, David Jordan, Monica Vazirani}
\date{\today}
\begin{document}

\begin{abstract}
We analyze the $G$-skein theory invariants of the 3-torus $T^3$ and the two-torus $T^2$, for the groups $G = \GLN, \SLN$ and for generic quantum parameter.   We obtain formulas for the dimension of the skein module of $T^3$, and we describe the algebraic structure of the skein category of $T^2$ -- namely of the $n$-point relative skein algebras.


The Schur-Weyl case $n=N$ is special in our analysis. We construct an isomorphism between the $N$-point relative skein algebra and the double affine Hecke algebra at specialized parameters. As a consequence, we prove that all tangles in the relative $N$-point skein algebra are in fact equivalent to linear combinations of braids, modulo skein relations.  More generally for $n$ an integer multiple of $N$, we construct a surjective homomorphism from an appropriate DAHA to the $n$-point relative skein algebra. 

In the case $G=\SL_2$ corresponding to the Kauffman bracket we give proofs directly using skein relations.  Our analysis of skein categories in higher rank hinges instead on 
the combinatorics of multisegment representations when restricting from DAHA to AHA and nonvanishing
properties of parabolic sign idempotents upon them. 
\end{abstract}

 \maketitle
 \tableofcontents
 
\section{Introduction}
This paper contains several contributions to the study of skein theory.  Skein modules of oriented 3-manifolds, as well as skein categories and skein algebras of oriented 2-manifolds, are graphically defined invariants which ``integrate" the local algebraic structure of a quantum group around the global topology of the 3-manifold or surface, respectively.

Because skein theory invariants are defined fully locally, they fit naturally into the framework of topological quantum  field theory (TQFT) and higher Morita theory.  Within quantum topology, skein theory describes both the 4D Crane--Yetter \cite{crane1993evaluating,crane1997state,barrett2007observables} and 3D Witten--Reshetikhin--Turaev TQFT's \cite{witten1989quantum,reshetikhin1991invariants}, and also underpins the quantum $A$-polynomial \cite{FGL,Garoufalidis,garoufalidis2005colored,Sikora2005}.    Distinct variants of the skein theories we consider have recently been related to counting of holomorphic curves \cite{skeins-on-branes} and to the theory of vanishing cycles \cite{Gunningham-Safronov}.  In the categorified setting, so-called skein-lasagna modules \cite{MWW-skein-lasagna},\cite{MN-skein-lasagna},\cite{MWW2-skein-lasagna} have been used to distinguish exotic 4-manifolds \cite{Ren-Willis}.

In physics, skein modules and skein categories describe the space of topological line operators in 3-dimensions, and the categories of surface operators, respectively, in the Marcus/Kapustin--Witten twist of $\mathcal{N}=4, d=4$ gauge theory with group $G$ \cite{Marcus1995,Kapustin-Witten}.  Dually, skein algebras arise in the study of $\mathcal{N}=2$ theories of Class S \cite{gaiotto2023commuting,tachikawa2015skein}.  These appearances echo the original appearance of skein relations in physics, in Witten's seminal paper \cite{witten1989quantum} on quantum Chern-Simons invariants.

By a conjecture of Witten, proved in \cite{GJS}, the $G$-skein module of a closed oriented 3-manifold $M$ is finite-dimensional, for any $G$ and for generic quantum parameter $\q$.  This fact begs the question: what precisely are these dimensions for various 3-manifolds, and what is the deeper enumerative significance of the resulting dimension formulas?  Very little is known, however, at this level of precision about skein modules of 3-manifolds, and even less about skein categories of surfaces.  Where concrete formulas for dimension have been obtained -- namely, in the cases $S^2\times S^1$ \cite{Hoste-Przytycki}, $T^3$ \cite{Carrega,Gilmer}, and $\Sigma_g\times S^1$ \cite{Gilmer-Masbaum,Detcherry-Wolff} --
such formulas are only applicable to the group $G=\SL_2$, corresponding to the Kauffman skein relations.\footnote{While our focus in the paper is to push beyond the case of the Kauffman bracket skein relations, readers interested in this case specifically may see Section \ref{sec:SL2} for some new results there, which bypass the type $A$ combinatorics needed for the rest of the paper.}

In this paper we focus on the $\GLN$- and $\SLN$-skein invariants, of the  tori $T^2$ and $T^3$
at generic value of the quantum parameter $\q$. Specifically, we prove a finiteness property -- a categorical analogue of Witten's finiteness conjecture -- yielding a finite presentation of $\SLN$- and $\GLN$- skein categories of the two-torus $T^2$.  Passing to Hochschild homology, we obtain formulas for dimensions of $\GLN$- and $\SLN$-skein modules of $T^3$; to our knowledge these are the first known dimensions beyond the case $G=\SL_2$ of the Kauffman bracket skein module. The interesting combinatorics which appear in our formulas suggest many generalisations and connections to type $A$ algebraic combinatorics.

Our main technical tool is a form of ``elliptic" Schur-Weyl duality:  the natural homomorphism 
from the relevant braid group of the torus to the relative  skein algebra (where each strand is labeled by the defining representation of $G=\GLN$ or $\SLN$) factors through a suitable version of the double affine Hecke algebra (DAHA), and in some cases this yields an isomorphism between the skein algebra and the DAHA at appropriate parameters \cite{J2008,Jordan-Vazirani,GJV}.

\medskip

We turn now to an enumeration of our main results, which we have ordered starting with the most concrete/elementary, moving onto the most abstract/fundamental results from which the former are derived.

\subsection{The skein algebra of the two-torus $T^2$}
Our first main result is a complete computation of the zeroeth Hochschild homology of the $\GLN$ and $\SLN$ skein algebras of the two-torus.  In section \ref{sec:HH skein algebras} we prove:

\begin{theorem} \label{thm: HHz of GL skein algebra}
Suppose that the quantum parameter $\q$ is generic. 
The dimensions of the zeroeth Hochschild homology of the skein algebras for $G =\GLN$ is given by
$$\dim \HHz(\SkAlgN{\GLN}(T^2)) = \cP(N).
$$
For $N=1, 2,3,4,\ldots$ this yields the dimensions:
\[
1,
2,
 3,
 5,
 7,
 11,
 15,
 22,
 30,
 42,
 56,
 77,
 101,
 135,
 176,
 231,
 297,
 \cdots
\]
\end{theorem}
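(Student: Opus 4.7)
The strategy is to exploit the elliptic Schur--Weyl duality developed in this paper, which identifies the $N$-point relative skein algebra on $T^2$ with the DAHA $\DN$ at specialized parameters, and to realise the unmarked skein algebra as an idempotent corner of $\DN$. Concretely, the sign idempotent $\eN \in H_N \subset \DN$ projects the $N$-strand defining object onto $\Lambda^N V \cong \det$, which is invertible in $\Repq(\GLN)$. Combining this with caps/cups and the $q$-determinant should identify
\[\SkAlgN{\GLN}(T^2) \;\cong\; \eN \DN \eN\]
up to an invertible twist by $\det$ that is transparent at the level of $\HHz$. When $\eN$ is a full idempotent inside $\DN$ (which I expect to follow from the nonvanishing of the parabolic sign idempotent on multisegment representations highlighted in the abstract), Morita invariance of Hochschild homology yields $\HHz(\SkAlgN{\GLN}(T^2)) \cong \HHz(\DN)$.

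Having reduced to a purely algebraic question about the DAHA, I would compute $\HHz(\DN)$ by trace-theoretic methods. At generic $\q$ the cocenter of $\DN$ is spanned by characters of the finite-dimensional irreducible quotients of the principal series module $\Mrho$, and these quotients are parametrised by partitions $\lambda \vdash N$ via the multisegment classification of AHA representations restricted from $\DN$. The pairing between the character $\chi_{L_\lambda}$ and the parabolic-sign-isotypic component $\elam \cdot \Mrho$ is nondegenerate precisely by the paper's central combinatorial input, so the $\cP(N)$ characters $\chi_{L_\lambda}$ are linearly independent and exhaust the cocenter, giving $\dim \HHz(\DN) = \cP(N)$.

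The principal obstacle will be the reduction step: making the Morita-type identification rigorous, i.e.\ checking that $\eN$ is full inside $\DN$ and that the twist by the invertible object $\det$ does not alter $\HHz$. Both pieces hinge on the nonvanishing of parabolic sign idempotents $\elam$ on multisegment representations, which the abstract advertises as the combinatorial heart of the paper; once this input is in hand, the trace-count of irreducibles $\{L_\lambda\}_{\lambda \vdash N}$ in the second step should be a standard consequence of the semisimplicity-of-category-$\cO$ type results available at generic $\q$.
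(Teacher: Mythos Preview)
Your first reduction, identifying $\SkAlgN{\GLN}(T^2)\cong \eN\DN\eN$, is correct and is exactly the input from \cite{GJV} that the paper uses. But the paper does not need to show that $\eN$ is full in $\DN$ for this theorem. It instead applies Marshall's shift isomorphism $\eN\DN\eN\cong (\KLam)^{\SN}$ directly, and then Montgomery's theorem gives a Morita equivalence between $(\KLam)^{\SN}$ and the smash product $\KLam\#\SN$. The Hochschild homology of the latter is computed by the standard decomposition over conjugacy classes $[\sigma]$ of $\SN$: for each $\sigma$ one shows, by an elementary calculation in the quantum torus, that the $\sigma$-twisted summand $\HHz(\KLam,\KLam_\sigma)_{Z_{\SN}(\sigma)}$ is one-dimensional when $\Lambda=\Lambda_{\GLN}$, yielding $\cP(N)$ in total. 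So your route via the full-idempotent result is more circuitous: it invokes the hardest theorem in the paper (the multisegment combinatorics of Section~\ref{sec:DAHA stuff}) to prove something the paper obtains by elementary means.

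The genuine gap is your second step. The claim that the cocenter of $\DN$ is spanned by characters of the finite-dimensional irreducible quotients of $\Mrho$, parametrised by $\lambda\vdash N$, does not hold up. Characters of representations define linear functionals on $\HHz$, not elements of it, and for an infinite-dimensional algebra there is no reason a finite set of such functionals should separate points of the cocenter (compare already the affine Hecke algebra, whose cocenter is infinite-dimensional). Nor is it a known fact that the finite-dimensional irreducibles of $\DN$ at this specialisation are indexed by partitions of $N$. You have effectively traded $\HHz(\eN\DN\eN)$ for $\HHz(\DN)$, but the latter is no easier to compute directly; the paper's point is that one must pass one step further, to the quantum torus smashed with $\SN$, where $\HHz$ becomes a transparent combinatorial count over conjugacy classes.
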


\begin{theorem} \label{thm: HHz of SL skein algebra}
Suppose that the quantum parameter $\q$ is generic. 
The dimensions of the zeroeth Hochschild homology of the skein algebras for $G =\SLN$ is given by
$$ \dim \HHz(\SkAlgN{\SLN}(T^2)) = (\cP \star J_2)(N) = \sum_{\lambda \vdash N} \gcd(\lambda)^2 
= \sum_{v
\in (\Z/N\Z)^{\oplus 2}} \cP(\gcd(v, N)).
$$
For $N=2,3,4,\ldots$ this yields the dimensions:
\[
5, 11, 23, 31, 60, 63, 109, 126, 183, 176, 330, 269, 420, 496, 645,
585, 
,\ldots
\]
\end{theorem}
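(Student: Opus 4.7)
The plan is to reduce the computation of $\HHz(\SkAlgN{\SLN}(T^2))$ to a trace computation on a DAHA, then evaluate it using the multisegment and sign-idempotent combinatorics highlighted in the abstract. The starting point is the parallel statement for $\GLN$ (Theorem~\ref{thm: HHz of GL skein algebra}) together with the Schur--Weyl type isomorphism between the $N$-point relative skein algebra of $T^2$ and a specialized DAHA, announced in the introduction.

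First, I would use the parabolic sign idempotent $\en$ to realize $\SkAlgN{\SLN}(T^2)$ as a sign corner of the $N$-point relative skein algebra; this identifies $\HHz$ with a sign-isotypic component of the trace of the $N$-point relative skein algebra. By the main DAHA isomorphism of the paper, the latter is identified with a trace on the specialized DAHA $\HSN$, reducing the problem to pure algebra. For the $\GLN$ analog, the same reduction should directly yield $\cP(N)$, which will correspond to the $v=0$ summand below.

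Second, I would exploit the $(\Z/N\Z)^{\oplus 2}$-grading on $\SkAlgN{\SLN}(T^2)$ coming from winding numbers of strands around the two cycles of $T^2$, taken modulo $N$ because $N$ parallel strands become a scalar (the quantum determinant) in the $\SLN$ theory. This grading splits $\HHz$ as
$$\HHz(\SkAlgN{\SLN}(T^2)) \;\cong\; \bigoplus_{v \in (\Z/N\Z)^{\oplus 2}} \HHz(\SkAlgN{\SLN}(T^2))_v,$$
and on the DAHA side it corresponds to twisting the central characters of $\HSN$ by the character lattice of the center of $\SLN$ on each copy of the polynomial subalgebra. The $v=0$ block then recovers the $\GLN$ count $\cP(N)$ by Theorem~\ref{thm: HHz of GL skein algebra}.

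Third, for general $v=(v_1,v_2)$, I would identify the $v$-block of the DAHA trace with a trace on the affine Hecke algebra $\Hfn$ at a semisimple central character labeled by $v$. Applying the multisegment classification of $\Hfn$-representations, and the paper's key nonvanishing property of parabolic sign idempotents on those multisegments, the trace is supported precisely on multisegment classes whose defining partition is a partition of $\gcd(v_1,v_2,N)$. This yields $\dim \HHz(\SkAlgN{\SLN}(T^2))_v = \cP(\gcd(v,N))$. Summing over $v$ gives the stated formula, and the reformulations $(\cP\star J_2)(N) = \sum_{\lambda\vdash N}\gcd(\lambda)^2$ follow by organizing $v$ according to $d=\gcd(v,N)$ (giving $J_2(N/d)$ vectors per $d$) or by grouping partitions $\lambda$ of $N$ by their gcd.

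The main obstacle is the third step: translating each $v$-block of the $\SLN$ skein algebra into an AHA trace at a prescribed central character, and evaluating that trace via the multisegment classification plus the nonvanishing of sign idempotents. Unlike the $\GLN$ case, where only the trivial central character appears and one recovers partitions of $N$ directly, for $\SLN$ one must treat all $(\Z/N\Z)^{\oplus 2}$ central characters uniformly, and the new input is the nonvanishing of parabolic sign idempotents on multisegments at nontrivial central characters. This combinatorial nonvanishing is the technical core of the argument and where I expect the heaviest lifting.
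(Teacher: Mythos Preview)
Your proposal takes a substantially different route from the paper, and the third step contains a genuine gap.

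The paper does \emph{not} use multisegments or AHA representation theory to compute $\HHz(\SkAlgN{\SLN}(T^2))$. That machinery (Section~5) is deployed for a different purpose entirely: proving that the parabolic sign idempotent $\eN$ is conservative on DAHA-modules, hence establishing Morita equivalences needed for the skein \emph{module} of $T^3$. The $\HHz$ computation for the skein \emph{algebra} (Section~4) is instead carried out as follows: the isomorphism $\SkAlg_G(T^2)\cong \eN\HH_N(N)\eN$ together with the shift isomorphism and Montgomery's theorem yields a Morita equivalence with the smash product $\K_\omega[\Lambda\oplus\Lambda]\#\SN$, where $\Lambda$ is the $\SLN$ weight lattice. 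Then the standard decomposition of $\HHz$ of a smash product over conjugacy classes $[\sigma]$ of $\SN$ reduces the problem to computing $\HHz(\K_\omega[\Lambda\oplus\Lambda],\K_\omega[\Lambda\oplus\Lambda]_\sigma)$, which is identified explicitly with $\K[(U_\sigma^\perp\oplus U_\sigma^\perp)/\Im(1-\sigma)]$ by a direct quantum-torus commutator computation. A Smith normal form argument shows this has dimension $\gcd(\lambda)^2$ where $\lambda$ is the cycle type of $\sigma$, and summing over partitions gives the formula.

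Your step~3 is where the problem lies. You propose to identify the $v$-graded piece of $\HHz$ with ``a trace on the affine Hecke algebra at a semisimple central character labeled by $v$'' and then count via multisegments and sign-idempotent nonvanishing. But $\HHz$ of an algebra is a cocenter, not a representation-theoretic invariant in the way you suggest; there is no mechanism in the paper (or an obvious one elsewhere) that converts a graded piece of the cocenter of the skein algebra into an AHA trace at a specified central character. The nonvanishing results for $\eN$ on multisegment modules tell you that certain modules are nonzero after applying the idempotent---they do not compute dimensions of trace spaces. Your intuition that the $v=0$ block recovers the $\GLN$ answer $\cP(N)$ is correct and is visible in the paper's approach (it is the contribution of the identity coset in each $U_\sigma^\perp/\Im(1-\sigma)$), but the route to the other blocks goes through lattice quotients, not through AHA central characters.
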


In the formulas above, $\mathcal{P}(N)$ denotes the partition number of $N$, i.e. the number of ways to write $N$ as a sum of positive integers, and the $\gcd$ of a partition $\lambda$ of $N$ is defined as the greatest common divisor  of  its parts. 
For $(v_1, \cdots, v_k ) \in(\Z/N\Z)^{\oplus k} $  we write $\gcd(v, N)$ for $\gcd(v_1, \cdots, v_k, N)$.
Given two functions $F$ and $G$ of positive integers,
$F\star G$ denotes their multiplicative convolution, \[(F\star G)(N) = \sum_{d|N}F(d)G(N/d),\] and finally $J_k$ denotes the $k$th Jordan totient function,
\[
J_k(d) = d^k \prod_{\textrm{prime } p | d}(1-\frac{1}{p^k}),
\]
which counts the order $d$ elements in $(\mathbb{Z}/N\mathbb{Z})^{\oplus k}$, for any multiple $N$ of $d$,
or equivalently the size of the  $\GL_k(\mathbb{Z}/N\mathbb{Z})$ orbit of the vector $(\frac Nd,0,\ldots,0) \in (\mathbb{Z}/N\mathbb{Z})^{\oplus k}$.

The proof of Theorem \ref{thm: HHz of GL skein algebra} and \ref{thm: HHz of SL skein algebra} are elementary once we invoke a Morita equivalence of the skein algebra with the smash product of a quantum torus and the symmetric group; and we compute the latter directly using a simple combinatorial reformulation. 

\subsection{The skein module of the three-torus $T^3$}
Our next main results are as follows.
\begin{theorem}\label{mainthm:dimensionsGLN}
Suppose that the quantum parameter $\q$ is generic. 
The $\GLN$-skein module of $T^3$ has dimension:
\[
\dim \SkMod_{\GLN}(T^3) = \mathcal{P}(N).
\]
For $N=1, 2,3,4,\ldots$ this yields the dimensions:
\[
1,
2,
 3,
 5,
 7,
 11,
 15,
 22,
 30,
 42,
 56,
 77,
 101,
 135,
 176,
 231,
 297,
 \cdots
\]
\end{theorem}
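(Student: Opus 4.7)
The plan is to deduce this theorem from Theorem \ref{thm: HHz of GL skein algebra} by invoking the standard TQFT / factorization-homology identification
\[
\SkMod_{G}(\Sigma\times S^{1}) \;\cong\; \HHz\bigl(\SkCat_{G}(\Sigma)\bigr),
\]
valid for any closed oriented surface $\Sigma$ and quantum group $G$ at generic $\q$. Specializing to $\Sigma = T^{2}$, so that $T^{2}\times S^{1} = T^{3}$, the theorem reduces to establishing
\[
\HHz\bigl(\SkCat_{\GLN}(T^{2})\bigr) \;\cong\; \HHz\bigl(\SkAlgN{\GLN}(T^{2})\bigr),
\]
after which Theorem \ref{thm: HHz of GL skein algebra} immediately yields the partition-number formula.

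The remaining task is to show that inclusion of the empty object into $\SkCat_{\GLN}(T^{2})$ induces an isomorphism on $\HHz$. I would establish this by exploiting the fact that for $\GLN$ at generic $\q$ the defining representation $V$ and its dual $V^{*}$ monoidally generate $\Repq(\GLN)$, and that the rigid structure supplies evaluation and coevaluation morphisms between the empty object and configurations of $V/V^{*}$-colored points. Cyclicity of $\HHz$, applied in the $S^{1}$ direction of the mapping torus, then lets one slide such points around and cancel them against their duals. The essential algebraic input is that for $\GLN$ the total charge (number of $V$'s minus number of $V^{*}$'s) is a strictly conserved quantum number, so any contribution to $\HHz$ from a configuration with marked points can be rewritten, after cyclic rotation along $S^{1}$, as a class already represented in the $\HHz$ of the skein algebra of the empty object.

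Once this reduction is in place, one concludes directly by citing Theorem \ref{thm: HHz of GL skein algebra}. The main obstacle is this second step: organizing the cancellation cleanly enough to avoid double-counting. Concretely, one must verify that the $n$-point relative skein algebras studied elsewhere in the paper contribute nothing to $\HHz$ beyond what is already captured by the $n=0$ sector; for $\GLN$ this should follow from the fact that the conservation of charge forces nontrivial point configurations to consist of balanced pairs of $V$ and $V^{*}$, and such pairs can be annihilated cyclically on the torus using the rigid structure. Equivalently, one can phrase the argument as a compact-generator statement, identifying the empty object as a generator of $\SkCat_{\GLN}(T^{2})$ up to Karoubi completion. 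Either route reduces the computation to Theorem \ref{thm: HHz of GL skein algebra} and gives $\dim \SkMod_{\GLN}(T^{3}) = \mathcal{P}(N)$.
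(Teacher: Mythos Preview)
Your overall architecture matches the paper exactly: use Proposition~\ref{prop:HH0} to identify $\SkMod_{\GLN}(T^3)$ with $\HHz(\SkCat_{\GLN}(T^2))$, then reduce the latter to $\HHz(\SkAlg_{\GLN}(T^2))$, and finish with Theorem~\ref{thm: HHz of GL skein algebra}. The paper's proof is literally one sentence once Theorem~\ref{mainthm:generators} is in hand: since $\Dist$ is a compact projective generator, the skein category is Morita equivalent to its skein algebra, and $\HHz$ is Morita invariant.

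The gap is in your proposed mechanism for the reduction step. Your ``cyclic cancellation'' argument does not work as stated. Charge conservation (this is Lemma~\ref{lem:GLdeg}) only tells you that the nonzero objects $\Dist_X$ have $X$ of degree zero; it does \emph{not} tell you that their endomorphisms contribute nothing new to $\HHz$. Concretely, for $n=kN$ the relative skein algebra $\SkAlgNn{\GLN}{n}(T^2)=\End(\Dist_{V^{\otimes n}\otimes\det_\q^{-k}})$ is a genuinely large algebra, and an arbitrary endomorphism $h$ of $\Dist_{V^{\otimes n}\otimes\det_\q^{-k}}$ need not factor through $\Dist$ via evaluation/coevaluation. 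The trace relation $[fg]=[gf]$ in $\HHz$ lets you move classes between objects only when the endomorphism in question already factors as such a composite; rigidity alone does not force this on a torus, where strands can wind nontrivially. So ``balanced pairs can be annihilated cyclically'' is precisely the statement that needs proof, not a tool you can invoke.

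You do name the correct reformulation at the end --- that $\Dist$ is a compact projective generator --- but you treat it as a routine alternative. In the paper this is Theorem~\ref{mainthm:generators}(1), and it is the technical heart of the argument: it is deduced from Theorem~\ref{thm:relskeins}, which in turn comes from the DAHA Morita equivalence of Theorem~\ref{mainthm:morita-DAHA}, whose proof occupies all of Section~\ref{sec:DAHA stuff} and relies on the multisegment combinatorics for affine Hecke algebras. There is no known shortcut; in particular, the statement is false at $\q=1$ (the $\GLN$-character stack of $T^2$ is not affine), so any valid argument must use genericity of $\q$ in an essential way, which your cancellation sketch does not.
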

\begin{theorem}\label{mainthm:dimensionsSLN}
Suppose that the quantum parameter $\q$ is generic. 
The $\SLN$-skein module of $T^3$ has dimension:
\[
\dim \SkMod_{\SLN}(T^3) = (\mathcal{P}\star J_3)(N)  = \sum_{\lambda \vdash N} \gcd(\lambda)^3
= \sum_{v \in (\Z/N\Z)^{\oplus 3}} \cP(\gcd(v,N))\]
For $N=2,3,4,\ldots$ this yields the dimensions:
\[
9, 29, 75, 131, 266, 357, 617, 810, 1207, 1386, 2272,
2297, 3318, 3954, 5145, 5209, 7745, 7348,
\ldots
\]

\end{theorem}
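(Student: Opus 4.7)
The strategy is to extend the method used for Theorem \ref{thm: HHz of SL skein algebra} to the three-dimensional setting, using the TQFT dictionary that identifies skein modules of $\Sigma\times S^1$ with traces of skein categories of $\Sigma$.

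\textbf{Step 1 (TQFT identification).}  The first step is to use the factorization homology/TQFT identification for the closed three-manifold $T^3=T^2\times S^1$,
\[
\SkMod_{\SLN}(T^3)\;\cong\;\HHz\bigl(\SkCat_{\SLN}(T^2)\bigr).
\]
Unlike the $\GLN$ case, where this trace is already captured by the $n$-point relative skein algebra of Theorem \ref{thm: HHz of GL skein algebra}, for $\SLN$ the full skein category of $T^2$ carries \emph{additional} sectors not visible to a single $n$-point algebra: the center $\mu_N$ of $\SLN$ produces a $\Z/N$-grading on objects by total central character of the marked points, and these sectors are not connected by any morphism.  So here $\HHz$ of the skein category is a strict enhancement of $\HHz$ of the single skein algebra from Theorem \ref{thm: HHz of SL skein algebra}, and this enhancement is responsible for the promotion $J_2\rightsquigarrow J_3$.

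\textbf{Step 2 (Morita reduction).}  Extend the Morita equivalence underlying Theorems \ref{thm: HHz of GL skein algebra}--\ref{thm: HHz of SL skein algebra} to the full skein category, identifying $\SkCat_{\SLN}(T^2)$ up to Morita equivalence with modules over a smash-product $\mathcal{A}_{\SLN}\#\SN$, where $\mathcal{A}_{\SLN}$ is the quantum torus attached to the cocharacter lattice of the maximal torus of $\SLN$ twisted by $H^1(T^2;\Z)$.  By Morita invariance of Hochschild homology, the problem reduces to computing $\HHz(\mathcal{A}_{\SLN}\#\SN)$ together with the twisted-sector enhancement encoded by the $\mu_N$-grading.

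\textbf{Step 3 (Conjugacy-class decomposition).}  Apply the standard decomposition
\[
\HHz(\mathcal{A}\#\SN)\;=\;\bigoplus_{\lambda\vdash N}\bigl(\HHz(\mathcal{A},\,{}_{w_\lambda}\mathcal{A})\bigr)_{Z_{\SN}(w_\lambda)},
\]
which expresses the Hochschild homology of the smash product as a sum over conjugacy classes (partitions $\lambda\vdash N$) of twisted Hochschild homologies.  For each partition $\lambda$ with $k=\ell(\lambda)$ parts, the $w_\lambda$-fixed sublattice has rank $k$ in each of the homology directions, and the restricted quantum parameter becomes $\q^{\gcd(\lambda)}$, so that the center of the fixed quantum torus acquires a $\gcd(\lambda)$-torsion subgroup of size $\gcd(\lambda)^{(\text{number of homology directions})}$.

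\textbf{Step 4 (Per-partition count).}  For each $\lambda$, count carefully: the two $T^2$-homology directions contribute $\gcd(\lambda)^2$ (as in the proof of Theorem \ref{thm: HHz of SL skein algebra}), and the extra $S^1$-trace direction in $T^3$ contributes one additional factor of $\gcd(\lambda)$ from the $\mu_N$-sectors.  This yields the per-partition contribution $\gcd(\lambda)^3$, and summing over $\lambda$ produces
\[
\sum_{\lambda\vdash N}\gcd(\lambda)^3\;=\;\sum_{v\in(\Z/N\Z)^{\oplus 3}}\cP(\gcd(v,N))\;=\;(\cP\star J_3)(N),
\]
which is the claimed formula.

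\textbf{Main obstacle.}  The hardest part is Step 2: rigorously identifying the Morita class of the \emph{full} $\SLN$-skein category of $T^2$, and verifying that the extension from the single $n$-point skein algebra to the full category accounts precisely for the $\mu_N$-twisted sectors.  Once this bookkeeping is in place, the arithmetic of Step 4 proceeds uniformly --- each additional $S^1$-direction in the ambient three-manifold systematically promotes $J_{k-1}$ to $J_k$, which is what distinguishes the $T^3$ computation here from the $T^2$ computation of Theorem \ref{thm: HHz of SL skein algebra}.
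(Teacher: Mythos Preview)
Your Step~1 matches the paper. The gap is in Step~2: the full skein category $\SkCat_{\SLN}(T^2)$ is \emph{not} Morita equivalent to a single smash product $\mathcal{A}_{\SLN}\#\SN$. Theorem~\ref{mainthm:generators} shows the category decomposes into $N$ orthogonal blocks indexed by $\Z/N\Z$, but only the degree-$0$ block $\End(\Dist)$ is identified with the skein algebra (hence with the smash product). The other blocks $\End(\Dist_{V^{\otimes m}})$ are genuinely different algebras---for instance when $N=2$ the paper shows $\End(\Dist_V)\cong\K^4$, not a quantum torus smash $\Sm{2}$---and Conjecture~\ref{conj:SL_N generalized Springer} indicates the general picture involves skein algebras of smaller groups. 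Your ``twisted-sector enhancement'' is too vague to fill this gap, and your per-partition count in Step~4 (one extra factor of $\gcd(\lambda)$ from the $S^1$) is a heuristic with no mechanism behind it once Step~2 fails.

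The paper avoids describing the nonzero-degree blocks altogether. It computes only $\HHz(\End(\Dist))$, which by Theorem~\ref{thm: HHz of SL skein algebra} gives the graded pieces of $\SkMod_{\SLN}(T^3)$ in all degrees of the form $(\ast,\ast,0)\in(\Z/N\Z)^{\oplus 3}$; moreover the explicit basis of Proposition~\ref{prop:UU is HH0} and Lemma~\ref{lem:coset-size} tells you the dimension in each individual degree $(a,b,0)$, namely $\cP(\gcd(a,b,N))$. The remaining degrees are filled in by the mapping class group: $\operatorname{Map}(T^3)=\SL_3(\Z)$ acts on the skein module compatibly with the $(\Z/N\Z)^{\oplus 3}$-grading, and $\SL_3(\Z/N\Z)$ acts transitively on vectors of a given $\gcd$. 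Since every $v\in(\Z/N\Z)^{\oplus 3}$ lies in the orbit of some $(d,0,0)$ with $d\mid N$, and that degree was already computed, the total is $\sum_{d\mid N}\cP(d)J_3(N/d)$. This symmetry argument is the key idea you are missing.
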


Our proofs of Theorems \ref{mainthm:dimensionsGLN}, \ref{mainthm:dimensionsSLN}  rely on an identification of the skein module of $T^3$ with the (zeroeth) Hochschild homology of the skein \emph{category} of $T^2$.  Whereas Theorems \ref{thm: HHz of GL skein algebra} and \ref{thm: HHz of SL skein algebra} involves similar formulas for skein \emph{algebras} of $T^2$, this is not enough, and instead we need to understand the full structure of the skein category, as laid out in the following subsection.

\subsection{The skein category of the two-torus $T^2$}
Our most important general result in the paper is the following description of $\GLN$- and $\SLN$- skein categories at generic parameters:

\begin{theorem}\label{mainthm:generators} Suppose that the quantum parameter $\q$ is generic.  Then the skein categories $\SkCat_{\GLN}(T^2)$ and $\SkCat_{\SLN}(T^2)$ are each Morita equivalent to finitely presented algebras.  Specifically:
\begin{enumerate}  
    \item The abelian category $\SkCat_{\GLN}(T^2)\modu$ is generated by the compact projective object $\Dist$, and hence $\SkCat_{\GLN}(T^2)$ is Morita equivalent to $\SkAlg_{\GLN}(T^2)$.
    \item The abelian category $\SkCat_{\SLN}(T^2)\modu$ is generated by the compact projective object
    \[\Dist \oplus \Dist_{V} \oplus \ldots \oplus \Dist_{V^{\otimes (N-1)}}\]
    where $V$ is the defining representation of $\SLN$.  The summands are orthogonal for the $\Hom$ pairing, hence $\SkCat_{\SLN}(T^2)$ is Morita equivalent to $\prod_{m=0}^{N-1}\End(\Dist_{V^{\ot m}})$.
\end{enumerate}
\end{theorem}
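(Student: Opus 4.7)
The plan is to combine a reduction to tensor powers of the defining representation with a Morita absorption argument driven by the quantum determinant, and, for $\SLN$, to establish orthogonality via a central character grading. By Karoubi-completion and the additivity $\Dist_{W_1 \oplus W_2} \cong \Dist_{W_1} \oplus \Dist_{W_2}$, the module category is generated by objects $\Dist_{V_\lambda}$ with $V_\lambda$ irreducible. Schur-Weyl duality then realizes each such $\Dist_{V_\lambda}$ as a direct summand of $\Dist_{V^{\otimes n}}$ (for $\SLN$) or $\Dist_{V^{\otimes n}\otimes (V^*)^{\otimes m}}$ (for $\GLN$, using $V^* \cong \wedge^{N-1}V \otimes \det^{-1}$), with the splitting idempotents coming from the finite Hecke algebra action. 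This reduces the problem to analyzing $\Dist_{V^{\otimes n}}$ and $\Dist_{(V^*)^{\otimes m}}$.

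For $\GLN$ (part (1)): I would apply the Hecke-antisymmetric projector to $\Dist_{V^{\otimes N}}$, carving out $\Dist_{\det}$ as a direct summand. Because $\det$ is a one-dimensional invertible representation, $\Dist_{\det}$ is an invertible object in $\SkCat_{\GLN}(T^2)$: dragging the $\det$-labeled point around the two independent cycles of $T^2$ produces explicit autoequivalences whose scalar effects can be tracked via quantum traces. Using these invertibility witnesses I would construct an explicit Morita isomorphism $\Dist_{\det} \cong \Dist$, and iterate this absorption (together with the dual construction via $V^*$) to reduce any $\Dist_{V^{\otimes n} \otimes (V^*)^{\otimes m}}$ to $\Dist$. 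Hence $\Dist$ is a compact projective generator, yielding Morita equivalence with $\SkAlg_{\GLN}(T^2)$.

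For $\SLN$ (part (2)): The same antisymmetric idempotent now gives a split surjection $\Dist_{V^{\otimes N}} \twoheadrightarrow \Dist_{\un}$, and composing with the dual cup yields the Morita relation $\Dist_{V^{\otimes (m+N)}} \sim \Dist_{V^{\otimes m}} \oplus (\text{other summands already present in } \Dist_{V^{\otimes k}}, k<m+N)$. Iteration leaves only $\{\Dist_{V^{\otimes m}} : 0 \le m \le N-1\}$ as necessary generators. For orthogonality, I would use the $\Z/N$-grading on $\SkCat_{\SLN}(T^2)$ coming from the center $Z(\SLN) = \Z/N\Z$: the object $\Dist_{V^{\otimes m}}$ sits in degree $m \pmod{N}$, and every generating skein move (crossings, $V \otimes V^* \to \un$ caps/cups, and the top-exterior $V^{\otimes N} \to \un$) preserves this grading. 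Hence $\Hom(\Dist_{V^{\otimes m}}, \Dist_{V^{\otimes m'}}) = 0$ for $m \not\equiv m' \pmod{N}$, giving the direct product decomposition. Finite presentability of each $\End(\Dist_{V^{\otimes m}})$ then follows from the surjective DAHA-to-relative-skein homomorphism described in the introduction.

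The main obstacle is the absorption step: verifying rigorously that the antisymmetric idempotent together with the torus geometry genuinely produces the claimed summand $\Dist_{\det}$ (resp.\ $\Dist$), and identifying it via an explicit Morita morphism with $\Dist$. Direct tangle manipulation would be hopeless here, so I would lean on the elliptic Schur-Weyl tools developed elsewhere in the paper; in particular, the identification (or surjection) from DAHA to the $N$-point relative skein algebra is what allows one to recognize the Hecke antisymmetrizer inside the skein algebra and control the resulting idempotent decomposition.
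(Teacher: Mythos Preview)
Your reduction to the family $\Dist_{V^{\otimes n}}$ and your orthogonality argument for $\SLN$ via the $\Z/N$-grading are both correct and match the paper. The substantive gaps lie in the ``absorption'' step in both cases.

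For $\GLN$: the object $\Dist_{\det}$ is not invertible in $\SkCat_{\GLN}(T^2)$ --- it is \emph{zero}. Indeed, $\det$ has nonzero degree in $P/Q\cong\Z$, and the paper's Lemma~\ref{lem:GLdeg} shows that for any $X$ of nonzero degree one can wrap a $\det_\q$--$\det_\q^{-1}$ pair around a cycle of $T^2$ and compare crossings to obtain $(1-\q^{\deg X})\cdot \id_{\Dist_X}=0$. So your proposed Morita isomorphism $\Dist_{\det}\cong\Dist$ cannot exist; this is exactly why the paper defines the $\GLN$ relative skein algebra with an extra $\det_\q^{-k}$ strand to force degree zero.

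For $\SLN$: your claimed Morita relation
\[
\Dist_{V^{\otimes (m+N)}} \;\sim\; \Dist_{V^{\otimes m}} \oplus \bigl(\text{summands already in } \Dist_{V^{\otimes k}},\ k<m+N\bigr)
\]
is false. For example with $N=2$, $m=0$: the complement of $\eN V^{\otimes 2}=\un$ inside $V^{\otimes 2}$ is $\operatorname{Sym}^2 V$, which is \emph{not} a summand of $V^{\otimes 0}$ or $V^{\otimes 1}$. Splitting off the antisymmetric piece only gives $\Dist_{V^{\otimes(m+N)}}\cong \Dist_{V^{\otimes m}} \oplus \Dist_{(1-\eN)V^{\otimes(m+N)}}$, and the second summand is genuinely new at the level of $\Repq$. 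What is actually needed is the much stronger statement that $\eN$ is \emph{conservative}, i.e.\ that $\SkAlgNn{G}{n}\cdot\eN\cdot\SkAlgNn{G}{n}=\SkAlgNn{G}{n}$ (Theorem~\ref{thm:relskeins}); this says $\Dist_{V^{\otimes m}}$ alone already generates everything $\Dist_{V^{\otimes(m+N)}}$ does, including the complementary summand. The paper proves this by first establishing the analogous identity $\DAHAN{n}\cdot\eN\cdot\DAHAN{n}=\DAHAN{n}$ in the DAHA (Theorem~\ref{mainthm:morita-DAHA}), which requires the multisegment combinatorics of Section~\ref{sec:DAHA stuff}: one reduces via \cite{Bezrukavnikov-Etingof} to $\Y$-finite modules, restricts to the AHA, and uses the $\pi$-action together with the narrowness properties of multisegments to locate a composition factor not killed by $\eN$. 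Your final paragraph gestures at the Schur--Weyl homomorphism, but that map alone only lets you \emph{recognize} $\eN$ inside the skein algebra; the conservativity itself is the technical heart and is not a consequence of surjectivity.
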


Here, given a finite-dimensional representation $X$ of $\Uqgl$ or of $\Uqsl$, we denote by $\Dist_X$ the compact projective object of the skein category obtained by coloring some framed disk 
on the surface by finite-dimensional representation $X$ of the quantum group $\Uq$, and write simply $\Dist$ in case $X$ is the monoidal unit (see Section \ref{sec: skein categories} for details).   We note that later in the paper $\End(\Dist_{V^{\ot n}})$ is referred to as the relative skein algebra and denoted $\SkAlgNn{G}{n}$.

Recall that the existence of a compact projective generator $P$ of some category $\cC$ gives rise to an equivalence of categories $\cC\simeq \End_\cC(P)\modu$.
{\it A priori}, skein categories of surfaces admit a \emph{countably infinite} collection of compact projective objects which \emph{collectively} generate, namely those obtained from a single framed disk on the surface colored with an arbitrary finite-dimensional representations of the quantum group; however the infinite sum of compact objects is not compact.  The content of the above assertions is that for $\GLN$ it suffices to take only the trivial representation (equivalently, we may simply take the empty set), while for $\SLN$ it suffices to take finitely many tensor powers $V^{\ot 0},\ldots,V^{\ot (N-1)}$, of the defining representation $V$ (equivalently, to take $0,\ldots, N-1$ 
framed disks each colored by $V$).

\begin{remark}
We emphasize that Theorem \ref{mainthm:generators} holds only at generic parameter.  For example, the first statement says that the $\GLN$-skein category of the torus is simply Morita equivalent to  its skein algebra. At $\q=1$ this statement would read that the $\GLN$-character stack is affine, non-stacky, and in fact coincident with the $\GLN$-character \emph{variety} -- this is clearly false.  Compact generation is a purely generic quantum phenomenon.
\end{remark}


Theorem \ref{mainthm:generators} has the following interesting corollary, which immediately follows using the co-end identity for composition of cobordisms in the skein theory TQFT (see, e.g. Theorem 2.5 and Lemma 4.5 from \cite{GJS} for precise statements, both due to Kevin Walker \cite{Walker}).

\begin{corollary}
Suppose that the quantum parameter $\q$ is generic, and fix inside the oriented 3-manifold $M$ some embedded torus $T^2\subset M$. Then:
\begin{enumerate}
    \item The $\SL_2$-skein module $\SkMod_{\SL_2}(M)$ is spanned by tangles intersecting $T^2\subset M$ at most once.  In particular, if $T^2\subset M$ is separating, then $\SkMod_{\SL_2}(M)$ is spanned by skeins which do not intersect $T^2\subset M$.
    \item The $\SLN$-skein module $\SkMod_{\SLN}(M)$ is spanned by $\SLN$-spiders intersecting $T^2\subset M$ at most $N-1$ times.
    \item The $\GLN$-skein module $\SkMod_{\GLN}(M)$ is spanned by $\GLN$-spiders which do not intersect $T^2\subset M$.

\end{enumerate}

\end{corollary}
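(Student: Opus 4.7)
The plan is to unpack the excision (co-end) identity for the skein TQFT applied to a tubular neighborhood of $T^2 \subset M$, as promised in the remark preceding the corollary. First I would invoke Walker's excision principle (Theorem 2.5 and Lemma 4.5 of \cite{GJS}): in the separating case $M = M_1 \cup_{T^2} M_2$ this expresses the skein module as a tensor product
\[
\SkMod_G(M) \cong \SkMod_G(M_1) \otimes_{\SkCat_G(T^2)} \SkMod_G(M_2),
\]
where each $\SkMod_G(M_i)$ is viewed as a module over $\SkCat_G(T^2)$ sending a boundary condition $X$ to the relative skein module of $M_i$ with $X$ placed on $T^2$. In the non-separating case, cutting along $T^2$ produces a $\SkCat_G(T^2)$-bimodule whose categorical trace recovers $\SkMod_G(M)$.

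Next I would invoke the standard categorical fact that whenever $\SkCat_G(T^2)\modu$ is compactly generated by a single object $P$, tensor products and traces over $\SkCat_G(T^2)$ reduce to the corresponding computations over $\End(P)$; geometrically this means it suffices to consider skeins whose intersection with $T^2$ realizes (a summand of) $P$. Combining this reduction with Theorem \ref{mainthm:generators} then yields the three assertions: for $G = \GLN$ the generator $P = \Dist$ is the empty boundary condition, forcing representative skeins to be disjoint from $T^2$; for $G = \SLN$ the generator $P = \bigoplus_{m=0}^{N-1} \Dist_{V^{\otimes m}}$ forces representative spiders to meet $T^2$ transversally in at most $N-1$ strands labelled by $V$. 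Specializing to $N = 2$ gives the ``at most once'' bound in (1).

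The ``in particular'' clause in (1) about separating tori is then a short parity argument: since the Kauffman skein module is spanned by closed tangles (links with no boundary in $M$), and each loop component must cross a separating surface an even number of times in order to return to its starting component of $M \setminus T^2$, the bound of ``at most one'' intersection from the previous step collapses to exactly zero.

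I expect the main obstacle to be verifying that the categorical reduction to a single compact projective generator translates faithfully into the geometric statement about individual skein representatives. The module-theoretic equivalence $\SkCat_G(T^2)\modu \simeq \End(P)\modu$ is immediate from Theorem \ref{mainthm:generators}, but one must confirm that the colimit presentation of $\SkMod_G(M)$ on geometric skein generators restricts cleanly to the subcategory supported by $P$. This compatibility is essentially already present in \cite{GJS} but likely deserves a brief formal justification via a density / Yoneda argument for the factorization homology interpretation of $\SkMod_G(M)$.
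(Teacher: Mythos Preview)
Your proposal is correct and follows essentially the same approach as the paper: the paper simply remarks that the corollary ``immediately follows using the co-end identity for composition of cobordisms in the skein theory TQFT'' (citing Theorem~2.5 and Lemma~4.5 of \cite{GJS}, due to Walker) combined with Theorem~\ref{mainthm:generators}. Your write-up is in fact more detailed than the paper's, which gives no proof beyond that one sentence; your parity argument for the separating clause in (1) and your flagged concern about translating the Morita reduction into a statement about geometric skein representatives are both reasonable elaborations that the paper leaves implicit.
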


Motivated by Theorem \ref{mainthm:generators}, and by the finite-dimensionality of skein modules of oriented 3-manifolds, we propose the following categorification of Witten's conjecture:

\begin{conjecture}[Categorical Finiteness Conjecture]\label{conj:skein-cpt-gens}
Let $G$ be a reductive group, let $\Sigma$ be a closed compact surface, and let $\SkCat_G(\Sigma)$ denote the $G$-skein category at a generic value of the quantization parameter.  Then the abelian category $\SkCat_G(\Sigma)\modu$ admits a compact projective generator.
\end{conjecture}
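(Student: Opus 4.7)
My strategy is to follow the template of Theorem \ref{mainthm:generators} and exhibit, for each closed compact surface $\Sigma$, a finite collection of non-negative integers $n_1, \ldots, n_r$ (depending on $G$ and the topology of $\Sigma$) such that
\[P := \bigoplus_{i=1}^{r} \Dist_{V^{\otimes n_i}}\]
is a compact projective generator of $\SkCat_G(\Sigma)\modu$.  Each summand is automatically compact and projective, and it is standard that the full (infinite) collection $\{\Dist_X\}_{X \in \Repq(G)^{fin}}$ collectively generates; the substantive content of the conjecture is that at generic $\q$, finitely many tensor powers of the defining representation $V$ suffice.

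The first step would be to use an excision/pants-decomposition picture to reduce the problem to simpler pieces.  A closed surface $\Sigma_g$ admits a pants decomposition into $2g-2$ pairs of pants glued along $3g-3$ circles, and the factorization homology viewpoint on skein categories used in \cite{GJS} expresses $\SkCat_G(\Sigma_g)$ as an iterated relative tensor product over copies of $\Repq(G)^{fin}$.  A compact projective generator of the tensor product can then be assembled from compact projective generators of each pair of pants together with enough finite-dimensional objects to mediate the $\Repq(G)^{fin}$-balancing at each gluing circle.  The second step, modelled on the proof of Theorem \ref{mainthm:generators}(2), is to exploit invertibility of $\detq V$ together with the isomorphism $\detq V \cong \un$ in the $\SLN$ case to bound the needed tensor powers of $V$ at each gluing; this ought to produce a genus-polynomial bound on the multi-index $\{n_i\}$, generalizing the $0 \le m \le N-1$ cutoff in Theorem \ref{mainthm:generators}(2).

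The main obstacle is that the techniques developed in this paper are tightly adapted to the very special role played by the double affine Hecke algebra on $T^2$.  In higher genus there is no clean analogue of the DAHA-to-skein-algebra surjection, and the multisegment combinatorics driving our nonvanishing arguments for parabolic sign idempotents is specific to affine type $A$; replacing these inputs by something uniform in $g$ is, I expect, the hard part.  A reasonable intermediate target is therefore the once-punctured torus (or more generally a one-holed genus $g$ surface), where the relative skein algebra should carry an action of an appropriate spherical multi-affine Hecke algebra and where a direct extension of the methods of this paper has the best chance of succeeding; the pants-decomposition argument sketched above would then be used to bootstrap from such one-holed cases to arbitrary closed surfaces.
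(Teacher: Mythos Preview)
The statement you are trying to prove is presented in the paper as an open \emph{conjecture}, not a theorem; there is no proof in the paper to compare against. The paper explicitly introduces it with ``we propose the following categorification of Witten's conjecture,'' and immediately afterwards poses the related Question~\ref{qn:defquant} and notes only that the $\SL_2$ case has recently been confirmed (in separate work). So there is no ``paper's own proof'' here.

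Your proposal is accordingly not a proof but a strategy sketch, and you yourself correctly identify the gap: the pants-decomposition/excision step does not by itself bound the number of tensor powers of $V$ needed, and the nonvanishing arguments for parabolic sign idempotents that drive Theorem~\ref{mainthm:generators} are specific to the DAHA on $T^2$. Concretely, knowing that each pair of pants has a compact projective generator and that $\Repq(G)$ is semisimple does not imply that the relative tensor product over $\Repq(G)$ has one; the balancing at each gluing circle involves a colimit over infinitely many objects, and controlling that colimit is exactly the content of the conjecture. The ``invertibility of $\detq V$'' argument you propose only constrains the grading by $\Lambda/Q$, which is already finite; it does not by itself reduce to finitely many generators within a fixed degree. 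In short, your outline is a reasonable plan of attack (and is close in spirit to what the paper's authors presumably have in mind), but it is not a proof, and the paper does not claim one.
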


Given that finite-dimensionality of skein modules was proven using general tools in deformation quantization theory, we are led to ask more generally:

\begin{question}\label{qn:defquant}
What conditions on a Poisson stack ensure that a generic deformation quantization -- as a presentable abelian category, admits a compact projective generator?
\end{question}

For example, we expect that this holds when the Poisson stack arises as the quasi-Hamiltonion reduction of a smooth symplectic variety acted upon by a reductive algebraic group (as is the case for the $G$-character stack of a closed surface).

We note that in the setting of algebraic $\cD$-modules (that is, when the stack in question is a cotangent bundle), Question \ref{qn:defquant} has already been posed and studied in \cite{BGR}.

\begin{remark}
Renaud Detcherry and the second author have recently confirmed Conjecture \ref{conj:skein-cpt-gens} for all genus $g$ surfaces $\Sigma_g$ in the case that the group $G$ is $\SL_{2}$.
\end{remark}

In the case $\Sigma=T^2$, a refinement of Conjecture \ref{conj:skein-cpt-gens} appeared in Conjecture 1.10 of \cite{GJV}, namely we proposed a precise description of the $G$-skein category of $T^2$ in terms of elliptic cuspidal data for $G$. In the case $G=\SLN$ where the cuspidal data are well-understood, this leads to the following prediction:

\begin{conjecture}\label{conj:SL_N generalized Springer}    
    We have an equivalence of categories
    \[
\SkCat_{\SLN}(T^2) \simeq \bigoplus_{d \mid N} \SkAlg_{\SL_{N/d}}\modu^{\, \oplus d^2 J_1(d)} .
    \]
\end{conjecture}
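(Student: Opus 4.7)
The strategy is to refine the product description
\[
\SkCat_{\SLN}(T^2)\modu \simeq \prod_{m=0}^{N-1} \SkAlgNn{\SLN}{m}\modu
\]
from Theorem \ref{mainthm:generators}(2) into the conjectured decomposition by divisors $d \mid N$. For the $m=0$ factor — the module category of the pure skein algebra $\SkAlg_{\SLN}(T^2) = \SkAlgNn{\SLN}{0}$ — I would first observe that for $k \gg 0$ the object $\Dist_{V^{\otimes Nk}}$ lies entirely in this factor (its $U_\q(\slN)$-content is concentrated in the residue class of $0$ modulo $N$) and is in fact a compact projective progenerator of it, since the trivial representation appears in $V^{\otimes Nk}$ with positive multiplicity. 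This yields a Morita equivalence $\SkAlg_{\SLN}(T^2)\modu \simeq \SkAlgNn{\SLN}{Nk}\modu$, and the surjection $\Schmaha{Nk} \twoheadrightarrow \SkAlgNn{\SLN}{Nk}$ constructed in the paper then realizes the latter as a full subcategory of $\Schmaha{Nk}\modu$.

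The next step would be to decompose $\Schmaha{Nk}\modu$ by elliptic cuspidal support along the lines of Conjecture 1.10 of \cite{GJV}. For $G=\SLN$ the elliptic cuspidal strata are indexed by divisors $d \mid N$, reflecting the central $\mu_N$-action on the torus; each stratum carries an internal parameterization by a $\GL_2(\Z/d\Z)$-orbit of character data, enumerating to exactly $d^2 J_1(d)$. Within each stratum, parabolic induction from a type-$\SL_{N/d}$ Levi DAHA should yield an equivalence with $\SkAlg_{\SL_{N/d}}\modu$ after descent through the skein-relation quotient — this is where the multisegment and parabolic sign-idempotent combinatorics flagged in the abstract, and already central to Theorem \ref{mainthm:generators}, re-enter. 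The factors for $m \neq 0$ in the product require separate treatment since the DAHA surjection is only available for $n$ a multiple of $N$; one expects these factors to contribute the bulk of the semisimple blocks $\SkAlg_{\SL_1}\modu = \Vect$ appearing in the $d=N$ summand, but establishing this demands either a twisted-DAHA variant or a direct multisegment analysis of the $m$-point relative skein algebras for $0 < m < N$.

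The principal obstacle is two-fold: first, showing that the DAHA cuspidal-support block decomposition descends cleanly through the skein-relation quotient, with no collapse or extensions between distinct cuspidal blocks; and second, handling the $m \neq 0$ factors without access to a direct DAHA-to-skein surjection. As a preliminary sanity check I would combine the equivalence $\SkMod_G(T^3) \cong \HHz(\SkCat_G(T^2))$ with Theorems \ref{thm: HHz of SL skein algebra} and \ref{mainthm:dimensionsSLN}; this reduces the numerical content of the conjecture to the multiplicative identity $J_3 = J_2 \star \bigl(d \mapsto d^2 J_1(d)\bigr)$, verified on prime powers by a direct calculation, confirming that the conjecture is at least consistent at the level of Euler characteristics of Hochschild homology.
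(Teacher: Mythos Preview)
This statement is labeled a \emph{conjecture} in the paper, and the paper does not prove it. The only thing the paper offers in its support is the numerical consistency check immediately following the statement: the conjectured decomposition predicts
\[
\dim \HHz(\SkCat_{\SLN}(T^2)) = \sum_{d \mid N} d^2 J_1(d)\, \dim \HHz(\SkAlg_{\SL_{N/d}}(T^2)),
\]
and the paper verifies that this agrees with Theorem~\ref{mainthm:dimensionsSLN} via Theorem~\ref{thm: HHz of SL skein algebra} and a convolution identity. That is precisely your final ``sanity check'' paragraph. So there is no proof in the paper to compare your proposal against.

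Your outline is a plausible line of attack, and you have correctly identified its genuine gaps yourself. Two points are worth emphasizing. First, nothing in the paper establishes a block decomposition of $\SkCat_{\SLN}(T^2)\modu$ (or of $\Schmaha{n}\modu$) by elliptic cuspidal support; that decomposition is exactly the content of Conjecture~1.10 of \cite{GJV}, which the present conjecture specializes. Invoking it is circular. Second, for the factors $\SkAlgNn{\SLN}{m}\modu$ with $0 < m < N$, the paper has no DAHA surjection and no structural description beyond compact projective generation; the only case worked out is $N=2$, $m=1$ in Section~\ref{sec:SL2}, where $\SkAlgNn{\SL_2}{1} \cong \K^{4}$ is computed by hand. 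Your expectation that these factors supply the $d=N$ summand is reasonable but is not supported by any result in the paper. In short, your proposal is a program, not a proof, and the obstacles you flag are the same ones that leave the conjecture open.
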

Moreover, each of the $\SL_{N/d}$-skein algebras has an explicit description in terms of the anti-spherical DAHA,
or equivalently, symmetric group invariants in a quantum torus (see Section \ref{sec:DAHA intro}). As a consistency check, we note that the the formula for $\dim \HHz (\SkCat_{\SLN}(T^2))$ arising from Conjecture \ref{conj:SL_N generalized Springer} lead to the following formula:
\[
\sum_{d \mid N} d^2\phi(d) \dim \HHz (\SkAlg_{\SLN}(T^2)).
\]
This reduces to the formula in Theorem \ref{mainthm:dimensionsSLN} using Theorem \ref{thm: HHz of SL skein algebra} and the combinatorial identity $\Id_2 \star J_2=J_3$ (where $\Id_2$ denotes the function $n\mapsto n^2$).

\subsection{The relative skein algebra of the two-torus $T^2$}

Let $\SkAlgNn{G}{n}$ denote either the relative skein algebra of the torus $T^2$ with $n$ strands
for $G=\SLN$, or the relative skein algebra of the torus with $n$ standard strands and a strand carrying $\det^{\otimes \frac{-n}{N}}$  for $G=\GLN$.  
In either case, let $\ee\in \End(V^{\ot N})$ denote the anti-symmetrizing idempotent, projecting onto the trivial (respectively, the determinant) representation of $\Uq$ appearing in the $N$th tensor power of the defining representation $V$.
When $n \ge N$, one can identify $\ee \SkAlgNn{G}{n} \ee$  with $ \SkAlgNn{G}{n-N}$.
By abuse of notation, we write $\ee$, or sometimes $\eN$, to denote this (sign)  idempotent both as an element of the finite Hecke algebra (via Schur-Weyl duality) and as an element of the skein algebra involving the first $N$ of the $n$ strands when $N \le n$. 

\begin{theorem}\label{thm:relskeins}
Suppose that the quantum parameter $\q$ is generic.  Then for $n\geq N$ we have
\[
\SkAlgNn{G}{n}\cdot \eN \cdot \SkAlgNn{G}{n} = \SkAlgNn{G}{n}.
\]
Consequently, we obtain a Morita equivalence between $\SkAlgNn{G}{n}$ and $\eN \cdot \SkAlgNn{G}{n} \cdot \eN \cong \SkAlgNn{G}{n-N}$ 
 for $G= \GLN$ or $\SLN$. 
\end{theorem}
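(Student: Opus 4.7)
The plan is to deduce the Morita equivalence from the algebraic fullness statement $\SkAlgNn{G}{n}\cdot \eN\cdot \SkAlgNn{G}{n}=\SkAlgNn{G}{n}$, and to prove the latter by reducing to a purely representation-theoretic question about double affine Hecke algebras at the specialized parameters.

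The identification $\eN\cdot\SkAlgNn{G}{n}\cdot\eN\cong \SkAlgNn{G}{n-N}$ is skein-theoretic: the idempotent $\eN$ projects $V^{\otimes N}$ onto $\wedge^N V=\det$, which is trivial for $G=\SLN$, so $\eN$ simply removes the $N$ antisymmetrized strands; while for $G=\GLN$ the resulting $\det$ strand fuses with the auxiliary $\det^{-n/N}$ strand to produce a $\det^{-(n-N)/N}$ strand, matching the definition of $\SkAlgNn{\GLN}{n-N}$. Granting this identification, the Morita equivalence follows from fullness of $\eN$ by standard Morita theory, so the heart of the matter is the fullness equality.

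First I would reformulate fullness as the statement that every simple $\SkAlgNn{G}{n}$-module $M$ satisfies $\eN\cdot M \neq 0$. At generic $\q$, the relative skein algebra is (expected to be) semisimple with finitely many simples, so this reformulation is valid. Next I would invoke the surjective homomorphism from the appropriate DAHA $\Schmahan$ onto $\SkAlgNn{G}{n}$ that is constructed earlier in the paper, which is an isomorphism when $n=N$ and a surjection when $N \mid n$ (with intermediate cases handled by extending by identity strands or by an induction/restriction argument). This pulls back simples of $\SkAlgNn{G}{n}$ to simples of $\Schmahan$, and carries $\eN$ to the parabolic sign idempotent of the finite Hecke subalgebra $H_N\subset\Schmahan$ acting on the first $N$ strands.

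The crux is then a representation-theoretic claim: every simple $\Schmahan$-module that factors through $\SkAlgNn{G}{n}$ has nonzero image under the parabolic sign idempotent $\eN$. I would establish this via the Zelevinsky-style multisegment classification of simple modules of the affine Hecke subalgebra $\mathsf{H}_n^{\mathrm{aff}}\subset\Schmahan$ at generic $\q$: nonvanishing of $\eN$ on a multisegment simple translates to the existence of a vertical $N$-strip in the associated diagram, and the specialized central character forced by the quotient to $\SkAlgNn{G}{n}$ (namely $t = \q^{-2n/N}$ for $\GLN$, or the $\SLN$ analogue) precisely guarantees this. The main obstacle I anticipate is uniformly verifying the ``column of length $N$'' condition across all central characters that can arise, especially when $n$ is not a multiple of $N$, where the DAHA surjection must be set up indirectly; this is precisely where the combinatorics of multisegments and the nonvanishing of parabolic sign idempotents, as flagged in the abstract, must be pushed hardest.
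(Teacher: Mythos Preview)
Your broad strategy---reduce to the DAHA/Schmaha and establish nonvanishing of the parabolic sign idempotent there via multisegment combinatorics---is the paper's strategy. But your transfer step from DAHA to skein algebra is circular as written. You invoke a \emph{surjection} $\SWdaff:\Schmahan\twoheadrightarrow\SkAlgNn{G}{n}$ in order to pull simple skein-algebra modules back to simple $\Schmahan$-modules. Surjectivity, however, is Corollary~\ref{cor:SkAlgGLNn}, which is \emph{deduced from} Theorem~\ref{thm:relskeins}, not an input to it. Without surjectivity a simple $B$-module need not restrict to a simple $A$-module, so your pull-back argument does not go through.

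The paper avoids this cleanly. Once one knows $\HH\cdot\eN\cdot\HH=\HH$ for $\HH=\DAHAN{n}$ or $\SchmahaN{n}$ (Theorem~\ref{mainthm:morita-DAHA}), any \emph{unital} homomorphism $\varphi:\HH\to B$ suffices: writing $1=\sum a_i\eN b_i$ in $\HH$ and applying $\varphi$ gives $1\in B\cdot\eN\cdot B$ immediately (Proposition~\ref{prop:idems}). No surjectivity, no semisimplicity of $B$, no knowledge of the simples of the skein algebra is needed. The Schur-Weyl map $\SWdaff$ exists as a unital homomorphism for all relevant $n$ (via the Schmaha for $\SLN$; and for $\GLN$ only $N\mid n$ arises anyway since $\SkAlgNn{\GLN}{n}=0$ otherwise), so your worry about ``$n$ not a multiple of $N$'' and ``indirect setup'' dissolves. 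Finally, your sketch of the multisegment step (``existence of a vertical $N$-strip forced by the central character'') is too coarse: the actual argument in Section~\ref{sec:DAHA stuff} uses the $\pi$-action to produce a $k$-narrow multisegment and then a further averaging/cycling argument to extract an $\ell$-narrow piece of size $\geq \ell m$, which is what guarantees $\ek{m^\ell}L(\Delta)\neq 0$.
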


\subsection{The double affine Hecke algebras}\label{sec:DAHA intro}
Our approach to the skein category of $T^2$ is to reduce questions and computations there to questions about double affine Hecke algebras (DAHAs).  Let $G=\GLN$ or $\SLN$.  In either case, throughout the remainder of the introduction let us denote by $\DAHAN{n}$ the relevant specialisation of Cherednik's double affine Hecke algebra in type $\mathrm{A}$. See Section \ref{sec:AHA} for a presentation, and details about the specialisation of parameters, which we may express in terms of the quantum parameter $\q$, necessary to define a homomorphism to the skein algebra. We introduce an intermediate algebra $\SchmahaN{n}$ that has $\DAHAN{n}$ for $G=\SL$ as a quotient.  When $N \nmid n$, we need to work with $\SchmahaN{n}$ rather than $\DAHAN{n}$. 

In our conventions we express $\DAHAN{n}$ as a quotient of the group algebra of the $n$th braid group of the torus $T^2$ (or the $n,1$ elliptic braid group for $G=\GLN$).
The natural homomorphism from the
braid group 
to the relative skein algebra descends to  homomorphisms,
\begin{gather} \label{eqn:SW}
   \SWdaff: \DAHAN{n} \longrightarrow \SkAlgNn{\GLN}{n}(T^2) \notag \\ 
   \SWdaff: \SchmahaN{n} \longrightarrow \SkAlgNn{\SLN}{n}(T^2). 
\end{gather}

Further if $n=kN$ for $k \in \Z_{>0}$, the map from $\SchmahaN{n}$ factors through $\DAHAN{kN}$:
\begin{gather*}
   \SWdaff: \DAHAN{kN} \longrightarrow \SkAlgNn{\SLN}{kN}(T^2).
\end{gather*}

At $k=1$ we recall the following theorem from \cite{GJV} -- see \cite{Morton-Samuelson}, \cite{MS-Daha-skein},  and \cite{Mellit-and-co} for closely related results at different specialisations, and see \cite[Section 1.7]{GJV} for a longer discussion how the various results inter-relate. 

\begin{theorem} \label{thm:GJV eHe=Sk} Upon restriction to the antispherical subalgebra, we obtain an isomorphism,
\[
\ee\SWdaff\ee: \eN\cdot \DAHAN{N} \cdot \eN \overset{\sim}{\longrightarrow} \SkAlgNn{G}{0} (T^2)
\]
between the skein algebra and the antispherical subalgebra $\eN\cdot \HH_N(N) \cdot \eN$ of the double affine Hecke algebra for $G= \GLN$ or $\SLN$.
\end{theorem}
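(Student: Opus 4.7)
The plan is to show that the restricted map $\ee\SWdaff\ee$ is both surjective and injective, starting from the Schur--Weyl homomorphism $\SWdaff$ of (\ref{eqn:SW}). The target identification $\ee\cdot \SkAlgNn{G}{N}(T^2) \cdot \ee \cong \SkAlgNn{G}{0}(T^2)$ is geometric: the idempotent $\ee$ projects $V^{\otimes N}$ onto its one-dimensional summand --- the trivial representation for $G=\SLN$ and the determinant representation for $G=\GLN$ --- so capping $N$ parallel strands of the $N$-string relative skein algebra by $\ee$ on both ends replaces them with a single $\det$-colored strand, which for $\SLN$ can be deleted and for $\GLN$ is a central invertible twist identifying the idempotent-truncated algebra with the bare skein algebra.

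For surjectivity, I would exploit the fact that in type $A$ the defining representation $V$ generates $\Rep(G)$ monoidally, so $\SkAlgNn{G}{0}(T^2)$ is generated as an algebra by simple closed $V$-colored loops on $T^2$. Each such $(a,b)$-loop, for $(a,b) \in \Z^2$ a primitive homology class, is realized as the image of $\ee \cdot X_1^a Y_1^b \cdot \ee$ by an antisymmetrisation trick: $N$ parallel class-$(a,b)$ strands capped by $\ee$ collapse to a single $\det$-colored loop in the same class, which then corresponds to a $V$-loop after the identification of the previous paragraph. Products of loops in distinct homology classes pull back analogously to elements of the form $\ee \cdot X_1^{a_1} Y_1^{b_1} \cdots X_N^{a_N} Y_N^{b_N} \cdot \ee$ in the antispherical subalgebra, and symmetrisation over $\SN$ is automatic upon conjugation by $\ee$, yielding surjectivity.

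The main obstacle is injectivity, for which I would employ a flat-deformation argument. Both sides admit compatible classical degenerations as $\q \to 1$: the antispherical DAHA degenerates to the invariant ring $\cO(H\times H)^{\SN}$ by the PBW theorem for DAHA, while the skein algebra of $T^2$ degenerates to the coordinate ring of the $G$-character variety of $T^2$, which is identified with the same invariant ring by classical invariant theory. Since the specialised map is an isomorphism in the classical limit and both deformations are flat in $\q$, the classical isomorphism promotes to an isomorphism at generic $\q$. A more direct alternative --- which is the actual approach taken in \cite{GJV} --- is to exhibit an explicit PBW-style basis of $\ee\DAHAN{N}\ee$ indexed by pairs of partitions (or dominant weights) and verify that its image is linearly independent in the skein algebra via skein-theoretic normal forms for braids on the torus, checking that no new relations beyond those of the antispherical DAHA arise when the braids are viewed modulo HOMFLY-type skein relations.
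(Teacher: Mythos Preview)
This theorem is not proved in the present paper: it is quoted from the authors' earlier work \cite{GJV} (restated in Section~\ref{sec:apps to skeins} as ``\cite{GJV}, Corollary 1.12''), so there is no proof here to compare against directly. That said, a few comments on your sketch are in order.

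Your surjectivity argument contains a real confusion. You write that the $(a,b)$-loop is the image of $\ee\cdot X_1^a Y_1^b\cdot\ee$, and then gloss this as ``$N$ parallel class-$(a,b)$ strands capped by $\ee$''. These are two different elements: $X_1^aY_1^b$ winds only the first strand, while $N$ parallel $(a,b)$-strands would be $\prod_i X_i^aY_i^b$. More seriously, under the identification $\ee\,\SkAlgNn{G}{N}\,\ee\cong\SkAlgNn{G}{0}$, a single $V$-colored $(a,b)$-loop in the skein algebra lifts to $\ee$ times the identity braid on $N$ strands \emph{disjoint union} a closed $V$-loop --- i.e.\ a tangle with a closed component, not a braid. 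Showing that such tangles lie in the image of $\SWdaff$ is precisely the nontrivial content here; the authors themselves remark (after Corollary~\ref{cor:SkAlgGLNn}) that ``despite its elementary formulation we are not aware of an elementary proof'' that relative tangles on the torus are combinations of braids. So your surjectivity step is circular as written.

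As for the actual route taken in \cite{GJV}: from the way it is invoked in Section~\ref{sec:quantum character} of this paper, the argument there is not a direct skein/PBW comparison but passes through the equivalence $\SkCat_G(T^2)\modu\simeq\Dqstr{G}$ with strongly equivariant quantum $\mathcal{D}$-modules, identifying $\Dist$ with a Hotta--Kashiwara-type object and computing its endomorphisms on the $\mathcal{D}$-module side. Your flat-deformation idea for injectivity is a reasonable heuristic, but making it rigorous requires knowing flatness of both sides over the parameter ring, which for the skein side is itself a substantive input.
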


In this paper we prove the following key result:

\begin{theorem}\label{mainthm:morita-DAHA} Suppose the quantum parameter $\q$ is not a root of unity. 
Then we have   
\[
\DAHAN{n}\cdot \eN\cdot \DAHAN{n} = \DAHAN{n} \text{ and }
\SchmahaN{n}\cdot \eN\cdot \SchmahaN{n} = \SchmahaN{n}.
\]
 Consequently, we  obtain  a Morita equivalence between $\DAHAN{n}$ and 
 $\eN \cdot  \DAHAN{n} \cdot \eN$  as well as between $\SchmahaN{n}$ and $\eN \cdot \SchmahaN{n} \cdot \eN$.
\end{theorem}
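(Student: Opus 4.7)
The plan is to establish the two-sided ideal identity $\DAHAN{n}\cdot \eN\cdot \DAHAN{n} = \DAHAN{n}$ (and its $\SchmahaN{n}$-analog) by translating it into a non-vanishing statement on simple modules, then verifying that statement via the representation theory of the affine Hecke subalgebra. The Morita equivalence between each algebra $A$ and its corner $\eN A \eN$ then follows formally from the fact that $AeA = A$ for an idempotent $e$.

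The first step is the standard reduction: the inclusion $I := \DAHAN{n}\cdot \eN\cdot \DAHAN{n} \subseteq \DAHAN{n}$ is an equality if and only if $\eN$ does not annihilate any simple module in a category that detects the quotient $\DAHAN{n}/I$ (for example, category $\mathcal{O}$, or the category of modules on which the polynomial subalgebra acts locally finitely). Thus I would first set up such a category and then argue that no simple module $L$ satisfies $\eN \cdot L = 0$.

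The second step is to restrict any such simple $L$ along $\mathrm{H}^{\mathrm{aff}}_n \hookrightarrow \DAHAN{n}$. At parameters that are not roots of unity, $L$ decomposes as a finite sum of simple AHA modules, each of which is a multisegment representation $L(\underline{\mathbf{m}})$ in the sense of Bernstein--Zelevinsky. So the problem reduces to showing $\eN \cdot L(\underline{\mathbf{m}}) \neq 0$ for every multisegment $\underline{\mathbf{m}}$ that actually arises as an AHA-summand of a DAHA simple. The third step is then the computation: $\eN$ acts as the $S_N$-parabolic sign idempotent of the finite Hecke subalgebra $H_n \subset \mathrm{H}^{\mathrm{aff}}_n$, and by Frobenius reciprocity its non-vanishing on $L(\underline{\mathbf{m}})$ is equivalent to the occurrence in the $H_n$-restriction of a Specht module $\tSpecht^\lambda$ whose shape $\lambda$ has a column of length at least $N$. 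At generic $\q$, the multisegment combinatorics, together with the central character constraints imposed by the DAHA torus, guarantee that such a column-rich Specht component always occurs.

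The main obstacle is the third step: pinning down exactly which multisegments $\underline{\mathbf{m}}$ appear and proving the column-length lower bound uniformly across them. This is where the ``nonvanishing properties of parabolic sign idempotents upon multisegment representations'' alluded to in the introduction do the real work. For the variant $\SchmahaN{n}$, the same argument applies: when $N\mid n$ the claim reduces directly to the DAHA case via the surjection $\SchmahaN{n}\twoheadrightarrow \DAHAN{n}$, and when $N\nmid n$ the additional simples of $\SchmahaN{n}$ still restrict to multisegment AHA-modules, so the combinatorial non-vanishing lemma from Step 3 applies verbatim. Once both non-vanishing statements are established, the identities $AeA=A$ and hence the Morita equivalences with $\eN\DAHAN{n}\eN$ and $\eN\SchmahaN{n}\eN$ follow formally.
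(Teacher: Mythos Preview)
Your overall skeleton matches the paper's: reduce (via a Bezrukavnikov--Etingof argument) to showing $\eN M \neq 0$ for every simple $\Y$-finite module $M$, then restrict to the affine Hecke subalgebra $\HY$ and study multisegments. But Step~3 as written is not an argument, and it misses the essential mechanism. The paper does \emph{not} prove that $\eN L(\Delta)\neq 0$ for every AHA composition factor of $\Res^{\HH}_{\HY} M$; indeed $\eN$ is not conservative on AHA-modules, so there is no reason to expect this. What is needed is only that \emph{some} AHA submodule of $\Res M$ survives $\eN$, and manufacturing such a submodule uses the extra DAHA generator $\pi$ (resp.\ $\Spi$) in an active way, not merely as a passive ``central character constraint''. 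Concretely: if $v_\Delta\in M$ spans the generating line of an $\HY$-submodule $M(\Delta)$, then $\pi^{-b}v_\Delta$ does the same for $M(\Gamma)$, where $\Gamma$ has its segment starts cyclically shifted (because $\pi Y_i\pi^{-1}=Y_{i+1}$ and $\pi Y_n\pi^{-1}=q^{-1}Y_1$ with $q=t^{-2k}$). The paper iterates this cycling first to force the multisegment to become $k$-narrow, and then cycles again within each $t^{2/N_0}$-line to arrange a large $\ell$-narrow initial block $A$ with $|A|\ge \ell m$; only after this preparation does the finite-Hecke dominance combinatorics yield $\eN L(\Delta)\neq 0$. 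None of this is visible from central characters alone.

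Your Schmaha reduction is also backwards. A surjection $\SchmahaN{n}\twoheadrightarrow \DAHAN{n}$ transports the identity $A\,\eN A = A$ \emph{from} $A=\SchmahaN{n}$ \emph{to} its quotient, not the reverse: the Schmaha has strictly more simples to check, so knowing the DAHA case does not settle it. The paper instead runs the same cycling argument directly in $\SchmahaN{n}$, with $\Spi$ in place of $\pi$; the only modification is that all starts are uniformly rescaled by a power of $t^{2k/n}$ at each step, which preserves narrowness and so does not affect the argument.
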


Theorem \ref{mainthm:morita-DAHA} is a special case of Theorem \ref{mainthm:morita-DAHA-eNj} in Section \ref{sec:apps to skeins}, which in turn holds at one particular specialization of parameters 
in 
Theorem  \ref{thm:idempotent}, which is more general and which we prove  in Section \ref{sec:DAHA stuff}. 
Theorem  \ref{thm:idempotent} (along with Proposition \ref{prop:signn neq 0}) is stated for a vast array of specializations as well as a wider family of idempotents.  The proof involves first reducing the claim, following an argument of \cite{Bezrukavnikov-Etingof}, to showing that $\eN$ is conservative on the subcategory of $\Y$-finite modules (i.e. that $\eN \cdot M \neq 0$ for any nonzero $\Y$-finite DAHA-module $M$).  For this, we restrict to the AHA $\HY$ action, and employ the rich combinatorics of multisegments to find an AHA composition factor not killed by $\eN$.  Note this $\eN$ is a partial or parabolic sign idempotent involving just $N$, not $n$, strands. 

\medskip

We obtain the following corollaries of Theorem \ref{mainthm:morita-DAHA}, which extend the isomorphism of Theorem \ref{thm:GJV eHe=Sk} to the full DAHA.  

\begin{corollary}\label{cor:SkAlgN}
Let $G=\GLN$ or $\SLN$.   Then the Schur-Weyl map,
\[
\SWdaff: \DAHAN{N} \longrightarrow \SkAlgNn{G}{N} (T^2),
\]
is an isomorphism.
\end{corollary}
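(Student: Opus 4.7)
The plan is to deduce the full isomorphism from the antispherical isomorphism of Theorem \ref{thm:GJV eHe=Sk} via a Morita-theoretic bimodule argument enabled by Theorem \ref{mainthm:morita-DAHA}. Set $A = \DAHAN{N}$, $B = \SkAlgNn{G}{N}(T^2)$, and $e = \eN$, and view $B$ as an $A$-bimodule via $\SWdaff$, so that $\SWdaff \colon A \to B$ becomes a morphism of $A$-bimodules. A routine check shows that $\SWdaff(e) = e$, since both idempotents are realized as the antisymmetrizer in the finite Hecke subalgebra on the first $N$ strands. Consequently $\SWdaff$ restricts to a map $eAe \to eBe$ which, by Theorem \ref{thm:GJV eHe=Sk}, is an isomorphism (both sides being canonically identified with $\SkAlgNn{G}{0}(T^2)$).

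The key technical input is the following standard bimodule lemma: since $AeA = A$ by Theorem \ref{mainthm:morita-DAHA}, any $A$-bimodule $M$ with $eMe = 0$ must vanish. Indeed, writing $1_A = \sum_i a_i e b_i$ and using the bimodule structure, any $m \in M$ expands as
\[
m \;=\; 1 \cdot m \cdot 1 \;=\; \sum_{i,j} a_i \cdot \bigl(e(b_i m a_j)e\bigr) \cdot b_j,
\]
each factor of which lies in $eMe = 0$.

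I would then apply this lemma twice. For injectivity, I take $M = \ker \SWdaff$, an $A$-subbimodule of $A$; since $e\SWdaff e$ is injective, $eMe = 0$, so $M = 0$. For surjectivity, I take $M = B / \SWdaff(A)$, which is an $A$-bimodule via $\SWdaff$ because $\SWdaff(A)$ is an $A$-subbimodule of $B$. Exactness of $e(-)e$ together with the identity $\SWdaff(A) \cap eBe = \SWdaff(eAe) = eBe$ (the last equality from Theorem \ref{thm:GJV eHe=Sk}) gives $eMe = 0$, and the lemma then forces $M = 0$, i.e.\ that $\SWdaff$ is surjective.

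The conceptual heavy lifting is already done upstream in Theorems \ref{thm:GJV eHe=Sk} and \ref{mainthm:morita-DAHA}; the deduction here is formal once those are in hand. The only small point requiring verification is the compatibility $\SWdaff(\eN) = \eN$, which is immediate from the construction of $\SWdaff$ on the finite Hecke subalgebra. Note that Theorem \ref{thm:relskeins} (i.e.\ $BeB = B$) is not logically needed for this particular corollary, though it provides a parallel Morita picture on the skein side.
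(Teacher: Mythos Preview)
Your proof is correct and takes essentially the same approach as the paper: view $\SWdaff$ as an $A$-bimodule map, use Theorem \ref{thm:GJV eHe=Sk} to see that $e(\SWdaff)e$ is an isomorphism, and then invoke conservativity of $e$ (i.e.\ $AeA=A$ from Theorem \ref{mainthm:morita-DAHA}) to conclude. The paper states this in one sentence, whereas you spell out the bimodule lemma and its application to the kernel and cokernel; your added detail is sound and the observation that $BeB=B$ is not needed here is accurate.
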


\begin{remark}
We note that a very similar statement to Corollary \ref{cor:SkAlgN}, involving the HOMFLYPT skein algebra, appeared as Conjecture 1.5 from \cite{Morton-Samuelson}, and was proved in \cite{Mellit-and-co}. Recall that the $\GLN$-DAHA depends on two scalar parameters, which in \cite{Mellit-and-co} are assumed to be algebraically independent.  In our result, in place of the HOMFLYPT skein algbra we have the $\GLN$- or $\SLN$-skein algebra, and correspondingly we work with a specialisation, in which both parameters of the DAHA are a power of the quantum parameter $\q$ (however we do assume the quantum parameter is generic). 
We emphasize that this is a genuine rather than merely technical distinction between the two types of result: $\SkAlgNn{\GLN}{n}$ is in fact the zero algebra for $n$ not a multiple of $N$ (see Lemma \ref{lem:GLdeg}).
\end{remark}

Theorem \ref{mainthm:morita-DAHA} implies Theorem \ref{thm:relskeins} by standard arguments from Morita theory, and also implies the following corollary, which is of independent interest:

\begin{corollary} \label{cor:SkAlgGLNn} Let $G=\GLN$ or $\SLN$, and let $n =kN$, for some $k\in\Z_{>0}$.  Then the Schur-Weyl map,
\[
\SWdaff: \DAHAN{n} \longrightarrow \SkAlgNn{G}{n}(T^2),
\]
is a surjection.
\end{corollary}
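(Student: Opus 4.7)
The plan is to argue by induction on $k \geq 1$, with the base case $k=1$ handled by Corollary \ref{cor:SkAlgN}. For the inductive step I assume $\SWdaff: \DAHAN{(k-1)N} \to \SkAlgNn{G}{(k-1)N}$ is surjective and seek to upgrade this to $n = kN$.

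The first step is to record the commutative square
\begin{equation*}
\begin{array}{ccc}
\DAHAN{(k-1)N} & \longrightarrow & \eN \cdot \DAHAN{kN} \cdot \eN \\
\downarrow & & \downarrow \\
\SkAlgNn{G}{(k-1)N} & \stackrel{\sim}{\longrightarrow} & \eN \cdot \SkAlgNn{G}{kN} \cdot \eN,
\end{array}
\end{equation*}
whose vertical arrows are the Schur--Weyl maps (the left one at level $(k-1)N$, the right one the restriction $\eN\SWdaff\eN$ at level $kN$), whose top horizontal arrow is the embedding obtained by adjoining $N$ antisymmetrized strands, and whose bottom horizontal arrow is the isomorphism coming from Theorem \ref{thm:relskeins}. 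The inductive hypothesis makes the left vertical map surjective, so the corner restriction $\eN\SWdaff\eN$ is surjective too.

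The second step is to promote this corner surjectivity to surjectivity of the full Schur--Weyl map via the Morita setup provided by Theorem \ref{mainthm:morita-DAHA} and Theorem \ref{thm:relskeins}. Setting $I := \SWdaff(\DAHAN{kN})$, applying $\SWdaff$ to the identity $\DAHAN{kN} = \DAHAN{kN}\cdot\eN\cdot\DAHAN{kN}$ yields $I = I \cdot \eN \cdot I$. Combined with $\SkAlgNn{G}{kN} = \SkAlgNn{G}{kN} \cdot \eN \cdot \SkAlgNn{G}{kN}$, the equality $I = \SkAlgNn{G}{kN}$ reduces to the two one-sided containments $\SkAlgNn{G}{kN}\cdot\eN \subseteq I$ and $\eN\cdot\SkAlgNn{G}{kN} \subseteq I$, since then any element $a\eN b$ of $\SkAlgNn{G}{kN} \cdot \eN \cdot \SkAlgNn{G}{kN}$ decomposes as $(a\eN)(\eN b) \in I \cdot I = I$. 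Each containment is established by exhibiting a finite generating set for $\SkAlgNn{G}{kN}\cdot\eN$, viewed as a right module over $\eN\SkAlgNn{G}{kN}\eN \cong \SkAlgNn{G}{(k-1)N}$, consisting of coset representatives that already lie in the finite Hecke subalgebra $H_{kN} \subset \DAHAN{kN}$ and hence in $I$; the corner $\SkAlgNn{G}{(k-1)N}$ itself is in $I$ by the first step.

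The main obstacle is this last transfer: corner surjectivity under Morita equivalence does not automatically lift to full surjectivity, so one must explicitly identify a generating set for the Morita bimodule $\SkAlgNn{G}{kN}\cdot\eN$ consisting of elements already hit by $\SWdaff$. The natural source for such generators is the finite Hecke subalgebra $H_{kN}$ together with the decomposition of $V^{\otimes kN}$ into isotypic components for the $\eN$-idempotent applied to its first $N$ tensor factors; once this generation statement is in hand, the remainder of the deduction is formal.
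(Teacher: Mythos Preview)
Your inductive strategy is reasonable, but the ``main obstacle'' you identify is illusory. Once you know $\DAHAN{kN}=\DAHAN{kN}\cdot\eN\cdot\DAHAN{kN}$ (Theorem~\ref{mainthm:morita-DAHA}), corner surjectivity \emph{does} automatically lift to full surjectivity: writing $1=\sum_i a_i\eN b_i$ in $\DAHAN{kN}$ and applying $\SWdaff$ gives $1=\sum_i\alpha_i\eN\beta_i$ with $\alpha_i,\beta_i\in I$; then for any $b\in\SkAlgNn{G}{kN}$,
\[
b=1\cdot b\cdot 1=\sum_{i,j}\alpha_i\,(\eN\beta_i\, b\,\alpha_j\eN)\,\beta_j\in I\cdot(\eN\SkAlgNn{G}{kN}\eN)\cdot I\subseteq I,
\]
since the bracketed factor lies in $I$ by your first step. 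No explicit generators for $\SkAlgNn{G}{kN}\cdot\eN$ are needed, and your proposed route via the finite Hecke algebra is unnecessary.

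A secondary issue: the top arrow of your square, a putative algebra map $\DAHAN{(k-1)N}\to\eN\DAHAN{kN}\eN$, is not obviously well-defined --- DAHAs do not admit na\"ive parabolic embeddings because the loop parameter and the relations involving $\pi$ (or $\Spi$) depend on the rank. You do not actually need such a map; you only need that every element of $\eN\SkAlgNn{G}{kN}\eN\cong\SkAlgNn{G}{(k-1)N}$ lies in $I$, and this follows from the inductive hypothesis since a $(k-1)N$-strand braid word, once $N$ antisymmetrized strands are adjoined, is visibly $\eN$ times a $kN$-strand braid word and hence lies in $I$.

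The paper takes a cleaner route that avoids induction. It uses the larger idempotent $\eNk$ rather than $\eN$: Theorem~\ref{mainthm:morita-DAHA-eNj} still gives $\DAHAN{kN}\cdot\eNk\cdot\DAHAN{kN}=\DAHAN{kN}$, and the same sandwich argument reduces to surjectivity of $\eNk\SWdaff\eNk$. But now $\eNk\SkAlgNn{G}{kN}\eNk\cong\SkAlgN{G}(T^2)$ is the $0$-point skein algebra, and surjectivity onto that is established diagrammatically in one shot: by Theorem~\ref{thm:GJV eHe=Sk} any element is an $N$-strand braid capped by $\eN$, and one inserts $(k-1)N$ additional straight strands (capped with $\ek{N^{k-1}}$) in a small disk disjoint from the picture to obtain a $kN$-strand braid capped by $\eNk$.
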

The content of this corollary is the assertion that every relative tangle on the torus can be expressed as a linear combination of braid words,
modulo the skein relations.  Despite its elementary formulation we are not aware of an elementary proof.

Our proof of Theorems \ref{thm: HHz of GL skein algebra}, \ref{thm: HHz of SL skein algebra} hence of Theorems \ref{mainthm:dimensionsGLN}, \ref{mainthm:dimensionsSLN}, also hinge on facts about DAHA's.  Specifically, in order to compute the Hochschild homology of the skein algebra, we recall from \cite{GJV} that Theorem \ref{thm:GJV eHe=Sk} along with an application of the shift isomorphism \cite{Marshall1999} yields an isomorphism,
\begin{gather} \label{eq:Sk = KLamW}
   \SkAlgN{G}(T^2) \cong  \DqH^W.
\end{gather}
where $W$ is the Weyl group.
In Section \ref{sec:HH skein algebras} we rely on explicit presentation of  the quantum torus $\DqH$, denoted $\KLam$, built from the weight lattice $\Lambda$ of $G$, and $\KLam^W$ denotes its invariants for the natural $W$-action by algebra automorphisms.  By \cite[Theorem 2.4]{Montgomery1980}, this is further Morita equivalent to the smash product $\KLam\# W$.  We have an explicit decomposition   of $\HHz(\KLam\# W)$ 
(see \cite{Shepler-Witherspoon},\cite{Stefan},\cite{AFLS})
which we compute combinatorially in Section \ref{sec:HH skein algebras} in  order to yield the dimensions asserted in Theorem \ref{mainthm:dimensionsGLN}.

\subsection{Outline}
We now outline the structure of the paper, and enumerate where to find proofs of each theorem stated in the introduction.  Section \ref{sec:skeins} recalls basic definitions from skein theory and a few standard results from the Morita theory of linear categories.  Section \ref{sec:AHA} defines the various Hecke algebras appearing in our constructions, and some well-known representation theory of affine Hecke algebras which we will require.  The first new results appear in Section \ref{sec:HH skein algebras}, where we compute the Hochschild homology of skein algebras, in particular proving Theorems \ref{thm: HHz of GL skein algebra} and \ref{thm: HHz of SL skein algebra}.  Section \ref{sec:DAHA stuff} is the technical core of the paper, and involves a combinatorial proof of Theorem \ref{mainthm:morita-DAHA}.  Section \ref{sec:SL2} derives the complete structure of the Kauffman bracket skein category of $T^2$, corresponding to the group $G=\SL_2$.  Section \ref{sec:apps to skeins} brings together the results of the previous sections to prove all the remaining theorems stated in the introduction.
Section \ref{sec:quantum character} interprets the results of the present paper in the language of quantum Springer theory, and strengthens some results proved in \cite{GJV}. 

\subsection{Acknowledgements}
We would like to thank L{\'e}a Bittmann, Alex Chandler, Renaud Detcherry, Patrick Kinnear, Anton Mellit, Chiara Novarini, Peter Samuelson, Jos\'e Simental, and Kevin Walker for helpful discussions.  We are especially grateful to Haiping Yang, who first suggested the formulas in Theorems \ref{mainthm:dimensionsGLN} and \ref{mainthm:dimensionsSLN} and proposed a proof based on computing the top de Rham cohomology of the symplectic resolution of the character variety.

The first author was partially supported by NSF grant DMS-2202363. The second author was partially supported by ERC Starting Grant no. 637618, and by the Simons Foundation award 888988 as part of the Simons Collaboration on Global Categorical Symmetry.
The third author was partially supported by Simons Foundation Collaboration Grants 319233 and 707426. 
The authors are grateful to the International Centre for Mathematical Sciences Research in Groups Programme, to Aspen Center for Physics, which is supported by National Science Foundation grant PHY-1607611, and to Mathematical Sciences Research Institute (MSRI), now named the Simons Laufer Mathematical Sciences Institute (SLMath), which is supported by the National Science Foundation (Grant No. DMS-1928930), all of whom hosted research visits during which parts of this work was undertaken.

\section{Preliminaries on skein theory}\label{sec:skeins}
In this section we recall basic definitions from skein theory, and the special case of $\SLN$- and $\GLN$-spiders (in particular for $\SL_2$ it recovers the Kauffman bracket skein category).  We also recall some basic notions from enriched category theory and how they apply to skein theory.

\subsection{Skein modules}

\begin{definition}[See {\cite[Section 4.2]{Cooke}} for a more detailed definition.]
Fix a $\cK$-linear ribbon braided tensor category $\cA$, where $\cK$ is a commutative ring.

\begin{itemize}
\item Let $\Sigma$ be an oriented surface. 
 An \defterm{$\cA$-labelling of $\Sigma$} is the data, $X$, of an oriented embedding of finitely many (possibly zero) disjoint disks $x_1, \dots, x_n\colon \DD\rightarrow \Sigma$ labeled by objects $V_1, \dots, V_n$ of $\cA$.  We denote by $\vec{x_i}$ the $x$ axis sitting inside each disk $x_i$, and denote $\vec{X}=\cup_i\vec{x_i}$.

\item Let $M$ be an oriented 3-manifold, possibly with boundary $\partial M = \rev \Sigma_{\mathrm{in}}\sqcup \Sigma_{\mathrm{out}}$.  A ribbon graph $\Gamma$ in $M$ has ``ribbons" connected at ``coupons."  As topological spaces, ribbons and coupons are simply embedded rectangles $I\times I$, 
however, we require that ribbons begin and end at either the top ``outgoing," or bottom ``incoming," boundary interval of some coupon, or else at $\partial M$.
\item An \defterm{$\cA$-coloring} of a ribbon graph is a labelling of each ribbon by an object of $\cA$, and of each coupon by a morphism from the (ordered) tensor product of incoming edge-labels to the (ordered) tensor product of outgoing edge-labels.

\item We say that an $\cA$-colored ribbon graph $\Gamma$ is \defterm{compatible with an $\cA$-labelling} $X$ 
of $\partial M$
if $\partial \Gamma={X}$, by which we mean $\partial\Gamma= \vec{X}$ and the corresponding object labels agree.
We denote by $\mathrm{Rib}_\cA(M,X)$ the free $\K$-module with basis the $\cA$-colored ribbon graphs on $M$ compatible with $X$.

\end{itemize}
\end{definition}

Consider the $3$-ball $\DD\times I$, where $I=[0,1]$ and consider a labelling $X\cup Y$ with disks $X=\{(x_1,V_1),\ldots,(x_n,V_n)\}$ embedded in $\DD\times\{0\}$ and $Y=\{(y_1,W_1),\ldots (y_m,W_m)\}$ embedded in $\DD\times \{1\}$.  Then we have a well-defined surjection, 
\[\mathrm{Rib}_\cA(\DD\times I,X\cup Y) \to \Hom_\cA(V_1\otimes \cdots \otimes V_n,W_1\otimes \cdots \otimes W_m),\]
see \cite{TuraevBook}. We will call the kernel of this map the \defterm{skein relations} between $X$ and $Y$.

\begin{definition}
Let $M$ be an oriented $3$-manifold possibly with boundary $\partial M\cong \rev{\Sigma}_{\mathrm{in}}\sqcup \Sigma_{\mathrm{out}}$, and $\cA$-labellings $X_{\mathrm{in}}$ of $\Sigma_{\mathrm{in}}$ and $X_{\mathrm{out}}$ of $\Sigma_{\mathrm{out}}$.  The \defterm{relative $\cA$-skein module} $\SkMod_\cA(M, X_{\mathrm{in}}, X_{\mathrm{out}})$ is the $\K$-module spanned by isotopy classes of $\cA$-colored ribbon graphs in $M$ compatible with $X_{\mathrm{in}}\cup X_{\mathrm{out}}$, taken modulo the skein relations determined by any oriented ball $\DD\times I\subset M$. In the case $\partial M=\emptyset$, we write $\SkMod_\cA(M)$ and call this the \defterm{$\cA$-skein module} of $M$.
\end{definition}

\subsection{Skein categories} \label{sec: skein categories}

\begin{definition}
Let $\Sigma$ be an oriented surface. The \defterm{skein category} $\SkCat_\cA(\Sigma)$ of $\Sigma$ has:
\begin{itemize}
\item As its objects, $\cA$-labellings of $\Sigma$.

\item As the 1-morphisms from $X$ to $Y$ the relative $\cA$-skein module $\SkMod_\cA(\Sigma\times I,X,Y)$.
\end{itemize}
\end{definition}

\begin{example}
In the case $\cA=\Repq(\SL_2)$ (with suitable choice of ribbon structure), the constructions above recover the Kauffman bracket skein theories. 
\end{example}

\begin{example}
In the case $\cA=\Repq(\SLN)$ (with suitable choice of ribbon structure), the constructions above recover the $\SLN$-spider constructions \cite{CKM} (see also \cite{Sikora2005} and \cite{Poudel-comparison}).  In the case $\cA=\Repq(\GLN)$ see \cite{tubbenhauer}
for the corresponding notion of $\GLN$-spider.
\end{example}

\begin{notation}
Let us fix once and for all an embedding $\DD \into \Sigma$.  Then, given an object $W\in \cA$, we have the \defterm{distinguished object} $\Dist_W\in\SkCat_\cA(\Sigma)$, obtained by labelling the $x$-axis of $\DD$ by $W$. Every object of the skein category is isomorphic to $\Dist_W$ for some $W$, with such an isomorphism obtained by any tangle collecting all the disjoint disks 
$x_1,\ldots, x_n$ into $\DD$, and accordingly taking $W=V_1\otimes \cdots \otimes V_n$.

An important special case is when $W=\mathbf{1}\in \cA$ is the unit, and in this case we denote the resulting  by  $\Dist$.  This object is even more canonical than the distinguished objects $\Dist_W$: while $\Dist_W$ is independent up to isomorphism of the chosen disk $\DD\into \Sigma$, clearly the objects $\Dist$ obtained from any two choices of $\DD\into \Sigma$ are canonically isomorphic. 

When the ribbon tensor category $\cA$ is clear from context (typically $\cA=\Repq(\GLN)$ or $\Repq(\SLN)$), we will abbreviate the Hom-functor $\Hom_{\SkCat_G(\Sigma)}$ simply by $\Hom_\Sigma$, likewise $\End_\Sigma$.  
\end{notation}

\begin{notation}
Let $G=\SLN$, and fix a non-negative integer $n$. We denote by $V$ the defining representation of dimension $N$. The \defterm{relative skein algebra} $\SkAlgNn{N}{n}(\Sigma)$ of $\Sigma$ is the endomormorphism algebra,
\[\SkAlgNn{N}{n}(\Sigma) = \End_{\Sigma}(\Dist_{V^{\ot n}}).\]
For $G=\GLN$ we assume further that $n=kN$ for a non-negative integer $k$, and similarly we write,
\[
\SkAlgNn{N}{n}(\Sigma) = \End_{\Sigma}(\Dist_{V^{\ot n}\otimes (\Lambda_q^N(V)^*)^{\ot k}}).
\]
Often in the paper $\Sigma=T^2$ is clear from context and we will write simply $\SkAlgNn{N}{n}$.
\end{notation}

\subsection{Modules, Morita equivalence, Hochschild homology}

Skein categories defined in this way are linear but not abelian categories, and so are more akin to ``many pointed algebras" (for this reason skein categories also sometimes called ``skein algebroids").  Given such a small linear category $\cC$, one studies the abelian category,
\[
\cC\modu := \operatorname{Fun}(\cC^{op},\Vect),
\]
which is sometimes called the \defterm{category of $\cC$-modules}, or alternatively the \defterm{free co-completion} of $\cC$.  Two small categories with equivalent free co-completions are said to be \defterm{Morita equivalent}; in fact a Morita equivalence occurs between two small categories if, and only if, they have equivalent idempotent completions.  

We recall that a subcategory $\cD$ in an abelian category $\cC$ is said to \defterm{generate} $\cC$
if the restriction to $\cD$ of the contravariant Hom functor in $\cC$ is conservative, i.e. if for any non-zero object $c\in\cC$, there exists some $d \in \cD$ with $\Hom(d,c)\neq 0$.  We say that $\cC$ \defterm{admits a compact projective generator} if such a $\cD$ can be chosen to consist of a single compact projective object $X$, where we recall that $X$ is \defterm{compact projective} if its covariant Hom functor $\Hom(X,-)$ is colimit-preserving. Given a compact projective generator $X$, we obtain an equivalence of categories $\cD\simeq \End(X)\modu$.

Recall that the (zeroeth) \defterm{Hochschild homology} $\HHz(\cC)$ of a small $\K$-linear category $\cC$ is the vector space,
\[
\HHz(\cC) = \bigoplus_{P\in \cC} \Hom_\cC(P,P) \,\,\Big{/}\,\, \left\{f\circ g - g\circ f \,\,\big{|}\,\, f:P\to Q,\,\, g: Q\to P\right\}.
\]
Here the sum is taken over the objects of $\cC$, and the relations are taken over all (doubly) composable pairs of morphisms $f,g$, noting that in general the two compositions will appear in different direct summands.  We note that the Hochschild homology factors through the free cocompletion: if two categories $\cC$ and $\cD$ are Morita equivalent then they have isomorphic Hochschild homologies.  We note that in the case $\cC$ has a unique object $X$, then $\HHz(\cC)$ coincides with $\HHz(\End(X))$.

\begin{remark}
$\K$-linear categories with enough compact projectives define dualizable objects in the 2-category of locally presentable $\K$-linear categories.  The value of the associated 1-dimensional topological field theory on the circle $S^1$ is precisely the Hochschild homology as defined above.  
\end{remark}

\begin{proposition}\label{prop:HH0}
Let $\Sigma$ be a closed and oriented surface.  We have a canonical isomorphism of vector spaces,
\[
\SkMod_\cA(\Sigma\times S^1) = \HHz(\SkCat(\Sigma)).
\]
\end{proposition}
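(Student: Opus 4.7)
The plan is to construct an explicit isomorphism by cutting and regluing along a distinguished copy of $\Sigma$ inside $\Sigma \times S^1$. Concretely, I would fix a basepoint $* \in S^1$ and a small open interval $I \subset S^1$ containing $*$, and identify $\Sigma \times S^1 \setminus (\Sigma \times \{*\})$ with $\Sigma \times (0,1)$; re-closing recovers $\Sigma \times S^1$ by gluing $\Sigma \times \{0\}$ to $\Sigma \times \{1\}$ via the identity.

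First I would define the map $\Phi: \bigoplus_{X} \End_{\SkCat_\cA(\Sigma)}(X) \to \SkMod_\cA(\Sigma \times S^1)$. Given any ribbon graph in $\Sigma \times [0,1]$ representing a morphism $f: X \to X$ in the skein category (so its boundary data on the top and the bottom agree), glue $\Sigma \times \{0\}$ to $\Sigma \times \{1\}$ to obtain a closed ribbon graph in $\Sigma \times S^1$. I would check that skein relations are sent to skein relations (since they are imposed locally in $3$-balls, and every $3$-ball in $\Sigma \times [0,1]$ embeds into $\Sigma \times S^1$), so $\Phi$ is well-defined on each summand. I would then verify that any closed ribbon graph in $\Sigma \times S^1$ can, after an isotopy making it transverse to $\Sigma \times \{*\}$, be cut along $\Sigma \times \{*\}$ to yield a tangle from some labelling $X$ to itself; this shows surjectivity.

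Next I would show $\Phi$ descends to $\HHz(\SkCat_\cA(\Sigma))$, i.e.\ that for composable $f: X \to Y$ and $g: Y \to X$, the skeins $\Phi(f \circ g)$ and $\Phi(g \circ f)$ coincide in $\SkMod_\cA(\Sigma \times S^1)$. This is the cyclicity relation: after gluing, isotoping the cut $\Sigma \times \{*\}$ around the $S^1$ factor past the coupon separating $f$ and $g$ interchanges the order of composition, and the two resulting closed graphs are ambient isotopic inside $\Sigma \times S^1$. For injectivity of the induced map on $\HHz$, I would construct an inverse: given a closed ribbon graph $\Gamma \subset \Sigma \times S^1$, choose a cut $\Sigma \times \{*\}$ transverse to $\Gamma$, record the resulting endomorphism of the labelling $X$ given by the boundary data, and observe that changing the cut point corresponds to conjugating by an isotopy, hence gives the same class modulo $fg - gf$; isotopies of $\Gamma$ itself either are supported in some $\Sigma \times (a,b)$ (giving an equality of morphisms in the skein category) or involve sliding a strand across the cut (producing precisely a relation of the form $fg = gf$).

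The main obstacle is bookkeeping for the cyclicity/invariance of the inverse: one must argue carefully that every ambient isotopy in $\Sigma \times S^1$ decomposes into moves that are either local skein relations within $\Sigma \times I$ or cyclic rotations across the cut, and that coupons can be pushed off the cutting surface. This is essentially a Morse-theoretic argument on the projection $\Gamma \to S^1$, tracking critical points through the cut; the result is then that $\Phi$ induces an isomorphism $\HHz(\SkCat_\cA(\Sigma)) \xrightarrow{\sim} \SkMod_\cA(\Sigma \times S^1)$, which is canonical because no choices beyond the orientation of $S^1$ enter its construction.
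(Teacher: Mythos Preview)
Your proposal is correct, but it takes a more hands-on route than the paper does. The paper does not actually prove this proposition; it simply records it and then remarks that it is a special case of the fact that skein modules and skein categories assemble into a categorified $(3,2)$-TQFT, citing \cite{Walker, JF, GJS}. In that framework, the statement is the standard identification of the value of the TQFT on $\Sigma \times S^1$ with the trace (Hochschild homology) of the value on $\Sigma$.

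What you are doing is unpacking that abstract statement into an explicit cut-and-reglue argument along a fiber $\Sigma \times \{*\}$. This is exactly the content of the TQFT gluing axiom in this special case, and your outline is sound: surjectivity via transversality, descent to $\HHz$ via cyclic rotation of the cut point, and well-definedness of the inverse via factoring isotopies into local moves and slides across the cut. The only place to be careful is your Morse-theoretic decomposition of isotopies in $\Sigma \times S^1$; you should note that any $3$-ball in $\Sigma \times S^1$ supporting a skein relation can itself be isotoped off the cut $\Sigma \times \{*\}$ (since $\Sigma$ is closed, the ball cannot wrap around $S^1$), so skein relations in the target are accounted for by skein relations in $\Sigma \times I$ together with the $fg \sim gf$ relations. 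Your approach buys a self-contained argument without invoking the full TQFT machinery; the paper's approach buys brevity and situates the result in its natural general context.
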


\begin{remark}
Proposition \ref{prop:HH0} is a special case of the fact that skein modules, and skein categories combine to give a categorified (3,2)-TQFT, see \cite{Walker, JF, GJS}).
\end{remark}
\begin{remark}
As illustrated by Proposition \ref{prop:HH0}, it is not the skein category but rather its free co-completion (equivalently, its Morita class) which plays an essential role in TQFT.
\end{remark}

For later use we recall the following standard propositions concerning Morita equivalences of $\K$-algebras.

\begin{proposition}\label{prop:Morita}
Suppose that $e$ is an idempotent in some unital $\K$-algebra $A$, and consider the $eAe$-$A$-bimodule $eA$.  The following are equivalent:
\begin{enumerate}
    \item The bimodule $eA$ defines a Morita equivalence.
    \item The functor $e:A\modu \to eAe\modu$ is conservative, i.e. $eM=0 \implies M=0$.
    \item We have an equality $AeA=A$.
\end{enumerate}
\end{proposition}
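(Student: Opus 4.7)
The plan is to prove the equivalences via the cycle $(1)\Rightarrow(2)\Rightarrow(3)\Rightarrow(1)$, using only standard identities manipulating the idempotent $e$ and basic facts about tensor products and trace ideals. The implications $(1)\Rightarrow(2)$ and $(3)\Rightarrow(1)$ are essentially classical Morita theory, so the interesting content is the step $(2)\Rightarrow(3)$, which is the key technical input that gets used in the main applications (Theorems \ref{thm:relskeins} and \ref{mainthm:morita-DAHA}).

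First, $(1)\Rightarrow(2)$ is immediate: an equivalence of categories is conservative, and the functor in question is identified with the tensor product $eA\otimes_A(-)$ underlying the supposed Morita equivalence. For $(2)\Rightarrow(3)$, the plan is to apply the functor $e$ to the left $A$-module $M := A/AeA$ and show that $eM=0$; conservativity then forces $M=0$, i.e.\ $AeA = A$. The computation is just the identity
\[
eA \;=\; e\cdot e\cdot A \;=\; (eA)(eA) \;=\; eAeA,
\]
which shows that $eA \subseteq AeA$, hence $e\cdot(A/AeA) = eA/eAeA = eA/eA = 0$, as desired. This identity $eAeA = eA$ is the only manipulation specific to the idempotent; everything else is formal.

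Finally, for $(3)\Rightarrow(1)$, I would invoke the standard progenerator criterion for Morita equivalence. Writing $A = eA \oplus (1-e)A$ exhibits $eA$ as a finitely generated projective right $A$-module with $\End_A(eA)\cong eAe$ via the map $eae \mapsto (ex \mapsto eaex)$. The hypothesis $AeA = A$ is precisely the statement that the trace ideal of $eA$ is all of $A$, equivalently that $A$ is a direct summand of $(eA)^{\oplus n}$ for some $n$, i.e.\ $eA$ is a generator. A finitely generated projective generator induces a Morita equivalence between $A$ and its endomorphism algebra, which is exactly the bimodule equivalence claimed in~(1).

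The main obstacle, such as it is, is simply keeping track of left versus right module structures and verifying that $eAeA = eA$ as left ideals rather than merely as sets. Everything else is formal manipulation with idempotents and a direct appeal to the classical progenerator theorem; no combinatorics of the kind needed for the DAHA computations is used at this stage.
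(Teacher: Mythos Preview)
Your proof is correct and is precisely the standard argument; the paper's own proof consists of the single sentence ``This is straightforward,'' so your write-up is in fact more detailed than what appears there and is entirely consistent with the intended reasoning.
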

\begin{proof}
    This is straightforward.
\end{proof}
 \begin{proposition}\label{prop:idems}
     Let $A$, $B$ be unital  $\K$-algebras with an idempotent $e^2=e \in A$ and an algebra homomorphism 
     $\varphi: A \to B$.  Write $\bar e = \varphi(e)$. If $A=AeA$ then $B = B \bar e B$.
 \end{proposition}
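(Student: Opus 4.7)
The plan is to exploit the hypothesis $A = AeA$ at the level of the unit of $A$. Since $1_A \in A = AeA$, there exist finitely many elements $a_i, b_i \in A$ with $1_A = \sum_i a_i e b_i$. This reduces the problem to producing $1_B$ inside $B \bar e B$, after which the result follows by multiplying by an arbitrary element of $B$.

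More precisely, first I would record the finite expression $1_A = \sum_i a_i e b_i$, which exists exactly because $1_A$ lies in the two-sided ideal $AeA$. Second, I would apply the unital $\K$-algebra homomorphism $\varphi$ to both sides, obtaining
\[
1_B \;=\; \varphi(1_A) \;=\; \sum_i \varphi(a_i)\, \bar e\, \varphi(b_i) \;\in\; B \bar e B,
\]
where I am using that $\varphi$ preserves multiplication and the unit, and that by definition $\bar e = \varphi(e)$. Third, for any $b \in B$, writing $b = b \cdot 1_B$ and substituting the expression above gives
\[
b \;=\; \sum_i \bigl(b\, \varphi(a_i)\bigr)\, \bar e\, \varphi(b_i) \;\in\; B\bar e B,
\]
so $B \subseteq B\bar e B$. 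The reverse inclusion is automatic, whence $B = B\bar e B$.

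There is essentially no obstacle here: the statement is a direct consequence of the fact that unital algebra homomorphisms preserve the unit and that the equation $A = AeA$ is witnessed by finitely many elements decomposing $1_A$. The only point to be mindful of is the convention that algebra homomorphisms between unital algebras are assumed to be unital, which is the standing convention in the paper.
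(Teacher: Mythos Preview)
Your proof is correct and follows essentially the same approach as the paper: the paper's proof simply notes that $1_A \in AeA$ implies $1_B = \varphi(1_A) \in \varphi(AeA) \subseteq B\bar e B$, hence $B = B\bar e B$. Your version is slightly more explicit in writing out the finite sum, but the idea is identical.
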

 \begin{proof}
 
 Since $1_A \in AeA$ we have $1_B = \varphi(1_A) \in \varphi(AeA) \subseteq B \varphi(e) B = B \bar e B $. 
 Thus we conclude $B = B \bar e B$.
     
 \end{proof}
  \begin{proposition}\label{prop:central}
     Let $A$ be a unital  $\K$-algebra with  $e \in A$ such that $A=AeA$ and central $z \in Z(A)$.  Then $ze=0$ implies $z=0$.
 \end{proposition}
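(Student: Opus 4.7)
The plan is very short since the statement is essentially a one-line manipulation. The key observation is that the hypothesis $A = AeA$ gives us a presentation of the unit, and centrality lets us slide $z$ past anything to reach an occurrence of $e$.

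First, I would use $A = AeA$ to choose a finite expression $1_A = \sum_{i} a_i e b_i$ for some $a_i, b_i \in A$. Such a decomposition exists because $A$ is unital and $AeA$ is a two-sided ideal equal to $A$.

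Next, I would compute
\[
z \;=\; z \cdot 1_A \;=\; \sum_i z a_i e b_i \;=\; \sum_i a_i (z e) b_i \;=\; 0,
\]
where the middle equality uses that $z \in Z(A)$ commutes with $a_i$, and the final equality uses the hypothesis $ze = 0$.

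There is no real obstacle here; the only subtlety worth flagging is that the centrality hypothesis is essential, since it is what allows us to pull $z$ next to $e$ in each summand. The same argument shows more generally that if $A = AeA$ then the two-sided annihilator of $e$ intersected with $Z(A)$ is trivial, so the result could be recorded in that slightly stronger form if needed elsewhere in the paper.
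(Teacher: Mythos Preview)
Your proof is correct and is essentially the same as the paper's: the paper phrases it as ``the set $\{a \in A \mid za = 0\}$ is an ideal containing $e$, hence all of $A$, hence $1$,'' which is exactly your computation read abstractly. The only cosmetic difference is that you explicitly expand $1 = \sum_i a_i e b_i$ while the paper invokes the ideal property.
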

 \begin{proof}
 Since $z$ is central the set $\{ a \in A \mid za=0\}$ is an ideal containing $e$ and hence the whole algebra $A$. Since $A$ is unital this implies $z=0.$
  \end{proof}

\section{Preliminaries on Hecke algebras} \label{sec:AHA}

In this section we recall the definitions of finite, affine and double affine Hecke algebras, as well as a variant which we call the Schmaha\footnote{We are grateful to Kevin Walker for suggesting to us this name.}.
We establish some properties of (parabolic) sign idempotents. We also recall some well-known combinatorics of multisegments, which index simple representations of affine Hecke algebras.

\begin{notation}
In this subsection, we work over ground field $\K$ of characteristic zero containing elements $q^{\frac12}$, $t^{\frac1N}$, and $\Sq$ none of which is a root of unity, and we abbreviate $q=(q^{\frac12})^2, t=(t^{\frac1N})^N$.  
\end{notation}

\subsection{Finite Hecke algebra and idempotents}

\begin{definition} The finite Hecke algebra $\Hf_n(t)$  is the $\K$-algebra with  generators $T_1,\ldots T_{n-1}$, and relations:
\[
T_iT_{i+1}T_i = T_{i+1}T_iT_{i+1}, \textrm{ for $i=1,\ldots, n-2$},\]
\[\quad T_iT_j=T_jT_i, \textrm{ if $|i-j|\geq 2$ },\qquad
(T_i-t)(T_i+t^{-1})=0, \textrm{ for $i=1,\ldots n-1$}.
\]
\end{definition}

We denote the symmetric group by $\Sn$ and the extended affine symmetric group by $\AffSym$. 
Given a reduced word $w = s_{i_1} \cdots s_{i_m} \in \Sn$ we have a corresponding well-defined element $T_w = T_{i_1} \cdots T_{i_m} \in \Hf_n(t).$  We will write $\ellen(w) =m$ for the length of $w$. By convention $T_{\id} = 1.$ Note that the set
$\{ T_w | w \in \Sn\}$ forms a basis of $\Hf_n(t).$

\begin{notation} We denote the (unbalanced) quantum integers by 
$[k]_{r} = \frac{r^k - 1}{r - 1}$ 
and quantum factorials by
$[n]_{r}! = [n]_{r} \cdots [2]_{r} [1]_{r}$. 
\end{notation}

The finite Hecke algebra has two one-dimensional representations. Corresponding to the trivial and the sign (anti-symmetric) representation, respectively, we have the idempotents
\begin{gather*}
   \etrivn =  \frac{1}{[n]_{t^2}!} \sum_{w \in \Sn} t^{\ellen(w)} T_w, \qquad     \en = \frac{1}{[n]_{t^{-2}}!} \sum_{w \in \Sn} (-t^{-1})^{\ellen(w)} T_w.
\end{gather*}
These satisfy $(T_i -t) \etrivn = 0$ and $(T_i + t^{-1}) \en = 0$, for $1 \le i < n$. 

\begin{notation} Recall that 
 $\alpha = (\alpha_1,\ldots,\alpha_\ell)$ is  a \defterm{composition}  of $n$, denoted $\alpha \vDash n$, if $n = \alpha_1 + \cdots + \alpha_\ell$  and $\alpha_i \in \Z_{>0}$. We will say $\alpha$ has $\ell$ parts.  We will also consider ``weak" compositions where $\alpha_i \in \Z_{\ge 0}$ for which we tend to ignore the $0$ parts.  If additionally $\alpha_1 \ge \cdots \ge \alpha_\ell$ we call $\alpha $ a \defterm{partition} of $n$, denoted $\alpha \vdash n$.  
\end{notation} 
 Recall that since $t$ is not a root of  unity, the irreducible representations of $\Hfn(t)$ are indexed by $\lambda \vdash n$ and we will denote these by $\Slam$.
 The representation theory of $\Hfn(t)$ coincides with that of $\K[\Sn]$, and we will use its properties liberally, such as that the algebra is semisimple, as well as properties  about induction, restriction, and the Pieri rule. 

\begin{remark}
Compositions of $n$ are in bijection with subsets $J\subset \{1,\ldots,n-1\}$, via
\[
J\left((\alpha_1,\ldots,\alpha_\ell)\right) := \{1,\ldots,n-1\} \backslash \left\{\sum_{i=1}^j\alpha_i \,\,|\,\, j=1,\ldots \ell\right\}.
\]
\end{remark}

\begin{definition} \label{def:parabolic finite}
Given a subset $J\subset \{1,\ldots,n-1\}$ the parabolic subalgebra $\Hf_J(t)\subset\Hfn(t)$ is generated by $T_j$ for $j \in J$.
\end{definition}

We may lighten notation and just write $\Hfn$ for $\Hfn(t)$ when $t$ is understood, similarly for $\Hf_J$.

Given a composition $\alpha = (\alpha_1,\ldots,\alpha_\ell) \vDash n$, we have an algebra embedding,
\[i_\alpha:\Hf_{\alpha_1}\otimes \cdots \otimes \Hf_{\alpha_\ell}\to \Hf_{n},
\]
obtained by mapping the first factor along the evident inclusion, and mapping each $k$th factor $\Hf_{\alpha_k}$ along the evident inclusion with indices shifted by $\sum_{j<k}\alpha_j$.  The image of $i_\alpha$ is clearly $\Hf_{J(\alpha)}$.  When it is clear from context, we will use the notation $\Hf_J$ and $\Hf_\alpha$ interchangeably. 

Note $\Hf_\alpha$ corresponds to the parabolic or Young subgroup $\Sm{\alpha} \subseteq \Sn$.  In particular it has basis $\{T_w \mid w \in \Sm{\alpha} \}$.

Given $\alpha \vDash n$, we may consider the  parabolic sign idempotent $\ek{\alpha} \in \Hfn$ with  
 $\sgna{\alpha}$ the corresponding 1-dimensional sign representation of $\Hf_\alpha$. Note $(T_j + t^{-1}) \ek{\alpha} =0$ for all $j \in J(\alpha)$. 

In the special case 
a composition has the form $(\alpha 1^{r})$, 
we will lighten notation and write 
$\ek{\alpha} = \ek{(\alpha 1^{r})}$. 
More specifically, when $jm \le n$, 
 $\emj$ denotes the following parabolic sign 
 idempotent  
\begin{gather} \label{eq:eNj}
\emj = \frac{1}{([m]_{t^{-2}}!)^j}\sum_{w \in \Sm{m^j 1^{n-jm}}} (-t^{-1})^{\ellen(w)} T_w \,
\in \underbrace{\Hfm \otimes \Hfm \otimes \cdots \otimes \Hfm}_{j} \subset \Hfn 
\end{gather}
When $j=1$ we often denote this idempotent by just $\ek{m}$, and in the case $m=N, j=1$ sometimes just by  $\ee$ in later sections.

\subsection{On  parabolic sign idempotents}
In this section we describe several instances for which $\sgna{\alpha} M  \neq 0$ for a module $M$; this will be an important ingredient for ultimately proving Theorem \ref{thm:idempotent}.

Throughout this subsection, unless otherwise stated, $n,\ell, m, r$ are assumed to be non-negative integers.

Because $\Hfn $ is semisimple, the following useful lemma is a straightforward consequence of Frobenius Reciprocity. 
\begin{lemma}\label{lemma:e vs sgn}
Let $\alpha \vDash n, \, \lambda \vdash n$.
    The representation $\Sblah{\lambda}$ is a composition factor of $\Ind_{\Hf_\alpha}^{\Hf_n} \sgna{\alpha} $ iff
    $\ek{\alpha} \Sblah{\lambda} \neq 0 $. 
\end{lemma}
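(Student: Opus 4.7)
The plan is a routine Frobenius reciprocity argument, made easy by the semisimplicity of $\Hfn(t)$. Since $t$ is not a root of unity, both $\Hfn$ and the parabolic subalgebra $\Hf_\alpha$ are semisimple, so "composition factor" coincides with "direct summand," and multiplicities can be detected via Hom spaces.

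First I would observe that the parabolic idempotent $\ek{\alpha}$ is a primitive idempotent in $\Hf_\alpha$ realizing the sign representation $\sgna{\alpha}$: for any $\Hf_\alpha$-module $M$, the subspace $\ek{\alpha} M$ is precisely the $\sgna{\alpha}$-isotypic component of $M$, so its dimension equals the multiplicity of $\sgna{\alpha}$ in $M$. Applied to $M = \Res^{\Hfn}_{\Hf_\alpha} \Sblah{\lambda}$, this gives
\[
\dim \ek{\alpha} \Sblah{\lambda} = \dim \Hom_{\Hf_\alpha}\!\left(\sgna{\alpha},\; \Res^{\Hfn}_{\Hf_\alpha} \Sblah{\lambda}\right).
\]
Then by Frobenius reciprocity,
\[
\Hom_{\Hf_\alpha}\!\left(\sgna{\alpha},\; \Res^{\Hfn}_{\Hf_\alpha} \Sblah{\lambda}\right) \;\cong\; \Hom_{\Hfn}\!\left(\Ind_{\Hf_\alpha}^{\Hfn} \sgna{\alpha},\; \Sblah{\lambda}\right),
\]
and semisimplicity of $\Hfn$ identifies the dimension of the right-hand side with the multiplicity of $\Sblah{\lambda}$ as a composition factor of $\Ind_{\Hf_\alpha}^{\Hfn} \sgna{\alpha}$. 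Combining these two equalities yields the stated iff.

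There is no real obstacle here; the only thing to be careful about is the identification of $\ek{\alpha}$ as the correct projector onto the sign isotypic, which is immediate from the defining relations $(T_j + t^{-1})\ek{\alpha} = 0$ for $j \in J(\alpha)$ together with $\ek{\alpha}^2 = \ek{\alpha}$, and the fact that $\sgna{\alpha}$ is one-dimensional so that its isotypic component equals $\ek{\alpha} M$ rather than merely containing it.
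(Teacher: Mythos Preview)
Your proof is correct and takes essentially the same approach as the paper, which simply remarks that the lemma is a straightforward consequence of Frobenius reciprocity together with the semisimplicity of $\Hfn$. You have spelled out the details of exactly that argument.
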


Recall that \emph{dominance} order on partitions $\lambda, \mu$ for which $|\mu| = |\lambda|$
is given by  $\mu \dom \lambda$
if $\sum_{i=1}^p \mu_i \le \sum_{i=1}^p \lambda_i $ for all $p$.
Note $\mu \dom \lambda$ iff $\mu^T \unrhd \lambda^T$ where $\mu^T$ is the transposed diagram (transposing corresponds to the sign involution). 

The following claims are well-known for the representation theory of the symmetric group (see for instance \cite{Sagan}), and carry over to the finite Hecke algebra. They are usually stated in terms of parabolic trivial as compared to sign idempotents, but are easily modified to our setting given tensoring with the sign representation transposes diagrams and ``reverses" dominance order. 

Let us write $\sort(\alpha)$ for the partition whose parts are those of $\alpha$ in weakly decreasing order.

\begin{claim}\label{claim:composition factors alpha}
Let $\alpha \vDash n$.
The partitions $\lambda$ of $n$ for which  $\Slam$ is a composition factor of $\Ind_{\Hf_\alpha}^{\Hfn} \sgna{\alpha}$ are precisely those   satisfying  $\lambda \dom \sort(\alpha)^T.$  In particular if $\alpha$ has $\ell$ (nonzero) parts, 
then $\lambda_1 \le \ell$.
\end{claim}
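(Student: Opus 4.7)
The plan is to reduce the claim to well-known symmetric-group combinatorics and then transport across the sign involution. Since $t$ is not a root of unity, $\Hfn(t)$ is semisimple and Tits deformation identifies its representation category with that of $\K[\Sn]$ compatibly with parabolic induction from $\Hf_\alpha \cong \K[\Sm{\alpha}]$; in particular the simples $\Slam$ correspond under this identification to the usual Specht modules $S^{\lambda}$. So I will freely pass between $\Hfn(t)$- and $\Sn$-modules throughout.

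The main step is to invoke Young's rule,
\[
\Ind_{\Sm{\alpha}}^{\Sn} \trivial_{\Sm{\alpha}} \;\cong\; \bigoplus_{\mu \vdash n} K_{\mu,\alpha}\, S^{\mu},
\]
where the Kostka number $K_{\mu,\alpha}$ is nonzero precisely when $\mu \unrhd \sort(\alpha)$ in standard dominance. Tensoring with the sign representation is a self-equivalence of $\Sn\modu$ that sends $S^{\mu}\mapsto S^{\mu^T}$ and sends $\trivial_{\Sm{\alpha}}$ to $\sgna{\alpha}$, so after reindexing $\lambda = \mu^T$ one obtains
\[
\Ind_{\Sm{\alpha}}^{\Sn} \sgna{\alpha} \;\cong\; \bigoplus_{\lambda \vdash n} K_{\lambda^T,\alpha}\, S^{\lambda}.
\]
The composition factors are therefore exactly those $S^{\lambda}$ with $\lambda^T \unrhd \sort(\alpha)$, which by the transpose-versus-dominance identity recalled just before the claim is equivalent to $\lambda \dom \sort(\alpha)^T$. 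Combining with Lemma~\ref{lemma:e vs sgn} this is the desired characterization.

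For the final bound, note that the first part of $\sort(\alpha)^T$ counts the number of nonzero parts of $\sort(\alpha)$, namely $\ell$. Specializing $\lambda \dom \sort(\alpha)^T$ (i.e.\ $\lambda \unlhd \sort(\alpha)^T$) to $p=1$ immediately gives $\lambda_1 \le (\sort(\alpha)^T)_1 = \ell$. I anticipate no serious obstacle; the only point requiring a moment of care is the semisimple bookkeeping needed to transport Young's rule from $\Sn$ to $\Hfn(t)$, after which everything is standard type-$\mathrm{A}$ combinatorics.
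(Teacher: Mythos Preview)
Your proposal is correct and follows essentially the same approach the paper indicates: the paper does not give a formal proof but simply remarks that this is the well-known Young's rule statement for the symmetric group (citing Sagan), carried to $\Hfn(t)$ via semisimplicity, and converted from the trivial to the sign setting by tensoring with sign (transposing diagrams and reversing dominance). Your argument fills in exactly these details, including the final observation that $(\sort(\alpha)^T)_1 = \ell$.
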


\begin{claim}\label{claim:lambdas not killed}
    Suppose  $\alpha \vDash \ell m$ has $\le \ell$ parts. For  $\lambda \vdash \ell m$, 
  if $\ek{\alpha} \Slam \neq 0$ then  $\eml \Slam \neq 0$. 
\end{claim}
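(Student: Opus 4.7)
The plan is to reduce the claim to a purely combinatorial statement about the dominance order on partitions, by combining Lemma \ref{lemma:e vs sgn} with Claim \ref{claim:composition factors alpha} in both directions.

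First, by Lemma \ref{lemma:e vs sgn} the hypothesis $\ek{\alpha}\Slam \neq 0$ is equivalent to $\Slam$ being a composition factor of $\Ind_{\Hf_\alpha}^{\Hfn}\sgna{\alpha}$, and by Claim \ref{claim:composition factors alpha} this forces $\lambda \dom \sort(\alpha)^T$.  Similarly, since $(m^\ell)$ is already a partition, $\sort((m^\ell))^T = (\ell^m)$, and another application of Claim \ref{claim:composition factors alpha} together with Lemma \ref{lemma:e vs sgn} shows that it suffices to prove $\lambda \dom (\ell^m)$.  By transitivity of the dominance order, it is therefore enough to establish
\[
\sort(\alpha)^T \dom (\ell^m).
\]

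Set $\beta = \sort(\alpha)$.  By hypothesis $\beta$ is a partition of $\ell m$ with at most $\ell$ parts, so its transpose $\beta^T$ has every part of size at most $\ell$ and total size $\ell m$.  For each $p \ge 1$ we then have
\[
\sum_{i=1}^{p} \beta^T_i \;\le\; \min(p,\, \text{(number of parts of } \beta^T\text{)}) \cdot \ell \;\le\; \min(p,m)\cdot \ell \;=\; \sum_{i=1}^{p}(\ell^m)_i,
\]
where the bound on partial sums with $p>m$ uses that the total $|\beta^T|=\ell m$ cannot be exceeded.  This is exactly the condition $\beta^T \dom (\ell^m)$, completing the chain of implications.

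The argument is essentially routine given the two preparatory statements; the only real content is the simple observation that a partition all of whose parts are $\le \ell$ and whose total size equals the rectangle $(\ell^m)$ automatically dominates that rectangle.  No obstacle is anticipated beyond verifying these partial-sum inequalities carefully.
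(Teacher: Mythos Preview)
Your approach is exactly the paper's: reduce via Lemma \ref{lemma:e vs sgn} and Claim \ref{claim:composition factors alpha} to the combinatorial inequality $\sort(\alpha)^T \dom (\ell^m)$, equivalently $(m^\ell) \dom \sort(\alpha)$.

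One small correction in your displayed chain: the middle inequality $\min(p,\text{number of parts of }\beta^T)\cdot \ell \le \min(p,m)\cdot \ell$ is false in general, since the number of parts of $\beta^T$ equals $\beta_1 \ge m$ (a partition of $\ell m$ with at most $\ell$ parts has largest part at least $m$), so the inequality goes the wrong way for $m<p\le\beta_1$. Your parenthetical remark does supply the correct argument, though: for $p\le m$ use $\sum_{i\le p}\beta_i^T \le p\ell$, and for $p>m$ use $\sum_{i\le p}\beta_i^T \le |\beta^T|=\ell m$. Just rewrite the display to reflect that two-case split rather than the incorrect intermediate bound.
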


This follows from checking $\sort(\alpha)^T \dom (m^\ell)^T$, i.e. $(m^\ell) \dom \sort(\alpha)$ when $\alpha$ has $\le \ell$ parts. 
We prove a stronger but more technical form of this claim below.

 \begin{remark} \label{rem:idempotent killing}
Notice that $\eml \Slam \neq 0$ implies $\ek{m^j} \Slam \neq 0$ for any $j \le \ell$. 
Further $\eN \Slam \neq 0 $ implies $\ek{m} \Slam \neq 0$ for any $m \le N$.  This follows from $\eml$ belonging to the ideal generated by $\emj$, and likewise for $\eN$ and $\ek{m}$, when $ j \le \ell$ and $m \le N$.
\end{remark}

\begin{proposition}
\label{prop:kN+r}
Write $n=\ell m+r = \ell m + r_1 + r_2$ with $0 \le r < m$ and $0 \le r_1, r_2$. Let $\lambda \vdash n$, $\beta \vDash r_2, \alpha \vDash \ell m+r_1$ and suppose $\alpha $ has $\le \ell$ parts. 
    Then $\ek{(\alpha, \beta)} \Slam \neq 0$ implies that  $\ek{(m^{\ell}, 1^{r})} \Slam \neq 0$.  Further $\ek{\alpha} \Slam \neq 0$ implies that  $\ek{m^{\ell}} \Slam \neq 0$, and
consequently $\emj \Slam \neq 0$ for all $0\le j \le \ell$.
        
\end{proposition}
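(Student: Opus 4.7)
The plan is to translate both nonvanishing conditions via Lemma \ref{lemma:e vs sgn} and Claim \ref{claim:composition factors alpha} into comparisons in the dominance order, and then verify the required comparison by an elementary estimate on partial sums.

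First I would rephrase the hypothesis $\ek{(\alpha,\beta)}\Slam \neq 0$, by combining Lemma \ref{lemma:e vs sgn} and Claim \ref{claim:composition factors alpha}, as the dominance inequality
\[
\lambda \dom \sort(\alpha,\beta)^T,
\]
and analogously rephrase the desired conclusion as $\lambda \dom (m^\ell, 1^r)^T$. Since $\dom$ is transitive, the first implication reduces to the purely combinatorial claim $\sort(\alpha,\beta)^T \dom (m^\ell, 1^r)^T$; using the equivalence $\mu \dom \nu \iff \mu^T \unrhd \nu^T$, this is in turn equivalent to
\[
\gamma \;:=\; \sort(\alpha,\beta) \;\unrhd\; (m^\ell, 1^r),
\]
i.e.\ $\sum_{i=1}^p \gamma_i \geq \sum_{i=1}^p (m^\ell,1^r)_i$ for all $p\geq 1$.

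I would then verify this combinatorial claim in three ranges of $p$. For $p \leq \ell$: since $\alpha$ has $\leq \ell$ parts summing to $\ell m + r_1 \geq \ell m$, a pigeonhole/averaging estimate gives $\alpha_1 + \cdots + \alpha_p \geq pm$; and since the multiset of parts of $\gamma$ contains that of $\alpha$, the top-$p$ partial sum of $\gamma$ is at least the top-$p$ partial sum of $\alpha$, hence $\geq pm$. For $p = \ell + j$ with $1 \leq j \leq r$: rewriting
\[
\sum_{i=1}^{p} \gamma_i \;=\; |\gamma| - \sum_{i>p}\gamma_i \;=\; \ell m + r - \sum_{i>p}\gamma_i,
\]
the required inequality becomes $\sum_{i>\ell+j}\gamma_i \leq r - j$; this follows from the $p=\ell$ case, which yields $\sum_{i>\ell}\gamma_i \leq r$, combined with the fact that $\gamma_{\ell+1},\dots,\gamma_{\ell+j}$ (when present) are each $\geq 1$ and so contribute at least $j$ to $\sum_{i>\ell}\gamma_i$; if $\gamma$ has fewer than $\ell+j$ nonzero parts then $\sum_{i>\ell+j}\gamma_i = 0 \leq r - j$ directly. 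For $p \geq \ell + r$ both sides simply equal $|\gamma|$.

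The second implication, $\ek{\alpha}\Slam \neq 0 \Rightarrow \ek{m^\ell}\Slam \neq 0$, is then the $\beta = (1^{r_2})$ specialization of the first, under the standing convention (used e.g.\ in \eqref{eq:eNj}) that $\ek{\alpha}$ acting on $\Hfn$-modules coincides with $\ek{(\alpha,\,1^{\,n-|\alpha|})}$, and similarly $\ek{m^\ell} = \ek{(m^\ell, 1^r)}$ inside $\Hfn$. The final assertion ``$\emj \Slam \neq 0$ for all $0 \le j \le \ell$'' is immediate from Remark \ref{rem:idempotent killing}. I expect the main technical subtlety to lie in the $p \leq \ell$ range of the combinatorial claim, whose key point is that the parts of $\beta$ can be safely ignored there: the top-$p$ part-sum of the multiset union $\alpha \sqcup \beta$ always dominates that of $\alpha$ alone, regardless of how the two interleave, so that the whole inequality is driven by the hypothesis that $\alpha$ has $\leq \ell$ parts.
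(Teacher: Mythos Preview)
Your proposal is correct and follows essentially the same approach as the paper: reduce via Lemma~\ref{lemma:e vs sgn} and Claim~\ref{claim:composition factors alpha} to the dominance inequality $\sort(\alpha,\beta)\unrhd (m^\ell,1^r)$, then verify this combinatorially using that $\alpha$ has $\le \ell$ parts summing to $\ge \ell m$. The paper organizes the combinatorial step slightly differently (first showing $(m^\ell,1^{r_1})\dom\sort(\alpha)$ by contradiction, then incorporating $\beta$), whereas you verify the inequality for $\gamma=\sort(\alpha,\beta)$ directly via a clean prefix-average estimate and a three-range split in $p$; both arguments are equally valid and of comparable length.
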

\begin{proof}
   
    Suppose $\ek{(\alpha, \beta)} \Slam \neq 0$.  By Lemma \ref{lemma:e vs sgn} and Claim \ref{claim:composition factors alpha} we have $\lambda \dom \sort(\alpha, \beta)^T$.  Thus also by Claim \ref{claim:composition factors alpha} it suffices to show $\sort(\alpha, \beta)^T \dom (m^\ell, 1^r)^T$, or equivalently that $(m^\ell, 1^r) \dom \sort(\alpha, \beta)$. 

    Write $\mu = \sort(\alpha)$. Since $\mu$ has $\le \ell$ parts but $|\mu| \ge \ell m$, we have $m \le \mu_1$.  Further $\ell m \le \ell m + r_1 = |\mu|$. 
    If $(m^\ell, 1^{r_1}) \centernot\dom \mu$  
    then there exists $1 < p < \ell$ with $(p-1)m \le \mu_1 + \cdots + \mu_{p-1}$ but $pm > \mu_1 + \cdots + \mu_{p-1} + \mu_p$.  Thus $\mu_p = m-s$ for some $0 < s \le m$. Then $\ell m + r_1 = (\mu_1 + \cdots + \mu_p) + (\mu_{p+1} + \cdots + \mu_{\ell}) <  mp + (\ell - p)(m-s) = \ell m -s(\ell -p) < \ell m$, which is a contradiction.  Thus $(m^\ell, 1^{r_1}) \dom \alpha$. 

    Next for $1 \le p \le \ell$ clearly $\sort(\alpha, \beta)_p \ge \mu_p$, and since the nonzero parts of $\beta$ are $\ge 1$, we have $\sort(\alpha, \beta) \unrhd (\mu, 1^{r_2}) = \sort(\alpha, 1^{r_2}) \unrhd (m^{\ell}, 1^r)$. The final statement of the proposition applies by taking $\beta = 1^{r_2}$ as $\ek{\alpha} = \ek{(\alpha, 1^{r_2})}$. 
\end{proof}

\subsection{Affine Hecke algebra and its parabolic subalgebras}
\begin{definition}
The extended affine Hecke algebra $\H_n(t)$ is generated by the algebras $\K[Y_1^{\pm 1},\ldots Y_n^{\pm 1}]$ and $\Hf_n(t)$, with relations: 
\[
T_iY_iT_i=Y_{i+1}, \textrm{for $i=1,\ldots, n-1$},\quad 
T_i Y_j = Y_j T_i, \textrm{ for $j \neq i, i+1$}
\]
\end{definition}

To lighten notation, we will write the parameters $t$ and $n$ only when required, and otherwise abbreviate $\H_n(t)$ by either $\H_n$ or simply $\H$. Similarly for $\Hf$.
 We denote by $\Y$ the $\K$-subalgebra generated by $Y_1^{\pm 1},\ldots Y_n^{\pm 1}$. 
We shall also sometimes denote $\H_n(t)$ by $\HY$ or refer to it as AHA.

Note that the center $Z(\HY) = \K[Y_1^{\pm 1},\ldots Y_n^{\pm 1}]^{\Sn} $ consists of 
symmetric Laurent polynomials. 

\begin{remark} \label{rem:Y inverse}  A common alternative presentation  imposes instead the relations $T_i Y_i^{-1} T_i = Y_{i+1}^{-1}$.  An isomorphism between the presentations may be given by inverting $Y_i$.
\end{remark}

\begin{definition} \label{def:parabolic Hecke}
Given a subset $J\subset \{1,\ldots,n-1\}$ the parabolic subalgebra $\H_J\subset\H$ is generated by $T_j$ for $j \in J$ and by $\Y$.
\end{definition}

Similar to the finite Hecke algebra, for the extended affine Hecke algebra we will use the notation $\H_J$ and $\H_\alpha$ interchangeably when $J = J(\alpha)$ corresponds to the composition $\alpha \vDash n$.

\subsection{Multisegments and the category of $\Y$-finite $\H$-modules}
In this section we review the construction of simple AHA modules from the data of multisegments.

As $Z(\H) = \Y^{\Sn}$, all simple $\H$-modules are finite-dimensional.  For the ease of understanding and parameterizing simple modules, we desire the added assumption that they are absolutely irreducible or equivalently that $\K$ is a splitting field.  For this to hold, it suffices that the eigenvalues of $Y_i$ lie in $\K$.   To this end, we redefine the following notion of $\Y$-finite to suit our purposes. 
\begin{definition}
 \label{def:Yfinite}
 A module $M$ for $\H$ is called \defterm{$\Y$-finite} if for every $v\in M$, $\Y v$ is a finite-dimensional subspace and further the eigenvalues of each $Y_i$ lie in $\K$.
\end{definition}

The category of $\Y$-finite modules is controlled combinatorially by so-called multisegment modules: every simple $\Y$-finite module arises as a simple quotient $L(\Delta)$ of a standard multisegment module $M(\Delta)$, the latter being obtained by inducing from a sign representation for some parabolic subalgebra.  Let us recall all of these constructions, which will play an important role in our proofs.

\begin{definition}
A pair of an element 
$a \in \K^\times$ and a non-negative integer $\ell$ is called a \defterm{segment} $\seg{a}{\ell}$, and $a$ is called its \defterm{start}.  
We will refer to a tuple $\Delta = (\seg{a_1}{\ell_1}, \dots, \seg{a_m}{\ell_m})$ of segments as a \defterm{multisegment}, and write $|\Delta|= \ell_1 + \cdots + \ell_m$.
\end{definition}
\begin{remark}
We caution that the word ``multisegment" typically refers only to a multiset of segments, and not an ordering on its elements.  For our purposes we will require the additional data of the ordering of segments; we abuse terminology and nevertheless call such an ordered tuple of segments a multisegment.
\end{remark}

\begin{definition}
Given a multisegment $\Delta$ and $a\in\K^\times$, the \defterm{area} of $a$ in $\Delta$ is the sum $\sum_{i}\ell_i$ over all segments $\seg{a}{\ell_i}$ appearing in $\Delta$ with start $a$.
\end{definition}

\begin{definition}
Given a multisegment $\Delta = (\seg{a_1}{\ell_1},\cdots,\seg{a_m}{\ell_m})$ of size $n=|\Delta|$, the associated composition $\alpha(\Delta)$ of $n$ is defined by $\alpha(\Delta)= (\ell_1,\ldots,\ell_m)$.
\end{definition}

In this way a multisegment $\Delta$ also determines a subset $J(\alpha(\Delta))\subset \{1,\ldots,|\Delta|-1\}$.  It is sometimes useful to allow some $\ell_j=0$ as placeholders, although such empty segments will ultimately be ignored. 

Given a multisegment $\Delta$, we will abbreviate $\H_\Delta = \H_{\alpha(\Delta)}= \H_{J(\Delta)} $.  

\begin{example}
   For $\Delta = (\seg{a}{2}, \seg{b}{3})$ we have 
   $ \H_{\Delta} = \H_{(2,3)}= \H_{\{1,3,4\}}   \subset \H_{5}$, for any $a,b\in \K^\times$. 
\end{example}

\begin{definition} Given two multisegments $\Delta$
 and $\Gamma$
 we denote their concatenation by
 $\Delta \sqcup \Gamma$.\end{definition}

Note that $\alpha(\Delta \sqcup \Gamma)$ is the concatenation of the compositions
$\alpha(\Delta)$ and $\alpha(\Gamma)$ and that  $|\Delta \sqcup \Gamma| = |\Delta| + |\Gamma|$.  We allow $\Delta$ or $\Gamma$ to be the empty multisegment.

\begin{definition} An  multisegment $\Delta = (\seg{a_1}{\ell_1},\cdots,\seg{a_m}{\ell_m})$ is \defterm{right-ordered} if
\begin{enumerate}
    \item whenever $a_j/a_i = t^{2z}$ for a non-negative integer $z$, then $i\leq j$, and
    \item whenever $a_i=a_j$ and $i<j$, then $\ell_i\leq \ell_j$.
\end{enumerate}
\end{definition}

\begin{definition} Let $a,b, r\in\K^\times$.  We say that $a$ and $b$ are \defterm{in the same $r$-line} if $a/b=r^{z}$ for some $z\in\mathbb{Z}$, and otherwise that they are \defterm{in distinct $r$-lines}.
\end{definition}

\begin{remark} The definition of right-ordered above is slightly different than the one found in \cite{V-multisegments} for two reasons. First, here we allow segments with starts on distinct $t^2$-lines, and so take  a more flexible definition to account for that. Second, the representation-theoretic significance of a segment is  attached to the sign representation here, whereas in \cite{V-multisegments} it was attached to the trivial representation.\end{remark}

 \begin{definition}
Two multisegments $\Delta$ and $\Gamma$ are \defterm{equivalent}, denoted $\Delta \sim \Gamma$, if:
\[
\Delta=(\seg{a_1}{\ell_1},\ldots ,\seg{a_m}{\ell_m}), \textrm{ and }
\Gamma = (\seg{a_{\sigma(1)}}{\ell_{\sigma(1)}},\ldots ,\seg{a_{\sigma(m)}}{\ell_{\sigma(m)}})\]
for some $\sigma\in \Sm{m}$ such that its only inversions occur among distinct $t^2$-lines, i.e. if $i<j$ and $\sigma(i)>\sigma(j)$, then $a_i/a_j\not\in \{t^{2z}\,\,|\,\,z\in\mathbb{Z}\}$.
\end{definition}

Note that if $\Delta$ is right-ordered and $\Delta \sim \Gamma$ then $\Gamma$ is also right-ordered.

\begin{definition}\label{def:narrow}
Let $s\in \mathbb{Q}_{>0}$.  Write $s=\frac{n_0}{N_0}$ with $n_0, N_0>0$, and $\gcd(n_0,N_0)=1$.  A multisegment $\Delta = (\seg{a_1}{\ell_1},\ldots \seg{a_m}{\ell_m})$ is \defterm{$s$-narrow} if whenever $a_i/a_j=t^{\frac{2z}{N_0}}$, with $z\in\mathbb{Z}$, we have $|z|<n_0$.\end{definition}

We note that if $s \in \Z$ then $N_0=1$.  As an important special case, $\Delta$ is $1$-narrow if any two starts of $\Delta$ are either equal or lie on distinct $t^2$-lines.

\begin{def/prop}
Given a multisegment $\Delta = (\seg{a_1}{\ell_1},\ldots \seg{a_m}{\ell_m})$  there exists a unique one-dimensional representation $\r(\Delta)$ of 
$\H_{\Delta}$
such that $Y_i$ acts by 
the $i$th entry of
$$(a_1, a_1 t^{-2},  \ldots, a_1 t^{2(1-\ell_1)}, a_2, a_2 t^{-2},  \ldots, a_2 t^{2(1-\ell_2)}, \ldots,
a_m, a_m t^{-2},  a_m t^{2(1-\ell_m)}
)
$$
and $T_j$ acts by $-1/t$ for $j \in J(\Delta)$.
\end{def/prop}

Note if $m=1$, so that $\Delta$ consists of a single segment, then all the eigenvalues of $Y_i$ on $\r(\Delta)$ lie on the same $t^2$-line.

\begin{definition}
Given a multisegment $\Delta = (\seg{a_1}{\ell_1},\ldots \seg{a_m}{\ell_m})$ with $|\Delta|  = n $, let $M(\Delta)$ denote the induced module,
\[
M(\Delta) = \Ind_{\H_{\Delta}}^{\H} \r(\Delta) \,  :=  \, \H \rt{\H_{\Delta}} \r(\Delta).
\]
\end{definition}

\begin{notation}\label{not:vdelta} Let $M'$ be an $\H$-module.
Any non-zero element $f\in\Hom_{\H}(M(\Delta),M')$ produces a vector $v_\Delta\in M'$ such that $\r(\Delta)=\K v_\Delta.$  By abuse of notation, we call any such non-zero vector $v_\Delta$, even though it is only defined up to a scalar, and furthermore we suppress the dependence on $f$ from the notation.
\end{notation}

\begin{remark}
To avoid confusion, we note that \cite{V-multisegments} constructs $M(\Delta)$ by inducing from the \emph{trivial} representation of $\Hf_J$, whereas we use the \emph{sign} representation.  Hence segments in \cite{V-multisegments} feature increasing powers of $t^2$ whereas in our arguments they are decreasing.  All the proofs however go through in either setting, see \cite[Section 3, 9]{V-multisegments}. 
\end{remark}

\begin{definition}
Let $\Rep_r(\H_n(t))$ be the category of 
$\Y$-finite $\H_n(t)$-modules such that all eigenvalues of the
$Y_i$ lie in $\{r t^{2z} \mid z \in \Z \}$; in other words they are all on the same $r$-line.  Let $\Rep_r$ be the direct sum over all $n \ge 0$ of $\Rep_r(\H_n(t))$.
\end{definition}

\begin{theorem}[\cite{BZ77, Zelevinsky}] \label{thm:RepaRepb}  Suppose $L$  is a simple 
module in $\Rep_a(\H_l)$ and
$M$ is a simple 
module in $\Rep_b(\H_m)$. If $a$ and $b$ lie on different $t^2$-lines, then $\Ind^{\H_{l+m}}_{\H_{(l,m)}}(L\boxtimes M)$ is simple, and moreover isomorphic to 
$\Ind^{\H_{l+m}}_{\H_{(m,l)}}(M\boxtimes L)$.
\end{theorem}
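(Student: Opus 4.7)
The plan is to reduce both assertions to the standard Bernstein--Lusztig intertwiner calculus in $\H_{l+m}$. For each simple reflection $s_i$, introduce the intertwiner
\[
\phi_i \;=\; (Y_i - Y_{i+1})T_i - (t - t^{-1})\,Y_{i+1}\,\in\,\H.
\]
A direct computation using the defining relations of $\H_n$ gives $\phi_i Y_j = Y_{s_i(j)} \phi_i$, $\phi_i$ commutes with $T_j$ for $|i-j|>1$, and $\phi_i\phi_{i+1}\phi_i = \phi_{i+1}\phi_i\phi_{i+1}$, so that a well-defined element $\phi_w$ exists for every $w\in \Sm{l+m}$ via any reduced expression. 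Moreover, $\phi_i^2$ lies in $\Y$ and equals (up to sign) $(tY_i - t^{-1}Y_{i+1})(t^{-1}Y_i - tY_{i+1})$, so $\phi_i$ acts invertibly on any generalized $\Y$-eigenspace whose weights $(y_i,y_{i+1})$ satisfy $y_{i+1}/y_i \notin \{t^{\pm 2}\}$.

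For the isomorphism, let $w_0\in \Sm{l+m}$ be the shuffle permutation sending $(1,\dots,l+m)$ to $(m+1,\dots,m+l,1,\dots,m)$. Viewing $v_0 \in L\boxtimes M \subset \Ind^{\H_{l+m}}_{\H_{(l,m)}}(L\boxtimes M)$ as the canonical cyclic generator, I would define $\Phi(v_0) := \phi_{w_0}(v_0')$, where $v_0'$ is the cyclic generator of $\Ind^{\H_{l+m}}_{\H_{(m,l)}}(M\boxtimes L)$. The separation of $t^2$-lines ensures that none of the factors $tY_i - t^{-1}Y_{i+1}$ encountered in evaluating $\phi_{w_0}$ on the relevant generalized eigenspace vanishes, so $\Phi$ is well-defined and nonzero. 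Checking the relations of $\H_{(l,m)}$ on $\Phi(v_0)$ (straightforward from the intertwining property), one extends $\Phi$ to a nonzero $\H_{l+m}$-map between the two induced modules. A symmetric construction gives a map in the opposite direction, and both compositions are multiplication by a nonzero scalar (again by the line separation), giving the desired isomorphism.

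For simplicity, suppose $0 \ne N'' \subseteq \Ind^{\H_{l+m}}_{\H_{(l,m)}}(L\boxtimes M)$ is a submodule. By Mackey/Frobenius, the $\Y$-weights of the induced module are all shuffles of the $\Y$-weights of $L$ (which lie in $a\cdot t^{2\Z}$) with those of $M$ (which lie in $b\cdot t^{2\Z}$). Pick a nonzero generalized $\Y$-eigenvector $v \in N''$. Its weight sequence is some shuffle; applying a sequence of intertwiners $\phi_{i}$ corresponding to a reduced word sorting $L$-type entries into positions $1,\dots,l$ and $M$-type entries into the remaining positions, and invoking invertibility of each $\phi_i$ used (which is guaranteed precisely because each swapped pair straddles the two different $t^2$-lines), we transport $v$ to a nonzero vector $v'$ supported on the $\H_{(l,m)}$-submodule $L\boxtimes M$. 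By simplicity of $L$ and $M$, the $\H_{(l,m)}$-submodule generated by $v'$ is all of $L\boxtimes M$, which under induction generates the whole module. Hence $N'' = \Ind^{\H_{l+m}}_{\H_{(l,m)}}(L\boxtimes M)$.

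The main technical obstacle is handling possibly non-semisimple generalized $\Y$-eigenspaces: one must work in a suitable Ore localization of $\H$ (inverting the polynomials $tY_i - t^{\pm 1}Y_{i+1}$ whose zeros are excluded by hypothesis) and verify that the localized algebra genuinely acts on our modules, so that $\phi_w$ yields honest isomorphisms rather than merely rational operators. Once this local calculus is set up, both the exchange isomorphism and the transport argument in the simplicity proof reduce to the verification that no critical denominator vanishes, which is exactly the content of the hypothesis that $a$ and $b$ lie on distinct $t^2$-lines.
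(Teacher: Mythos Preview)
The paper does not give its own proof of this theorem: it is stated with a citation to \cite{BZ77, Zelevinsky} and used as a black box. So there is no ``paper's proof'' to compare against.

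Your sketch is the standard intertwiner argument and is essentially sound. A few points to tighten. First, the map you build for the isomorphism is slightly muddled: you want an $\H_{l+m}$-map from one induced module to the other, and the clean way is to observe that $\phi_{w_0}$ applied to the cyclic generator of $\Ind(M\boxtimes L)$ lands in a vector on which $\H_{(l,m)}$ acts through $L\boxtimes M$ (by the intertwining relation $\phi_i Y_j = Y_{s_i(j)}\phi_i$ and the analogous relation with the $T_j$'s for $j$ not adjacent to $i$), and then invoke Frobenius reciprocity. Second, in the simplicity step you should be explicit that the sorting word $w$ can be chosen so that every simple reflection in a reduced expression swaps an $L$-position with an $M$-position (this is exactly the minimal-length coset representative for $\Sm{l}\times\Sm{m}\backslash\Sm{l+m}$), so that each $\phi_i$ applied is genuinely invertible on the relevant generalized weight space; you assert this but it deserves a sentence. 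Third, the passage from ``$v'$ lies in the generalized weight space of some $L\boxtimes M$-weight'' to ``$v'$ lies in the subspace $1\otimes(L\boxtimes M)$'' uses that this subspace is exactly the sum of generalized $\Y$-weight spaces for weights in the $(a^{\times l}, b^{\times m})$ block, which follows from the Mackey/PBW decomposition of the induced module --- worth stating rather than leaving implicit. With these clarifications the argument goes through.
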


\begin{theorem}\label{thm:equalstarts} Suppose that all starts of $\Delta$ are equal.  Then $M(\Delta)$ is simple, and further we have an isomorphism $M(\Delta) \cong M(\Delta')$ for any permutation $\Delta'$ of $\Delta$.
\end{theorem}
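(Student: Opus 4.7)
The plan is to prove both claims by adapting the Bernstein--Zelevinsky/Zelevinsky classification of simple $\Y$-finite $\H$-modules to the sign-induction convention used here, following the line of analysis developed in \cite{V-multisegments}. The key structural input is that when two segments $[a;\ell]$ and $[a;\ell']$ share the same start, the eigenvalue set of the shorter is contained in that of the longer, so they are never ``linked'' in the Bernstein--Zelevinsky sense (neither contains the other properly \emph{and} their union fails to form a segment).

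For the permutation claim, it suffices to treat the case of an adjacent swap $\sigma = (i,i+1)\in\Sym_m$ of two segments $\seg{a}{\ell_i}$ and $\seg{a}{\ell_{i+1}}$, after which the general case follows by composing such swaps. The strategy is to construct an explicit intertwiner $M(\sigma\cdot \Delta)\to M(\Delta)$ via Frobenius reciprocity: locate inside $M(\Delta)$ a vector on which $\H_{\sigma\cdot\Delta}$ acts by $\r(\sigma\cdot\Delta)$. Such a vector is produced by acting on $v_\Delta$ by a product of $T_j$'s implementing the block interchange, normalized so that the resulting vector is a $(-t^{-1})$-eigenvector for each $T_j$ with $j\in J(\sigma\cdot\Delta)$ and a $Y$-weight vector with the correct eigenvalues. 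The verification uses the braid and affine relations together with the quadratic relation $(T_j-t)(T_j+t^{-1})=0$. Once such a vector exists, Frobenius reciprocity provides a nonzero map $M(\sigma\cdot\Delta)\to M(\Delta)$, which is an isomorphism since both modules have the same dimension $\binom{n}{\ell_1,\ldots,\ell_m}$ and are cyclic on their $\r$-vectors.

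For simplicity, I would proceed by induction on $m$. The base case $m=1$ is trivial: $\H_{\Delta}=\H_n$ and $M(\Delta)=\r(\Delta)$ is one-dimensional. For the inductive step, write $\Delta = \Delta'\sqcup(\seg{a}{\ell_m})$ and use transitivity of induction to express $M(\Delta) \cong \Ind_{\H_{|\Delta'|}\otimes \H_{\ell_m}}^{\H_n}\bigl(M(\Delta')\boxtimes\r(\seg{a}{\ell_m})\bigr)$, where $M(\Delta')$ is simple by induction and $\r(\seg{a}{\ell_m})=M(\seg{a}{\ell_m})$ is simple trivially. The Bernstein--Zelevinsky irreducibility criterion (in the sign-induction form appearing in \cite{V-multisegments}) then asserts that this induced module is simple provided no pair of segments in $\Delta$ is linked; the observation above about equal starts rules out linkage, completing the induction.

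The main obstacle I anticipate is the bookkeeping needed to port the linkage criterion and the explicit intertwiner constructions from the trivial-induction convention of the classical BZ/Zelevinsky literature to the sign-induction convention employed here. This dualization sends segments with \emph{increasing} powers of $t^2$ to segments with \emph{decreasing} powers and interchanges the role of ``$+t$'' and ``$-t^{-1}$'' eigenvalues for the $T_j$; every intermediate computation must be tracked with the signs consistent. An alternative, more self-contained route would be to analyze the $\Y$-weight space decomposition of $M(\Delta)$ directly, using the PBW-type basis from $\Sym_n/\Sym_{\alpha(\Delta)}$ coset representatives, and to show that any nonzero $\H$-submodule must contain a $\r(\Delta)$-isotypic vector — from which cyclicity yields simplicity; this route would bypass the external appeal to the BZ classification but at the cost of repeating its core combinatorial input.
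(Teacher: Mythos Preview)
The paper does not actually prove this theorem: it is stated without proof or explicit citation, but in context (sandwiched between Theorem~\ref{thm:RepaRepb} and Theorem~\ref{thm:L=usq}, both attributed to \cite{BZ77,Zelevinsky}, and with the surrounding remark pointing to \cite{V-multisegments} for the sign-induction translation) it is being quoted as part of the classical Bernstein--Zelevinsky package. Your proposal is essentially a sketch of the relevant piece of that package, so there is nothing to compare against beyond noting that you are reconstructing what the paper takes as known.

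Your outline is correct in substance, with one organisational point worth fixing. The argument you give for the permutation isomorphism --- produce a nonzero map $M(\sigma\cdot\Delta)\to M(\Delta)$ by Frobenius reciprocity and conclude it is an isomorphism from ``same dimension and cyclic'' --- is not complete on its own: cyclicity does not force a nonzero map between equal-dimensional modules to be injective. What makes it work is that $\sigma\cdot\Delta$ also has all starts equal, so once you have established simplicity for every equal-start multisegment, both source and target are simple and the nonzero map is automatically an isomorphism. So prove the irreducibility claim first (your unlinked-segments argument via the BZ criterion is the right one, and you can invoke it directly for the full multisegment rather than inducting on $m$), and then the permutation statement follows immediately. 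Your closing remark about the bookkeeping needed to pass from the trivial-induction to the sign-induction convention is exactly the caveat the paper itself flags in the remark preceding Theorem~\ref{thm:RepaRepb}.
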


Combining Theorems \ref{thm:RepaRepb} and \ref{thm:equalstarts} gives the following

\begin{corollary}\label{cor:distinctstarts}
Let $\Delta$ be $1$-narrow.  Then $M(\Delta)$ is simple, and $M(\Delta)\cong M(\Delta')$ for any permutation $\Delta'$ of $\Delta$.
\end{corollary}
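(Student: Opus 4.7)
The plan is to reduce the $1$-narrow case to the two special cases already established: Theorem \ref{thm:equalstarts} (all starts equal) and Theorem \ref{thm:RepaRepb} (two factors on distinct $t^2$-lines). The $1$-narrow hypothesis says precisely that the starts of $\Delta$ partition into equivalence classes under the relation ``lying on the same $t^2$-line'', and within each class all starts are in fact equal. So I would begin by grouping: write $\Delta = \Delta_1 \sqcup \cdots \sqcup \Delta_k$ where each $\Delta_i$ consists of all segments whose starts lie on a single $t^2$-line (equivalently, in the $1$-narrow case, all starts in $\Delta_i$ coincide), and the distinguished starts $a_1,\ldots,a_k$ of the different $\Delta_i$ lie on pairwise distinct $t^2$-lines.

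Next, I would invoke induction in stages. Writing $\alpha = (|\Delta_1|,\ldots,|\Delta_k|)$, the parabolic subalgebra $\H_\Delta$ sits inside $\H_\alpha$, and the one-dimensional module $\r(\Delta)$ is manifestly $\r(\Delta_1)\boxtimes\cdots\boxtimes\r(\Delta_k)$ under this factorization. Thus
\[
M(\Delta) \;=\; \Ind_{\H_\Delta}^{\H}\r(\Delta) \;\cong\; \Ind_{\H_\alpha}^{\H}\bigl(M(\Delta_1)\boxtimes\cdots\boxtimes M(\Delta_k)\bigr).
\]
By Theorem \ref{thm:equalstarts} each $M(\Delta_i)$ is simple, lies in $\Rep_{a_i}(\H_{|\Delta_i|})$, and is independent (up to isomorphism) of the internal order of the segments inside $\Delta_i$.

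Then I would apply Theorem \ref{thm:RepaRepb} iteratively to assemble the $M(\Delta_i)$'s. Since any two of the starts $a_i,a_j$ lie on distinct $t^2$-lines, one may peel off factors one at a time: induct first from $\H_{(|\Delta_1|,|\Delta_2|)}$ to $\H_{|\Delta_1|+|\Delta_2|}$ to obtain a simple module on the union of the first two lines, then combine with $M(\Delta_3)$, and so on. At each step the two factors being combined have $Y$-spectra confined to disjoint collections of $t^2$-lines, which is what is needed to run Theorem \ref{thm:RepaRepb}; the theorem simultaneously delivers both simplicity of the induced module and the commutation isomorphism swapping the two factors. Composing these adjacent swaps realizes every permutation of $(\Delta_1,\ldots,\Delta_k)$, giving $M(\Delta)\cong M(\Delta')$ for every reordering of the group blocks. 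Combined with the within-block freedom from Theorem \ref{thm:equalstarts}, this yields invariance under an arbitrary permutation of the segments of $\Delta$.

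The main technical point is the first step of the iteration in the previous paragraph: Theorem \ref{thm:RepaRepb} as stated pairs a simple module on one $t^2$-line with a simple module on another single $t^2$-line, whereas after the first induction we need to pair a simple module on a new line with a simple module supported on a union of previously-treated lines. I would handle this by doing the inductions in the opposite order, so that at each stage the ``new'' factor is $M(\Delta_i)$ (genuinely on a single line $a_i$), paired with the simple module already built from $\Delta_{i+1},\ldots,\Delta_k$ — and then using the commutation isomorphisms from Theorem \ref{thm:RepaRepb}, applied to each $M(\Delta_i)$ against each $M(\Delta_j)$, to freely move factors past one another; a careful bookkeeping argument (essentially that the two inductions differ only by the simplicity-preserving adjacent swaps supplied by Theorem \ref{thm:RepaRepb}) then gives that the resulting module is simple regardless of the chosen order.
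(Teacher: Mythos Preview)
Your approach is exactly the paper's: the paper's proof is a single sentence saying the corollary follows from Theorems \ref{thm:RepaRepb} and \ref{thm:equalstarts}, and you have simply spelled out the grouping-by-line and induction-in-stages argument that makes this precise.

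The technical worry you raise in your final paragraph is legitimate as a matter of logic: Theorem \ref{thm:RepaRepb} as stated in the paper handles only two simple factors each supported on a single $t^2$-line, so iterating it literally runs into the problem you describe.  However, this is an artifact of the paper's abbreviated statement rather than a genuine gap.  The cited sources \cite{BZ77, Zelevinsky} establish the stronger fact that for simple $L_i \in \Rep_{a_i}(\H_{n_i})$ with the $a_i$ on pairwise distinct $t^2$-lines, the full parabolic induction $\Ind(L_1\boxtimes\cdots\boxtimes L_k)$ is simple and independent of the ordering; the paper's Theorem \ref{thm:RepaRepb} is the $k=2$ case.  So you may either invoke that stronger version directly, or---if you want to stay strictly within the paper's stated results---note that $L_{23}:=\Ind(L_2\boxtimes L_3)$ being simple and supported on lines disjoint from $a_1$ is already enough to rerun the same Bernstein--Zelevinsky argument (the hypothesis ``on a single line'' in Theorem \ref{thm:RepaRepb} is used only through disjointness of the supporting $t^2$-lines).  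Your ``careful bookkeeping'' sketch is pointing in the right direction but is not needed once you appeal to the general form.
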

This corollary follows easily from  Theorems \ref{thm:RepaRepb} and \ref{thm:equalstarts}, recalling that $1$-narrow means that any two starts of $\Delta$ are either equal or lie on distinct $t^2$-lines.

For the following, we assume $\K$ is a splitting field. 

\begin{theorem}[\cite{BZ77, Zelevinsky}]
\label{thm:L=usq}
Simple $\H$-modules are parameterized by right-ordered 
multisegments.  More precisely:
\begin{enumerate}
\item Suppose that $\Delta' \sim \Delta$.
    Then we have $M(\Delta)\cong M(\Delta')$.
    \item  Suppose $\Delta$ is right-ordered. Then the module $M(\Delta)$ has a unique simple quotient which we denote $L(\Delta)$.
    \item Suppose $\Delta$ is right-ordered and that $\Delta' \sim \Delta$.
    Then we have 
    $L(\Delta)\cong L(\Delta')$.
\end{enumerate}
Moreover, given any simple module $L$ of $\H$, there exists a right-ordered multisegment $\Delta$ and an isomorphism $L\cong L(\Delta)$.
\end{theorem}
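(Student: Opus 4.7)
My plan is to follow the strategy of Bernstein--Zelevinsky and Zelevinsky, adapted to the affine Hecke algebra setting, proceeding through the three parts in order and treating the existence statement at the end.

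\textbf{Part (1).} The equivalence $\Delta \sim \Delta'$ is generated by transpositions of adjacent segments whose starts lie on distinct $t^2$-lines, so it suffices to handle a single such swap. If $\Delta = \Gamma_1 \sqcup (\seg{a}{\ell}, \seg{b}{m}) \sqcup \Gamma_2$ and $\Delta' = \Gamma_1 \sqcup (\seg{b}{m}, \seg{a}{\ell}) \sqcup \Gamma_2$ with $a, b$ on distinct $t^2$-lines, I would use transitivity of induction to rewrite $M(\Delta)$ as induced in stages: first induce $\r(\seg{a}{\ell}) \boxtimes \r(\seg{b}{m})$ from $\H_{(\ell,m)}$ up to $\H_{\ell+m}$, then induce the resulting module tensored with the other data up to $\H_n$. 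Theorem \ref{thm:RepaRepb} (applied to the simple modules $M(\seg{a}{\ell})$ and $M(\seg{b}{m})$, which are simple by Theorem \ref{thm:equalstarts}) gives that the intermediate induced module is independent of the order, and then the outer induction preserves the isomorphism. Iteration across all the adjacent transpositions defining $\sigma$ yields $M(\Delta) \cong M(\Delta')$.

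\textbf{Part (2).} Assume $\Delta$ is right-ordered. I would analyze the $\Y$-weight decomposition of $M(\Delta)$ as an $\H$-module. A basis of $M(\Delta)$ is given by $\{T_w \otimes v_\Delta \mid w \in {}^{\alpha(\Delta)}\Sn\}$, where ${}^{\alpha(\Delta)}\Sn$ denotes the set of minimal length left coset representatives for $\Sm{\alpha(\Delta)} \backslash \Sn$; the weight of each $T_w \otimes v_\Delta$ is obtained by permuting (with a Bernstein-style correction when $Y$-eigenvalues collide on the same $t^2$-line) the eigenvalues recorded in $\r(\Delta)$. The right-ordered condition is engineered precisely so that the weight of $v_\Delta$ itself appears with multiplicity one in this basis: no other coset representative can permute the tuple to reproduce the original ordering without introducing an inversion that would either swap within a segment (controlled by the $-1/t$ eigenvalue of $T_j$) or violate the $t^{2z}$-order on same-line starts. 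Since any proper submodule $N \subset M(\Delta)$ is stable under $\Y$ and the ambient weight space at the character of $v_\Delta$ is one-dimensional, if $v_\Delta \in N$ then by cyclicity $N = M(\Delta)$. Hence every proper submodule misses $v_\Delta$, so their sum is proper, yielding a unique maximal submodule and thus a unique simple quotient $L(\Delta)$.

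\textbf{Part (3)} is then immediate: if $\Delta \sim \Delta'$ then $M(\Delta) \cong M(\Delta')$ by (1), and these share a unique simple quotient by (2), so $L(\Delta) \cong L(\Delta')$.

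\textbf{Existence.} Given any simple $\Y$-finite $\H$-module $L$, I would produce a multisegment as follows. Pick a $\Y$-weight $\chi$ occurring in $L$ whose multiset of eigenvalues $\{a_1, \dots, a_n\}$ is ``most dominant'' in the sense that no other weight of $L$ has the same multiset up to cyclic shifts by $t^2$ pushing eigenvalues further toward the end of any $t^2$-chain; equivalently, use a center-of-gravity argument to pick a $\chi$ minimizing the total $t^{-2}$-distance traveled from the starts. Reading off maximal $t^{-2}$-chains among the $a_i$ (respecting the cyclic order forced by $\chi$) yields a right-ordered multisegment $\Delta$, and standard Frobenius reciprocity, combined with the observation that the $\chi$-weight vector in $L$ is annihilated by $(T_j + t^{-1})$ for $j \in J(\Delta)$ (which follows from maximality of $\chi$, since otherwise $T_j$ would create a weight vector with eigenvalues more dominantly ordered), produces a nonzero map $M(\Delta) \to L$. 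Simplicity of $L$ forces $L \cong L(\Delta)$.

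\textbf{Main obstacle.} The hardest part is the multiplicity-one claim for the $\chi_{v_\Delta}$-weight space of $M(\Delta)$ in part (2), because one must track exactly how the $T_i$-commutation relations with $Y_j$ modify weights along non-identity coset representatives, and verify that the right-ordered hypothesis is exactly what prevents a non-identity representative from returning to the starting weight. The existence argument's choice of ``most dominant'' $\chi$ requires analogous care to ensure the corresponding multisegment is right-ordered.
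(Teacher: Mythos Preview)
The paper does not give its own proof of this theorem: it is stated with attribution to \cite{BZ77, Zelevinsky} and used as a black box. There is therefore nothing in the paper to compare your argument against.

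That said, a few remarks on your proposal. Part (1) and Part (3) are fine. In Part (2), your multiplicity-one claim for the weight of $v_\Delta$ is not correct as stated: if $\Delta$ contains two identical segments, say $\Delta = (\seg{a}{\ell}, \seg{a}{\ell})$, then the weight $(a, at^{-2}, \ldots, a, at^{-2})$ has nontrivial stabiliser in $\Sn$ and can appear with multiplicity greater than one in $M(\Delta)$. In that particular case Theorem \ref{thm:equalstarts} rescues you (the module is already simple), but your argument as written does not handle the general right-ordered case with repeated segments. The standard route in Zelevinsky's work is not a bare weight-multiplicity count but rather an analysis via Jacquet modules / derivatives (or, in the Hecke setting, via restriction to parabolic subalgebras), which shows that the appropriate ``top'' constituent is simple even when weight multiplicities exceed one.

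Your existence argument is too vague to evaluate: the phrases ``most dominant'' and ``center-of-gravity argument'' need to be made precise, and it is not clear that the procedure you describe actually produces a \emph{right-ordered} multisegment, nor that the chosen weight vector is genuinely annihilated by all the required $(T_j + t^{-1})$. The usual argument instead uses that every simple is a subquotient of some principal series $\Ind_\Y^\H \chi$, decomposes $\chi$ into $t^2$-strings, and then invokes the classification of subquotients of such modules.
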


\begin{remark}\label{remark:Z}
$\SLGL$
If we replace $\H$ with a quotient that imposes a relation of the form $(Y_1 \cdots Y_n)^d = \Zprod$ for
some constant $\Zprod$, the above theorems and constructions still hold. It merely imposes a restriction on the multisegments we consider.  Namely, the constant by which $Y_1 Y_2\cdots Y_n$ acts on $\r(\Delta)$ by must be consistent with the relation. (See Remark \ref{remark:SLGL}.)
\end{remark}

\subsection{DAHA--Schmaha}\label{sec:DAHA defn}
\begin{definition} \label{def:HG}
The $\GLn$ double affine Hecke algebra $\HG$ is
the $\K$-algebra presented by generators:
$$T_0,T_1,\ldots T_{n-1}, \pi^{\pm 1}, Y_1^{\pm 1},\ldots, Y_n^{\pm 1},$$
subject to relations\footnote{As with the affine symmetric group  $\AffSym$, we drop the relations on the 
second line when $n=2$.}:
\begin{align} 
&(T_i-t)(T_i+t^{-1})=0 \quad (i=0,\ldots, n-1),& && \label{HeckeReln}\\
&T_iT_jT_i = T_jT_iT_j\quad (j\equiv i\pm 1 \bmod n),& &T_iT_j = T_jT_i \quad (\textrm{otherwise}),&\label{BraidReln}\\
&\pi T_i\pi^{-1} = T_{i+1} \quad (i=0,\ldots, n-2),& &\pi T_{n-1}\pi^{-1}=T_0,&\nonumber\\
&T_iY_iT_i=Y_{i+1} \quad (i=1,\ldots, n-1),& &T_0Y_nT_0 = q^{-1}Y_1&\nonumber\\
&T_i Y_j = Y_j T_i \quad  (j \not\equiv i, i+1 \bmod n),& &&\nonumber\\
&\pi Y_i\pi^{-1} = Y_{i+1} \quad (i=1,\ldots, n-1),& &\pi Y_{n}\pi^{-1}=
 q^{-1}Y_1&\nonumber
\end{align}
\end{definition}

We recall that $\HG$ has basis $\{  T_w Y^\beta \mid
w \in \AffSym, \beta \in \Z^n\}$ where we identify $\pi$ with $T_\pi$. 

An alternate presentation of $\HG$ drops $T_0$ and $\pi$ but includes generators $X_i^{\pm 1}, 1 \le i \le n$, which are related to the generators above via $X_1 = \pi T_{n-1}^{-1} \cdots T_1^{-1}$ and $X_{i+1} = T_i X_i T_i$.  
Note $X_1 X_2 \cdots X_n = \pi^n$, and  this element $q$-commutes with each $Y_i$.
The $X_i$ generate a Laurent polynomial subalgebra, hence
similar to above, we may write $\cX = \K[X_1^{\pm 1}, \cdots, X_n^{\pm 1}]$.

\begin{notation} \label{not:AHA-GL}
We denote by $\HY$ and $\HX$, respectively, the subalgebras of $\HG$ generated by $\Hf$ and $Y_i^{\pm}$'s (resp, $X_i^{\pm}$'s). Note $\HX$ is also generated by $\Hf$ and $\pi^{\pm 1}$.  Each subalgebra identifies with the AHA  $\H$ as an abstract algebra.
\end{notation} 
\begin{remark} \label{remark:SLGL} Throughout most of the paper, statements and proofs will be made using $G=\GLN$ notation. We will warn the reader in cases where statements or proofs need modification to be correct for $\SLN$, with the symbol $\SLGL$. 
\end{remark}

 \begin{definition}\label{def:schmaha}
The $\SL$ Schmaha $\Schmaha{n}$ is the $\K$-algebra presented by generators:

$$T_0,T_1,\ldots T_{n-1}, \Spi^{\pm 1}, Z_1^{\pm 1},\ldots, Z_n^{\pm 1},$$
subject to relations:
\begin{align*} 
&(T_i-t)(T_i+t^{-1})=0 \quad (i=0,\ldots, n-1),& 
&\Spi^n \text{ is central} ,
&\\
&T_iT_jT_i = T_jT_iT_j\quad (j\equiv i\pm 1 \bmod n),& &T_iT_j = T_jT_i \quad (\textrm{otherwise}),&\\
&\Spi T_i\Spi^{-1} = T_{i+1} \quad (i=0,\ldots, n-2),& &\Spi T_{n-1}\Spi^{-1}=T_0,&\\
&T_iZ_iT_i=Z_{i+1} \quad (i=1,\ldots, n-1),&
&T_i Z_j = Z_j T_i \quad  (j \not\equiv i, i+1 \bmod n),&
\\
&T_0Z_nT_0 = \Sq^{2n}Z_1,&
&Z_1 Z_2 \cdots Z_n   \text{ is central}, &
& &\\
&\Spi Z_i\Spi^{-1} = \Sq^{-2} Z_{i+1} \quad (i=1,\ldots, n-1),& &\Spi Z_{n}\Spi^{-1}=
 \Sq^{2n-2}Z_1.&
\end{align*}
\end{definition}

Let us fix a constant $\Zprod \in\K^\times$.

\begin{definition} \label{def:HS}
The $\SLn$ double affine Hecke algebra $\HS$ is the quotient of $\Schmaha{n}$ by the 
relations $Z_1 Z_2 \cdots Z_n = \Zprod$ and $\Spi^n = 1$.

Notice that the inner automorphism given by conjugation by $\Spi$ has order $n$ for both $\HS$ and $\Schmaha{n}$.

\end{definition}

$\Schmaha{n}$ has analogous subalgebras $\HY$ and $\HX$, which are isomorphic to their $\GL$ counterparts.
$\HS$ has  subalgebras $\HY$ and $\HX$, which by abuse of notation we also refer to by these same names, even though they are each quotients of the affine Hecke algebra (AHA). 
This abuse of notation allows us to make uniform statements across the cases.
All the algebras $\HS$, $\Schmaha{n}$,  or $\HG$ may be referred to as a DAHA.

\section{Zeroth Hochschild homology of Skein algebras}\label{sec:HH skein algebras}

In this section we compute the zero${}^{\mathrm{th}}$ Hochschild homology of the $\GLN$ and $\SLN$ skein algebras.
Our starting point 
is the isomorphism \eqref{eq:Sk = KLamW} between $\SkAlg_G(T^2)$ and the algebra $\K_\omega[\Lambda \oplus \Lambda]^{\SN}$ defined below, as well as the following:

\begin{proposition}\label{prop:morita smash}
We have a Morita equivalence between $\K_\omega[\Lambda \oplus \Lambda]^{\SN}$ and $\K_\omega[\Lambda \oplus \Lambda]\#{\SN}$.

\end{proposition}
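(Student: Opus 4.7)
The plan is to apply the standard Morita correspondence for smash products by finite groups whose order is invertible in the ground field, following \cite[Theorem~2.4]{Montgomery1980} which is cited in the excerpt. Set
\[
e \;=\; \frac{1}{N!}\sum_{w\in\SN} w \;\in\; \SmashD.
\]
A direct calculation shows that $e$ is an idempotent and that the map $\Coinv\to e\,\SmashD\,e$, sending an invariant $a$ to $ea = ae$, is an algebra isomorphism; this part is purely formal from the definition of the smash product and does not use anything specific about $\KLam$.

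By Proposition~\ref{prop:Morita}, the remaining content is the equality $\SmashD\cdot e\cdot \SmashD = \SmashD$, equivalently the conservativity of the functor $M\mapsto eM$ on $\SmashD\modu$. This is precisely Montgomery's criterion, which applies whenever (i) $\KLam$ is semiprime, and (ii) the action of $\SN$ on $\KLam$ is outer (no nontrivial element of $\SN$ is induced by conjugation by a unit, even after restricting to any nonzero $\SN$-stable ideal).

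I would verify (i) and (ii) as follows. For (i): at generic $\q$ the quantum torus $\KLam$ is a Noetherian domain, because the $\omega$-twisted group algebra of the free abelian group $\Lambda\oplus\Lambda$ has one-dimensional homogeneous components and no zero divisors between monomials. For (ii): use the $\Lambda\oplus\Lambda$-grading on $\KLam$, whose homogeneous components are one-dimensional and spanned by monomials. Any inner automorphism by a homogeneous unit must permute monomial lines by translation in the lattice, while the $\SN$-action permutes the weights themselves in a non-translation fashion; a straightforward weight comparison rules out any non-identity $w\in\SN$ acting as an inner automorphism. (For generic $\q$ one may also appeal to the observation that the only units of $\KLam$ are scalar multiples of monomials, so this case analysis is exhaustive.)

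The main obstacle I anticipate is purely notational: keeping careful track of the cocycle $\omega$ when verifying the $e\,\SmashD\,e\cong \Coinv$ identification, and ensuring that ``outer'' is understood in the strong sense required by Montgomery (outerness on every nonzero ideal, not merely generically). Once the outerness and semiprimeness are in hand, the result is a direct appeal to \cite[Theorem~2.4]{Montgomery1980}.
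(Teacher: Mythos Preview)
Your proposal is correct and follows essentially the same route as the paper: both arguments introduce the trivial idempotent $e=\etriv{N}\in\K[\SN]\subset\SmashD$, identify $e\,\SmashD\,e$ with $\Coinv$, and invoke Montgomery to obtain $\SmashD\cdot e\cdot\SmashD=\SmashD$, finishing via Proposition~\ref{prop:Morita}.

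The one difference worth noting is how Montgomery is applied. The paper cites \cite[Theorem~2.3]{Montgomery1980} to conclude directly that the smash product $S=\SmashD$ is \emph{simple}; this uses the stronger fact that the quantum torus $\KLam$ is itself simple at generic~$\q$ (not merely a domain), since $\omega$ is nondegenerate on $\Lambda\oplus\Lambda$. Once $S$ is simple, $SeS=S$ is immediate for any nonzero idempotent, and your careful verification of outerness on arbitrary nonzero ideals becomes unnecessary---there are no proper nonzero ideals. Your approach instead checks the weaker hypotheses (semiprime, X-outer) needed for \cite[Theorem~2.4]{Montgomery1980}. Both are valid; the paper's version is shorter, while yours would survive in situations where the base algebra is a domain but not simple.
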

\begin{proof}
    By \cite[Theorem 2.3]{Montgomery1980}, the algebra $S:= \K_\omega[\Lambda \oplus \Lambda]\#{\SN}$ is simple. It follows that $S\etriv{N} S=S$, where $\etriv{N} \in \K[\SN]$ is the trivial idempotent,  and thus we have the Morita equivalence by Proposition \ref{prop:Morita}.
\end{proof}

Because $\HHz$ is Morita invariant, it remains only to compute the Hochschild homology of $\SmashD$. This is the focus of the remainder of the section.

\begin{notation}
Let $\Lambda_{\GLN}$ and $\Lambda_{\SLN}$ denote the weight lattices of $\GLN$ and $\SLN$, respectively. For ease of notation below, we identify $\Lambda_{\GLN}$ with $\Z^N$ and its elements are expressed relative to the  orthonormal basis $\{ e_1, \ldots, e_N \}$. For $a\in \Lambda_{\GLN}$ we define its degree $\deg(a)\in\mathbb{Z}$ to be the sum of its entries. 
 We identify $\Lambda_{\SLN}$ with $\Z^N/ \Z \sum_i e_i$.
For $a\in\Lambda_{\SLN}$ $\deg(a)$ is the sum of its entries taken modulo $N$, so that $\deg(a)\in\mathbb{Z}/N\mathbb{Z}$.
The group $\SN$ acts on $\Lambda_{G}$, preserving degree.

Note we may also view $\deg(a)$ as living in $\Lambda_G/Q$, where $Q$ is the root lattice.

Let $\Lambda=\Lambda_{\GLN}$ or $\Lambda_{\SLN}$. 
 For $u,v\in \Lambda$ we denote by $(u,v)$ the Cartan pairing, which is a symmetric bilinear form, and we denote by $\omega$ the canonical symplectic pairing defined on $\Lambda\oplus\Lambda$,
\[
\omega(u\oplus v,x\oplus y) =  (u,y) - (x,v).
\]
The quantum torus $\K_\omega[\Lambda\oplus\Lambda]$ is identified as a $\K$-vector space with the group algebra of the lattice $\Lambda \oplus \Lambda$, but with a multiplication which is twisted by $\omega$.  It inherits the grading by degree from $\Lambda\oplus\Lambda$.  Denoting by $\cX^a$ the basis element labeled by $a\in\Lambda\oplus \Lambda$, the multiplication is given by:
\[
\cX^a \cX^b = q^{-\omega(a,b)/2} \cX^{a+b}.
\]

The  group $\SN$ acts on $\Lambda \oplus \Lambda$ via the diagonal action on $\Lambda$.  This induces an action by algebra automorphisms on $\K_\omega[\Lambda \oplus \Lambda]$ precisely because $\SN$ respects the Cartan pairing on $\Lambda$ and hence the symplectic pairing on $\Lambda\oplus \Lambda$.  We denote by $\K_\omega[\Lambda\oplus\Lambda]^{\SN}$ the subalgebra of invariants for the $\SN$-action, and by $\K_\omega[\Lambda\oplus\Lambda]\#{\SN}$ the smash product algebra.

\end{notation}

Given a group $W$ and $\sigma \in W$, we will denote the conjugacy class of $\sigma$ by $[\sigma]$ and its centralizer by $Z_W(\sigma)$.
It follows from e.g. \cite{Shepler-Witherspoon},\cite{Stefan} (see also {\cite[Theorem 2.4]{Kaygun2021}}, {\cite[Theorem 3.3]{kinnear2023skein}}) that for any algebra $A$ with an action of 
$W$ by automorphisms, the Hochschild homology of the smash product $A\# W$ may be computed as a sum over 
$[\sigma]$ of the $\stab W \sigma$-coinvariants in the Hochschild homology of $A$ with coefficients in the $\sigma$-twisted bimodule $A_\sigma$, on which $A$ acts on the left in the usual way, and on the right twisted by $\sigma$.  In the case at hand, this yields:
\begin{gather}
    \label{eq:HH0 decomposition}
\HHz(\SmashD) \cong \bigoplus_{[\sigma]\in \Conj{\SN}} \HHz(\K_\omega[\Lambda \oplus \Lambda],\K_\omega[\Lambda \oplus \Lambda]_\sigma)_{\stab{\SN} \sigma}. 
\end{gather}
Here $\K_\omega[\Lambda \oplus \Lambda]_\sigma$ denotes $\K_\omega[\Lambda \oplus \Lambda]$ with bimodule action twisted on the right by $\sigma$; the sum is taken over the conjugacy classes in $\SN$, denoted $\Conj{\SN}$ above; and the subscript $\stab{\SN} \sigma $
denotes that we take the coinvariants for the centralizer of $\sigma$ in $\SN$.  More explicitly:
\[
\HHz(\K_\omega[\Lambda \oplus \Lambda],\K_\omega[\Lambda \oplus \Lambda]_\sigma) = \K_\omega[\Lambda \oplus \Lambda] \Bigg/ \left\{ \cX^a \cX^b - \cX^b \cX^{\sigma(a)} \, , \,\, \textrm{ for } a,b \in \Lambda\oplus\Lambda\right\}.\\
\]

Let $_\mathbb{Q}\Lambda = \mathbb{Q}\otimes_\ZZ \Lambda$. 
Given $\sigma\in\SN$,  when we consider it as a  linear map  on $_\mathbb{Q}\Lambda$ (or diagonally on $_\mathbb{Q}\Lambda \oplus {}_\mathbb{Q}\Lambda$),  we will write $\sigma_{\Q}$.
Otherwise we assume its domain is $\Lambda$ (resp. $\Lambda \oplus \Lambda$).
Let  ${}_{\Q} \Uw = \ker(1-\sigma_{\Q}) $ and let ${}_{\Q}\Uwp$ denote its orthogonal complement with respect to the Cartan pairing. 
Write 
$$\Uw = {}_{\Q} \Uw \cap \Lambda \, \text{ and  } \,  \Uwp = {}_{\Q} \Uwp \cap \Lambda.$$
Observe  $\Uw = \Lambda^{\sigma}= \ker(1-\sigma) |_{\Lambda}$ and $ {}_{\Q} \Uwp =\Im(1 - \sigma)$, but we only have $\Im(1-\sigma)|_\Lambda \subseteq \Uwp$ in general.  More precisely, we will compute its index below and show the quotient is a cyclic group.
\begin{remark}In the statement and proof of the following proposition we encounter expressions such as $\omega((1-\sigma_\Q)^{-1}\alpha,\beta)$ with $\alpha, \beta\in \Uwp\oplus \Uwp$.  These are well-defined because $(1-\sigma_\Q)$ surjects onto  $\Uwp \oplus \Uwp$, and because $\ker(1-\sigma_\Q)$ coincides with ${}_\Q\Uw \oplus {}_\Q \Uw$.\end{remark}

\begin{proposition}\label{prop:UU is HH0}
Given $\sigma\in\SN$, we let $\K[(\Uwp\oplus\Uwp)/\Im(1-\sigma)]$ denote the vector space formally spanned by the set of $\Im(1-\sigma)$-cosets in $\Uwp\oplus \Uwp$.
Then the linear homomorphism
\begin{align*}
\psi: \K[(\Uwp\oplus \Uwp)/\Im(1-\sigma)] &\to\HHz(\K_\omega[\Lambda\oplus\Lambda],\K_\omega[\Lambda\oplus\Lambda]_\sigma)\\
 a &\mapsto m_a = q^{-\frac12\omega( (1-\sigma_\Q)^{-1}a,a)} \cX^a
\end{align*}
defines an isomorphism. In the $\SLN$ case, it is compatible with the natural $\Z/N\Z\times\Z/N\Z$-grading.
\end{proposition}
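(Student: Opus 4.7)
The plan is to directly analyze the explicit presentation
\[
\HHz(\K_\omega[\Lambda \oplus \Lambda], \K_\omega[\Lambda \oplus \Lambda]_\sigma) = A/I,
\]
where $A = \K_\omega[\Lambda\oplus\Lambda]$ and $I$ is the $\K$-span of $\cX^a\cX^b - \cX^b\cX^{\sigma(a)}$ for $a,b \in \Lambda\oplus\Lambda$. Expanding the products, each generator of $I$ is a linear combination of the two monomials $\cX^{a+b}$ and $\cX^{b+\sigma(a)}$, whose exponents differ by $(\sigma-1)a \in \Im(1-\sigma)$; hence $I$ is homogeneous with respect to the grading of $A$ by $(\Lambda\oplus\Lambda)/\Im(1-\sigma)$. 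As a first reduction I will show that $\cX^c = 0$ in $A/I$ whenever $c \notin \Uwp\oplus\Uwp$. Observing that $\Uwp\oplus\Uwp = (\Uw\oplus\Uw)^{\perp_\omega}$, for such $c$ we can choose $k \in \Uw\oplus\Uw$ with $\omega(k,c) \neq 0$; applying the defining relation with $a = k$, $b = c - k$ (using $\sigma(k)=k$ and $\omega(k,k)=0$) yields $(q^{-\omega(k,c)/2} - q^{\omega(k,c)/2})\cX^c \in I$, and generic $q$ forces $\cX^c = 0$. Consequently $A/I$ is spanned by classes of $\cX^c$ for $c$ in cosets of $(\Uwp\oplus\Uwp)/\Im(1-\sigma)$, giving a tautological surjection from $\K[(\Uwp\oplus\Uwp)/\Im(1-\sigma)]$ onto $A/I$.

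Next I will compute the scalar identifying $\cX^z$ with $\cX^{z-d}$ for $d = (1-\sigma)c' \in \Im(1-\sigma)$ and $z, z-d \in \Uwp\oplus\Uwp$. Applying the defining relation to $a = c'$, $b = z - c'$, a direct calculation yields
\[
\cX^z = q^{\omega(c',z) - \omega(d,z)/2 - \omega(c',d)/2} \cX^{z-d} \quad\text{in } A/I.
\]
Different lifts of $d$ differ by an element of $\Uw\oplus\Uw$, and a short calculation (using $\sigma^{-1}k = k$ for $k \in \Uw\oplus\Uw$) shows that this replacement changes the exponent by $\omega(k,z)$, which vanishes precisely because $z \in (\Uw\oplus\Uw)^{\perp_\omega}$. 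Substituting the canonical rational lift $c_\perp := (1-\sigma_\Q)^{-1} d$, I will then verify that the normalization $m_a = q^{-\omega((1-\sigma_\Q)^{-1}a, a)/2} \cX^a$ precisely absorbs the scalar, so that $m_z = m_{z-d}$ in $A/I$. The key identity in this verification is
\[
(1-\sigma^{-1})(1-\sigma)^{-1} = -\sigma^{-1},
\]
which together with the $\sigma$-invariance and the alternating property of $\omega$ produces a telescoping cancellation. This proves $\psi$ is well-defined and surjective.

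For injectivity, I will construct for each coset $[z] \in (\Uwp\oplus\Uwp)/\Im(1-\sigma)$ a linear functional $\phi_{[z]} \colon A \to \K$ that vanishes on $I$ and takes a nonzero value on $m_z$. Specifically, $\phi_{[z]}$ is defined to vanish on $\cX^c$ for $c \notin [z]$ and on $\cX^c$ for $c \in [z]$ to be the reciprocal of the scalar appearing in $m_c$; the identical scalar compatibility computation as above shows $\phi_{[z]}$ descends to $A/I$. Since the $\phi_{[z]}$'s have pairwise disjoint supports on $\{m_z\}$, they separate distinct cosets and establish linear independence. Compatibility with the $\Z/N\Z \oplus \Z/N\Z$-grading in the $\SLN$ case is automatic: the $(\Lambda/Q)\oplus(\Lambda/Q)$-degree of $m_a$ coincides with that of $\cX^a$, which is the degree of $a$. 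The main technical obstacle throughout is the scalar bookkeeping in the second step; once the identity $(1-\sigma^{-1})(1-\sigma)^{-1} = -\sigma^{-1}$ is exploited, the cancellations become routine.
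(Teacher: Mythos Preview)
Your proposal is correct and follows essentially the same approach as the paper: both establish surjectivity by using an element of $\Uw\oplus\Uw$ to kill $\cX^c$ for $c\notin\Uwp\oplus\Uwp$, observe that the relations are homogeneous for the $(\Lambda\oplus\Lambda)/\Im(1-\sigma)$-grading, and then compute the $\sigma$-commutator scalar to see that the normalization $m_a$ makes all representatives of a coset coincide.

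The one genuine difference is in how injectivity is handled. The paper computes a \emph{single} commutator $[\cX^c,\cX^{a-c}]_\sigma$ with $c=(1-\sigma_\Q)^{-1}(a-b)\in\Uwp\oplus\Uwp$ and then asserts that ``the $\sigma$-commutator relations simply amount to identifying $m_a$ and $m_b$'', leaving implicit the check that \emph{every} generating relation in a given coset-degree is a scalar multiple of some $m_a-m_b$. Your lift-independence argument (changing $c'$ by $k\in\Uw\oplus\Uw$ alters the exponent by $\omega(k,z)=0$) actually establishes this for arbitrary $c'$, and your functional construction $\phi_{[z]}$ then gives a clean separation of cosets. So your treatment is slightly more explicit on exactly the point the paper glosses over, but the underlying computation and strategy are the same.
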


\begin{proof}
First we show that $\psi$ is surjective.  Suppose we have $a\in\Lambda\oplus\Lambda$ which does not lie in $\Uwp \oplus \Uwp$.  Then there exists some $u\in\Uw$ such that either $b=u \oplus 0$ or $b=0 \oplus u$ satisfies $\omega(a,b)\neq 0$.
In particular $(1-q^{-\omega(a,b)}) \neq 0$.
A straightforward computation then yields
\[
[ \cX^b, \cX^{-b}  \cX^{a}]_\sigma = (1-q^{-\omega(a,b)}) \cX^a.
\]
This implies that $\psi$ is surjective.

To establish that $\psi$ is injective, we must determine the relations amongst $\cX^a$ for $a\in \Uwp \oplus \Uwp$.  We begin by remarking that the commutator relations $[\K_\omega[\Lambda \oplus \Lambda], \K_\omega[\Lambda \oplus \Lambda]]_\sigma$ are naturally graded by the quotient $\Lambda\oplus\Lambda/(\Im(1-\sigma))$.  Indeed, we may rewrite the relations:

\[
[\cX^a,\cX^b]_\sigma = q^{-\omega(a,b)/2}\cX^{a+b} - q^{-\omega(b,\sigma(a))/2}\cX^{a+b - (1-\sigma)(a)}
,\]
which are clearly homogeneous modulo $\Im(1-\sigma)$. 

So let us now consider a pair $a, b$ lying in the same   $\Im(1-\sigma)$-coset, and let $c$ be the unique $(1-\sigma)^{-1}$-preimage of $(a-b)$ in $\Uwp\oplus \Uwp$.  We compute the $\sigma$-commutator,
\[
[\cX^c, \cX^{a - c}]_{\sigma}
=q^{\frac12 \omega((1-\sigma_\Q)^{-1}b,a)} \cdot \left(q^{-\frac12\omega( (1-\sigma_\Q)^{-1}a,a)} \cX^a - q^{-\frac12 \omega((1-\sigma_\Q)^{-1}b,b)} \cX^b\right).
\]

Hence if we renormalize by letting $m_a = q^{-\frac12\omega( (1-\sigma_\Q)^{-1}a,a)} \cX^a$ when $a \in \Uwp \oplus \Uwp$, then it follows that the $\sigma$-commutator relations simply amount to identifying any two  elements $m_a, m_b$ with $a, b$ lying in the same coset of $\Im(1-\sigma)$.  We obtain a basis consisting of the generators $m_a$, for $a$ ranging over distinct $\Im(1-\sigma)$-coset representative in $\Uwp \oplus\Uwp$.

\end{proof}

\begin{proposition}\label{prop:lose Z}
For any $\sigma\in\SN$
\begin{align*}
\HHz(\K_\omega[\Lambda\oplus\Lambda],\K_\omega[\Lambda\oplus\Lambda]_\sigma)  \cong
\HHz(\K_\omega[\Lambda\oplus\Lambda],\K_\omega[\Lambda\oplus\Lambda]_\sigma)_{Z_{\SN}(\sigma)}
\end{align*}
\end{proposition}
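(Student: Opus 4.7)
I would prove the statement by showing that $Z_{\SN}(\sigma)$ acts trivially on $HH_0 := \HHz(\K_\omega[\Lambda\oplus\Lambda],\K_\omega[\Lambda\oplus\Lambda]_\sigma)$; the claimed isomorphism is then an immediate consequence. To start, any $\tau\in Z_{\SN}(\sigma)$ commutes with $\sigma$ and preserves the Cartan pairing, hence also $\Uw$, $\Uwp$, $\Im(1-\sigma)$, and the symplectic form $\omega$. Using $\tau(1-\sigma_\Q)^{-1} = (1-\sigma_\Q)^{-1}\tau$ together with $\omega$-invariance, a direct computation shows $\tau\cdot \psi(a) = \psi(\tau(a))$ for every $a\in\Uwp\oplus\Uwp$, where $\psi$ is the isomorphism of Proposition~\ref{prop:UU is HH0}. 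Since $\tau$ acts diagonally on $\Lambda\oplus\Lambda$, triviality of the $\tau$-action on the quotient $(\Uwp\oplus\Uwp)/\Im(1-\sigma)$ reduces to the claim
$$(1-\tau)(\Uwp) \;\subseteq\; \Im(1-\sigma)|_\Lambda \qquad\text{for all } \tau \in Z_{\SN}(\sigma).$$

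Next I would analyze the quotient $\Uwp/\Im(1-\sigma)|_\Lambda$ in the two lattice cases. For $\Lambda = \Lambda_{\GLN} = \Z^N$, a cycle-by-cycle calculation gives $\Im(1-\sigma)|_\Lambda = \Uwp$: for each cycle $(i_1\cdots i_\ell)$ of $\sigma$ the elements $e_{i_s} - e_{i_{s+1}}$ already span the sum-zero sublattice of $\Z^\ell$, and the latter is the restriction of $\Uwp$ to that cycle. The quotient therefore vanishes and the claim is immediate. For $\Lambda = \Lambda_{\SLN} = \Z^N/\Z\mathbf{1}$, let $\sigma$ have cycles $c_1,\ldots,c_k$ of lengths $\ell_1,\ldots,\ell_k$ and set $g=\gcd(\ell_1,\ldots,\ell_k)$. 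I would define
$$\nu : \Uwp \;\longrightarrow\; (1/g)\Z / \Z, \qquad \nu([w]) = \tfrac{1}{\ell_j}\sum_{i\in c_j} w_i \pmod{\Z},$$
and verify that it is well-defined (the independence from the choice of $c_j$ is precisely the defining condition $[w]\in\Uwp$, namely that all cycle-averages coincide), surjective, and that its kernel equals $\Im(1-\sigma)|_\Lambda$: if the common cycle-average $\lambda$ is an integer then $w-\lambda\mathbf{1}$ has zero sum on every cycle, so $[w]=[w-\lambda\mathbf{1}]$ lies in $\Im(1-\sigma)|_\Lambda$. This identifies $\Uwp/\Im(1-\sigma)|_\Lambda \cong \Z/g\Z$.

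The final step is to observe that $\nu$ is $\tau$-invariant. Since $\tau\in Z_{\SN}(\sigma)$ must permute the cycles of $\sigma$ of equal length, one has $\sum_{i\in c_j}(\tau w)_i = \sum_{i \in \tau^{-1}(c_j)} w_i$ with $\ell_{\tau^{-1}(c_j)} = \ell_j$, so after dividing by $\ell_j$ we obtain $\nu(\tau[w]) = \nu([w])$. Consequently $(1-\tau)(\Uwp)\subseteq \ker\nu = \Im(1-\sigma)|_\Lambda$, which completes the proof. The only delicate point is the $\SLN$ identification of $\Uwp/\Im(1-\sigma)|_\Lambda$ with $\Z/g\Z$ (for $\GLN$ this group is already trivial, which is why the matter does not arise there); once the invariant $\nu$ is in hand, centralizer-invariance follows essentially tautologically from the structure of $Z_{\SN}(\sigma)$.
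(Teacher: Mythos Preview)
Your proof is correct and follows the same overall strategy as the paper: both use the isomorphism $\psi$ of Proposition~\ref{prop:UU is HH0} and check that $\tau\cdot m_a = m_{\tau a}$ coincides with $m_a$ in the target, by verifying that the normalizing $q$-power is $\tau$-invariant and that $a,\tau a$ lie in the same $\Im(1-\sigma)$-coset. The paper's proof simply asserts the latter (``observe that \ldots\ $a$ and $\tau a$ lie in the same $\Im(1-\sigma)$-coset''), whereas you actually justify it via the cycle-average invariant $\nu$, which is genuine content in the $\SLN$ case. Your argument is thus a more complete version of the same proof; note also that your identification $\Uwp/\Im(1-\sigma)|_\Lambda\cong\Z/g\Z$ anticipates Lemma~\ref{lem:coset-size}, and the $\tau$-invariance of $\nu$ is equivalent to the observation that $\tau$ preserves the degree map $\Uwp/\Im(1-\sigma)\hookrightarrow\Z/N\Z$ described there.
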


\begin{proof}
We use the same notation as in Proposition \ref{prop:UU is HH0} and its proof.

Observe that if $\tau \in \stab{\SN} \sigma$ then $\tau$ preserves $\Uwp$, $a$ and $\tau a$ lie in the same $\Im(1-\sigma)$-coset, and further that
\[ \omega(\tau a,(1-\sigma_\Q)^{-1}\tau a) = \omega(\tau a,\tau (1-\sigma_\Q)^{-1}a) = \omega(a,(1-\sigma_\Q)^{-1}a),\]
and so taking $\tau$-coinvariants imposes the relation $m_{\tau a} = m_a$, which is already a relation in $\HHz$.  Hence taking $\stab{\SN} \sigma$-coinvariants does not change the space. 
\end{proof}
\begin{lemma}\label{lem:coset-size}
For $\Lambda=\Lambda_{\GLN}$ we have $\Im(1-\sigma) = \Uwp$.  
For $\Lambda = \Lambda_{\SLN}$, the index of $\Im(1-\sigma)$ in $\Uwp$ is $\dl := \gcd(\lambda_1,\ldots,\lambda_k)$, where $\lambda \vdash N$ denotes the cycle type of $\sigma$.  
In particular, $\Uwp/\Im(1-\sigma)$ is a cyclic group of order $\dl$
with cyclic generator of degree $N/\dl$.
\end{lemma}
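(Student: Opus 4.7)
Decompose $\sigma$ into its disjoint cycles $C_1,\dots,C_k$ of lengths $\lambda_1,\dots,\lambda_k$, and accordingly write $\Z^N = \bigoplus_{i=1}^k L_i$ with $L_i\cong\Z^{\lambda_i}$ spanned by the basis vectors in $C_i$. Let $\rho_i=\sum_{j\in C_i} e_j$, so $\rho=\sum_i\rho_i$. Then $\Uw\cap L_i = \Z\rho_i$, while ${}_\Q \Uwp \cap L_i$ is the rational sum-zero hyperplane in $L_i\otimes\Q$, so $\Uwp\cap L_i = R_i$, the integer sum-zero sublattice (the $A_{\lambda_i-1}$ root lattice).

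For the $\GLN$ case, the vectors $(1-\sigma)e_j = e_j-e_{\sigma(j)}$ for $j\in C_i$ are (up to sign) the simple roots, which generate $R_i$ as a $\Z$-module, so $\Im(1-\sigma)|_{L_i} = R_i$. Summing over blocks gives $\Im(1-\sigma) = \bigoplus_i R_i = \Uwp$.

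For the $\SLN$ case, let $\pi:\Z^N\to\Lambda_{\SLN}=\Z^N/\Z\rho$ denote the projection. I would identify $\Lambda_{\SLN}\otimes\Q$ with the sum-zero hyperplane $H\subset\Q^N$ via $[v]\mapsto v-(\deg v/N)\rho$; under this identification ${}_\Q \Uw$ corresponds to $\operatorname{span}\{\rho_i-(\lambda_i/N)\rho\}_i$, and a direct computation using $(w,\rho)=0$ on $H$ gives
\[
{}_\Q \Uwp = \{w\in H \mid (w,\rho_i)=0 \text{ for all } i\}.
\]
Hence $[v]\in \Uwp$ iff the ratios $(v,\rho_i)/\lambda_i$ take a common rational value $a=a(v)$ independent of $i$; integrality of each $(v,\rho_i)\in\Z$ then forces $a\in\tfrac{1}{\dl}\Z$. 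This translation of the orthogonality condition is the principal obstacle, since the Cartan pairing on $\Z^N$ does not descend to $\Z^N/\Z\rho$ (as $\rho$ is not in its radical), and some care is required to compute orthogonal complements in the quotient.

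With this characterization in hand, define the homomorphism
\[
\phi:\pi^{-1}(\Uwp)\longrightarrow \tfrac{1}{\dl}\Z, \qquad \phi(v)=a(v).
\]
Three observations finish the argument: (i) $\phi$ is surjective, witnessed by $v_\star=\sum_i(\lambda_i/\dl)e_{j_i}$ for any chosen coordinates $j_i\in C_i$, which satisfies $\phi(v_\star)=1/\dl$ and $\deg v_\star = N/\dl$; (ii) $\ker\phi = \{v\mid (v,\rho_i)=0 \ \forall\,i\} = \bigoplus_i R_i = \Im(1-\sigma|_{\Z^N})$ by the $\GLN$ case; (iii) $\phi(\rho)=1$. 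Composing $\phi$ with the quotient $\tfrac{1}{\dl}\Z\twoheadrightarrow\tfrac{1}{\dl}\Z/\Z$ yields a surjection whose kernel is $\Im(1-\sigma|_{\Z^N})+\Z\rho$, which is precisely the $\pi$-preimage of $\Im(1-\sigma|_{\Lambda_{\SLN}})$. Therefore $\Uwp/\Im(1-\sigma)\cong\Z/\dl\Z$, and the class of $v_\star$ provides the cyclic generator of degree $N/\dl$.
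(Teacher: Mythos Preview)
Your argument is correct. The paper takes a different, less explicit route: it observes that $\Uwp/\Im(1-\sigma)$ is precisely the torsion subgroup of $\Lambda/\Im(1-\sigma)$, and then appeals to the Smith normal form of the matrix with columns $e_j-e_{\sigma(j)}$ (relative to the basis $\{e_i\}$ of $\Lambda_{\GLN}$, with $\Lambda_{\SLN}=\Lambda_{\GLN}/\Z\rho$), declaring the remaining computation elementary and omitting it. Your approach replaces this appeal by an explicit homomorphism $\phi(v)=(v,\rho_i)/\lambda_i$ whose kernel you identify via the $\GLN$ case; this is more self-contained and makes both the cyclicity and the generator transparent without tracking elementary divisors. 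The two generators agree: the paper's vector of $N/\dl$ ones with $\lambda_i/\dl$ ones in each $\sigma$-orbit and your $v_\star=\sum_i(\lambda_i/\dl)e_{j_i}$ have the same block sums $\lambda_i/\dl$, hence differ by an element of $\bigoplus_i R_i=\Im(1-\sigma|_{\Z^N})$.
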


\begin{proof}
We note that $\Uwp/ \Im(1-\sigma)$ coincides precisely with torsion subgroup of $\Lambda / \Im(1-\sigma)$.  Hence, the result follows from determination of the Smith normal form associated to the generating set $\{ e_j - e_{\sigma(j)} \mid 1 \le j \le N\}$ of $\Im(1-\sigma)$.  This is with respect  to an orthonormal basis $\{e_i \mid 1 \le i \le N \}$  of $\Lambda_{\GLN}$, where we have identified $\Lambda_{\SLN} $ with $\Lambda_{\GLN}/ \Z \sum_{i=1}^N e_i.$ 
For $\GLat $ we see there is no torsion. We omit this elementary computation for $\SLat$.

A cyclic generator of $\Uwp/\Im(1-\sigma)$ is any vector consisting of $N/\dl$ ones and $N-N/\dl$ zeros, 
distributed so there are $\lambda_i/\dl$ ones
in each $\sigma$-orbit of $\{1,2 \cdots  N\}$. 
In particular, it has degree $N/\dl \in \mathbb{Z}/N\mathbb{Z}$.

\end{proof}

\begin{remark}
We note that when $\lambda = (N)$ has one part, $\Im(1-\sigma)|_\Lambda \subseteq \Lambda$ is precisely the root lattice.  In the case of ${\GLN}$, $\Lambda_{\GLN}/\Im(1-\sigma) = \Z^N/\Im(1-\sigma)$ has no torsion (but is free of rank $1$).  In the case  of ${\SLN}$, $\Uwp$ is the weight lattice $\Lambda_{\SLN}$ for which we know the root lattice has index $N$.
\end{remark}

\begin{lemma}\label{lemma:totient}
Fix positive integers $k$ and $N$. Then
$$\sum_{\lambda \vdash N} \dl^k =  (\cP \star J_k) (N)$$
\end{lemma}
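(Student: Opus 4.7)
The plan is a straightforward manipulation via Dirichlet convolution. Let me define $\cP^*(m)$ to be the number of partitions of $m$ whose parts are coprime, i.e.\ partitions $\mu \vdash m$ with $\gcd(\mu) = 1$.

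First I would stratify the sum on the left by the value $d = \dl$. The map $\lambda \mapsto \lambda/d := (\lambda_1/d, \ldots, \lambda_\ell/d)$ gives a bijection between partitions $\lambda \vdash N$ with $\dl = d$ and partitions of $N/d$ with gcd equal to $1$. In particular such $d$ must divide $N$, and we obtain
\[
\sum_{\lambda \vdash N} \dl^k \;=\; \sum_{d \mid N} d^k\, \cP^*(N/d) \;=\; (\cP^* \star \mathrm{Id}_k)(N),
\]
where $\mathrm{Id}_k(n) = n^k$.

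Next I would relate $\cP^*$ to $\cP$ itself. Every partition $\lambda \vdash m$ has some well-defined gcd $e \mid m$, and the same map $\lambda \mapsto \lambda/e$ exhibits
\[
\cP(m) \;=\; \sum_{e \mid m} \cP^*(m/e),
\]
i.e.\ $\cP = \cP^* \star \mathbf{1}$ in the convolution algebra. By Möbius inversion this yields $\cP^* = \cP \star \mu$.

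Finally I would invoke the defining identity of the Jordan totient: $J_k = \mu \star \mathrm{Id}_k$, which is immediate from the formula $J_k(n) = \sum_{d \mid n} \mu(d) (n/d)^k$ given in the excerpt. Putting the pieces together and using associativity and commutativity of Dirichlet convolution,
\[
\sum_{\lambda \vdash N} \dl^k \;=\; (\cP^* \star \mathrm{Id}_k)(N) \;=\; (\cP \star \mu \star \mathrm{Id}_k)(N) \;=\; (\cP \star J_k)(N),
\]
as required. There is no real obstacle here; the only substantive content is the bijective observation that $\lambda \mapsto \lambda/\dl$ sets up $\cP(m) = \sum_{e \mid m} \cP^*(m/e)$, after which the rest is formal number-theoretic bookkeeping.
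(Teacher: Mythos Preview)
Your proof is correct. It takes a different route from the paper's, though both are short.

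The paper argues combinatorially: it interprets $\dl^k$ as the cardinality of $(\Z/\dl\Z)^{\oplus k}$ and then counts elements of this group by their order. Since the number of elements of order exactly $d$ in $(\Z/g\Z)^{\oplus k}$ is $J_k(d)$ whenever $d\mid g$, one has $\dl^k=\sum_{d\mid \dl}J_k(d)$; swapping the order of summation and observing that $\{\lambda\vdash N : d\mid \dl\}$ is in bijection with $\{\mu\vdash N/d\}$ gives $(\cP\star J_k)(N)$ directly. In effect the paper uses the identity $J_k\star\mathbf{1}=\mathrm{Id}_k$, whereas you use its M\"obius inverse $J_k=\mu\star\mathrm{Id}_k$ together with the auxiliary function $\cP^*$ counting coprime partitions. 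Your approach is a clean piece of Dirichlet-convolution bookkeeping; the paper's is slightly more direct in that it never invokes M\"obius inversion or an auxiliary arithmetic function. One small inaccuracy: the paper states the product formula for $J_k$, not the sum $\sum_{d\mid n}\mu(d)(n/d)^k$ you cite, though of course the two are equivalent.
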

\begin{proof}
    The left hand side counts the number of elements in a union of products of cyclic groups
    $$\underbrace{\Z/\dl \Z \oplus \Z/\dl \Z \oplus \cdots \oplus \Z/\dl \Z }_{k}$$
    as $\lambda$ varies.
    We can count these elements by their order. The $k$th totient function $J_k(d)$ precisely counts how many elements of order $d$ there are in $(\Z/g\Z)^{\oplus k}$ when $d \mid g$. 
    On the other hand $\Z/\dl \Z$ has an element of order $d$ precisely if $d \mid \dl$, and in this case it means the partition $\lambda = d \mu$ where $\mu \vdash N/d$.
    Hence 
    \begin{align*}
        \sum_{\lambda \vdash N} \dl^k
        &= \sum_{d \mid N} \sum_{\lambda \vdash N} \# \text{elements of order $d$ in } (\Z/\dl\Z)^{\oplus k}
        \\
    &=
    \sum_{d \mid N} \sum_{\mu \vdash N/d} J_k(d) \\
    &= \sum_{d \mid N} \cP( N/d) J_k(d)
    = (\cP \star J_k) (N).
    \end{align*}
    We note the above sum may also be written $= \sum_{v \in (\Z/N\Z)^{\oplus k} } 
    \cP(\gcd(v_1, \dots , v_k, N))$
\end{proof}

We can now prove the main theorem of this section.  We note that the $\GLN$ case of the following theorem may be read off from \cite[Theorem 5.1]{Etingof-Oblomkov}, however the formula in the $\SLN$ case appears to be new. 

\begin{theorem} \label{thm:HHz KLam formula}
  For $\Lambda = \Lambda_{\GLN} $ we have   $\dim \HHz(\SmashD) = \cP(N)$.
  
For $\Lambda = \Lambda_{\SLN} $ we have $\dim \HHz(\SmashD) = (\cP \star J_2)(N) = \sum_{\lambda \vdash N} \gcd(\lambda)^2.
$
\end{theorem}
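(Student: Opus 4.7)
The plan is to combine the three preceding preparatory results: the conjugacy class decomposition \eqref{eq:HH0 decomposition}, Proposition \ref{prop:UU is HH0}, and Proposition \ref{prop:lose Z}. Starting from \eqref{eq:HH0 decomposition} and applying Proposition \ref{prop:lose Z} inside each summand, the centralizer coinvariants disappear, so
\[
\HHz(\SmashD) \;\cong\; \bigoplus_{[\sigma]\in\Conj{\SN}} \HHz(\K_\omega[\Lambda\oplus\Lambda],\K_\omega[\Lambda\oplus\Lambda]_\sigma).
\]
By Proposition \ref{prop:UU is HH0}, the $\sigma$-summand is identified with $\K[(\Uwp\oplus\Uwp)/\Im(1-\sigma)]$. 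Since the symmetric group acts diagonally on $\Lambda\oplus\Lambda$, the subgroup $\Im(1-\sigma)$ in $\Uwp\oplus\Uwp$ splits as a direct sum of its two coordinate copies, so
\[
\dim_{\K}\K[(\Uwp\oplus\Uwp)/\Im(1-\sigma)] \;=\; \bigl|\Uwp/\Im(1-\sigma)\bigr|^{2}.
\]

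For $\Lambda=\Lambda_{\GLN}$, Lemma \ref{lem:coset-size} says $\Im(1-\sigma)=\Uwp$, so each $\sigma$-summand is one-dimensional. Summing over conjugacy classes of $\SN$, which are indexed by partitions $\lambda\vdash N$, yields exactly $\cP(N)$, proving the first claim.

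For $\Lambda=\Lambda_{\SLN}$, Lemma \ref{lem:coset-size} says $\Uwp/\Im(1-\sigma)$ is cyclic of order $\dl=\gcd(\lambda)$ when $\sigma$ has cycle type $\lambda$. Hence the $\sigma$-summand contributes $\dl^{2}$, and summing over conjugacy classes gives
\[
\dim\HHz(\SmashD) \;=\; \sum_{\lambda\vdash N}\gcd(\lambda)^{2}.
\]
Finally, applying Lemma \ref{lemma:totient} with $k=2$ rewrites this as $(\cP\star J_{2})(N)$, completing the $\SLN$ case.

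Every ingredient has already been established; the only mild subtlety — and thus the one spot to double-check carefully — is that $\Im(1-\sigma)$ really does act coordinatewise on $\Uwp\oplus\Uwp$ in Proposition \ref{prop:UU is HH0} (so the quotient is a product rather than a mixed diagonal quotient), which follows because $1-\sigma$ is built from the diagonal $\SN$-action on $\Lambda\oplus\Lambda$ and commutes with the two projections. No new calculations are needed beyond this observation.
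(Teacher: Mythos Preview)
Your proof is correct and follows essentially the same route as the paper's own argument: combine the decomposition \eqref{eq:HH0 decomposition} with Propositions \ref{prop:UU is HH0} and \ref{prop:lose Z}, then invoke Lemma \ref{lem:coset-size} to compute each summand's dimension and Lemma \ref{lemma:totient} (with $k=2$) for the $\SLN$ identity. Your explicit remark that $\Im(1-\sigma)$ acts coordinatewise on $\Uwp\oplus\Uwp$ is exactly the observation the paper encodes by writing the index as $[\Uwp\oplus\Uwp:\Im(1-\sigma)\oplus\Im(1-\sigma)]$.
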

\begin{proof}

The formula for $\GLN$ follows immediately from Proposition \ref{prop:lose Z} and Lemma \ref{lem:coset-size}: each conjugacy class $[\sigma]$ in the symmetric group contributes the one-dimensional summand $\HHz(A,A_\sigma)$, and there are $\cP(N)$ such conjugacy classes.

For the $\SLN$ case, Lemma \ref{lem:coset-size} shows $\HHz(A,A_\sigma)$ has dimension equal to the index
\[\dl^2 = [\Uwp \oplus \Uwp : \Im(1-\sigma) \oplus \Im(1-\sigma)].\]
Together with Lemma \ref{lemma:totient} this concludes the proof in the $\SLN$ case.
\end{proof}

\begin{proof}[Proof of Theorems \ref{thm: HHz of GL skein algebra} \ref{thm: HHz of SL skein algebra}]

The  same formulas as above for the dimensions of the zeroeth Hochschild homology of the skein algebras for $G =\GLN, \SLN$  now follow from the isomorphism
$$
\SkAlg_G \cong \ee \DAHA{N}{N} \ee
$$
from \cite{GJV}, along with the shift isomorphism of \cite{Marshall1999}, that at our specialized parameters yields
$$ \ee \DAHA{N}{N} \ee \cong \ek{+} \SmashD \ek{+},
$$
which by \cite{Montgomery1980} is Morita equivalent to $\SmashD$. 
\end{proof}

\section{Morita equivalences for DAHA} \label{sec:DAHA stuff}

In this section we give criteria for when the idempotents $\emj$ from equation \eqref{eq:eNj} are conservative, hence define Morita equivalences.  These results are analogous to the determination of spherical values of the Cherednik algebras established in \cite{GS, Bezrukavnikov-Etingof}.

\nocite{Cherednik1995} \nocite{CherednikBook} \nocite{Cherednik-Selecta} \nocite{Cherednik1995}

Throughout this section we assume $t$ is 
not a root of unity.

\begin{theorem}\label{thm:idempotent}
Let $k \in \Q_{>0}$,  $n, m, j \in \Z_{>0}$, with $\ell = \lfloor k \rfloor$ and $m \le \frac nk$.

   Suppose $q=t^{-2k}$.   If $j \le k$ then $\emj$ is conservative for $\HG$.

Suppose $\Sq=t^{k/n}$.   If $j \le k$
   then $\emj$ is conservative for $\Schmaha{n}$.
   
   Suppose $\Sq=t^{k/n}$.  
   If $j \le k$
   then $\emj$ is conservative for $\HS$.
\end{theorem}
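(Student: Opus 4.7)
The proof I would outline proceeds in three stages, matching the sketch given in the introduction.

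\emph{Stage 1: Reduction to $\Y$-finite modules.} Following the Bezrukavnikov--Etingof strategy developed for spherical subalgebras of rational Cherednik algebras, I would first reduce conservativity of $\emj$ on the full module category to conservativity on simple $\Y$-finite modules. The key inputs are that $\Y^{\Sn}$ sits inside the center of the DAHA (or Schmaha), so localizing at the generic point of $\Spec(\Y^{\Sn})$ produces an Azumaya algebra on which $\emj$ is automatically conservative; any module not detected by this generic locus is $\Y$-finite. Since $\emj$ is an idempotent, $\emj M = 0$ iff $\emj L = 0$ for every simple subquotient $L$ of $M$, so it suffices to verify $\emj L \neq 0$ for every simple $\Y$-finite DAHA-module $L$.

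\emph{Stage 2: Passage to an AHA composition factor.} For such an $L$, I would restrict to the AHA subalgebra $\HY \subset A$ (for $A = \HG, \Schmaha{n}$, or $\HS$). By Theorem \ref{thm:L=usq}, $L|_{\HY}$ has a composition factor isomorphic to some $L(\Delta)$ for a right-ordered multisegment $\Delta$. Since $\emj \in \Hfn \subset \HY$, to see that $\emj L \neq 0$ it is enough to produce one such $L(\Delta)$ with $\emj L(\Delta) \neq 0$. Now $L(\Delta)$ is a quotient of the standard module $M(\Delta) = \Ind^{\HY}_{\H_\Delta} \r(\Delta)$, and since $\r(\Delta)|_{\Hf_{\alpha(\Delta)}}$ is the sign representation, $L(\Delta)|_{\Hfn}$ is a quotient of $\Ind^{\Hfn}_{\Hf_{\alpha(\Delta)}} \sgna{\alpha(\Delta)}$. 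Consequently, by Lemma \ref{lemma:e vs sgn} and Claim \ref{claim:composition factors alpha}, the Specht composition factors $\Slam$ of $L(\Delta)|_{\Hfn}$ are precisely those satisfying $\lambda \dom \sort(\alpha(\Delta))^T$, and each of them satisfies $\ek{\alpha(\Delta)} \Slam \neq 0$.

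\emph{Stage 3: Combinatorial core using Proposition \ref{prop:kN+r}.} The specialization $q = t^{-2k}$ (resp.\ $\Sq = t^{k/n}$) constrains the $\Y$-spectrum of a simple $\Y$-finite DAHA-module. Concretely, the relation $\pi^n Y_i \pi^{-n} = q^{-1} Y_i$ (and its Schmaha/SL analogues for $\Spi$) forces the multiset of $\Y$-eigenvalues to be compatible with a $q$-shift, which bounds how long a segment on any single $t^2$-line can be. Exploiting this together with the equivalence $\Delta \sim \Delta'$ that permits permutation of segments on distinct $t^2$-lines, I would argue that one can always produce an AHA composition factor $L(\Delta)$ whose composition $\alpha(\Delta)$ (possibly after regrouping and trimming) fits the hypothesis of Proposition \ref{prop:kN+r}: namely $\alpha \vDash \ell m + r_1$ with at most $\ell = \lfloor k \rfloor$ parts. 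Proposition \ref{prop:kN+r} then upgrades the nonvanishing $\ek{\alpha(\Delta)} \Slam \neq 0$ obtained in Stage 2 to $\emj \Slam \neq 0$ for all $j \leq \ell$, so $\emj L(\Delta) \neq 0$ and hence $\emj L \neq 0$.

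\emph{Main obstacle.} The crux is Stage 3: matching the DAHA specialization bound ``$j \le k$'' with the combinatorial bound ``$\alpha$ has at most $\lfloor k \rfloor$ parts'' in Proposition \ref{prop:kN+r}. One must carefully classify which multisegments can arise as AHA composition factors at the specialized parameter and exploit the equivalence $\sim$ to regroup them appropriately. For the Schmaha and $\HS$ variants, the same three-stage strategy should apply, with the additional bookkeeping of the central $\Spi^n$ (resp.\ the constraint $Z_1 \cdots Z_n = \Zprod$); I would not expect fundamentally new phenomena, but the set of admissible multisegments will differ slightly from the $\HG$ case.
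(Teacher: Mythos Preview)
Your three-stage outline matches the paper's architecture, but Stage~3 has a genuine gap: you have not identified the mechanism that produces a multisegment with the required narrowness or few-parts property. The equivalence $\sim$ only permutes segments lying on \emph{distinct} $t^2$-lines, and the relation $\pi^n Y_i \pi^{-n} = q^{-1}Y_i$ by itself does not bound segment lengths or the number of segments in an AHA composition factor. What the paper actually does is use conjugation by $\pi^{-b}$ (or $\Spi^{-b}$) for a carefully chosen $b$ equal to the \emph{area} of the leftmost start: this cycles the segments with that start to the end, shifting their start by $t^{2k}$. Iterating this procedure (Proposition~\ref{prop:k-narrow}) forces the resulting multisegment to become $k$-narrow on each $t^{2/N_0}$-line; a second round of cycling (Proposition~\ref{prop:l-narrow}), combined with a pigeonhole argument on cyclic sums of start areas, produces a decomposition $\Gamma = A \sqcup B$ with $A$ $\ell$-narrow and $|A| \ge \ell m$. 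Only then can one regroup $A$ into $\le \ell$ blocks and invoke Proposition~\ref{prop:kN+r}. Your phrase ``regrouping and trimming'' hides exactly this nontrivial $\pi$-cycling argument, which is the technical heart of the proof.

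Two smaller points. In Stage~1, the Azumaya claim is incorrect (the DAHA is not finite over $\Y^{\Sn}$) and unnecessary: the paper's reduction (Proposition~\ref{prop:BEG}) simply observes that if $B = \HH/\HH\ee\HH \neq 0$ then, being finitely generated as a $\SymX$--$\SymY$-bimodule, $B$ admits a nonzero quotient $B_\chi$ which is $\Y$-finite and killed by $\ee$. In Stage~2, the word ``precisely'' is wrong: the Specht factors of $L(\Delta)|_{\Hfn}$ are a \emph{subset} of those of $M(\Delta)|_{\Hfn}$, since $L(\Delta)$ is a proper quotient in general; this does not break your argument but it is exactly why one must work with the explicit embedding $L(A)\boxtimes L(B) \hookrightarrow \Res L(\Delta)$ rather than with $M(\Delta)$.
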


\begin{remark}
    Observe that parabolic sign idempotents $\emj$ are not conservative for the affine Hecke algebra, nor the finite Hecke algebra.  Similarly they are not conservative for the skein algebra of the annulus or disk.  Conservativity is a property of the torus.
\end{remark}

\begin{example}
Consider the case $k=2$, $n=4$, and let $M$ be the rectangular representation of $\HG$ or $\HS$ attached to the partition $(2,2)$, studied in \cite{Jordan-Vazirani}.  Then we have $\ek{4}M = 0$.  This shows that the restriction on $k$ is non-trivial.
\end{example}

We first need the following simplifying reduction, adapted from \cite{Bezrukavnikov-Etingof}[Theorem 4.1].

\begin{proposition}\label{prop:BEG}
 Let $\ee$ denote any idempotent in $ \Hf_n\subset \HH_n$, where we let $\HH_n$ denote any of $\HG, \HS, \Schmaha{n}$ below.  The functor
\[e: \HH_n\modu \to \ee\HH_{n}\ee\modu\]
is conservative if, and only if, it is conservative on $\Y$-finite modules.
\end{proposition}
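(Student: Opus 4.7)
The forward direction is immediate: $\Y$-finite modules form a full subcategory of $\HH_n\modu$, so any conservative functor restricts conservatively. The content of the proposition is the converse, for which I follow the strategy of \cite[Theorem 4.1]{Bezrukavnikov-Etingof}.

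Suppose $e$ is conservative on $\Y$-finite modules, yet there exists a nonzero $M \in \HH_n\modu$ with $eM = 0$. The plan is to construct a nonzero $\Y$-finite subquotient of $M$, which will also be annihilated by $e$ (by exactness of multiplication by the idempotent), contradicting the hypothesis. First, replacing $M$ with the cyclic submodule $\HH_n v$ generated by any nonzero $v \in M$ reduces to $M$ finitely generated over $\HH_n$. Noetherianity of $\HH_n$, a standard consequence of its PBW-type decomposition, then supplies a maximal proper submodule, and hence a nonzero simple quotient $L$ on which $eL = 0$ still holds.

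The key structural input is that $\Y^{\Sn}$ sits inside the center $Z(\HH_n)$ and that $\Y$ is free of rank $n!$ over $\Y^{\Sn}$; this holds uniformly for $\HH_n = \HG$, $\Schmaha{n}$, and $\HS$, and is directly visible from the presentations. Since $L$ is simple, Schur's lemma (applied over a splitting field) implies that $Z(\HH_n)$ acts on $L$ through a character $\chi$, and in particular so does $\Y^{\Sn}$. Then $\Y$ acts on $L$ through the finite-dimensional quotient algebra $\Y / \Y \cdot \ker(\chi|_{\Y^{\Sn}})$, of $\K$-dimension $n!$, so every cyclic $\Y$-submodule of $L$ is finite-dimensional. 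Consequently $L$ is $\Y$-finite, and conservativity on such modules forces $eL \neq 0$, the desired contradiction.

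The main technical obstacle is ensuring the eigenvalues of each $Y_i$ lie in $\K$ as required by Definition \ref{def:Yfinite}, while a priori $\chi$ lands in some residue field $Z(\HH_n)/\ker\chi$ that may be a finite extension of $\K$. I would handle this by first extending scalars to a finite splitting extension $\K \subseteq \K'$ containing the necessary eigenvalues, running the argument above over $\K'$, and then observing that restriction of scalars along a finite extension preserves $\Y$-finiteness (a finite-dimensional $\K'$-subspace is still finite-dimensional over $\K$, and the eigenvalues lie in $\K'$ by construction). This descent is routine and does not alter the substance of the argument, which is the central-character reduction.
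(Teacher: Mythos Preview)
Your argument has a genuine gap at the key structural step. You assert that $\Y^{\Sn}$ sits inside the center $Z(\HH_n)$, but this is true only for the affine Hecke algebra, not for the double affine Hecke algebra or the Schmaha. Indeed, from the defining relations one computes, for instance, $\pi Y_n \pi^{-1} = q^{-1}Y_1$ in $\HG$, so already the elementary symmetric polynomial $Y_1\cdots Y_n$ satisfies $\pi (Y_1\cdots Y_n)\pi^{-1} = q^{-1}Y_1\cdots Y_n$ and fails to be central; the same happens with $\Spi$ in $\Schmaha{n}$ and $\HS$. Consequently Schur's lemma does not force $\Y^{\Sn}$ to act by a character on a simple $\HH_n$-module, and your conclusion that every simple $\HH_n$-module is $\Y$-finite is in fact false: at generic parameters the $\cX$-polynomial representation (on which $\Y$ acts by Cherednik--Dunkl operators) is $\Y$-locally finite, but its Fourier-dual $\Y$-polynomial representation, on which $\Y$ acts by multiplication, is simple yet manifestly not $\Y$-locally finite.

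The paper's proof avoids this by never passing to an arbitrary simple quotient. Instead it works directly with the $\HH_n$-bimodule $B = \HH_n/\HH_n e \HH_n$ and exploits the PBW-type fact that $\HH_n$ (hence $B$) is finitely generated as an $S(\cX)$--$S(\Y)$ bimodule. One then finds a character $\chi$ of $S(\Y)$ with $B_\chi := B \otimes_{S(\Y)} \r(\chi) \neq 0$; this $B_\chi$ is a quotient of $\HH_n \otimes_{S(\Y)}\r(\chi)$, which is $\Y$-locally finite, and $eB_\chi=0$ by construction. The essential difference is that the paper specializes the \emph{right} $S(\Y)$-action on the bimodule $B$, rather than attempting to show a left $\HH_n$-simple is automatically $\Y$-finite.
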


\begin{proof}
We will prove the contrapositive statement, using Proposition \ref{prop:Morita}. Suppose 
\[
B = \DAHA{N}{n}/\DAHA{N}{n} \ee\DAHA{N}{n} 
\]
is non-zero. Note that $\DAHA{N}{n}$ and thus $B$ is finitely generated as an $\SymX-\SymY$-bimodule. It follows that there exists a character $\chi:\SymY \to \K$ such that
\[
B_\chi := B \otimes_{\SymY} \r(\chi) \neq 0
\]
is nonzero.  As a $\DAHA{N}{n}$-module, $\B_\chi$ is $\Y$-locally finite and  $\ee B_\chi =0$ as required.
\end{proof}

In Section \ref{sec:apps to skeins} we will show how Theorem \ref{thm:relskeins} follows from the following result, the proof of which comprises the remainder of this section.

As a special case of Theorem \ref{thm:idempotent}, in light of Proposition \ref{prop:BEG}, we state Theorem \ref{theorem:signN neq 0} for the specializations we use for applications to Skein Theory in Section \ref{sec:apps to skeins}.  We will in particular care about the case $j=1$. 

\begin{theorem}
\label{theorem:signN neq 0}
Fix integers $n \ge N \ge 1$, and let $k=\frac{n}{N}$. Suppose that either:
\begin{enumerate}
    \item $q=t^{-2k}$, and $M$ is a simple $\Y$-finite  $\HG$-module, or
    \item $\Sq=\St^{\frac{1}{N}}$, and $M$ is a simple $\Y$-finite $\Schmaha{n}$-module, or
    \item $\Sq=\St^{\frac{1}{N}}$, and $M$ is a simple $\Y$-finite $\HS$-module.
\end{enumerate}
Then we have $\eNj M \neq 0$ for $j \in \Z_{\le k}$.
\end{theorem}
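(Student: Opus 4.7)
The plan is to reduce to the affine Hecke subalgebra $\HY$, use the multisegment classification to extract a simple $\HY$-subquotient $L(\Delta)$ of $M$, and then exploit the extra DAHA generator $\pi$ (resp.\ $\Spi$) together with the specialization of $q$ (resp.\ $\Sq$) to control the underlying multisegment. Since $\eNj$ lies in the finite Hecke subalgebra $\Hf_n \subset \HY$, it suffices to find an $\Hf_n$-composition factor $\Slam$ of $M|_{\Hf_n}$ with $\eNj \Slam \neq 0$.

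First I would use $\Y$-finiteness to choose a simple $\HY$-subquotient $L(\Delta)$ of $M|_{\HY}$, where $\Delta = (\seg{a_1}{\ell_1}, \ldots, \seg{a_m}{\ell_m})$ is right-ordered with $|\Delta|=n$ (Theorem \ref{thm:L=usq}). Since $L(\Delta)$ is a quotient of $M(\Delta) = \Ind_{\H_{\Delta}}^{\HY}\r(\Delta)$ and $M(\Delta)|_{\Hf_n} = \Ind_{\Hf_{\alpha(\Delta)}}^{\Hf_n}\sgna{\alpha(\Delta)}$, every $\Hf_n$-composition factor $\Slam$ of $L(\Delta)$ satisfies $\ek{\alpha(\Delta)}\Slam \neq 0$ by Lemma \ref{lemma:e vs sgn}, and these are a fortiori composition factors of $M|_{\Hf_n}$. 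Using conjugation-invariance of sign-idempotent nonvanishing on Specht modules together with Proposition \ref{prop:kN+r}, it is enough to arrange that $\Delta$ admits a subset of $\leq j$ segment-lengths summing to at least $jN$. A pigeonhole argument reduces this further to the condition $m \leq k$: if the total number of segments is at most $k = n/N$, the top $j \leq m$ segment-lengths sum to at least $jn/m \geq jN$, while if $m < j \leq k$ then the full length $n \geq jN$ already suffices.

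The core of the argument is thus showing that some $\HY$-subquotient of $M$ corresponds to a multisegment with at most $k$ segments. Here the DAHA structure enters: the generator $\pi$ (resp.\ $\Spi$) cyclically shifts the $\Y$-weight tuple with a twist by a power of $q$ (resp.\ $\Sq$), and under $q = t^{-2k}$ (resp.\ $\Sq = \St^{1/N}$) this twist is precisely the amount required to extend a segment by $k$ eigenvalue steps along its $t^2$-line. Combined with the centrality (resp.\ triviality) of $\pi^n$ in $\HG$ and $\Schmaha{n}$ (resp.\ $\HS$), the multiset of $\Y$-weights in $M$ is forced to be invariant under $a \mapsto q a$; appealing to the equivalence $\Delta \sim \Delta'$ across distinct $t^2$-lines and the right-ordering convention, one iteratively coalesces short segments along a $\pi$-orbit and extracts an $\HY$-subquotient whose multisegment satisfies $m \leq k$.

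The main obstacle is this last combinatorial analysis: tracking precisely how the $\pi$-twist permutes segment starts while respecting the right-ordering convention, and executing the argument uniformly across the three cases ($\HG$, $\Schmaha{n}$, $\HS$) with their different central normalizations and specializations. Once the bound $m \leq k$ is achieved, the theorem follows immediately from Proposition \ref{prop:kN+r} and Lemma \ref{lemma:e vs sgn}.
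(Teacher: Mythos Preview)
Your overall strategy---restrict to $\HY$, pick a simple subquotient $L(\Delta)$, and use the $\pi$-action to improve $\Delta$---is exactly the paper's. The gap is in the target you aim for. You reduce to finding an $\HY$-subquotient whose multisegment has at most $k$ segments (or a subset of $\leq j$ segments of total length $\geq jN$), and this is generally impossible. The $\pi$-shift multiplies a start by $q^{-1}=t^{2k}$, which is a power of $t^2$, so it preserves the partition of starts into $t^2$-lines; and passing to subquotients of $M(\Gamma)$ can only merge segments whose starts lie on the \emph{same} $t^2$-line (the starts of any $L(\Delta')$ occurring in $M(\Gamma)$ are a subset of the starts of $\Gamma$). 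Hence if the initial $\Delta$ has starts on several distinct $t^2$-lines, every multisegment you can reach still has at least that many segments. Already for $k=1$, $n=N=2$, a $1$-narrow $\Delta=(\seg{a}{1},\seg{b}{1})$ with $a,b$ on different $t^2$-lines has two length-$1$ segments and no $\pi$-iterate or subquotient reduces this; your criterion fails, yet $\ek{2}L(\Delta)\neq 0$ because $M(\Delta)=L(\Delta)$ and $M(\Delta)|_{\Hf_2}$ contains $S^{(1,1)}$.

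The paper replaces ``few segments'' by a narrowness condition on the \emph{starts}. First one uses $\pi$ to force $\Delta$ to be $k$-narrow: on each $t^{2/N_0}$-line the starts span fewer than $n_0$ steps (Proposition~\ref{prop:k-narrow}). This does not bound the number of segments. Then, still using $\pi$ to cyclically permute the sequence of start areas along each line, one finds an $\ell$-narrow right-ordered prefix $A$ with $|A|\geq \ell N$ via an averaging/pigeonhole over cyclic shifts (Proposition~\ref{prop:l-narrow}, with $\ell=\lfloor k\rfloor$). Finally, Proposition~\ref{prop:signABneq0lnarrow} shows directly that $\ek{N^\ell}L(A\sqcup B)\neq 0$, by further decomposing $A$ into $1$-narrow pieces $A^{(*,1)}\sqcup\cdots\sqcup A^{(*,\ell)}$ and invoking Proposition~\ref{prop:kN+r}. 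So Proposition~\ref{prop:kN+r} is applied not to $\alpha(\Delta)$ itself but to the composition $(|A^{(*,1)}|,\ldots,|A^{(*,\ell)}|)$, which by construction has $\leq \ell$ parts---this is where your reduction and the paper's diverge.
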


In the proof of Theorem \ref{thm:idempotent}, it will suffice to restrict $M$ to an $\H_n$-module, and to look for $M'$ an $\H_n$-submodule such that $\emj M' \neq 0$.  The submodule $M'$ will be one of the simple quotients $L(\Delta)$ of the induced modules $M(\Delta)$ constructed from a multisegment $\Delta$. 
The conjugation action of $\pi$ (resp. $\Spi$) will be a key tool in finding appropriate $\Delta$. 
Before beginning the proof, we collect a number of preliminary results we will need.
 \begin{lemma} \label{lemma:en not kill M}
     Let $n= |\Delta|$.  Then $\ek{n} M(\Delta) \neq 0$. In particular if $\Delta$ is such that $L(\Delta) = M(\Delta)$ we have $\ek{n} L(\Delta) \neq 0$.
 \end{lemma}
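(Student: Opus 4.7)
My plan is to verify the nonvanishing directly on the canonical generator $v_\Delta = 1 \otimes 1 \in M(\Delta)$. The key point is that although $v_\Delta$ is only a sign eigenvector for the parabolic $\Hf_\Delta \subset \Hf_n$, one can antisymmetrize over the bigger group $\Sn$ without producing cancellation, because the contribution of $W_\Delta := \Sm{\alpha(\Delta)}$ factors out cleanly.

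First, I would invoke the standard coset decomposition $\Sn = W^\Delta \cdot W_\Delta$, where $W^\Delta$ is the set of minimal length coset representatives. This decomposition is length-additive, so $T_{uw} = T_u T_w$ for $u \in W^\Delta$, $w \in W_\Delta$. Plugging this into the definition of $\ek{n}$ gives
\[
\ek{n} v_\Delta \;=\; \frac{1}{[n]_{t^{-2}}!}\sum_{u\in W^\Delta} (-t^{-1})^{\ellen(u)} T_u\Bigg(\sum_{w\in W_\Delta}(-t^{-1})^{\ellen(w)}T_w\Bigg) v_\Delta .
\]
By the definition of $\r(\Delta)$, each $T_w$ with $w \in W_\Delta$ acts on $v_\Delta$ by $-t^{-1}$, so the inner sum collapses to $\big(\sum_{w \in W_\Delta} t^{-2 \ellen(w)}\big) v_\Delta$, the Poincaré polynomial of $W_\Delta$ evaluated at $t^{-2}$. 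Since $t$ is not a root of unity, this scalar is nonzero, and the normalization $[n]_{t^{-2}}!$ is invertible.

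The remaining step is to argue that the outer sum $\sum_{u\in W^\Delta} (-t^{-1})^{\ellen(u)} T_u v_\Delta$ is nonzero. For this I would use the Bernstein-type PBW identification $\H_n \cong \Hf_n \otimes \Y$ of vector spaces, together with the fact that $\H_n$ is free as a right $\H_\Delta$-module on the basis $\{T_u : u \in W^\Delta\}$. This implies $M(\Delta) = \H_n \otimes_{\H_\Delta} \r(\Delta)$ has basis $\{T_u v_\Delta : u \in W^\Delta\}$, and in particular these vectors are linearly independent, so the displayed element is nonzero. The ``in particular'' statement then follows since $\ek{n}L(\Delta) = \ek{n}M(\Delta) \ne 0$ when $L(\Delta) = M(\Delta)$.

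The only mildly nontrivial ingredient is the freeness of $M(\Delta)$ on the basis $\{T_u v_\Delta\}_{u \in W^\Delta}$, which is standard for parabolic induction from $\H_\Delta$ to $\H_n$ (equivalently, $M(\Delta)$ restricted to $\Hf_n$ is $\Ind_{\Hf_\Delta}^{\Hf_n}\sgna{\Delta}$, and the result can also be seen via Frobenius reciprocity since $\Hom_{\Hf_\Delta}(\sgna{\Delta}, \sgna{n}|_{\Hf_\Delta}) = \K$). No further input from the theory of multisegments is needed, which is consistent with the fact that the lemma makes no non-degeneracy assumption on $\Delta$.
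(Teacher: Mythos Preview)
Your proof is correct. The paper's own proof is a one-liner: it observes that $\Res_{\Hfn}^{\H_n} M(\Delta) = \Ind_{\Hf_{\alpha(\Delta)}}^{\Hfn} \sgna{\alpha(\Delta)}$ and then invokes the standard fact (Claim~\ref{claim:composition factors alpha}) that the composition factors $\Slam$ of this induced module are exactly those with $\lambda \dom \sort(\alpha)^T$, which of course includes $\lambda = (1^n)$. Your parenthetical Frobenius reciprocity remark is essentially this argument.

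Your primary argument is a bit different in flavor: rather than appealing to the classification of composition factors, you exhibit $\ek{n} v_\Delta \neq 0$ by a direct coset computation, using that $\{T_u v_\Delta : u \in W^\Delta\}$ is a basis of $M(\Delta)$ (equivalently, that $\H_n$ is free over $\H_\Delta$ on minimal coset representatives). This is more hands-on and entirely self-contained, whereas the paper's route is shorter because it leans on the representation-theoretic infrastructure already set up (Lemma~\ref{lemma:e vs sgn} and Claim~\ref{claim:composition factors alpha}). Both are fine; the paper's version fits more naturally into the surrounding narrative about parabolic sign idempotents and dominance order.
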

 \begin{proof}
     Let $\alpha = \alpha(\Delta).$ Restricting to $\Hfn$ we have $\Res_{\Hfn}^{\H_n} M(\Delta) = \Ind_{\Hf_\alpha}^{\Hfn} \sgna{\alpha}$. The result follows from Claim \ref{claim:composition factors alpha}.
 \end{proof}

\begin{proposition}\label{prop:signABneq0}
Let $\Delta$ be a right-ordered multisegment of the form $\Delta = A\sqcup B$, such that $A$ is $1$-narrow.  Then for any $0\leq m \leq p=|A|$ we have $\ek{m} L(\Delta)\neq 0$.
\end{proposition}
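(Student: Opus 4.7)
The plan is to exhibit inside $L(\Delta)$ a non-zero vector $w$ that is annihilated by $T_i + t^{-1}$ for every $1 \le i \le p-1$, i.e.\ a sign vector for the parabolic $\Hf_p \subseteq \Hfn$ spanning the first $p$ strands. A short computation then shows that any such $w$ satisfies $\ek{m} w = w$ for every $0 \le m \le p$, since $\ek{m}$ is the sign idempotent of the sub-parabolic $\Hf_m \subseteq \Hf_p$; this will immediately give $\ek{m} L(\Delta) \ni w \neq 0$.

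To produce $w$, I first want to embed $L(A)$ inside $L(\Delta)$ as an $\H_p$-submodule. By transitivity of induction, $M(\Delta) = \Ind_{\H_{(p,n-p)}}^{\H_n}\!\bigl(M(A) \boxtimes M(B)\bigr)$, and Frobenius reciprocity turns the canonical surjection $M(\Delta) \twoheadrightarrow L(\Delta)$ into a non-zero $\H_{(p,n-p)}$-homomorphism $\psi \colon M(A) \boxtimes M(B) \to L(\Delta)$ (non-vanishing is forced because the image of $M(A)\boxtimes M(B)$ generates $M(\Delta)$ under $\H_n$). The key observation is that upon restricting only the $\H_p$-action on both source and target, $M(A)\boxtimes M(B)$ becomes a direct sum of $\dim M(B)$ copies of $M(A)$; since $A$ is $1$-narrow, Corollary \ref{cor:distinctstarts} says $M(A) = L(A)$ is simple, so the image of $\psi$ is a semisimple $\H_p$-module isomorphic to a non-zero direct sum of copies of $L(A)$, and in particular contains a copy of $L(A)$ sitting inside $L(\Delta)$.

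To conclude, I apply Lemma \ref{lemma:en not kill M} to $A$: it provides $\ek{p} L(A) \neq 0$, so I may pick $v \in L(A) \subseteq L(\Delta)$ with $w := \ek{p} v \neq 0$. This $w$ is the required sign vector for $\Hf_p$, and the opening paragraph finishes the proof. No step looks like a genuine obstacle; the $1$-narrow hypothesis is used in exactly one place, namely to guarantee simplicity of $M(A)$ so that copies of $L(A)$ persist under arbitrary $\H_p$-module quotients, and everything else is a standard adjunction together with a routine idempotent manipulation.
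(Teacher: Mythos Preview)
Your proof is correct and follows essentially the same approach as the paper's: both embed $L(A)$ (or $L(A)\boxtimes L(B)$) into $\Res L(\Delta)$ via Frobenius reciprocity, use the $1$-narrow hypothesis together with Corollary~\ref{cor:distinctstarts} to identify $M(A)=L(A)$, and then invoke Lemma~\ref{lemma:en not kill M}. Your version is slightly more explicit about why $\ek{m}L(\Delta)\neq 0$ for all $m\le p$ (via $\ek{m}w=w$ once $w\in\ek{p}L(A)$), and it avoids the paper's appeal to the unique simple quotient $L(B)$ by working only with the $\H_p$-restriction, but these are cosmetic differences rather than a different route.
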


\begin{proof}
We begin by observing that for any $m\leq p$, and for any multisegment $A$ of size $p=|A|$, $\ek{m} M(A)\neq 0$ by Lemma \ref{lemma:en not kill M}.
By Lemma \ref{cor:distinctstarts}, we have $L(A)=M(A)$, so in particular $\ek{m} L(A)\neq 0$.  We have further that
\[
M(\Delta)=M(A\sqcup B) = \Ind^{\H_n}_{\H_p\otimes \H_{n-p}}(M(A)\boxtimes M(B))
\]
has unique simple quotient $L(\Delta)$, while $M(B)$ has a unique simple quotient $L(B)$ since $A \sqcup B$ is right-ordered.  Hence the canonical map of $\H_p\otimes\H_{n-p}$-modules, $L(A)\boxtimes L(B)\to \Res^{\H_n}_{\H_p\otimes\H_{n-p}}(L(\Delta))$ is nonzero, hence an injection.  It follows therefore that $\ek{m} L(\Delta)$ is nonzero.
\end{proof}

We generalize Proposition \ref{prop:signABneq0}  below. However the proof of Proposition \ref{prop:signABneq0lnarrow} potentially requires replacing the multisegment by an equivalent one.

\begin{proposition}\label{prop:signABneq0lnarrow}  Fix natural numbers $n$ and $\ell$, and suppose $m \in \Z_{>0}$ satisfies $\ell m \le n$. 
Let $\Delta$ be a right-ordered multisegment of the form $\Delta = A\sqcup B$, such that $A$ is $\ell$-narrow and  $\ell m \leq|A|$. Then we have $\ek{m^\ell} L(\Delta)\neq 0$.
\end{proposition}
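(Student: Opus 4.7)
The plan is to mimic the proof of Proposition \ref{prop:signABneq0}, with the new ingredient being a more delicate analysis in the regime $\ell > 1$ where $L(A) \neq M(A)$ in general. First, exactly as in the $\ell = 1$ case, since $\Delta = A \sqcup B$ is right-ordered, the canonical $\H_{|A|} \otimes \H_{n-|A|}$-module map $L(A) \boxtimes L(B) \to \Res^{\H_n}_{\H_{|A|} \otimes \H_{n-|A|}} L(\Delta)$ is nonzero — it sends the generator of the source to the image of $v_\Delta$ in $L(\Delta)$ — hence injective by simplicity of $L(A) \boxtimes L(B)$. It therefore suffices to show $\ek{m^\ell} L(A) \neq 0$.

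\textbf{Paragraph 2.} To analyze $L(A)$, I use equivalence to rewrite $A$ as $A' = A^{(1)} \sqcup \cdots \sqcup A^{(r)}$, with each $A^{(i)}$ supported on a single and distinct $t^2$-line. This is allowed since equivalence freely permutes segments on distinct lines. Iterated application of Theorem \ref{thm:RepaRepb} to $M(A')$ identifies its simple head $L(A) = L(A')$ with the parabolic induction $\Ind^{\H_{|A|}}_{\H_{p_1} \otimes \cdots \otimes \H_{p_r}}\bigl(L(A^{(1)}) \boxtimes \cdots \boxtimes L(A^{(r)})\bigr)$ (where $p_i = |A^{(i)}|$), which is simple by distinctness of the lines. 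In particular, the restriction of $L(A)$ to $\H_{p_1} \otimes \cdots \otimes \H_{p_r}$ contains $L(A^{(1)}) \boxtimes \cdots \boxtimes L(A^{(r)})$ as a generating submodule. The problem thus reduces to the single-$t^2$-line case for each $L(A^{(i)})$, with the reduction from the collection of single-line conclusions to $\ek{m^\ell} L(A) \neq 0$ handled by Proposition \ref{prop:kN+r} applied to a composition built out of each line's contribution.

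\textbf{Paragraph 3.} For the single-$t^2$-line case, the $\ell$-narrow hypothesis forces all starts of $A^{(i)}$ to lie within $\{a t^{-2j} : 0 \leq j \leq \ell - 1\}$ for some $a$, giving at most $\ell$ distinct starts. Grouping the segments by start value and applying Theorem \ref{thm:equalstarts} internally to each same-start group shows that each same-start sub-multisegment has simple standard module, which permits a refinement of $\alpha(A^{(i)})$ into at most $\ell$ coarse blocks. The main obstacle is that for $\ell > 1$ one can have $L(A^{(i)}) \subsetneq M(A^{(i)})$, so Specht factors of the standard module may die in the radical and fail to transfer to the simple head. Resolving this will require the Bernstein--Zelevinsky combinatorics controlling the radical of a single-$t^2$-line standard module, in order to verify that a Specht factor $\Slam$ with $\lambda \dom (m^\ell, 1^{|A^{(i)}| - m\ell})^T$ (equivalently, $\ek{m^\ell}\Slam \neq 0$ by Lemma \ref{lemma:e vs sgn}) persists in $L(A^{(i)})$, completing the argument.
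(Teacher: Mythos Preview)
Your reduction in Paragraph~1 is fine and matches the paper.  The gap is in Paragraphs~2--3: having split $A$ first by $t^2$-line and then by start value, you end up with blocks $A^{(i,j)}$ indexed by pairs (line~$i$, start-position~$j$).  Two problems follow.  First, the ``obstacle'' you flag in Paragraph~3 is not actually an obstacle: since each same-start block $A^{(i,j)}$ has $M(A^{(i,j)})=L(A^{(i,j)})$ and the concatenation $A^{(i,1)}\sqcup\cdots\sqcup A^{(i,\ell_i)}$ is right-ordered, you already have the embedding $\boxtimes_j L(A^{(i,j)})\hookrightarrow L(A^{(i)})$, so $\ek{\alpha^{(i)}}L(A^{(i)})\neq 0$ for the composition $\alpha^{(i)}=(|A^{(i,1)}|,\ldots)$ with at most $\ell$ parts---no analysis of the radical of $M(A^{(i)})$ is needed.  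Second, and more seriously, assembling these over all lines~$i$ gives a composition with up to $r\ell$ parts, not $\leq\ell$ parts, so Proposition~\ref{prop:kN+r} does not apply.  (Your closing sentence in Paragraph~3 seems to want $\ek{m^\ell}L(A^{(i)})\neq 0$ for each individual line, but $|A^{(i)}|$ need not be $\geq m\ell$; only the total $|A|$ is.)

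The missing idea is to slice $A$ the other way: group the segments by start-position~$j\in\{1,\ldots,\ell\}$ \emph{across all $t^2$-lines at once}, setting $A^{(*,j)}=\bigsqcup_i A^{(i,j)}$.  Each $A^{(*,j)}$ is then $1$-narrow (starts on distinct lines, or equal), so $M(A^{(*,j)})=L(A^{(*,j)})$ and $\ek{|A^{(*,j)}|}L(A^{(*,j)})\neq 0$ by Lemma~\ref{lemma:en not kill M}.  One checks that the reordering $\Gamma=A^{(*,1)}\sqcup\cdots\sqcup A^{(*,\ell)}$ is still right-ordered and equivalent to $A$, so $L(\Gamma)\cong L(A)$ and $\boxtimes_j L(A^{(*,j)})\boxtimes L(B)\hookrightarrow L(\Delta)$.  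Now the relevant composition $\alpha=(|A^{(*,1)}|,\ldots,|A^{(*,\ell)}|)$ has at most $\ell$ parts summing to $|A|\geq\ell m$, and Proposition~\ref{prop:kN+r} finishes.  This ``transpose'' of your double decomposition is exactly what makes the part-count work.
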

\begin{proof}
    Write $A = A^{(1)} \sqcup A^{(2)} \sqcup \cdots \sqcup A^{(s)}$ where each  $A^{(i)}$
    is a maximal subcollection of segments of $A$ lying on a given $t^2$-line.
    Note each $A^{(i)}$ is $\ell$-narrow.  Let $a_i$ be the first start of $A^{(i)}$.
    Because it is $\ell$-narrow, each $A^{(i)}$ can be further decomposed as $A^{(i)}=A^{(i,1)} \sqcup \cdots \sqcup A^{(i,\ell)}$ where  $A^{(i,j+1)}$ are those segments with start $a_i t^{2j}$. 
    Then set 
    $A^{(*,j)}=A^{(1,j)} \sqcup \cdots \sqcup A^{(s,j)}$, which by construction
    is $1$-narrow (or possibly empty), i.e. any two starts  are either equal or lie on distinct $t^2$-lines, and further
we have preserved being right-ordered, so that
$\Gamma = A^{(*,1)} \sqcup \dots \sqcup A^{(*,\ell)}$ is right-ordered and equivalent to $A$.

Let $\alpha = (|A^{(*,1)}|, \dots, |A^{(*,\ell)}|)$ which is a composition with $\le \ell$ nonzero parts. 
We may write $|\alpha|  = \ell m + r_1$ and $|B| = r_2$ with $\ell m +r_1+r_2 = n$.
Since each 
$A^{(*,j)}$    is $1$-narrow, $\ek{\alpha} ( L(A^{(*,1)})  \boxtimes \cdots \boxtimes L(A^{(*,\ell)})) \neq 0 $
by Lemma \ref{lemma:en not kill M}.   
As  $\Gamma$ is right-ordered, we have
$L(A^{(*,1)})  \boxtimes \cdots \boxtimes L(A^{(*,\ell)}) \boxtimes L(B) \subseteq \Res L(\Gamma) \cong L(\Delta)$, and so $\ek{\alpha} L(\Delta) \neq 0$.  Finally, (restricting further to $\Hf$) Proposition \ref{prop:kN+r} implies $\ek{m^{\ell}} L(\Delta) \neq 0$ 
\end{proof}

\begin{lemma}
\label{lemma:narrow-linkage}
Let $\Gamma$ be a multisegment which is $s$-narrow.  Let $L$ be any simple subquotient of $M(\Gamma)$.  Write $L=L(\Delta)$ for some right-ordered multisegment $\Delta$.  Then $\Delta$ is also $s$-narrow.
\end{lemma}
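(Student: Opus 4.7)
The plan is to reduce the lemma to the following multiset-theoretic statement: whenever $L(\Delta)$ is a simple subquotient of $M(\Gamma)$, the multiset of starts of $\Delta$ is contained in the multiset of starts of $\Gamma$. Granting this, the lemma follows immediately, since the condition of being $s$-narrow (Definition \ref{def:narrow}) only constrains pairs of starts, so passing to any sub-multiset preserves it.

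To establish the multiset inclusion I will invoke the Bernstein--Zelevinsky classification of composition factors of $M(\Gamma)$ (as developed in the sign-representation convention in \cite{V-multisegments}, building on \cite{BZ77,Zelevinsky}): the right-ordered multisegments $\Delta$ such that $L(\Delta)$ occurs as a composition factor of $M(\Gamma)$ are precisely those obtained from $\Gamma$ by a finite sequence of elementary intertwining operations, each acting on a pair of linked segments lying on a common $t^2$-line. Such an operation comes in two cases. In the overlapping case, where the two linked segments share at least one $Y$-eigenvalue, the operation replaces the pair by their union and intersection segments, and a direct check shows the multiset of starts of the pair is preserved. In the adjacent case, where the two segments together form a single contiguous interval with empty overlap, the operation collapses them into a single union segment, and one of the two original starts is lost.

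Either way, the multiset of starts can only shrink weakly under one elementary step, so chaining steps along any path from $\Gamma$ down to $\Delta$ yields the desired inclusion by induction on the length of the chain. The main care needed is in correctly transcribing the Bernstein--Zelevinsky classification into our sign-representation convention (in which $Y_i$-eigenvalues decrease by $t^{-2}$ along a segment); the key observation that elementary operations preserve or reduce the multiset of starts is convention-independent, but the direction of the Zelevinsky order must be set up correctly as in \cite[Section 3, 9]{V-multisegments}.
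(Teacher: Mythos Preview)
Your proposal is correct and follows essentially the same approach as the paper: both reduce the lemma to showing that the starts of $\Delta$ form a subset of the starts of $\Gamma$, and both establish this by analyzing what happens under the elementary Bernstein--Zelevinsky operations on linked pairs of segments (union/intersection in the overlapping case, union alone in the adjacent case). The only difference is packaging: the paper writes out the two-segment case explicitly (with the short exact sequence) and then appeals to ``transitivity of induction,'' whereas you invoke the full Zelevinsky classification of composition factors directly and induct on the length of a chain of elementary operations---but the underlying mechanism is identical.
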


\begin{proof}
 We shall show that the starts of $\Delta$  form a subset of the starts of $\Gamma$, hence if $\Gamma$ is $s$-narrow, then so is $\Delta$.
 We will only prove it for the case that $\Gamma$ consists of two segments, and then by transitivity of induction, the argument extends to a multisegment with an arbitrary number of segments.
 
 Let $\Gamma=(\seg{a}{\ell},\seg{b}{p})$ and
 $\Gamma'=(\seg{b}{p},\seg{a}{\ell})$.  Since we will consider both multisegments, and in fact $M(\Gamma)$ and $M(\Gamma')$ have the same composition factors, let us assume $\Gamma$ is right-ordered. 
Recall from \cite{V-multisegments} that $M(\Gamma)$ is simple and further $M(\Gamma) \cong M(\Gamma')$ except in the special case $(\ast)$ that $b= a t^{2z}$ for some positive integer $z \in \Z_{>0}$ such that  $p \ge z \ge p-\ell+1$, as pictured in Figure \ref{fig:linkage}. 

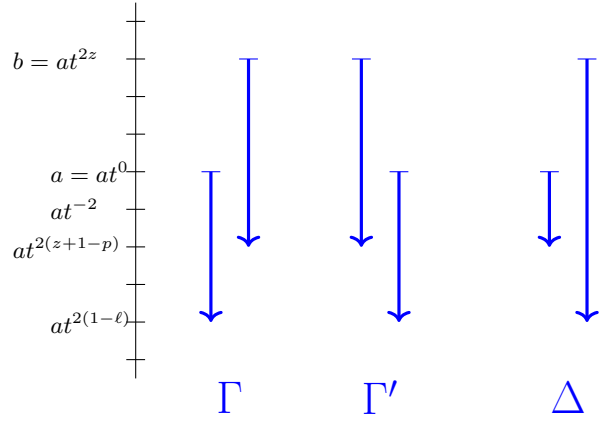
\begin{figure}
    \centering
    \begin{tikzpicture}[scale=0.5]
\begin{scope}[shift={(0,0)}]
\draw[black] (0,-0.5) -- (0, 9.5);
\draw[black] (-0.25,0) -- (0.25, 0);
\draw[black] (-0.25,1) -- (0.25, 1);
\draw[black] (-0.25,2) -- (0.25, 2);
\draw[black] (-0.25,3) -- (0.25, 3);
\draw[black] (-0.25,4) -- (0.25, 4);
\draw[black] (-0.25,5) -- (0.25, 5);
\draw[black] (-0.25,6) -- (0.25, 6);
\draw[black] (-0.25,7) -- (0.25, 7);
\draw[black] (-0.25,8) -- (0.25, 8);
\draw[black] (-0.25,9) -- (0.25, 9);
{\scalefont{.8}
\draw[anchor=west] (-2.5,5) node  {$a=a t^{0}$};
\draw[anchor=west] (-2.5,4) node  {$a t^{-2}$};
\draw[anchor=west] (-2.5,1) node  {$a t^{2(1-\ell)}$};
\draw[anchor=west] (-3.5,8) node  {$b=a t^{2z}$};
\draw[anchor=west] (-3.5,3) node  { $a t^{2(z+1-p)}$};
}
\end{scope}

\begin{scope}[shift={(0,0)}]
\draw[->, very thick, blue] (2,5) -- (2, 1);
\draw[->, very thick, blue] (3,8) -- (3, 3);
\draw[blue] (1.75,5) -- (2.25, 5);
\draw[blue] (2.75,8) -- (3.25, 8);
\draw (2.5,-1) node[blue]  {{\scalefont{1.5}$\Gamma$}};
\end{scope}
\begin{scope}[shift={(4,0)}]
\draw[->, very thick, blue] (3,5) -- (3, 1);
\draw[->, very thick, blue] (2,8) -- (2, 3);
\draw[blue] (2.75,5) -- (3.25, 5);
\draw[blue] (1.75,8) -- (2.25, 8);
\draw (2.5,-1) node[blue]  {{\scalefont{1.5}$\Gamma'$}};
\end{scope}

\begin{scope}[shift={(5,0)}]
\draw[->, very thick, blue] (6,5) -- (6, 3);
\draw[->, very thick, blue] (7,8) -- (7, 1);
\draw[blue] (5.75,5) -- (6.25, 5);
\draw[blue] (6.75,8) -- (7.25, 8);

\draw (6.5,-1) node[blue]  {{\scalefont{1.5}$\Delta$}};
\end{scope}

\end{tikzpicture}
    \caption{$\Gamma$ and $\Delta$ are right-ordered, but $\Gamma'$ is not. Note the starts of $\Delta$ are a subset of those of $\Gamma$.}
    \label{fig:linkage}
\end{figure}
Then we have  short exact sequences,
\begin{gather*}
0 \to L(\Delta) \to M(\Gamma) \to L(\Gamma) \to 0,
\\
0 \to L(\Gamma) \to M(\Gamma') \to L(\Delta) \to 0
\end{gather*}
where $\Delta$ is  right-ordered, with either
$ \Delta = (\seg{b}{\ell+p})$
or
$ \Delta = (\seg{a}{p_1},\seg{b}{p_2}),$
  for some $p_1, p_2$ with $p_1+p_2=\ell_1+\ell_2$
  (specifically $p_1 = p-z$, $p_2 = z + \ell$). 
  Since $\Delta$ does not satisfy $(\ast)$, it holds that $M(\Delta)=L(\Delta)$; this is used in the general induction argument which we have omitted.  Finally, we observe that the starts of $\Delta$ are a subset of $\{a,b\}$.
\end{proof}

\begin{proposition}
    
\label{prop:k-narrow}
Fix an integer $n \geq 1$, and let $k\in \Q_{>0}$.  Suppose that either:
\begin{enumerate}
    \item $q = t^{-2k}$, and $M$ is a simple and $\Y$-finite $\HG$-module, or
     \item $\Sq = \St^{\frac{k}{n}}$, and $M$ is a simple and $\Y$-finite  $\Schmaha{n}$-module,
    \item $\Sq = \St^{\frac{k}{n}}$, and $M$ is a simple and $\Y$-finite  $\HS$-module,
\end{enumerate}
Then $\Res^{\HH}_{\H}M$ contains a simple submodule of the form $L(\Delta)$, where $\Delta$ is a $k$-narrow right-ordered multisegment.

\end{proposition}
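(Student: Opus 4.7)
The strategy is to exploit the elements $\pi$ (in the $\GL$ case) or $\Spi$ (in the $\SL$ and Schmaha cases), which lie in $\HH$ but not in $\H$, to successively replace simple $\H$-submodules of $\Res^{\HH}_{\H} M$ by narrower ones. Since $M$ is $\Y$-finite, Theorem \ref{thm:L=usq} gives some simple $\H$-submodule $L(\Delta_0) \subseteq \Res^{\HH}_{\H} M$ with $\Delta_0$ right-ordered. Writing $k = n_0/N_0$ in lowest terms, define the \emph{width}
\[
w(\Delta) = \max \bigl\{\, |z| \in \Z_{\geq 0} : a/b = t^{2z/N_0} \text{ for some starts } a,b \text{ of } \Delta\, \bigr\},
\]
so that $\Delta$ is $k$-narrow exactly when $w(\Delta) < n_0$. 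The plan is to produce a sequence of simple $\H$-submodules of $\Res^{\HH}_{\H} M$ whose widths strictly decrease, starting from $L(\Delta_0)$.

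Conjugation by $\pi$ is an automorphism of $\H$ that cyclically rotates $Y_1, \dots, Y_n$ with a twist by $q^{\pm 1} = t^{\mp 2k}$ at the boundary; conjugation by $\Spi$ acts analogously on the $Z_i$ with twists by powers of $\Sq^{\pm 2} = t^{\mp 2k/n}$. Consequently, for any simple $\H$-submodule $L(\Delta) \subseteq M$ and any $j \in \Z$, $\pi^j L(\Delta) \subseteq M$ (resp.\ $\Spi^j L(\Delta) \subseteq M$) is again a simple $\H$-submodule, isomorphic to $L(\Delta^{(j)})$ for a right-ordered multisegment $\Delta^{(j)}$ obtained from $\Delta$ by a cyclic rotation of its eigenvalue list together with the appropriate multiplicative twists. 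When $\Delta$ has width $\geq n_0$, witnessed by starts $a, b$ with $a/b = t^{2z/N_0}$ and $|z| \geq n_0$, a suitable choice of $j$ brings two segments of $\Delta^{(j)}$ into the Bernstein--Zelevinsky linkage range of the form $\seg{a'}{\ell}, \seg{b'}{p}$ with $b' = a' t^{2z'}$ and $p - \ell + 1 \leq z' \leq p$. Here the hypothesis $q = t^{-2k}$ (resp.\ $\Sq = t^{k/n}$) is essential: it ensures that the DAHA-induced shifts are integer multiples of $t^{2/N_0}$, hence commensurate with the spacing of starts, so that each application of a suitable power of $\pi$ (or $\Spi$) can reduce the exponent gap between a wide pair by exactly $n_0$.

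The short exact sequence of Lemma \ref{lemma:narrow-linkage} then exhibits a non-trivial simple $\H$-submodule $L(\Delta') \subseteq L(\Delta^{(j)}) \subseteq M$ whose multisegment $\Delta'$ has strictly smaller width than $\Delta$. Iterating this reduction terminates in finitely many steps and produces a $k$-narrow right-ordered multisegment $\Delta$ with $L(\Delta) \subseteq \Res^{\HH}_{\H} M$, as required.

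The main obstacle is the combinatorial step of choosing the power $j$ and performing the linkage reduction so that the width strictly decreases at each stage without creating new wide pairs of starts elsewhere in $\Delta^{(j)}$. This demands careful tracking of the interaction between the cyclic rotation of eigenvalues (via $\pi$ or $\Spi$) and the segment-merging of Lemma \ref{lemma:narrow-linkage}, and becomes subtler in the $\SL$ and Schmaha cases, where the finer shifts by $\Sq^{\pm 2}$ do not preserve $t^2$-lines. In the $\SL$ case one must additionally verify compatibility with the constraint $Z_1 \cdots Z_n = \Zprod$, which is automatic because $\Spi$ preserves the product $Z_1 \cdots Z_n$.
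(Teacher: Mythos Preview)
Your overall strategy---use $\pi$ (or $\Spi$) to iteratively reduce a ``width'' until the multisegment becomes $k$-narrow---matches the paper's. But the execution has a genuine gap at its core.

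\textbf{Conjugation by $\pi$ does not preserve $\H$.} You claim that $\pi^j L(\Delta)$ is again a simple $\H$-submodule of $M$. This is false: the defining relations give $\pi T_{n-1}\pi^{-1}=T_0$, and $T_0\notin \H=\HY$. So for general $h\in\H$ one has $\pi^{-j}h\pi^{j}\notin\H$, and the subspace $\pi^j L(\Delta)$ is not $\H$-stable. The paper avoids this by applying $\pi^{-b}$ not to the submodule $L(\Delta)$ but to the single vector $v_\Delta$ spanning $\r(\Delta)$. One checks directly that $v_\Gamma:=\pi^{-b}v_\Delta$ is still a simultaneous eigenvector for all $Y_i$ and satisfies $(T_j+t^{-1})v_\Gamma=0$ for the appropriate $j$'s, i.e.\ it spans $\r(\Gamma)$ for the explicit cyclically shifted multisegment $\Gamma$. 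This yields a nonzero map $M(\Gamma)\to \Res^{\HH}_{\H}M$, not an identification of simple submodules. Here the choice of $b$ is not ``suitable'' in some vague sense: it is forced to be exactly the total area of the segments with start $a$, so that the cyclic shift moves that entire block past the end and multiplies its start by $q^{-1}=t^{2n_0/N_0}$, decreasing the spread on that $t^{2/N_0}$-line by one.

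\textbf{Your use of Lemma \ref{lemma:narrow-linkage} is inverted.} You write that the short exact sequence of that lemma ``exhibits a non-trivial simple $\H$-submodule $L(\Delta')\subseteq L(\Delta^{(j)})$''. But $L(\Delta^{(j)})$, were it well-defined, would be simple and have no proper nonzero submodules. What Lemma \ref{lemma:narrow-linkage} actually says is that every simple subquotient of $M(\Gamma)$ has starts contained in those of $\Gamma$, hence inherits $s$-narrowness from $\Gamma$. In the paper's argument, having produced $M(\Gamma')\to M$ with $\Gamma'$ narrower on the relevant line, one picks \emph{any} simple submodule $L(\Delta')$ of the image; Lemma \ref{lemma:narrow-linkage} then guarantees $\Delta'$ is at least as narrow as $\Gamma'$. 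There is no need to engineer a ``linkage range'' configuration---linkage may or may not occur when passing from $M(\Gamma')$ to its composition factors, and either way the narrowness is preserved.
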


\begin{proof}
We will treat the $\GL_n$ case first, and afterwards outline the changes for the Schmaha and  $\SLn$ case.  We will abbreviate $\HH=\HG$, and $\H=\H_{n}(t) = \HY$, the AHA regarded as a subalgebra generated by the subalgebras $\Hf,\Y \subset \HH$.

Since $M$ is $\Y$-finite, we may find 
simple submodules of $\Res^\HH_{\H(\Y)} M$, each of which is of the form $L(\Delta)$ for some right-ordered multisegment $\Delta$.  If $\Delta$ is $k$-narrow, then we are done. If not, we will produce a succession of multisegments until our final one has the desired properties and by abuse of notation we will eventually also call that one $\Delta$.

Write $k= \frac{n_0}{N_0}$ with $\gcd(n_0,N_0)=1$ and $n_0, N_0 > 0$.

Assume otherwise,
i.e., that $\Delta $ is $s/{N_0}$-narrow for some $s>n_0$,

We will now describe an inductive procedure to produce another simple submodule of $\Res M$ of the form $L(\widetilde{\Delta})$ such that $\widetilde{\Delta}$ is  right-ordered and $\frac{s-1}{N_0}$-narrow.

Replacing $\Delta$ with an equivalent multisegment, we may write $\Delta=\Delta_{(a)}\sqcup \Delta^c$, where
$\Delta_{(a)}$ is not $k$-narrow, and furthermore, for some $a \in \K^\times$
all starts of $\Delta_{(a)}$ are on the same $t^{2/N_0}$-line as $a$ but are on distinct $t^{2/N_0}$-lines from all starts of $\Delta^c$.  
We may further assume that
\[\Delta_{(a)}=(\seg{a}{\ell_1},\seg{a t^{\frac{2z_2}{N_0}}}{\ell_2}, \ldots, \seg{a t^{\frac{2z_m}{N_0}}}{\ell_m}),\]
with $0 = z_1 \le z_i \le z_{i+1}$ for $i=1,\ldots, m-1$ because $\Delta$ is right-ordered. 
We note that $\Delta_{(a)}$ is $\frac{z_m+1}{N_0}$-narrow, so by assumption we have $z_m\geq n_0$.  Let $b$ denote the area of the start $a$, i.e., $b=\ell_1 + \cdots + \ell_p$, where $z_2=\cdots =z_p=0$, but $z_{p+1}\neq 0$.

\begin{figure}
    \centering
     \begin{tikzpicture}[scale=0.4]

\begin{scope}[shift={(0,0)}]
\draw[black] (0,-0.5) -- (0, 9.5);
\draw[black] (-0.25,0) -- (0.25, 0);
\draw[black] (-0.25,1) -- (0.25, 1);
\draw[black] (-0.25,2) -- (0.25, 2);
\draw[black] (-0.25,3) -- (0.25, 3);
\draw[black] (-0.25,4) -- (0.25, 4);
\draw[black] (-0.25,5) -- (0.25, 5);
\draw[black] (-0.25,6) -- (0.25, 6);
\draw[black] (-0.25,7) -- (0.25, 7);
\draw[black] (-0.25,8) -- (0.25, 8);
\draw[black] (-0.25,9) -- (0.25, 9);
{\scalefont{.8}
\draw[anchor=west] (-2.7,8) node  {$a t^{\frac{2z_m}{N_0}}$};
\draw[anchor=west] (-2.7,6) node[blue]  {$a t^{\frac{2n_0}{N_0}}$};
\draw[anchor=west] (-2.7,2) node  { $a t^{\frac 4{N_0}}$};
\draw[anchor=west] (-2.7,1) node  { $a t^{\frac 2{N_0}}$};
\draw[anchor=west] (-2.7,0) node[blue]  {$a $};
}

\end{scope}

\begin{scope}[shift={(0,0)}]
\draw[->, very thick, blue] (2,0) -- (2, -1);
\draw[->, very thick, blue] (3,0) -- (3, -3);
\draw[->, very thick, blue] (4,0) -- (4, -3);
\draw[->, very thick, blue] (5,0) -- (5, -4);

\draw[->, very thick] (6,1) -- (6, -4);
\draw[->, very thick] (7,1) -- (7, -4);
\draw[->, very thick] (8,6) -- (8, -1);
\draw[->, very thick] (9,6) -- (9, 1);
\draw[->, very thick] (10,8) -- (10, 7);
\draw[->, very thick] (11,8) -- (11, 5);
\draw[->, very thick] (12,8) -- (12, 4);

\draw (6.5,-7) node[black]  {{\scalefont{1.3}$\Delta_{(a)}$}};
\draw (3.5,-5) node[blue]  {$\underbrace{\hspace{1.2cm}}_{\scalefont{1.5}p}$};
\end{scope}

\begin{scope}[shift={(13,0)}]
\draw[->, very thick, blue] (13,6) -- (13, 5);
\draw[->, very thick, blue] (14,6) -- (14, 3);
\draw[->, very thick, blue] (15,6) -- (15, 3);
\draw[->, very thick, blue] (16,6) -- (16, 2);

\draw[->, very thick] (6,1) -- (6, -4);
\draw[->, very thick] (7,1) -- (7, -4);
\draw[->, very thick] (8,6) -- (8, -1);
\draw[->, very thick] (9,6) -- (9, 1);
\draw[->, very thick] (10,8) -- (10, 7);
\draw[->, very thick] (11,8) -- (11, 5);
\draw[->, very thick] (12,8) -- (12, 4);

\draw (14.5,1) node[blue]  {$\underbrace{\hspace{1.2cm}}_{\scalefont{1.5}p}$};
\draw (10.5,-7) node[black]  {{\scalefont{1.3}$\Gamma'_{(a)}$}};
\end{scope}

\end{tikzpicture}

    \caption{We apply $\pi^{-b}$ where $b$ is the start area of ${\color{blue}a}$. We see how this transforms a $\frac{z_m+1}{N_0}$-narrow multisegment to one that is   $\frac{z_m}{N_0}$-narrow if $z_m$ is too large. The result may not be right-ordered. 
    }
    \label{fig:multisegment not narrow}
\end{figure}
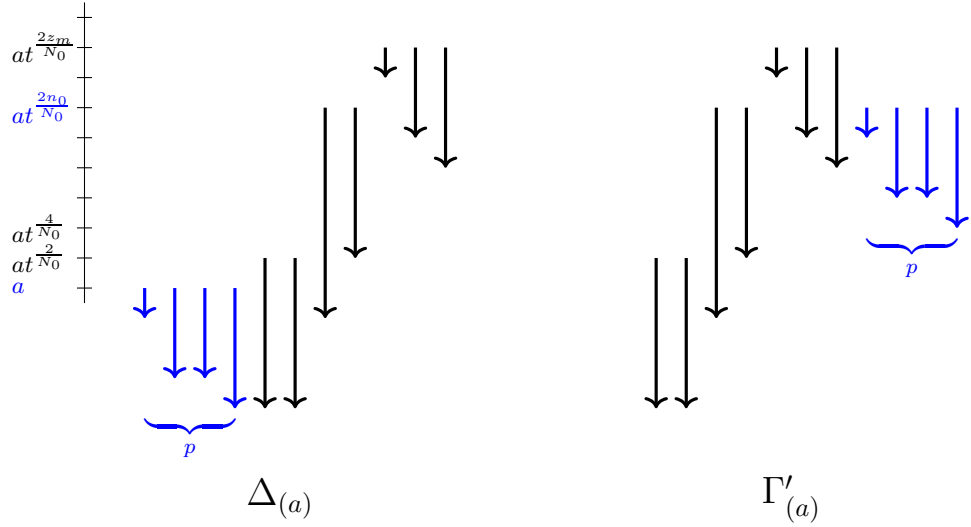

Since $\Hom_{\H}(M(\Delta),\Res^{\HH}_{\H} (M)) \neq 0$, let us pick a $v_\Delta\in M$ as in Notation \ref{not:vdelta}.  The vector $v_\Gamma = \pi^{-b} v_\Delta \in M$ is non-zero, and hence gives rise to a non-zero element of $\Hom_{\H}(M({\Gamma}),M)$, where
\begin{align} 
{\Gamma} &= (\seg{a_{p+1}}{\ell_{p+1}},\ldots,  \seg{a_m}{\ell_m}) \sqcup \Delta^c \sqcup (\seg{q^{-1}a}{\ell_1},\ldots,\seg{q^{-1}a}{\ell_p}) \notag\\
&=(\seg{t^{2\frac{z_{p+1}}{N_0}}a_{}}{\ell_{p+1}},\ldots,  \seg{t^{2\frac{z_m}{N_0}}a}{\ell_m}) \sqcup \Delta^c \sqcup (\seg{t^{2\frac{n_0}{N_0}}a}{\ell_1},\ldots,\seg{t^{2\frac{n_0}{N_0}}a}{\ell_p})\label{eq:SL Gamma}
\end{align}
Observe that $\Gamma \sim \Gamma'$, where:
\[\Gamma' = \Gamma'_{(a)}\sqcup \Delta^c, \textrm{ with }\]
\[\Gamma'_{(a)}= (\seg{a_{p+1}}{\ell_{p+1}},\ldots,\seg{a_m}{\ell_m}) \sqcup (\seg{a t^{2\frac{n_0}{N_0}}}{\ell_1},\ldots,\seg{a t^{2\frac{n_0}{N_0}}}{\ell_p}).\]
We have now that ${\Gamma'_{(a)}}$ is $\frac{z_m}{N_0}$-narrow. (See Figure \ref{fig:multisegment not narrow} for a sketch of $\Delta_{(a)}$ versus ${\Gamma'_{(a)}}$.)  Moreover, we have produced a nonzero homomorphism from $M(\Gamma')$ to $\Res^{\HH}_{\H} M$.

Consider any such nonzero homomorphism and pick a simple $\H$-submodule $L(\Delta')$ in its image.  By definition $\Delta'$ is right-ordered.  Write $\Delta'=\Delta'_{(a)} \sqcup (\Delta')^c$, so that $\Delta'_{(a)}$ consists of precisely those segments of $\Delta'$ with starts on the same $t^{2/N_0}$-line as $a$.  By Lemma \ref{lemma:narrow-linkage}, $\Delta'_{(a)}$ is also $\frac{z_m}{N_0}$-narrow.

Repeating this process, we eventually produce a  simple submodule $L({\Delta})$ of $\Res^{\HH}_{\H} M$ where ${\Delta}$ is right-ordered and $k$-narrow, concluding the proof in the $\GL_n$ case.

\medskip 

$\SLGL$ Let us now outline the changes necessary to make the proof work in the Schmaha or  $\SLn$ case.  We will abbreviate $\HH=\Schmaha{n}$ or $\HS$, and $\H=\HY$, regarded as a subalgebra generated by the subalgebras $\Hf,\cZ \subset \HH$.
The only modifications $\SLGL$ concern the action of $\Spi$ when we set $v_{\Gamma} = \Spi^{-b} v_\Delta$. More specifically it is the conjugation action by $\Spi$ that is modified from that of $\pi$ (but those actions agree for $\HS$ and $\Schmaha{n}$). 
The multisegment  we obtain is
$ t^{-\frac{2kb}{n}}\Gamma$; it differs 
 from Equation \eqref{eq:SL Gamma} 
in that all of its starts are uniformly scaled by $t^{-\frac{2kb}{n}}= t^{-\frac{2 n_0 b}{n N_0}}$ (recalling $\frac{n_0}{N_0}=k$), as below:
\begin{gather}\label{eq:SL Gamma'} 
t^{\frac{-2kb}{n}} \Gamma  = (\seg{at^{\frac{2z_{p+1}}{N_0} -\frac{2kb}{n}}}{\ell_{p+1}},\ldots, \seg{at^{\frac{2z_m}{N_0} -\frac{2kb}{n}}}{\ell_m}) \sqcup t^{\frac{-2kb}{n}}\Delta^c \sqcup (\seg{at^{\frac{2n_0}{N_0}- \frac{2kb}{n} }}{\ell_1},\ldots,\seg{at^{\frac{2n_0}{N_0}- \frac{2kb}{n}}}{\ell_p}),
\end{gather}
where we have written $t^{\frac{-2kb}{n}}\Delta^c$ for the corresponding multisegment that has its starts scaled by $t^{-\frac{2kb}{n}}$. 
So while specific 
$t^{\frac 2{N_0}}$-lines have changed, depending on $\gcd(b, n_0)$, the property of belonging to the same $t^{\frac 2{N_0}}$-line, and of being $\frac{z_m}{N_0}$-narrow, is preserved. 
The rest of the proof continues as in the $\GL$ case.

\end{proof}

\begin{corollary} \label{cor:k=1}
Suppose we are in any of the  cases of Proposition \ref{prop:k-narrow} and that 
$k=1$, 
and let $M$ be a simple and $\Y$-finite $\HH$-module.  Then we have $\ek{n} M\neq 0$.
\end{corollary}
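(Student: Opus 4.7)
The plan is to reduce the statement directly to Proposition \ref{prop:k-narrow} together with the two elementary facts we have already established about $1$-narrow multisegments. Since all three algebras $\HG$, $\Schmaha{n}$, and $\HS$ contain $\H = \HY$ as a subalgebra, and $\ek{n} \in \Hf \subset \H$, it suffices to find a single $\H$-submodule $L \subseteq \Res^{\HH}_{\H} M$ on which $\ek{n}$ acts nonzero.

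First I would apply Proposition \ref{prop:k-narrow} specialized to $k=1$: this produces a simple $\H$-submodule of $\Res^{\HH}_{\H}M$ of the form $L(\Delta)$, where $\Delta$ is a right-ordered multisegment of size $n$ that is $1$-narrow, i.e.\ any two starts of $\Delta$ are either equal or lie on distinct $t^2$-lines. Next I would invoke Corollary \ref{cor:distinctstarts}, which states that for such a $1$-narrow $\Delta$ the induced module $M(\Delta)$ is already simple, so $L(\Delta)=M(\Delta)$. Finally, Lemma \ref{lemma:en not kill M} (applied with $|\Delta|=n$) yields
\[
\ek{n}\,L(\Delta)=\ek{n}\,M(\Delta)\neq 0,
\]
and hence $\ek{n}M\neq 0$, as required. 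Equivalently, one can read off the same conclusion from Proposition \ref{prop:signABneq0} by taking $A=\Delta$ (the whole multisegment), $B$ empty, and $m=n=|A|$.

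There is essentially no obstacle here: all of the work has already been done. The one thing worth checking is that the specialisation $k=1$ is genuinely what makes the argument close, since for $k>1$ the multisegment produced by Proposition \ref{prop:k-narrow} need not be $1$-narrow, in which case $M(\Delta)$ can be a proper extension of $L(\Delta)$ and the passage from $\ek{n}M(\Delta)\neq 0$ to $\ek{n}L(\Delta)\neq 0$ is no longer automatic. That is precisely why the general case requires the finer input of Proposition \ref{prop:signABneq0lnarrow} (together with Proposition \ref{prop:kN+r}) rather than the cleaner $1$-narrow statement used here.
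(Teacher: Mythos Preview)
Your proof is correct and essentially identical to the paper's: the paper applies Proposition \ref{prop:k-narrow} to obtain a $1$-narrow right-ordered $\Delta$ with $L(\Delta)\subseteq \Res^{\HH}_{\H}M$, and then invokes Proposition \ref{prop:signABneq0} (your alternative route with $A=\Delta$, $B=\emptyset$, $m=n$) to conclude $\ek{n}L(\Delta)\neq 0$. Your first route via Corollary \ref{cor:distinctstarts} and Lemma \ref{lemma:en not kill M} simply unpacks the proof of Proposition \ref{prop:signABneq0} in this special case where $B$ is empty.
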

\begin{proof}
By Proposition \ref{prop:k-narrow}, $\Res^{\HH}_{\H}M$ contains an $\H$-submodule $L(\Delta)$, with $\Delta$ right-ordered and $1$-narrow.  Therefore the claim follows from Proposition \ref{prop:signABneq0}. 
\end{proof}

\begin{notation}\label{not:sequence}
A right-ordered multisegment all of whose starts lie on the same $t^{\frac{2}{N_0}}$-line is equivalent to one of the form,
\[(\seg{a}{\ell_1},\seg{at^{\frac{2z_2}{N_0}}}{\ell_2},\ldots, \seg{at^{\frac{2z_m}{N_0}}}{\ell_{m}}),\]
with $0\leq z_2\leq \cdots \leq z_m$ and $z_i\in \mathbb{Z}$, and some $a\in\K^{\times}$.
Its \defterm{sequence of start areas} refers to the sequence $(b_1,\ldots, b_{n_0})$ where each $b_{j+1}$ is the sum of the areas of all starts of the form $at^{\frac{2j}{N_0}}$ (in particular, the sequence contains zeroes for start areas that do not occur).
\end{notation}

\begin{proposition}\label{prop:l-narrow}
    Suppose we are in any of the  cases of Proposition \ref{prop:k-narrow} and that
    $\ell = \lfloor k \rfloor \neq 0$ and $m\in \Z_{>0}$ satisfies $m \le \frac nk$.
    Suppose 
     $L({\Delta}) \subseteq \Res^{\HH}_{\H} M$ with ${\Delta}$  right-ordered and $k$-narrow.
     Then $\Res^{\HH}_{\H} M \supseteq L(\Gamma)$ where $\Gamma = A \sqcup B$  is right-ordered with $|A| \ge \ell m$ and $A$ $\ell$-narrow.
\end{proposition}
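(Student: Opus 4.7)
The strategy extends the $\pi$-shift technique from the proof of Proposition \ref{prop:k-narrow}: starting from a vector $v_\Delta \in M$ generating $L(\Delta)$, I use iterated $\pi$-shifts to produce a nonzero $\Y$-weight vector $v_\Gamma \in M$ whose multisegment $\Gamma$ is right-ordered and decomposes as $\Gamma = A \sqcup B$ with $A$ $\ell$-narrow and $|A| \geq \ell m$. The size bound $|\Delta| = n \geq km \geq \ell m$ is automatic from $m \leq n/k$ and $\ell \leq k$. In the integer case $N_0 = 1$ (where $k = \ell$), the $t^{2/N_0}$-lines coincide with the $t^2$-lines, so the $k$-narrow condition already coincides with the $\ell$-narrow condition, and one takes $A = \Delta$, $B = \emptyset$.

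For the non-integer case $N_0 > 1$, write $k = n_0/N_0$ in lowest terms and $r_0 = n_0 - \ell N_0 \in \{1, \dots, N_0 - 1\}$. On each $t^{2/N_0}$-line of $\Delta$, exactly $r_0$ residues mod $N_0$ can carry $\ell + 1$ distinct $t^2$-exponent values, and these are precisely the residues where $\ell$-narrowness may fail. The effect of $\pi^{-1}$ on $v_\Delta$ multiplies one eigenvalue by $q^{-1} = t^{2k}$, which adds $n_0$ to its $2/N_0$-exponent and shifts its residue class mod $N_0$ by $r_0$; since $\gcd(r_0, N_0) = 1$, iterated shifts realize arbitrary redistributions among $t^2$-lines within a given $t^{2/N_0}$-line, while preserving the $k$-narrow property as in the proof of Proposition \ref{prop:k-narrow}.

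The key combinatorial input is a cyclic averaging argument: on each $t^{2/N_0}$-line with $D_\lambda$ segments, some cyclic window of length $\ell N_0$ in $\Z/n_0\Z$ contains at least $(\ell/k) D_\lambda$ segments, since each position lies in exactly $\ell N_0$ of the $n_0$ such cyclic windows. After suitable $\pi$-shifts align these segments into a contiguous region of $\ell N_0$ consecutive $2/N_0$-exponents, the resulting portion is automatically $\ell$-narrow (each $t^2$-line within contributes at most $\ell$ consecutive $t^2$-exponents). Summing over all $t^{2/N_0}$-lines yields $|A| \geq \sum_\lambda (\ell/k) D_\lambda = (\ell/k) n \geq \ell m$.

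The main technical obstacle is coordinating these $\pi$-shifts across different $t^{2/N_0}$-lines, since $\pi$ acts globally rather than per line, and the resulting multisegment must remain right-ordered. This is handled by processing the lines in the order dictated by the right-ordering of $\Gamma$: using Theorem \ref{thm:RepaRepb} and Corollary \ref{cor:distinctstarts} to decouple contributions from distinct $t^{2/N_0}$-lines, one isolates each line's segments at the prefix of the eigenvalue sequence, applies the needed per-line shifts, and then proceeds to the next line. The constructed $\Gamma$ places the $\ell$-narrow selections first (forming $A$) and their complements afterward (forming $B$); Lemma \ref{lemma:narrow-linkage} ensures that the simple submodule $L(\Gamma) \subseteq \Res^{\HH}_{\H} M$ still carries the required narrowness structure on its $A$-part.
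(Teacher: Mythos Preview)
Your approach is essentially the paper's: decompose $\Delta$ by $t^{2/N_0}$-lines, use $\pi$-shifts to cyclically permute the start areas on each line, apply a pigeonhole/averaging argument to find a window of $\ell N_0$ consecutive start positions carrying at least an $\ell/k$ fraction of the total size, and sum over lines to get $|A|\ge (\ell/k)n\ge \ell m$. The paper also handles the integer case $r_0=0$ by taking $A=\Delta$, $B=\emptyset$, and coordinates the shifts across lines exactly as you describe, by moving the line to be processed to the prefix via the equivalence $\sim$ (distinct $t^{2/N_0}$-lines lie on distinct $t^2$-lines, so such swaps are allowed).

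One terminological slip: what must be averaged and summed are \emph{start areas} (the total size $\sum \ell_j$ contributed at each start position), not the number of segments. Your equation $\sum_\lambda D_\lambda = n$ is only correct if $D_\lambda$ denotes $|\Delta^{(\lambda)}|$, the total size on that line, not the segment count. With that reading your averaging argument is exactly the paper's inequality \eqref{eqn:key-inequality}. Also, the appeal to Lemma~\ref{lemma:narrow-linkage} at the end is unnecessary here: the constructed $\Gamma$ is itself right-ordered with the required $A\sqcup B$ decomposition and yields $v_\Gamma\in M$ directly; that lemma controls narrowness of starts under passage to subquotients, not the $A\sqcup B$ splitting, so it would not give what you claim for it anyway.
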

\begin{proof}
    We will  treat the $\GL_n$ case in detail first, and then explain the required modifications $\SLGL$ for the  Schmaha or $\SLn$ case.  
Recall we write $k=\frac{n_0}{N_0}$, where $\gcd(n_0,N_0)=1$ and $n_0, N_0>0$
so that $n_0 = N_0 \ell + r_0$ with $0 \le r_0 < N_0$.

 Possibly replacing $\Delta$ by an equivalent right-ordered multisegment,
 we may write
$\Delta = \Delta^{(1)} \sqcup \Delta^{(2)} \sqcup \cdots \sqcup \Delta^{(s)},$
where each $\Delta^{(i)}$ is here a maximal subcollection of segments of $\Delta$ lying on a given $t^{\frac{2}{N_0}}$-line.  Note each $\Delta^{(i)}$ is $k$-narrow.  Let us denote $m_i=|\Delta^{(i)}|$. Observe $\sum_i m_i = n$.  Below we will construct another right-ordered multisegment with a decomposition,
\[
 \Gamma = 
 A^{(1)} \sqcup  \cdots \sqcup A^{(s)} \sqcup B^{(1)}\sqcup \dots \sqcup  B^{(s)}
 \sim A^{(1)} \sqcup B^{(1)} \sqcup \cdots \sqcup A^{(s)} \sqcup   B^{(s)}  ,
\]
with $\Hom(M(\Gamma),\Res^{\HH}_{\H}M ) \neq 0$, and satisfying the following three properties, the third property being what we require to complete the proof:
\begin{enumerate}
  \item \label{item:distinct} $A^{(i)}$ and $A^{(p)}$ occupy distinct $t^{\frac{2}{N_0}}$-lines (hence in particular distinct $t^2$-lines), for any pair $i \neq p$.
    \item \label{item:cyclic} The sequence of start areas of each $A^{(i)}\sqcup B^{(i)}$ is some cyclic shift of the sequence of  start areas of $\Delta^{(i)}$. In particular $m_i=|A^{(i)} \sqcup B^{(i)}|$.
 
    \item \label{item:1narrow}  $A = A^{(1)} \sqcup \cdots \sqcup A^{(s)}$ is right-ordered and $\ell$-narrow, with $|A| \ge \ell m$. 
\end{enumerate}

\medskip

Let us now construct the desired $\Gamma$ from the given $\Delta$. 
Notice if $r_0=0$, i.e., $\ell=k$, then we may let $A=\Gamma=\Delta$ and $B=\emptyset$.
If each $\Delta^{(i)}$ can already be decomposed as $\Delta^{(i)}=A^{(i)}\sqcup B^{(i)}$ satisfying condition \eqref{item:1narrow}, then we are done, noting that \eqref{item:distinct} holds automatically from how the $\Delta^{(i)}$ were defined.  Assuming otherwise, we may suppose, replacing $\Delta$ with an equivalent multisegment if necessary, that $\Delta^{(1)}$ violates  \eqref{item:1narrow}.  Let us rename,
\[\Delta^{(1)} = \Delta_{(a)}, \qquad \Delta^c = \Delta^{(2)} \sqcup \cdots \sqcup \Delta^{(s)},\]
so that $\Delta = \Delta_{(a)} \sqcup \Delta^c$, and all starts of $\Delta_{(a)}$ lie on the same $t^{\frac{2}{N_0}}$-line as $a$, and all starts on $\Delta^c$ lie on distinct $t^{\frac{2}{N_0}}$-lines from $a$.

Because $\Delta$ is right-ordered and $k$-narrow, we have $z_m < n_0$, where we write 

\[\Delta_{(a)} = (\seg{a}{\ell_1},\seg{at^{\frac{2z_2}{N_0}}}{\ell_2},\ldots, \seg{at^{\frac{2z_m}{N_0}}}{\ell_m}),\]
with $0\leq z_2\leq \cdots \leq z_m$  as in Notation \ref{not:sequence}.
 Let $(b_1, b_2, \ldots, b_{n_0})$ be the sequence of start areas of $\Delta_{(a)}$. 
Notice that if we take the segments corresponding to $\ell N_0$ consecutive start areas (including possible empty ones), the resulting multisegment will be $\ell$-narrow.  Our aim is to construct one of large enough size.

Pick a $v_\Delta$ as in Notation \ref{not:vdelta}.
Let $p$  be maximal such that $z_1=z_2 = \cdots = z_p=0$, so that $b_1 = \ell_1 + \cdots + \ell_p$.
We therefore obtain a non-zero $v_{\Delta'} = \pi^{-b_1} v_\Delta \in M$,
where
\begin{gather}\label{eq:GL Delta'}
\Delta' = (\seg{at^{\frac{2z_{p+1}}{N_0}} }{\ell_{p+1}},\ldots, \seg{at^{\frac{2z_m}{N_0}}}{\ell_m} \sqcup \Delta^c \sqcup (\seg{at^{\frac{2n_0}{N_0}}}{\ell_1},\ldots,\seg{at^{\frac{2n_0}{N_0}}}{\ell_p}).
\end{gather}
  We have \[
\Delta'\sim \Gamma' = \underbrace{(\seg{at^{\frac{2z_{p+1}}{N_0}}}{\ell_p+1},\ldots,\seg{at^{\frac{2z_m}{N_0}}}{\ell_m}) \sqcup (\seg{a t^{\frac{2n_0}{N_0}}}{\ell_1},\ldots,\seg{at^{\frac{2n_0}{N_0}}}{\ell_p})}_{\Gamma'_{(a)}}\sqcup \Delta^c.
\]
Observe that $\Gamma'$ is still right-ordered and $k$-narrow but the sequence of start areas  of $\Gamma'_{(a)}$ is cycled to $(b_2,b_3,\ldots,b_{n_0},b_1)$.  
(This is similar to the process depicted in Figure \ref{fig:multisegment not narrow}, but \emph{preserves} $k$-narrowness.)  Iterating this process for each $t^{\frac{2}{N_0}}$-line, we produce a nonzero $v_\Gamma \in M$, with $\Gamma$ right-ordered, $k$-narrow and such that the sequence of start areas in any $\Gamma^{(i)}$ is an arbitrary cyclic shift from those of $\Delta^{(i)}$. 

We claim that there exists such a shifted $\Gamma$ so that the start areas $(b_{g+1},b_{g+2},\ldots, b_{n_0},b_1,\ldots b_g)$ of each $\Gamma^{(i)}$ satisfy the inequality,
\begin{equation}
     \sum_{d=g+  1}^{g+\ell N_0} b_d \geq m_i  \frac{\ell}{k},
    \label{eqn:key-inequality}
\end{equation}
taking indices $\bmod\, n_0$, and recalling $m_i= |\Delta^{(i)}| = |\Gamma^{(i)}|$.  
Indeed, consider the sum over all such expressions as $g$ varies.  We have:
\[\sum_{g=1}^{n_0}\sum_{d=g+1}^{g+ \ell N_0} b_d = \ell N_0\sum_{d=1}^{n_0}b_d = \ell N_0 m_i.\]
It follows that at least one of the $n_0$ summands $\sum_{d=g+1}^{g+\ell N_0} b_d$ must be greater than or equal to $\frac{\ell N_0 m_i}{n_0} = \frac{\ell m_i}{k},$ as claimed.  

Having made this shift, we put these larger start areas at the leftmost places, and write this cyclic shift of $\Delta^{(i)}$ as $A^{(i)} \sqcup B^{(i)}$, so   $|A^{(i)}| \ge m_i \frac {\ell}{k}$ and having just taken $\ell N_0$ consecutive start areas, $A^{(i)}$ is $\ell$-narrow. 
Taking $A$ as in \eqref{item:1narrow} and  summing the inequality \eqref{eqn:key-inequality} over $i$ yields $|A| \ge n \frac {\ell}{k} \ge \ell m$. 
Finally setting $B= B^{(1)}\sqcup \cdots \sqcup B^{(s)}$ implies $\Gamma$ satisfies conditions \eqref{item:cyclic} and \eqref{item:1narrow}.  
Hence 
the proof is complete in the $\GL_n$ case.

\medskip 

$\SLGL$  For the $\SLn$ case, the only modification concerns the action of $\Spi$ when we set \mbox{$v_{\Delta'} = \Spi^{-b_1} v_\Delta$}.

The $\Delta'$ we obtain differs from Equation \eqref{eq:GL Delta'} in that all of its starts are uniformly scaled by $t^{-\frac{2b_1k}{n}}$. 
So while specific 
$t^{\frac 2{N_0}}$-lines have changed (depending on $\gcd(b_1, n_0)$), the property of belonging to the same $t^{\frac 2{N_0}}$-line is preserved. 
Thus the sequence of start areas is still cycled to
$(b_2, \ldots, b_{n_0}, b_1)$.  The rest of the proof of Theorem \ref{theorem:signN neq 0} proceeds as in the $\GL$ case.

\end{proof}

\begin{proof}[Proof of Theorem  \ref{thm:idempotent}]   

Putting together the propositions from this section, we now prove Theorem  \ref{thm:idempotent},
which has Theorem \ref{mainthm:morita-DAHA-eNj} as a special case and is a key ingredient of Theorem \ref{mainthm:morita-DAHA}.

 Given simple $M$, by Proposition \ref{prop:k-narrow}, we may find some $k$-narrow right-ordered multisegment $\Delta$ such that $\Hom(M(\Delta),\Res^{\HH}_{\H} M)\neq 0$. 
 From this we construct $\Gamma$ as in Proposition \ref{prop:l-narrow} that satisfies the
 hypotheses of Proposition \ref{prop:signABneq0lnarrow} as well as $\Hom(M(\Gamma),\Res^{\HH}_{\H} M)\neq 0$.
Then by Proposition \ref{prop:signABneq0lnarrow}   we have $\ek{m^\ell} L(\Gamma)\neq 0$
and thus $\ek{m^{\ell}} M \neq 0$.
\end{proof}
 
\subsection{Modification for other  specializations}
Theorem \ref{thm:idempotent} begs the following interesting question: for precisely which values of $k$ is $\emj$ conservative? 
The following proposition -- which is not needed in the rest this paper -- gives a partial answer. 

 \begin{proposition}
     \label{prop:signn neq 0}
Fix an integer $n  \ge 1$, and let $k \not\in \Q$ (by which we mean $t^a = q^b$ for $a,b \in \Z$ implies $a=b=0$)  or $k \in \Q_{< 0}$. Suppose that either:
\begin{enumerate}
    \item $q=t^{-2k}$, and $M$ is a simple $\Y$-finite  $\HG$-module, or
    \item $\Sq=\St^{\frac{k}{n}}$, and $M$ is a simple $\Y$-finite $\Schmaha{n}$-module, or
    \item $\Sq=\St^{\frac{k}{n}}$, and $M$ is a simple $\Y$-finite $\HS$-module.
\end{enumerate}
Then we have $\ek{n} M \neq 0$.  Consequently $\ek{\alpha} M \neq 0$ for $\alpha \vDash n$. 
 \end{proposition}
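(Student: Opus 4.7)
The plan is to construct a right-ordered $1$-narrow multisegment $\Delta$ of size $n$ such that $L(\Delta) \hookrightarrow \Res^{\HH}_{\H} M$, and then invoke Proposition~\ref{prop:signABneq0} with $A = \Delta$ and $B$ empty, which forces $\ek{n} L(\Delta) \neq 0$ and hence $\ek{n} M \neq 0$. The ``consequently'' part is then immediate: a direct computation gives $\ek{\alpha} \ek{n} = \ek{n}$ for any composition $\alpha \vDash n$ (since $(T_i + t^{-1}) \ek{n} = 0$ for each generator $T_i$ of $\Hfn$, so $T_w \ek{n} = (-t^{-1})^{\ell(w)} \ek{n}$ for every $w \in \Sm{\alpha}$, and summing yields $\ek{\alpha}\ek{n} = \ek{n}$); thus if $\ek{n} v \neq 0$ then $\ek{\alpha} M \ni \ek{\alpha} \ek{n} v = \ek{n} v \neq 0$.

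First I would reduce to the case of $M$ simple and $\Y$-finite via Proposition~\ref{prop:BEG}, and pick any right-ordered $\Delta_0$ with $L(\Delta_0) \hookrightarrow \Res^{\HH}_{\H} M$. Then I would mimic the inductive $\pi$-shift (respectively $\Spi$-shift) argument of Proposition~\ref{prop:k-narrow}. The crucial observation is that each such shift moves a start by $q^{-1} = t^{2k}$, and for $k \notin \mathbb{Q}$, or for $k = -n_0/N_0 \in \mathbb{Q}_{<0}$ with $\gcd(n_0, N_0) = 1$ and $N_0 \geq 2$, the element $t^{2k}$ lies on no $t^2$-line of $\K^\times$. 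Consequently each $\pi$-shift strictly decreases the number of starts lying on any fixed $t^2$-line, by transporting one off to a fresh $t^2$-line, and after finitely many iterations we obtain the desired right-ordered $1$-narrow representative.

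The main obstacle is the remaining subcase $k \in \mathbb{Z}_{<0}$, where $q^{-1} = t^{-2k}$ itself sits on a $t^2$-line, so $\pi$-shifts preserve each $t^2$-line. Here I would exploit that, because $k < 0$, a $\pi^{-b}$-shift moves the shifted starts \emph{backwards} relative to right-ordering, to strictly smaller $t^2$-powers. By iterating and appealing to the Bernstein--Zelevinsky linkage structure of Theorem~\ref{thm:RepaRepb} together with the linkage analysis of Lemma~\ref{lemma:narrow-linkage}, one may produce an equivalent right-ordered representative in which every pair of segments on a common $t^2$-line has either been absorbed into a longer segment via linkage or separated onto distinct $t^2$-lines. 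Establishing this carefully in the negative-integer case is the most delicate step of the argument.
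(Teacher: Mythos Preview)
Your treatment of the irrational case matches the paper's: when $k\notin\Q$, each $\pi$-shift moves the shifted block of starts onto a genuinely new $t^2$-line, and after finitely many iterations one obtains a $1$-narrow right-ordered $\Delta$ with $L(\Delta)\subset\Res^{\HH}_{\H}M$, whence $\ek{n}L(\Delta)\neq 0$.

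For $k\in\Q_{<0}$, however, your argument has a real gap. The claim that (for $N_0\ge 2$) a $\pi$-shift transports starts to a \emph{fresh} $t^2$-line is false. Take $k=-\tfrac12$, so $q^{-1}=t^{-1}$, and suppose $\Delta$ has starts $a,\,at,\,at^2$. The two relevant $t^2$-lines are those through $a$ and through $at$. Shifting the starts at $a$ sends them to $at^{-1}$, which lies on the $t^2$-line of $at$; the violation of $1$-narrowness has merely migrated from one $t^2$-line to the other, and iterating simply bounces the problem back and forth while the starts spread further apart. Your integer case $k\in\Z_{<0}$ is, as you acknowledge, only a hope: when $k\in\Z$, $\pi$-shifts preserve each $t^2$-line, so segments on a common line cannot be ``separated onto distinct $t^2$-lines'' at all.

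The paper's proof for $k\in\Q_{<0}$ abandons $1$-narrowness altogether and uses a different irreducibility criterion: $M\big([a;\ell],[at^{2s};r]\big)$ is already simple whenever $r<s$ (the two segments are unlinked, cf.\ the analysis in Lemma~\ref{lemma:narrow-linkage}). Since $q^{-1}=t^{2k}$ with $k<0$, the $\pi$-shifts \emph{increase} the exponent gap $s$ between starts on a common line rather than decrease it; iterating drives every pair of segments into the unlinked regime $s>r$, at which point $M(\Delta)=L(\Delta)$ and $\ek{n}M(\Delta)\neq 0$ by Lemma~\ref{lemma:en not kill M}. That unlinked-segments criterion is the idea your sketch is missing.
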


\begin{proof}[Proof sketch]
    If $k \not\in \Q$ it is easy to use the methods in Proposition \ref{prop:k-narrow} to produce a composition factor of $\Res_{\H}^{\HH} M$ of the form $L(\Delta) = M(\Delta)$ with $\Delta$ $1$-narrow, since $a$ and $q^{-1}a$ (or indeed $q^{z}a$ for any $z \in \Z\setminus \{0\}$) are on different $t^2$-lines. Thus by Lemma \ref{lemma:en not kill M}, $\ek{n} M \neq 0$.

    If $k \in \Q_{< 0}$ we take a different approach to produce a composition factor $M(\Delta)$ of $\Res_{\H}^{\HH} M$ that satisfies $M(\Delta) = L(\Delta)$.  Then by Lemma \ref{lemma:en not kill M} we will have $\ek{n} M \neq 0$.  For this, we employ another combinatorial property of multisegments that implies $M(\Delta)$ is irreducible.  In the case of two segments, possibly on the same $t^2$-line, but differing by a suffiently high power of $t^2$, we have by \cite{BZ77, Zelevinsky, V-multisegments} that the module
        $M(\seg{a}{\ell}, \seg{a t^{2s}}{r})$ is irreducible for all $\ell, s, r \in \Z_{\ge 1}$, $a \in \K^\times$  such that $r<s$.  With more than two segments a similar result applies.

    Instead of using the action of $\pi$ (or $\Spi$) to reduce the power of $t^{2/N_0}$ separating distinct starts of   segments (as in Figure \ref{fig:multisegment not narrow}), we increase it instead, making use of the ideas in 
     Lemma \ref{lemma:narrow-linkage}, until we can apply the irreducibility criterion above.  
\end{proof}

\begin{remark} \label{rem:triv vs sign}
    Results for specialization of parameters depending on $k$ for parabolic sign idempotents also hold at $-k$ for parabolic trivial idempotents.  One could furthermore use similar ideas
    as above to see $\ek{n}^+$ is conservative for $\SmashD$-modules. 
\end{remark}

\section{Applications to skein theory: the case $G=\SL_2$} \label{sec:SL2}
In this section, we give a complete description of the $\SL_2$-skein category of $T^2$.  We give direct proofs in this case which bypass the combinatorics of Section \ref{sec:DAHA stuff}.

\begin{theorem}\label{thm:SL2}
    Let $G=\SL_2$. Then
    \begin{enumerate}
        \item For any $n \ge 2$, the idempotent $\ee = \ek{2}$ satisfies
        \[
        \SkAlgNn{\SL_2}{n}\cdot \ee \cdot \SkAlgNn{\SL_2}{n} = \SkAlgNn{\SL_2}{n}.
        \]
        \item The objects $\Dist$ and $\Dist_V$ are orthogonal for the $\Hom$ pairing.
        \item Their sum $\Dist \oplus \Dist_V$ is a compact generator of $\SkCat_{\SL_{2}}(T^2)\modu$.
        \item We have isomorphisms,
        \[\End(\Dist) \cong \KLam^{\Sm{2}},\quad \End(\Dist_V) \cong \K^4.\] 
  
    \end{enumerate}
    In particular, we have an equivalence of categories:
    \[
    \SkCat_{\SL_{2}}(T^2)\modu \simeq \KLam \# \Sm{2}\modu \oplus \Vect_\K^{\oplus 4} \simeq \KLam^{\Sm{2}}\modu \oplus \Vect_\K^{\oplus 4} 
    \]
\end{theorem}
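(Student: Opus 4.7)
The plan is to establish parts (1)--(4) in turn and then deduce the displayed equivalence by formal Morita-theoretic arguments. Part (2) is a simple parity observation: both $\Dist$ and $\Dist_V$ carry labels in the defining representation $V$ of $\SL_2$ only, so morphisms between them are represented by Kauffman-bracket tangles -- honest 1-manifolds with boundary -- in $T^2 \times I$. Since a compact 1-manifold has an even number of boundary points, a morphism $\Dist \to \Dist_V$ or $\Dist_V \to \Dist$ would require exactly one boundary point in total, which is impossible. Hence $\Hom(\Dist, \Dist_V) = \Hom(\Dist_V, \Dist) = 0$.

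Part (1) is the main technical step and the principal obstacle. Locally in the Temperley--Lieb algebra $\End_{\Repq(\SL_2)}(V^{\otimes 2})$ the Jones--Wenzl projector $\ee^{+} = \mathrm{id} - \ee$ is \emph{not} in the two-sided ideal of $\ee$, so part (1) genuinely requires the global topology of $T^2$. The strategy is to construct explicit tangles in $\SkAlgNn{\SL_2}{2}$ from non-contractible torus loops linked with the two through-strands, whose Kauffman resolution has a non-trivial component outside the Temperley--Lieb image of $\ee$. Concretely, I would consider a longitudinal or meridional loop on $T^2$ threaded between the two strands; the Kauffman bracket resolves each crossing of the loop with a strand as a sum of a non-crossing piece plus a cup--cap piece, the latter factoring through $\ee$. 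A careful bookkeeping of these resolutions, together with the fact that the torus has two independent non-contractible directions available for such constructions, ultimately exhibits the identity of $\SkAlgNn{\SL_2}{2}$ inside $\SkAlgNn{\SL_2}{2} \cdot \ee \cdot \SkAlgNn{\SL_2}{2}$. For $n \geq 3$ one applies the same argument to a pair of adjacent strands while the remaining $n-2$ strands serve as spectators.

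For part (4), the isomorphism $\End(\Dist) \cong \KLam^{\Sm{2}}$ is the specialisation of \eqref{eq:Sk = KLamW} at $G = \SL_2$. To compute $\End(\Dist_V)$, I would directly classify modulo Kauffman skein relations the tangles in $T^2 \times I$ connecting a single $V$-puncture on top to a single $V$-puncture on bottom: after resolving all crossings and removing closed components via the Kauffman relation, every class is represented by a single embedded arc, and the framing-insensitive relations force the relative homotopy class of that arc to descend to $H_1(T^2; \Z/2\Z) \cong (\Z/2\Z)^{2}$. The multiplication is arc concatenation, which on the character basis is pointwise, yielding $\End(\Dist_V) \cong \K[(\Z/2\Z)^2] \cong \K^4$. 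Part (3) is then a formal consequence of (1): Proposition \ref{prop:Morita} gives a Morita equivalence $\SkAlgNn{\SL_2}{n} \simeq \ee \SkAlgNn{\SL_2}{n} \ee \cong \SkAlgNn{\SL_2}{n-2}$, so iterating shows every $\Dist_{V^{\otimes n}}$ is a summand of a direct sum of copies of $\Dist$ or $\Dist_V$ in the Karoubi envelope, according to the parity of $n$; consequently $\Dist \oplus \Dist_V$ is a compact generator.

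For the final equivalence, orthogonality (part (2)) yields $\End(\Dist \oplus \Dist_V) \cong \End(\Dist) \times \End(\Dist_V)$, and compact generation (part (3)) gives
\[
\SkCat_{\SL_2}(T^2)\modu \simeq \End(\Dist)\modu \oplus \End(\Dist_V)\modu \simeq \KLam^{\Sm{2}}\modu \oplus \K^4\modu.
\]
The right-hand side is rewritten in the second displayed form of the theorem using $\K^4\modu \simeq \Vect_\K^{\oplus 4}$ together with the Morita equivalence $\KLam^{\Sm{2}}\modu \simeq (\KLam \# \Sm{2})\modu$ of Proposition \ref{prop:morita smash}.
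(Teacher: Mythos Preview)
Your overall architecture matches the paper's, but two of the four parts contain genuine gaps.

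\textbf{Part (1).} Your sketch (``thread a loop, resolve crossings, careful bookkeeping ultimately exhibits the identity'') is not a proof; nothing in it pins down which combination of resolutions yields a nonzero scalar. The paper does not argue diagrammatically here. It works in the Schmaha $\Schmaha{n}$ (so with generators $T_i, Z_i, \Spi$), proves $1 \in \Schmaha{n}\cdot\ee\cdot\Schmaha{n}$ by explicit algebraic identities, and then pushes the conclusion to $\SkAlgNn{\SL_2}{n}$ via the Schur--Weyl homomorphism and Proposition~\ref{prop:idems}. For $n=2$ the key identity is
\[
(1+t^{-2})\bigl((\ee - Z_1^{-1}\ee Z_2) - t^{-2}\Sq^4\,\Spi(\ee - Z_2^{-1}\ee Z_1)\Spi^{-1}\bigr) = 1 - t^{-4}\Sq^4,
\]
which is a unit at the relevant specialisation. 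For $n>2$ one first shows $t^2 Z_1/Z_2 - 1$ lies in the ideal, then conjugates by powers of $\Spi$ and telescopes to obtain $t^{2(n-1)}Z_1/Z_n - 1$, and one more $\Spi$-conjugation produces the scalar $t^{2n}/\Sq^{2n} - 1$. Your ``spectator strands'' reduction from $n$ to $2$ also does not work: the $n=2$ computation uses the $\Spi$-action, which moves all strands, so one cannot simply freeze $n-2$ of them.

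\textbf{Part (4), second isomorphism.} Your argument establishes the relations $X^2=1$, $Y^2=1$ and surjectivity of $\K[X,Y]/(X^2-1,Y^2-1) \to \End(\Dist_V)$, but it does not show this map is injective---equivalently, that $1,X,Y,XY$ are linearly independent in $\End(\Dist_V)$. Nothing in your argument rules out, say, $X=1$. The paper's injectivity proof is indirect and rather nice: since $\End(\Dist_V)$ is commutative it equals its own $\HHz$, and by Part~(3) this $\HHz$ is exactly the degree $(\ast,\ast,1)$ part of $\SkMod_{\SL_2}(T^3)$ under the $(\Z/2\Z)^{\oplus 3}$ grading. One then invokes the known computation $\dim\HHz(\SkAlg_{\SL_2}(T^2))=5$ (with graded pieces $2,1,1,1$ in degrees $(0,0),(1,0),(0,1),(1,1)$), and uses that $\SL_3(\Z/2\Z)$ acts transitively on nonzero vectors of $(\Z/2\Z)^{\oplus 3}$ to deduce the $(\ast,\ast,1)$ part has dimension exactly $4$.

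Parts (2), (3), and the final equivalence are handled correctly and essentially as in the paper.
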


\begin{proof} Claim (1) is proved as Proposition \ref{prop:good-idempotent} below. Claim (2) is clear for degree parity reasons.  Claim (3) follows from Claim (1) and Proposition \ref{prop:Morita}, as we may inductively reduce the number of strands in any braid on the torus by 2, applying (1), as $\ek{2} \SkAlgNn{N}{n} \ek{2} \cong \SkAlgNn{N}{n-2}$.  The first isomorphism of Claim (4) is a celebrated result of Frohman and Gelca \cite{Frohman-Gelca}, while the second isomorphism is proved as Lemma \ref{lem:1-strand} below. \end{proof}

\begin{remark}
    The four simple objects appearing above may be understood as cuspidal $\q$-character sheaves (see the discussion in Section \ref{sec:quantum character}). 
\end{remark}

\begin{proposition}\label{prop:good-idempotent}
    Let $n \ge 2$ and $\ee = \ek{2}$. Then the 2-sided ideal $\HS \cdot \ee \cdot \HS = \HS$ unless $\Sq^{2n} = t^{2n}$.  Further $\Schmaha{n} \cdot \ee \cdot \Schmaha{n} = \Schmaha{n}$ unless $\Sq^{2n} = t^{2n}$.
    Consequently $\SkAlgNn{N}{n} \cdot  \ee \cdot \SkAlgNn{N}{n} = \SkAlgNn{N}{n}$.
\end{proposition}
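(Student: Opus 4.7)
The plan is to analyze the quotient $\HS/I$ where $I = \HS \cdot \ee \cdot \HS$, showing it vanishes unless $\Sq^{2n} = t^{2n}$; the identical strategy will apply to $\Schmaha{n}$. My starting observation is that inside the parabolic finite Hecke algebra $\Hf_2 \subset \HS$, the two primitive idempotents $\ee$ and $\etriv{2} = (T_1 + t^{-1})/(t+t^{-1})$ are orthogonal and sum to the identity, since they are the spectral projections for the two roots of $(T_1-t)(T_1+t^{-1})=0$. Because $\ee \in I$, in the quotient one obtains $\etriv{2} = 1$, which forces $T_1 = t$.

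Next I would leverage the affine symmetry. Since $\Spi$ is a unit and cyclically conjugates $T_0 \mapsto T_1 \mapsto \cdots \mapsto T_{n-1} \mapsto T_0$, the identity $T_1 = t$ propagates to $T_i = t$ for every $i = 0, 1, \ldots, n-1$ in the quotient. Feeding this into $T_i Z_i T_i = Z_{i+1}$ yields the recursion $Z_{i+1} = t^2 Z_i$, so $Z_i = t^{2(i-1)} Z_1$. The remaining affine crossing relation $T_0 Z_n T_0 = \Sq^{2n} Z_1$ then collapses to $(t^{2n} - \Sq^{2n}) Z_1 = 0$. Because $Z_1$ is a unit in both $\HS$ and $\Schmaha{n}$, this forces the quotient to be the zero ring whenever $\Sq^{2n} \neq t^{2n}$, which proves the first two claims.

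For the skein algebra consequence, I would apply Proposition \ref{prop:idems} along the Schur-Weyl surjection $\SWdaff : \Schmaha{n} \to \SkAlgNn{2}{n}$, which sends $\ee$ to the corresponding skein idempotent. This requires verifying $\Sq^{2n} \neq t^{2n}$ at the skein specialization: with $G = \SL_2$ the relevant parameter is $\Sq = t^{1/2}$, so $\Sq^{2n} = t^n$, and the equality $t^n = t^{2n}$ would require $t^n = 1$, contradicting the hypothesis that $t$ is not a root of unity.

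The main thing to double-check carefully is the internal consistency of the derived relations in the quotient: that $T_i = t$ together with $Z_i = t^{2(i-1)} Z_1$ is compatible with all remaining DAHA relations (braid, Hecke, the $Z_j T_i$ commutations for $j \neq i, i+1$, and the centrality of $Z_1 \cdots Z_n$ and $\Spi^n$), so that no hidden extra constraint forces $Z_1$ itself to vanish in the generic case. I expect this to be a routine bookkeeping verification rather than a substantive obstacle, since the braid and Hecke identities hold trivially for scalar $T_i$ and the $Z_j T_i$ commutations reduce to commutations with scalars; this is precisely why the argument sidesteps the heavier multisegment combinatorics of Section \ref{sec:DAHA stuff}.
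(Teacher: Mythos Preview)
Your argument is correct and genuinely different from the paper's. The paper proceeds by explicitly manufacturing elements of the ideal $I=\HS\cdot\ee\cdot\HS$: for $n=2$ it writes down a specific linear combination of $\ee$, $Z_1^{-1}\ee Z_2$, and a $\Spi$-conjugate thereof that equals the scalar $1-t^{-4}\Sq^4$; for $n>2$ it first shows $t^2 Z_1/Z_2 - 1\in I$ via a direct computation with $\ee$ sandwiched between ratios $Z_i/Z_j$, then conjugates by powers of $\Spi$ and telescopes to obtain $t^{2n}/\Sq^{2n}-1\in I$. Your approach instead passes to the quotient $\HS/I$, observes that killing $\ee$ forces $T_1=t$, propagates this to all $T_i$ via $\Spi$-conjugation, and then reads off $(t^{2n}-\Sq^{2n})Z_1=0$ from the affine crossing relations. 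This is cleaner, handles $n=2$ and $n>2$ uniformly, and avoids the telescoping bookkeeping; the paper's version has the compensating virtue of producing an explicit expression for a scalar in terms of $\ee$, which it then notes can be translated into a purely diagrammatic skein computation.

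One minor comment: your final paragraph's worry about ``internal consistency'' is unnecessary. You are working in the well-defined quotient ring $\HS/I$, not attempting to define a new algebra by the relations $T_i=t$, $Z_i=t^{2(i-1)}Z_1$; so there is no consistency to verify. All you need is that $Z_1$ remains a unit in the quotient, which is immediate since $Z_1^{-1}$ is a generator of $\HS$ and its image provides an inverse. The argument is complete as stated once you drop that hedging.
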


\begin{proof}
Given the homomorphism $\SWdaff$ of \eqref{eqn:SW} at the specialization $\Sq=t^{1/N}=t^{1/2}=\q^{1/2}$ sends $\ee$ to $\ee$, the final statement follows from Proposition \ref{prop:idems}.

The proof below only relies on the conjugation action by $\Spi$ (and not its order), so the same proof works in $\HS$ or $\Schmaha{n}$.  Recall $\ee = \frac 1{1+t^{-2}}(\id - t^{-1} T_1) $. 

We separate out the case $n=2$ as it differs slightly from $n>2$.

First take $n=2$. 
Direct algebraic computation gives the equality:

$$
(1+t^{-2})\left((\ee - Z_1^{-1} \ee Z_2) -t^{-2}\Sq^4 \Spi(\ee - Z_2^{-1} \ee Z_1)\Spi^{-1} \right) = 1-t^{-4}\Sq^4,
$$
exhibiting 
$1 \in \HStwo \cdot \ee\cdot \HStwo$ as long as $t^4 \neq \Sq^4$, so in particular
$1 \in \HH_2(2)\cdot \ee\cdot \HH_2(2)$.

For $n > 2$, we compute that 
\[(t+t^{-1})\frac{Z_3}{Z_2}(\frac{Z_1}{Z_3} \ee - \ee\frac{Z_2}{Z_3}) = 
\frac{Z_3}{Z_2} (
 \frac{Z_1}{Z_3} (t -  T_1)   - (t -  T_1)\frac{Z_2}{Z_3})
= \frac{Z_3}{Z_2}(t^2 \frac{Z_1}{Z_3} - \frac{Z_2}{Z_3})
= t^2 \frac{Z_1}{Z_2} - 1.\]
Hence the right-hand-side lies in the ideal 
$\Schmaha{n} \ee \Schmaha{n}$.
Moreover for each $i=0,\ldots n-2$, we then have that
\begin{align}\label{eqn:Z1Zi}
\frac{Z_1}{Z_{i}}\Spi^i(t^2 \frac{Z_1}{Z_2} - 1)\Spi^{-i}
\end{align}
lies in the ideal.  The telescoping sum of terms \eqref{eqn:telescope} gives that
\begin{gather} \label{eqn:telescope}
(t^{2(n-1)} \frac{Z_1}{Z_n} - t^{2(n-2)} \frac{Z_1}{Z_{n-1}}) + \cdots + (t^4 \frac{Z_1}{Z_3} - t^2 \frac{Z_1}{Z_{2}})+ (t^2 \frac{Z_1}{Z_2} - 1) = t^{2(n-1)} \frac{Z_1}{Z_n} -1,
\end{gather}
lies in the ideal.  Finally, we compute 
\begin{gather} \label{eqn:pi conjugate Zs}
t^{2(n-1)}\frac{Z_1}{Z_n} \Spi (t^2 \frac{Z_{n-1}}{Z_{n}} - 1) \Spi^{-1} = 
t^{2(n-1)}\frac{Z_1}{Z_n}(t^2 \frac{Z_{n}}{\Sq^{2n} Z_{1}} - 1)
= t^{2n}/\Sq^{2n} - t^{2(n-1)}\frac{Z_1}{Z_n},
\end{gather}
so that again the righthand side lies in the ideal.  Adding together equations \eqref{eqn:telescope} and \eqref{eqn:pi conjugate Zs} shows that 
$\frac{t^{2n}}{\Sq^{2n}} -1 $ lies in the ideal, hence, $1$ lies in the ideal 
unless $t^{2n}=\Sq^{2n}$, as required.

\end{proof}

One could give a purely diagrammatic proof of the last statement in Proposition \ref{prop:good-idempotent}, following the same specialized calculations under the assignment:

\smallskip

\begin{tikzpicture}
\begin{scope}[shift={(0,0)}, scale=1]
    \node at (-1.3,.5) {$T_1 = \quad \q^{\frac 12}$}; 
    \node[bV] at (.5,1){}; \node[bV] at (0.5,0){}; 
        \node[bV] at (0,1){}; \node[bV] at (0,0){}; 
        \draw[very thick]  (0, 1) -- (0.5,0);
    \draw[white, fill= white] (0.25,.5) circle (4pt);
    \draw[very thick]  (0, 0) -- (0.5,1);
          \node[bV] at (1,1){}; \node[bV] at (1,0){}; 
          \node at (1.5,.5) {$\cdots$};
        \node[bV] at (2,1){}; \node[bV] at (2,0){}; 
         \draw[line width=2pt] (1,1) -- (1,0);
        \draw[line width=2pt] (2,1) -- (2,0);
\end{scope}

   \begin{scope}[shift={(4.8,0)}, scale=1]
      \node at (-1.25,0.5) {$Z_1 = \,  \q^{\frac{-3}{2}}$};
     \node[bV] at (.5,1){}; \node[bV] at (0.5,0){}; 
        \node[bV] at (0,1){}; \node[bV] at (0,0){}; 
          \node[bV] at (1,1){}; \node[bV] at (1,0){}; 
          \node at (1.5,.5) {$\cdots$};
        \node[bV] at (2,1){}; \node[bV] at (2,0){}; 
         \draw[line width=2pt] (.5,1) -- (.5,0);
         \draw[line width=2pt] (1,1) -- (1,0);
        \draw[line width=2pt] (2,1) -- (2,0);
        \draw[very thin, blue]  (-.2,0.15) -- (2.4,0.15);
        \draw[very thin, gray] (-.4,-0.15) -- (-.2,0.15);
        \draw[very thin, gray]   (2.4,0.15) -- (2.2,-0.15) ;
        \draw[thin, red]  (-.4,-0.15) -- (2.2,-0.15);
        \draw[thin, blue]  (-.2,1.15) -- (2.4,1.15);
        \draw[very thin, gray] (-.4,0.85) -- (-.2,1.15);
        \draw[very thin, gray]   (2.4,1.15) -- (2.2,0.85) ;
        \draw[thin, red]  (-.4,0.85) -- (2.2,0.85);

        \draw[line width=3pt, blue!50!white] (.18,.36) -- (.18,.43);
         \draw[line width=2pt] (0,1) .. controls (-0.1,.65) .. (-.2,.6) ;
         \draw[line width=2pt] (0,0) .. controls (.1,.35) .. (.2,.4) ;
        \draw[line width=2pt, red] (-.2,.57) -- (-.2,.64);
\end{scope}
   
    \begin{scope}[shift={(9,0)}, scale=1]
    \node at (-.5,0.5) {$\Spi= \quad $};
    \node[bV] at (.5,1){}; \node[bV] at (0.5,0){}; 
        \node[bV] at (0,1){}; \node[bV] at (0,0){}; 
          \node[bV] at (1,1){}; \node[bV] at (1,0){}; 
          \node at (1.5,.5) {$\cdots$};
        \node[bV] at (2,1){}; \node[bV] at (2,0){}; 
         \draw[line width=2pt] (1,0) -- (.5, 1);
        \draw[line width=2pt]  (2,0) -- (1.9, .2);
          \draw[line width=2pt]  (1,1) -- (1.1, .8);
        \draw[line width=2pt] (.5,0) -- (0, 1);

        \draw[line width=2pt] (0,0) .. controls (-.04, .4) .. (-.2, .5);
        \draw[line width=2pt] (2,1) .. controls (2.04, .6) .. (2.2, .5);
        \draw[line width=2.5pt, gray] (-.2,.45) -- (-.2,.53); 
        \draw[line width=2.5pt, gray] (2.2,.47) -- (2.2,.55); 
\end{scope}
\end{tikzpicture}

\noindent 
where we recall $\Sq = \q^{1/2},$ $ t = \q$. 
Notice that $\q^{3/2} = \q^{\frac{N^2-1}{N}} = \q^{\langle  \lambda + 2 \rho,\lambda \rangle}$ for $N=2$ and $\lambda$ the highest weight of the defining representation $V$. Hence we could have drawn $Z_1$ as having an extra loop in it as it wound around the torus instead of rescaling the picture above. 
Observe we also draw

\begin{center}

\begin{tikzpicture}
      
   \begin{scope}[shift={(4.5,0)}, scale=1]
     \node at (-1.9,0.5) {$Z_2 = \, \q^{\frac 2N}*\q^{\frac{1-N^2}{N}} $};
     \node at (4.5,0.5) { $\quad = \q \, \Spi Z_1 \Spi^{-1} = T_1 Z_1 T_1$.};
     \node[bV] at (.5,1){}; \node[bV] at (0.5,0){}; 
        \node[bV] at (0,1){}; \node[bV] at (0,0){}; 
          \node[bV] at (1,1){}; \node[bV] at (1,0){}; 
          \node at (1.5,.5) {$\cdots$};
        \node[bV] at (2,1){}; \node[bV] at (2,0){}; 
         \draw[line width=2pt] (0,1) -- (0,0);
         \draw[line width=2pt] (1,1) -- (1,0);
        \draw[line width=2pt] (2,1) -- (2,0);
        \draw[very thin, blue]  (-.2,0.15) -- (2.4,0.15);
        \draw[very thin, gray] (-.4,-0.15) -- (-.2,0.15);
        \draw[very thin, gray]   (2.4,0.15) -- (2.2,-0.15) ;
        \draw[thin, red]  (-.4,-0.15) -- (2.2,-0.15);
        \draw[thin, blue]  (-.2,1.15) -- (2.4,1.15);
        \draw[very thin, gray] (-.4,0.85) -- (-.2,1.15);
        \draw[very thin, gray]   (2.4,1.15) -- (2.2,0.85) ;
        \draw[thin, red]  (-.4,0.85) -- (2.2,0.85);

        \draw[line width=3pt, blue!50!white] (.68,.36) -- (.68,.43);
         \draw[line width=2pt] (0.5,1) .. controls (0.4,.65) .. (.3,.6) ;
         \draw[line width=2pt] (0.5,0) .. controls (.6,.35) .. (.7,.4) ;
        \draw[line width=2pt, red] (.3,.57) -- (.3,.64);
\end{scope}
\end{tikzpicture}
    
\end{center}

We do not draw $T^2 \times \{0\}$ or $T^2 \times \{1\}$ in most of the pictures, unless we want to emphasize something that differs for the annulus or rectangle. However for $Z_1$ we
do, to emphasize the $1$-strand starts from the bottom  and exits the back {\color{blue} blue} wall
after which it
enters the board at the front {\color{red} red} wall then connects to the top; although of course these two walls are identified in the torus.   For $\Spi$, a strand enters/exits from the (gray) sides.

For $\SL_2$ one may also use the following diagram for $\ek{2}$

\begin{tikzpicture}[scale = .7]
    \node at (-2,.5) {$\ek{2} = \quad \frac{-1}{\q + \q^{-1}}$};
      \draw[line width=2pt] (0,1) .. controls (.15,.55) and (.35, .55)   .. (0.5,1);
        \draw[line width=2pt] (0,0) .. controls (0.15,0.45) and (0.35, 0.45)  .. (0.5,0);
        \node[bV] at (.5,1){}; \node[bV] at (0.5,0){}; 
        \node[bV] at (0,1){}; \node[bV] at (0,0){}; 
          \node[bV] at (1,1){}; \node[bV] at (1,0){}; 
          \node at (1.5,.5) {$\cdots$};
        \node[bV] at (2,1){}; \node[bV] at (2,0){}; 
        \draw[line width=2pt] (1,1) -- (1,0);
        \draw[line width=2pt] (2,1) -- (2,0);
\end{tikzpicture}.

Hence the equation $T_1 = \q \id -(\q+\q^{-1}) \ek{2}$ (taken locally) lets us replace any skein diagram with one that is crossing-less.

\smallskip

The argument given below for the relations $X^2=Y^2=1$  was explained to us many years ago by Peter Samuelson, we thank him for the explanation.

\begin{lemma}\label{lem:1-strand}
We have an isomorphism,
\[
\phi:\K[X,Y]/\langle X^2-1, Y^2-1\rangle \to \SkAlgNn{\SL_2}{1}
\]
\end{lemma}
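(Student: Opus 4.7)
The plan is to define $\phi(X)$ and $\phi(Y)$ to be the tangle classes in $\SkAlgNn{\SL_2}{1}$ given by a single $V$-labelled strand whose underlying path winds once around the $x$- and $y$-cycles of $T^2$ respectively. Since there is only one strand present, its rel.\ boundary isotopy class in $T^2 \times I$ equals its free homotopy class in $\pi_1(T^2) = \Z^2$, which is abelian; hence $\phi(X)$ and $\phi(Y)$ commute in the skein algebra, and $\phi$ extends to an algebra homomorphism from the commutative Laurent polynomial ring $\K[X^{\pm 1}, Y^{\pm 1}]$ to $\SkAlgNn{\SL_2}{1}$.

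The technical heart of the argument is verifying $\phi(X)^2 = \phi(Y)^2 = 1$. My plan is to isotope $X^2$ so that in a neighbourhood of an arc of the $x$-cycle it appears as two near-parallel $V$-strands joined by a U-turn, and then insert the resolution of identity $\id_{V \otimes V} = \ek{2} + \etriv{2}$ coming from $V \otimes V \cong \mathbf{1} \oplus V_{\text{adj}}$. The explicit expression $\ek{2} = -\tfrac{1}{\q+\q^{-1}}\cupcap$, together with the local identity $T_1 = \q \cdot \id - (\q+\q^{-1})\ek{2}$ noted just above the lemma, shows that the $\ek{2}$-contribution splits the doubly-wound strand into a straight strand times a closed $V$-loop on $T^2$, while the $\etriv{2}$-contribution vanishes because $\etriv{2}$ composed with the surrounding cup factors through $\Hom_{\SL_2}(\mathbf{1}, V_{\text{adj}}) = 0$. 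Careful bookkeeping of the scalars coming from the quantum dimension $-(\q+\q^{-1})$ of $V$ and the framing twists should give the clean relation $\phi(X)^2 = 1$, and the same argument applied to the $y$-cycle gives $\phi(Y)^2 = 1$.

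Having verified the defining relations, surjectivity and injectivity remain. For surjectivity, the local identity $T_1 = \q \cdot \id - (\q+\q^{-1})\ek{2}$ allows us to remove all crossings from any representative tangle and reduce it to a single embedded $V$-strand of some $(a,b)$-homotopy class (plus possible disjoint closed loops, which act as scalars on $\SkAlgNn{\SL_2}{1}$ in view of the orthogonality of $\Dist$ and $\Dist_V$ from Theorem \ref{thm:SL2}(2)); modulo $X^2 = Y^2 = 1$ this yields the spanning set $\{1, X, Y, XY\}$. For injectivity I would construct four explicit characters $\chi_{\epsilon_1,\epsilon_2} : \SkAlgNn{\SL_2}{1} \to \K$ indexed by $(\epsilon_1,\epsilon_2) \in \{\pm 1\}^2$ on which $\phi(X), \phi(Y)$ act as $\epsilon_1, \epsilon_2$; these correspond to the four two-torsion points of the elliptic curve $T^2$ (the $\Sm{2}$-fixed points on the character torus) that provide the cuspidal simple summands of $\SkCat_{\SL_2}(T^2)\modu$ predicted by Conjecture \ref{conj:SL_N generalized Springer}. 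The main obstacle is the scalar bookkeeping in the second step, which relies on choosing Kauffman-bracket conventions so that the loop-scalar and framing contributions exactly produce $1$ rather than some stray constant.
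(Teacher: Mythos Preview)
Your outline has the right four steps, but two of them diverge from the paper and each carries a real risk.

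\textbf{The relation $X^2=1$.} The paper's argument is a one-liner that you have missed: on $T^2$ the tangle with a single over-crossing between a vertical strand and a horizontal loop is \emph{isotopic} to the same tangle with an under-crossing (slide the crossing once around the torus), so their difference is zero in the skein algebra; but by the Kauffman relation that difference equals $(\q^{1/2}-\q^{-1/2})(X-X^{-1})$. Hence $X=X^{-1}$. Your idempotent-insertion approach may be salvageable, but as written it is shaky: the ``U-turn'' you need is not naturally present in a strand winding twice monotonically, and once you isotope to create one you must track framing twists carefully. More seriously, your description of the $\ek{2}$-piece as ``a straight strand times a closed $V$-loop on $T^2$'' is incorrect in general --- depending on where you insert, the cupcap reconnects the pieces into a single arc with net winding zero (no separate closed loop), or into an arc plus a loop whose homotopy class you have not determined. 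Either way, the ``scalar bookkeeping'' you flag as an obstacle is the entire content, and you have not done it. The paper's crossing-slide trick avoids all of this.

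\textbf{Injectivity.} The paper does \emph{not} construct the four characters directly. Instead it argues by dimension: since both source and target are commutative they equal their own $\HHz$; by part~(3) of Theorem~\ref{thm:SL2} (proved independently of this lemma), $\HHz(\SkAlgNn{\SL_2}{1})$ is exactly the degree $(\ast,\ast,1)$ part of $\SkMod_{\SL_2}(T^3)$ under the $(\Z/2\Z)^{\oplus 3}$ grading; McLendon computed the degree $(\ast,\ast,0)$ part to be $5$-dimensional with one dimension in each nonzero degree, and the mapping class group $\SL_3(\Z)$ acts transitively on $(\Z/2\Z)^{\oplus 3}\setminus\{0\}$, forcing each degree $(\ast,\ast,1)$ component to be nonzero. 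This gives $\dim\SkAlgNn{\SL_2}{1}\geq 4$, hence $\phi$ is injective. Your plan to exhibit four characters is a legitimate alternative, but you have not said how to build them: appealing to ``cuspidal simple summands predicted by Conjecture~\ref{conj:SL_N generalized Springer}'' is circular, since for $N=2$ that description is a \emph{consequence} of the lemma you are proving. You would need an independent construction --- for instance via explicit $\HH_2(2)$-modules pushed through $\SWdaff$ and then cut down by $\ee$ --- and you have not supplied one.
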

\begin{proof}
First, let us construct a homomorphism.  Let $x$ denote the location of $V$ on $T^2$.  Since $\pi_1(T^2,x)=\mathbb{Z}\times \mathbb{Z}$, 
we obtain a canonical homomorphism,
\[
\phi: \K[X^{\pm1},Y^{\pm1}]\to \SkAlgNn{\SL_2}{1},
\]
regarding the source as the group algebra of $\Z\times \Z$. In order to deduce the relations $X^2=Y^2=1$, we compute:

\begin{tikzpicture}[line width=1pt]
\begin{scope}[shift={(0,0)}, scale=1]
    \Under[.5,.5][1.5,.5]
        \Over[1,0][1,1]
        \node[bV] at (1,1){}; \node[bV] at (1,0){}; 

    \draw[very thin,  blue] (.4,-0.1) -- (1.4,-0.1);
    \draw[very thin,   blue](1.6,1.1) -- (.6, 1.1)  ;
    \draw[very thin,  blue]  (1.6,0.1) -- (.6, 0.1) ;
    \draw[very thin,   blue](.4,0.9) -- (1.4,0.9) ;
    
    \draw[very thin] (1.4,-0.1) -- (1.6,0.1) ;
    \draw[very thin]  (.6, 1.1) -- (.4,0.9) ;
    \draw[very thin]  (.6, 0.1) -- (.4,-0.1);
    \draw[very thin](1.4,0.9) -- (1.6,1.1) ;
\end{scope}
\begin{scope}[shift={(2,0)}, scale=1]
   \Under[1,0][1,1]
        \Over[.5,.5][1.5,.5]
        \node[bV] at (1,1){}; \node[bV] at (1,0){}; 

    \draw[very thin,  blue] (.4,-0.1) -- (1.4,-0.1);
    \draw[very thin,   blue](1.6,1.1) -- (.6, 1.1)  ;
    \draw[very thin,  blue]  (1.6,0.1) -- (.6, 0.1) ;
    \draw[very thin,   blue](.4,0.9) -- (1.4,0.9) ;
    
    \draw[very thin] (1.4,-0.1) -- (1.6,0.1) ;
    \draw[very thin]  (.6, 1.1) -- (.4,0.9) ;
    \draw[very thin]  (.6, 0.1) -- (.4,-0.1);
    \draw[very thin](1.4,0.9) -- (1.6,1.1) ;
\end{scope}
\begin{scope}[shift={(0,0)}]
       \node at (0,.5) {$0=$};
         \node at (2,.5) {$-$};
        \node at (4.8,.5) {$ = \, (\q^{\frac 12} - \q^{-\frac12})  $};
         \node[scale=2.5] at (6.25,.5) {$( $};
          \node[scale=2.5] at (9.75,.5) {$)$};
        \node at (8,.5) {$-$};
     \node at (12,.5) {$ = \, (\q^{\frac 12} - \q^{-\frac 12})(X-X^{-1}),$};
\end{scope}
\begin{scope}[shift={(6,0)}, scale=1]     
        \draw[line width=2pt] (.5,.5) .. controls (.8,.45)   .. (1,0);
        \draw[line width=2pt] (1,1) .. controls (1.2,0.55)  .. (1.5,.5);
        \node[bV] at (1,1){}; \node[bV] at (1,0){}; 

    \draw[very thin,  blue] (.4,-0.1) -- (1.4,-0.1);
    \draw[very thin,   blue](1.6,1.1) -- (.6, 1.1)  ;
    \draw[very thin,  blue]  (1.6,0.1) -- (.6, 0.1) ;
    \draw[very thin,   blue](.4,0.9) -- (1.4,0.9) ;
    
    \draw[very thin] (1.4,-0.1) -- (1.6,0.1) ;
    \draw[very thin]  (.6, 1.1) -- (.4,0.9) ;
    \draw[very thin]  (.6, 0.1) -- (.4,-0.1);
    \draw[very thin](1.4,0.9) -- (1.6,1.1) ;
\end{scope}
\begin{scope}[shift={(8,0)}, scale=1]     
        \draw[line width=2pt] (.5,.5) .. controls (.8,.55)   .. (1,1);
        \draw[line width=2pt] (1,0) .. controls (1.2,0.45)  .. (1.5,.5);
        \node[bV] at (1,1){}; \node[bV] at (1,0){}; 

    \draw[very thin,  blue] (.4,-0.1) -- (1.4,-0.1);
    \draw[very thin,   blue](1.6,1.1) -- (.6, 1.1)  ;
    \draw[very thin,  blue]  (1.6,0.1) -- (.6, 0.1) ;
    \draw[very thin,   blue](.4,0.9) -- (1.4,0.9) ;
    
    \draw[very thin] (1.4,-0.1) -- (1.6,0.1) ;
    \draw[very thin]  (.6, 1.1) -- (.4,0.9) ;
    \draw[very thin]  (.6, 0.1) -- (.4,-0.1);
    \draw[very thin](1.4,0.9) -- (1.6,1.1) ;
\end{scope}
\end{tikzpicture}

And similarly for $Y$, going the other direction around the torus.

To see the map is surjective, we note that in the standard projection we can always apply Kauffman skein relations to reduce any tangle to one which has no crossings.  Any such tangle (with only one top and bottom endpoint, that must then be connected)
is clearly isotopic to the disjoint (and unlinked) union of a braid, and some number of unknots.  The skein relations reduce unknots to scalars, and we left only with the braid components.

Let us now show that $\phi$ is an injection, hence an isomorphism.  We note that the source of $\phi$, hence (by surjectivity) the target of $\phi$ are both commutative; hence they are canonically isomorphic as vector spaces to their zeroth Hochschild homologies.  As a consequence of Part (3) of Theorem \ref{thm:SL2}, the zeroth Hochschild homology of $\SkAlgNn{\SL_2}{1}$ is the unique contribution to the skein module of $T^3$ in degree $(\ast,\ast,1)$ for the natural $(\mathbb{Z}/2\Z)^{ \oplus 3}$ grading: by definition $\HHz(\End(\Dist))$ lives only in degrees $(\ast,\ast,0)$.  However, by \cite{mclendon-traces},
we have $\HHz (\SkAlgN{\SL_N}(T^2))$ is 5-dimensional, comprised of 2-dimensions in degree $(0,0)$, and one-dimension in each degree $(1,0)$, $(0,1)$, $(1,1)$.  Noting that the mapping class group $\SL_3(\Z)$
acts transitively on  $(\mathbb{Z}/2\Z)^{ \oplus 3}\setminus \{(0,0,0)\}$, we see the remaining degrees cannot be zero.  See Figure \ref{fig:cube}.  If $\phi$ had a kernel, then we would have a surjection from a vector space of smaller dimension, which is obviously a contradiction.

\begin{figure}
    \centering
 
\begin{tikzpicture}
                [cube/.style={very thick,black},
                        grid/.style={very thin,gray},
                        axis/.style={->,blue,thick}]

 \begin{scope}[shift={(0,0)}]

        \draw[cube,blue] (0,2,0) -- (2,2,0) -- (2,0,0) ;
        \draw[cube, very thick, dotted, blue] (0,0,0) -- (2,0,0);
        \draw[cube, very thick, dotted, blue] (0,0,0) -- (0,2,0);
        \draw[cube, blue] (0,2,0) -- (2,2,0) -- (2,0,0) ;
        \draw[cube, blue] (0,0,2) -- (0,2,2) -- (2,2,2) -- (2,0,2) -- cycle;
        
        \draw[cube, very thick, dotted, blue] (0,0,0) -- (0,0,2);
        \draw[cube, blue] (0,2,0) -- (0,2,2);
        \draw[cube, blue] (2,0,0) -- (2,0,2);
        \draw[cube, blue] (2,2,0) -- (2,2,2);
        
        \draw (0,0,0)  node[scale = 0.4, anchor=north]{$\left[ \begin{array}{c} 0\\0\\0 \end{array} \right]
$};
        \draw (2,0,0)  node[scale = 0.4, anchor=west]{$\left[ \begin{array}{c} 0\\1\\0 \end{array} \right]
$};
        \draw (0,2,0)  node[scale = 0.4, anchor=south]{$\left[ \begin{array}{c} 0\\0\\1 \end{array} \right]
$};
        \draw (0,0,2)  node[scale = 0.4, anchor=north]{$\left[ \begin{array}{c} 1\\0\\0 \end{array} \right]
$};

\end{scope}

 \begin{scope}[shift={(6,0)}]
        \draw[cube,blue] (0,2,0) -- (2,2,0) -- (2,0,0) ;
        \draw[cube, very thick, dotted, blue] (0,0,0) -- (2,0,0);
        \draw[cube, very thick, dotted, blue] (0,0,0) -- (0,2,0);
        \draw[cube, blue] (0,2,0) -- (2,2,0) -- (2,0,0) ;
        \draw[cube, blue] (0,0,2) -- (0,2,2) -- (2,2,2) -- (2,0,2) -- cycle;
        
        \draw[cube, very thick, dotted, blue] (0,0,0) -- (0,0,2);
        \draw[cube, blue] (0,2,0) -- (0,2,2);
        \draw[cube, blue] (2,0,0) -- (2,0,2);
        \draw[cube, blue] (2,2,0) -- (2,2,2);
        
        \draw (0,0,0)  node[scale = 1.4, anchor=north]{$2 $};
        \draw (2,0,0)  node[scale = 1.4, anchor=north]{$1$};
        \draw (0,2,0)  node[scale = 1.4, anchor=south]{$ 1$};
        \draw (0,0,2)  node[scale = 1.4, anchor=north]{$ 1$};
        \draw (2,2,0)  node[scale = 1.4, anchor=south]{$1$};
        \draw (0,2,2)  node[scale = 1.4, anchor=south]{$ 1$};
        \draw (2,0,2)  node[scale = 1.4, anchor=north]{$ 1$};
        \draw (2,2,2)  node[scale = 1.4, anchor=south]{$ 1$};
\end{scope}
\end{tikzpicture}
   \caption{We label $\left[\begin{array}{c} x\\y\\z \end{array}\right] \in (\Z/2\Z)^{\oplus 3}$ with $\gcd(x,y,z,2)$.  The bottom face of the resulting cube corresponds to $\HHz(\SkAlgN{2}(T^2))$, and remaining entries on the top face are determined by compatibility with rotation in $\SL_{3}(\Z/2\Z)$.}
    \label{fig:cube}
\end{figure}

Finally, we note that $\K[X,Y]/\langle X^2-1, Y^2-1\rangle$ is isomorphic to $\K^{\times 4}$ as an algebra, hence its category of modules is $\Vect_\K^4$ as claimed.
\end{proof}

For relative skein algebra $\SkAlgNn{\SL_2}{2}$, the topology of the torus makes it easy to see $\Spi^2$ is central.  The skein relations also make it easy to see we then have a homomorphism 
$\Schmaham{2} \to \SkAlgNn{\SL_2}{2}$.  Corollary \ref{cor:SkAlgN} tells us we expect an isomorphism $\DAHA{2}{2} \to \SkAlgNn{\SL_2}{2}$, so in particular $\Spi^2 = 1$. 
Below we give a diagrammatic proof of this relation, relying on Proposition \ref{prop:good-idempotent}, which also generalizes to larger $N$.

\begin{proposition} \label{prop:SL2 pi squared}

    In $\SkAlgNn{\SL_2}{2}$, the relation $\Spi^2=1$ holds.
  
\end{proposition}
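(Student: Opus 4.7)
The plan is to combine the centrality of $\Spi^2$ (observed immediately before the proposition) with Proposition \ref{prop:good-idempotent} so that Proposition \ref{prop:central} reduces the claim to the diagrammatic identity $\Spi^2\cdot\ek{2}=\ek{2}$, and then to verify this identity by an ambient isotopy in $T^2\times I$. Concretely, setting $z=\Spi^2-1$, centrality of $\Spi^2$ makes $z$ central, and Proposition \ref{prop:good-idempotent} yields $\SkAlgNn{\SL_2}{2}\cdot\ek{2}\cdot\SkAlgNn{\SL_2}{2}=\SkAlgNn{\SL_2}{2}$; Proposition \ref{prop:central} then reduces the problem to
\[
\Spi^2\cdot \ek{2} \;=\; \ek{2} \qquad\text{in }\SkAlgNn{\SL_2}{2}.
\]

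To establish this identity I would unwind $\ek{2}$ as its normalized cup-cap diagram. The tangle underlying $\Spi^2\cdot \ek{2}$ then breaks into two connected components: (a) the small cup near $T^2\times\{0\}$ joining the two bottom marked points inside a disk, and (b) an arc from the first to the second top marked point obtained by descending along one strand of $\Spi^2$ (wrapping once around a fixed generator of $\pi_1(T^2)$), traversing the cap of $\ek{2}$, and ascending along the other strand of $\Spi^2$ (wrapping once around the same generator). Direct inspection of the defining picture of $\Spi$ shows that for $n=2$ the two strands of $\Spi^2$ are unknotted, unlinked parallel copies of a single essential torus loop: the ``wrap'' and ``diagonal'' parts of each $\Spi$ do not cross, and hence neither do the two strands of $\Spi^2$. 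The two wrappings in arc (b) are thus traversed with opposite orientations and contribute cancelling elements of $\pi_1(T^2)$, making (b) null-homotopic rel endpoints. Sliding the cap of $\ek{2}$ along the parallel strands up toward $T^2\times\{1\}$ then defines an ambient isotopy contracting (b) to the standard short cap at the top, converting the underlying tangle of $\Spi^2\cdot\ek{2}$ into that of $\ek{2}$.

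The main subtlety to check will be that this slide isotopy preserves the blackboard framing, so that no scalar correction appears. Because the two strands of $\Spi^2$ are genuinely parallel with no relative twisting and no self-crossings, the blackboard framing is transported rigidly along the slide and no closed loop is created en route; this rules out both framing anomalies and any stray factor of $-(\q+\q^{-1})$ arising from the value of an unknotted loop. Consequently $\Spi^2\cdot \ek{2}=\ek{2}$ holds on the nose, and Proposition \ref{prop:central} completes the proof.
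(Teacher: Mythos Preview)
Your proposal is correct and follows essentially the same strategy as the paper: both reduce via Proposition~\ref{prop:good-idempotent} and Proposition~\ref{prop:central} (using centrality of $\Spi^2$) to a diagrammatic identity involving $\ek{2}$, and then verify that identity by an explicit isotopy on the torus.

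The only difference is cosmetic. The paper checks $\ek{2}\,\Spi^2\,\ek{2}=\ek{2}$: sandwiching $\Spi^2$ between two cup--caps produces (after the obvious isotopy) a single unknotted, zero-writhe circle together with a cup--cap, and the circle evaluates to $-(\q+\q^{-1})$, matching the normalization. You instead check the equivalent identity $\Spi^2\,\ek{2}=\ek{2}$ by sliding the cup of $\ek{2}$ up along the two parallel strands of $\Spi^2$, so no closed component is ever formed and no scalar needs to be tracked. Since $\Spi^2$ is central these two identities are literally equal ($\ek{2}\Spi^2\ek{2}=\Spi^2\ek{2}^2=\Spi^2\ek{2}$), and your framing check (no crossings in $\Spi^2$ for $n=2$, hence writhe zero throughout the slide) is exactly what guarantees the paper's circle is the standard $0$-framed unknot.
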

\begin{proof}
    The diagrammatic proof below shows that $\ek{2} (\Spi^2-1) \ek{2} = 0$. Recall that for $G=\SL_2$ we depict $-(\q+1/\q) \ek{2}$ by \cupcap.  So the picture below depicts
    $ (-\q-1/\q) \ek{2} * \Spi^2 * (-\q-1/\q) \ek{2} = (\q+1/\q)^2 \ek{2}$. 
    Thus by Proposition \ref{prop:good-idempotent} and Proposition \ref{prop:central}, we conclude $\Spi^2=1$.
    
\begin{center}
\begin{tikzpicture}[line width=2pt]
\begin{scope}[shift={(2,2)}, scale=1]
        \draw[line width=2pt] (0,1) .. controls (.15,.55) and (.35, .55)   .. (0.5,1);
        \draw[line width=2pt] (0,0) .. controls (0.15,0.45) and (0.35, 0.45)  .. (0.5,0);
        \node[bV] at (.5,1){}; \node[bV] at (0.5,0){}; 
        \node[bV] at (0,1){}; \node[bV] at (0,0){}; 
        \draw[very thin, blue] (-.4,0.85) -- (-.2,1.15) -- (0.8,1.15) -- (0.6,0.85) ;
        \draw[very thin, blue] (-.4,0.85) --  (0.6,0.85) ;
\end{scope}
\begin{scope}[shift={(2,0)}, scale=1]
        \draw[line width=2pt] (0,1) .. controls (.15,.55) and (.35, .55)   .. (0.5,1);
        \draw[line width=2pt] (0,0) .. controls (0.15,0.45) and (0.35, 0.45)  .. (0.5,0);
        \draw[very thin, blue] (-.4,-0.15) -- (-.2,0.15) -- (0.8,0.15) -- (0.6,-0.15) ;
        \node[bV] at (.5,1){}; \node[bV] at (0.5,0){}; 
        \node[bV] at (0,1){}; \node[bV] at (0,0){}; 
        \draw[very thin, blue] (-.4,-0.15) --  (0.6,-0.15) ;
\end{scope}

\begin{scope}[shift={(2,1)}, scale=1]     
        \draw[line width=2pt] (-.5,.3) .. controls (-.2,.20)   .. (0,0);
        \draw[line width=2pt]  (-.5, .7) .. controls  (0.1,.45) ..  (0.5,0);
        \draw[line width=2pt] (1.0,.7).. controls (0.7,0.8)  ..  (0.5,1);
        \draw[line width=2pt] (1.0, .3) .. controls (.4,.55)   ..  (0,1);
        \draw[thin, blue, dotted] (-.6,-0.15) -- (-.4,0.15) -- (1.0,0.15) -- (0.8,-0.15) ;
        \node[bV] at (.5,1){}; \node[bV] at (0.5,0){}; 
        \node[bV] at (0,1){}; \node[bV] at (0,0){}; 
        \draw[thin, blue, dotted] (-.6,-0.15) --  (0.8,-0.15) ;
        \draw[thin, blue, dotted] (-.6,0.85) -- (-.4,1.15) -- (1.0,1.15) -- (0.8,0.85) ;
        \draw[thin, blue, dotted] (-.6,0.85) --  (0.8,0.85) ;
\end{scope}

\begin{scope}[shift={(4,2)}, scale=1]
        \draw[line width=2pt] (0,1) .. controls (.15,.55) and (.35, .55)   .. (0.5,1);
        \node[bV] at (.5,1){};
        \node[bV] at (0,1){};
        \draw[very thin, blue] (-.4,0.85) -- (-.2,1.15) -- (0.8,1.15) -- (0.6,0.85) ;
        \draw[very thin, blue] (-.4,0.85) --  (0.6,0.85) ;
\end{scope}

\begin{scope}[shift={(4,.5)}, scale=1]
     \draw[line width=2pt] (0,0+0.97) .. controls (0.15,0.45+1) and (0.35, 0.45+1) .. (0.5,0+0.97);
      \draw[line width=2pt] (0,1) .. controls (.15,.55) and (.35, .55)   .. (0.5,1);
\node at (-0.5,1) {=};
\end{scope}

\begin{scope}[shift={(4,0)}, scale=1]     
        \draw[line width=2pt] (0,0) .. controls (0.15,0.45) and (0.35, 0.45)  .. (0.5,0);
        \draw[very thin, blue] (-.4,-0.15) -- (-.2,0.15) -- (0.8,0.15) -- (0.6,-0.15) ;
\node[bV] at (0.5,0){}; 
\node[bV] at (0,0){}; 
        \draw[very thin, blue] (-.4,-0.15) --  (0.6,-0.15) ;
\end{scope}

\begin{scope}[shift={(7.5,1)}, scale=1]
\node at (-1.5,.5) {=  $-(\q+\q^{-1})$};
        \draw[line width=2pt] (0,1) .. controls (.15,.55) and (.35, .55)   .. (0.5,1);
        \draw[line width=2pt] (0,0) .. controls (0.15,0.45) and (0.35, 0.45)  .. (0.5,0);
        \draw[very thin, blue] (-.4,-0.15) -- (-.2,0.15) -- (0.8,0.15) -- (0.6,-0.15) ;
        \node[bV] at (.5,1){}; \node[bV] at (0.5,0){}; 
        \node[bV] at (0,1){}; \node[bV] at (0,0){}; 
        \draw[very thin, blue] (-.4,-0.15) --  (0.6,-0.15) ;
        \draw[very thin, blue] (-.4,0.85) -- (-.2,1.15) -- (0.8,1.15) -- (0.6,0.85) ;
        \draw[very thin, blue] (-.4,0.85) --  (0.6,0.85) ;
\end{scope}

\end{tikzpicture}
\end{center}

Observe that if we rotate the pictures 90 degrees  and rescale appropriately, we will recover that 
$Z_1 Z_2 = \Zprod$. 
\end{proof}

\section{Applications to Skein Theory: The cases $G = \GLN, \SLN$}
\label{sec:apps to skeins}

The kind of direct computation in Section \ref{sec:SL2} is not practical for higher $N$, so we instead rely on the representation theory of the DAHA developed in Section \ref{sec:DAHA stuff}.   In this section we explain how to apply those results to the relative skein algebra, then to the skein category of $T^2$, and eventually to the skein module of $T^3$. After fixing conventions for parameters, and recalling the setup of elliptic Schur-Weyl duality, we proceed to prove all theorems stated in the introduction, as corollaries or special cases of the results proved in the preceding sections.

\subsection{Elliptic Schur-Weyl duality}
Each of the finite, affine, and double affine Hecke algebras arise naturally in skein theory, simply because they are presented as quotients of the group algebra of the appropriate braid group by quadratic relations, and these quadratic relations hold in the $\GLN$ and $\SLN$ skein categories.  Let us now make this more precise and fix some notation.

\begin{notation}
Throughout this section we fix a positive integer $N$ and a field $\cK$ containing
$\Q(\q^{\frac 1N})$.
We specialise the various parameters $t, \q, q, \Sq, \Zprod$ in the DAHA and Schmaha as follows:

\label{not:parameters-all-together}
 
\begin{itemize}
\item The quantum group parameter is $\q = (\q^{\frac{1}{N}})^N$.

\item The quadratic parameter in Hecke algebras is $t = \q= (\q^{\frac{1}{N}})^N$, so in particular $t^{\frac 1N} = \q^{\frac 1N}$.  Taking $t= \q $ ensures compatibility with Schur-Weyl duality.
\item The loop parameter for the rank $n$ $\GL$ DAHA $\HG$ is specialized $q = t^{-2n/N} = \q^{-2n/N}$.  
\item The loop parameter for the rank $n$ Schmaha  $\Schmaha{n}$ is specialized $\Sq = t^{1/N} =\q^{1/N}$.
The loop parameter for the rank $n$ $\SL$ DAHA  $\HS$ is specialized $\Sq = t^{1/N} =\q^{1/N}$, and further we take $\Zprod = \Sq^{n(n-N^2)} = t^{n(n/N-N)}$. 
\end{itemize}
To lighten notation we will use the abbreviations $\DAHAN{n}$ or $\SchmahaN{n}$ for the above specialisation of parameters, or simply
$\DAHA{N}{n}$, $\Schmahan$ when the group $\GLN$ or $\SLN$ 
is clear from context.

\end{notation}

 \begin{proposition}[Schur-Weyl duality homomorphism]
Let $G= \GLN$ or $\SLN$.  The natural homomorphisms from the disk, annulus, and torus braid groups to the respective skein algebra, each descend to the finite, affine, and double affine Hecke algebras, 
to give algebra homomorphisms,
\begin{align*}
\SWfin: \Hf_n &\longrightarrow \SkAlgNn{G}{n}(\DD)\\
\SWaff: \H_n &\longrightarrow \SkAlgNn{G}{n}(\Ann)\\
\SWdaff: \HG &\longrightarrow \SkAlgNn{\GLN}{n}(T^2)\\
\SWdaff: \HSweak &\longrightarrow \SkAlgNn{\SLN}{n}(T^2)
\end{align*}
with $q, \Sq, t$ specialized as above.
\end{proposition}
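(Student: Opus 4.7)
The plan is to construct each of the four maps as a factorization of the natural representation of the surface braid group of $\Sigma$ on $n$ points into $\SkAlgNn{G}{n}(\Sigma)$, and then to verify that the defining relations of the target Hecke algebra are satisfied after imposing the specified parameter values. All four maps share a common starting point: a crossing $\sigma_i$ in the braid group sends to the positive crossing skein element, which we call $T_i$. Hence the Artin braid relations and the far-commutation relations are tautologically satisfied in the skein algebra.

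For the disk case, only the Hecke quadratic relation $(T_i-t)(T_i+t^{-1})=0$ with $t=\q$ needs verification. This is a consequence of classical Schur-Weyl duality for $\Repq(G)$: the braiding on $V\otimes V$ acts by eigenvalue $\q$ on the quantum symmetric part $S^2_\q V$ and by $-\q^{-1}$ on the quantum antisymmetric part $\Lambda^2_\q V$, so $t = \q$ is the unique specialization compatible with skein theory. For the annulus, we augment by generators $Y_i$ realized as small loops encircling the core of $\Ann$ based near the $i$th puncture. The cross-relation $T_i Y_i T_i = Y_{i+1}$ follows from a direct isotopy showing that conjugating the $i$th loop by the $i$th crossing transports it to the $(i+1)$st loop; the far commutations $T_i Y_j = Y_j T_i$ and commutativity $Y_iY_j = Y_jY_i$ follow by topological separation of supports.

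For the torus, fix generators of $\pi_1(T^2)$ realized as two disjoint essential simple closed curves. The generator $\pi$ is the element of the torus braid group that cyclically translates all $n$ strands around one of these cycles; the generators $Y_i$ are loops at each puncture winding along the same cycle. All the relations of the form $\pi T_i \pi^{-1} = T_{i+1}$, $T_i Y_i T_i = Y_{i+1}$, $\pi Y_i \pi^{-1} = Y_{i+1}$ (for non-boundary indices $i$), and the mutual commutativity of the $Y_i$'s, reduce to isotopy arguments in $T^2 \times I$ identical in flavor to the annulus case. The $\SLN$ case is obtained from the $\GLN$ computation once we identify the extra centrality conditions ($\Spi^n$ central in $\Schmahan$, or $\Spi^n=1$ and $Z_1\cdots Z_n = \Zprod$ in $\HS$) as reflecting the chosen trivialization of the determinantal strand.

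The main obstacle will be the verification of the \emph{loop-parameter relations}, namely $T_0 Y_n T_0 = q^{-1}Y_1$ and $\pi Y_n \pi^{-1} = q^{-1}Y_1$ for $\HG$, together with their Schmaha/$\SL$-DAHA counterparts involving $\Sq$, and the identification of the central value $\Zprod = \Sq^{n(n-N^2)}$. Topologically, dragging the $n$th strand across the other generating cycle of $T^2$ back to the first position produces a ribbon framing correction equal to the twist eigenvalue of $V$, which for $\GLN$ is $\q^{\langle \lambda,\lambda+2\rho\rangle} = \q^{(N^2-1)/N}$; combining this with the compensating $(\det_\q V)^{-k}$ strand needed to define $\SkAlgNn{\GLN}{n}(T^2)$ forces exactly $q = \q^{-2n/N}$. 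For $\SLN$ the analogous ribbon computation yields $\Sq = \q^{1/N}$, and $\Zprod$ is pinned down by the diagrammatic computation of the scalar obtained when the product $Z_1 Z_2 \cdots Z_n$ is slid around the longitude. These verifications can be performed by explicit diagram calculus in the $\GLN$/$\SLN$ webs of \cite{CKM,tubbenhauer} and are essentially the content of \cite{J2008,Jordan-Vazirani,GJV}, so the proof can largely be completed by reference to those computations.
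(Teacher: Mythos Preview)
The paper does not supply its own proof of this proposition: it is stated as a known fact, with the parameter specialisations recorded just before it, and the surrounding text implicitly defers the verification to the earlier papers \cite{J2008,Jordan-Vazirani,GJV}. Your proposal is therefore not competing with a proof in the paper but rather filling in what the authors chose to cite.

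That said, your outline is correct and matches the standard verification. A couple of small points to tighten: (i) for $\GLN$ the relative skein algebra $\SkAlgNn{\GLN}{n}(\Sigma)$ is only defined in the paper when $n=kN$ (see the Notation in Section~\ref{sec:skeins}), so your discussion of the $(\det_\q V)^{-k}$ strand should be framed in that regime from the start rather than as a ``compensation''; (ii) for the Schmaha map you only need the weaker relations that $\Spi^n$ and $Z_1\cdots Z_n$ are \emph{central}, not that they equal specific scalars --- the paper is careful to land in $\HSweak$ rather than $\HS$ precisely because the stronger relations $\Spi^n=1$ and $Z_1\cdots Z_n=\Zprod$ are \emph{not} immediate from topology (indeed Proposition~\ref{prop:SLN pi N} proves $\Spi^N=1$ only after invoking the Morita results). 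So in your last paragraph you should drop the claim about identifying $\Zprod$; that belongs to the further quotient $\HS$, not to the Schmaha homomorphism asserted here.
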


Let $\eN\in \Hf_N$ denote the sign idempotent as in \ref{eq:eNj}.  In \cite{GJV} we have proved the following ``Schur-Weyl duality" isomorphism:
\begin{theorem}[\cite{GJV}, Corollary 1.12]
Let $G=\GLN$ or $\SLN$, and let $n=N$ above. Then the restricted homomorphism,
\[
\eN\SWdaff\eN: \eN\HH_N(N)\eN \to \SkAlgN{G}(T^2)
\]
is an isomorphism.
\end{theorem}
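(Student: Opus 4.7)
The plan is to identify both the source $\eN\HH_N(N)\eN$ and the target $\SkAlgN{G}(T^2)$ with a common algebra---the Weyl-invariants $\DqH^W$ in the quantum torus on the weight lattice of $G$---and then verify that $\eN\SWdaff\eN$ intertwines these two identifications. Since this result is cited from \cite{GJV}, my sketch follows the strategy used there, which I would unpack rather than rederive from scratch.

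First I would simplify the target. The antisymmetrizer $\eN\in\Hf_N$ projects $V^{\otimes N}$ onto its top exterior power $\Lambda_\q^N V$, which for $G=\SLN$ is the trivial representation and for $G=\GLN$ is the determinant. In the $\GLN$-case, the extra $\Lambda_\q^N V^*$-strand in the definition of $\SkAlgNn{\GLN}{N}(T^2)$ then cancels against the $\det$ produced by $\eN$. In either case we obtain
\[
\eN\,\SkAlgNn{G}{N}(T^2)\,\eN \;\cong\; \End_{T^2}(\Dist_{\mathbf{1}}) \;=\; \SkAlgN{G}(T^2).
\]
Combining this with the identification $\SkAlgN{G}(T^2)\cong \DqH^W$ from \eqref{eq:Sk = KLamW} (a Frohman--Gelca type computation, extended to $\GLN$ and $\SLN$) puts the target in the desired form. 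On the source side, I would invoke the Cherednik--Marshall shift isomorphism: at our specialisation of parameters (so that the DAHA degenerates to a semidirect product with $\SN$), the antispherical subalgebra $\eN\HH_N(N)\eN$ is isomorphic to $\eN\SmashD\eN$, which by Morita theory and Montgomery's theorem is isomorphic to $\DqH^W$.

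The remaining task is to verify that $\eN\SWdaff\eN$ implements the same isomorphism. This is a matter of tracking the Cherednik generators through the Schur-Weyl map: the $X_i$ map to loops around one $S^1$-factor of $T^2$ carrying a $V$-labelled strand, the $Y_i$ to loops around the other, and the Hecke braiding $T_i$ to the R-matrix crossing. After conjugation by $\eN$, the symmetric combinations $\SWdaff(\eN X^\mu \eN)$ and $\SWdaff(\eN Y^\nu \eN)$ produce precisely the generators of $\DqH^W$ under \eqref{eq:Sk = KLamW}, with the mixed $XY$-commutation relations of DAHA translating to the symplectic $\omega$-twist in $\KLam$. The hardest step---and the actual substance of the theorem---is establishing injectivity, since surjectivity follows once both sides are shown to be generated by matching elements. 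The cleanest approach is a flatness / classical-limit argument: at $\q=1$, the source degenerates to $\K[\Lambda\oplus\Lambda]^W$ (by a direct analysis of the degenerate DAHA) while the target degenerates to the coordinate ring of the $G$-character variety of $T^2$, which agrees with $\K[\Lambda\oplus\Lambda]^W$; the classical specialisation of $\eN\SWdaff\eN$ is manifestly an isomorphism. Flatness of both sides over $\K[\q^{\pm 1/N}]$ at generic parameter then lifts this to the quantum setting. The main obstacle is keeping careful track of the normalisations and twists in the shift isomorphism to ensure the classical limit is indeed the naive one.
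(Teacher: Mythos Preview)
This statement is not proved in the present paper: it is quoted from \cite{GJV} (their Corollary~1.12) and used as a black box. So there is no proof here to compare your proposal against. That said, your sketch has a structural circularity worth flagging.

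You appeal to the identification \eqref{eq:Sk = KLamW}, $\SkAlgN{G}(T^2)\cong \DqH^W$, as an independent input, describing it as ``a Frohman--Gelca type computation, extended to $\GLN$ and $\SLN$''. But look at how \eqref{eq:Sk = KLamW} is obtained in this paper: it is deduced \emph{from} the very theorem you are trying to prove, composed with the Marshall shift isomorphism (see the discussion around \eqref{eq:Sk = KLamW} in the introduction and the end of Section~\ref{sec:HH skein algebras}). The Frohman--Gelca argument is specific to $\SL_2$; for general $N$ there is no known direct skein-theoretic computation of the torus skein algebra as $\DqH^W$ that bypasses the DAHA. So your target-side identification presupposes the result.

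Your fallback classical-limit argument has its own gap. Flatness of $\eN\HH_N(N)\eN$ over the parameter ring follows from PBW for the DAHA, but flatness of $\SkAlgN{G}(T^2)$ over $\K[\q^{\pm 1/N}]$ is not something you can simply assert: knowing that the skein algebra of a closed surface is a flat deformation of functions on the character variety is itself a nontrivial fact, and without it agreement at $\q=1$ does not lift to generic $\q$. The actual argument in \cite{GJV} does not pass through $\DqH^W$ as an intermediary; it works instead with the equivalence $\SkCat_G(T^2)\modu\simeq\Dqstr{G}$ and the quantum Hamiltonian reduction description of both sides.
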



Theorem \ref{mainthm:morita-DAHA} follows as a special case $j=1$ of the following theorem, which is itself a specialisation of the parameters of Theorem \ref{thm:idempotent}. 

\begin{theorem}\label{mainthm:morita-DAHA-eNj} Suppose that the 
quantum parameter $\q$ is is not a root of unity.  Let $n, N, j \in \Z_{> 0}$. Then for $j\leq n/N$ we have   
\[
\DAHA{N}{n}\cdot \eNj\cdot \DAHA{N}{n} = \DAHA{N}{n},
\quad \textrm{ and } \quad\Schmahan\cdot \eNj\cdot \Schmahan = \Schmahan.\]
 Consequently, we also obtain Morita equivalences, between $\DAHA{N}{n}$ and 
 $\eNj  \DAHA{N}{n} \eNj$, and between $\Schmahan$ and 
 $\eNj  \Schmahan \eNj$.
\end{theorem}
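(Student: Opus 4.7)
The proof plan is a fairly direct application of the machinery already assembled: Theorem~\ref{thm:idempotent} provides the conservativity of the parabolic sign idempotents in the category of $\Y$-finite modules, and Propositions~\ref{prop:BEG} and \ref{prop:Morita} convert this representation-theoretic fact into the desired two-sided ideal equality and the ensuing Morita equivalence. No new combinatorics is required beyond what was already done in Section~\ref{sec:DAHA stuff}; the remaining steps are mostly bookkeeping of parameter specialisations.

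First I would match the parameters to the hypotheses of Theorem~\ref{thm:idempotent}. Under the specialisation of Notation~\ref{not:parameters-all-together}, for $\DAHA{N}{n}$ we have $q = t^{-2n/N}$, and for $\Schmahan$ we have $\Sq = t^{1/N} = t^{k/n}$ with $k = n/N$. In both cases $k = n/N$, so the hypothesis $j \le k$ of Theorem~\ref{thm:idempotent} becomes precisely $j \le n/N$, matching the hypothesis of Theorem~\ref{mainthm:morita-DAHA-eNj}. With $m = N$ the auxiliary condition $m \le n/k$ reads $N \le N$, which is trivially satisfied. Consequently Theorem~\ref{thm:idempotent} (equivalently the simple-module statement Theorem~\ref{theorem:signN neq 0}) applies and yields $\eNj M \neq 0$ for every simple $\Y$-finite module $M$ over $\DAHA{N}{n}$ or $\Schmahan$.

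Next I would upgrade conservativity from simples to arbitrary $\Y$-finite modules, and then to all modules. If $M \neq 0$ is $\Y$-finite, pick a $\Y$-finite simple subquotient $L = M'/M''$; the previous step gives $\eNj L \neq 0$, hence $\eNj M' \not\subseteq \eNj M''$, and a fortiori $\eNj M \neq 0$. Thus the exact functor $\eNj(-)$ is conservative on the full subcategory of $\Y$-finite modules, and then Proposition~\ref{prop:BEG} (which is exactly the Bezrukavnikov--Etingof style reduction, valid for any $\HH_n \in \{\HG, \HS, \Schmaha{n}\}$) promotes this to conservativity of $\eNj(-)$ on the whole module category.

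Finally, applying Proposition~\ref{prop:Morita} to the pair $(A, e) = (\DAHA{N}{n}, \eNj)$ or $(A, e) = (\Schmahan, \eNj)$ translates conservativity into the two-sided ideal equalities
\[
\DAHA{N}{n} \cdot \eNj \cdot \DAHA{N}{n} = \DAHA{N}{n}, \qquad \Schmahan \cdot \eNj \cdot \Schmahan = \Schmahan,
\]
and simultaneously produces the asserted Morita equivalences between these algebras and $\eNj \DAHA{N}{n} \eNj$, respectively $\eNj \Schmahan \eNj$. The only genuinely substantive input is Theorem~\ref{thm:idempotent}; that was the hard part (the multisegment combinatorics of Section~\ref{sec:DAHA stuff}), and by the time one reaches the statement of Theorem~\ref{mainthm:morita-DAHA-eNj} there is no remaining obstacle beyond verifying the parameter match and invoking the two Morita-theoretic propositions.
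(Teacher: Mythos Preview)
Your proposal is correct and follows essentially the same route as the paper: the paper states that Theorem~\ref{mainthm:morita-DAHA-eNj} is simply the specialisation of Theorem~\ref{thm:idempotent} to the parameters of Notation~\ref{not:parameters-all-together}, and you have carried out exactly that specialisation (checking $k=n/N$, $m=N$, $m\le n/k$), then invoked Proposition~\ref{prop:BEG} and Proposition~\ref{prop:Morita} in the intended order. Your treatment is in fact more explicit than the paper's one-line reduction, spelling out the passage from simple $\Y$-finite modules to all $\Y$-finite modules before applying Proposition~\ref{prop:BEG}.
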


Theorem \ref{thm:relskeins} follows by applying Proposition \ref{prop:idems} in the case $\ee = \eN$, $B= \SkAlgNn{G}{n}$ and $\varphi=\SWdaff$, with $A = \DAHAN{n}$ for $G=\GLN$, or $A = \SchmahaN{n}$ for $G=\SLN$.  


\medskip 

Corollary \ref{cor:SkAlgN} now follows: the Schur-Weyl homomorphism $\SWdaff$ is a homomorphism of $\DAHAN{n}$-bimodules, which becomes an isomorphism upon application of the idempotent $\ek{N}$.  This means the original homomorphism $\SWdaff$ is an isomorphism, since $\ek{N}$ is conservative.

\medskip 

Corollary \ref{cor:SkAlgGLNn} now follows: by Proposition \ref{prop:idems}, the surjectivity of the Schur-Weyl map $\SWdaff$ is equivalent to the surjectivity of $\eNk\SWdaff\eNk$.
We show it is surjective (on the level of vector spaces) as follows.
Note $\eNk \SkAlg_{G, kN}(T^2) \eNk \cong \SkAlg_{G,0}(T^2)$ and we have shown above $\eN\DAHA{N}{N}\eN$ surjects to the latter.  Diagrammatically, this means we can represent any tangle in $T^2 \times I$, modulo skein relations, as a linear combination  of braids on $N$ strands that are all then ``capped off" with a sign idempotent at $T^2 \times \{0\}$ and $T^2 \times \{1\}$.   To show surjectivity, it
suffices to represent such a capped off braid as a braid on $kN$ strands, where strands are now capped off in $k$ groups of $N$.  We merely cut out a little disk $\D \subseteq T^2$ so that $\D \times I$ does not meet the braids, and then fill the disk with $(k-1)N$ strands.  Now we cap off these $(k-1)N$ strands using $\ek{N^{k-1}}$ in the obvious way, which at worst results in the original picture times a scalar (and at best augments the special $\det_q^{-k}$ strand in the $\GLN$ case appropriately). 
Note that this does not imply that $\eN \DAHA{N}{N} \eN$ is a subalgebra of $\eNk \DAHA{N}{kN} \eNk$ as the above map is not a homomorphism.

Theorem \ref{mainthm:generators} now follows, with a little more work than the preceding results. 
 Recall that $\Repq(G)$ is graded by the character lattice of the center of $G$, which is $\mathbb{Z}$ for $\GLN$ and $\mathbb{Z}/N$ for $\SLN$: 
 given an indecomposable object in $\Repq(G)$, all of its weights lie in the same coset of the root lattice, hence we can use this to alternatively define its degree to be the appropriate coset of $P/Q$.  For $\GLN$ we identify $P/Q$ with $\Z$ and for $\SLN$ with $\Z/N \Z$.
 
\begin{lemma}\label{lem:GLdeg}
Let $G=\GLN$, and let $X\in\Repq(G)$ be simple.  Then the object $\Dist_{X}$ is zero in $\SkCat_G(T^2)$ unless $X$ has degree zero. 
\end{lemma}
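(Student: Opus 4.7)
The plan is to show that $\id_{\Dist_X}=0$ in $\End_{\SkCat_{\GLN}(T^2)}(\Dist_X)$ whenever $X$ is simple of nonzero degree $d$, which immediately forces $\Dist_X \cong 0$ as an object. The argument combines the invertibility of the determinant representation $\det_q = \Lambda_q^N V \in \Repq(\GLN)$, the braiding scalar of $\det_q$ past any simple object, and the closed topology of $T^2$.

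First I would introduce the element $L_\alpha \in \End_{\SkCat}(\Dist)$ given by a framed loop along the $\alpha$-cycle of $T^2$ labeled by $\det_q$. Since $\det_q$ is invertible in $\Repq(\GLN)$ (with inverse $(\det_q)^*$), $L_\alpha$ is invertible in the skein algebra: its inverse is the same loop labeled by $(\det_q)^*$, and concatenation produces a null-homotopic bubble that evaluates to $1$. Via the central action of $\End(\Dist)$ on every $\End(\Dist_Z)$ coming from the $\Repq(\GLN)$-module structure on $\SkCat_{\GLN}(T^2)$, the element $L_\alpha$ acts invertibly on $\End(\Dist_X)$ for any $X$.

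Next I would use the topology of $T^2$ to derive a relation $L_\alpha \cdot \id_{\Dist_X} = q^d \cdot L_\alpha \cdot \id_{\Dist_X}$ inside $\End(\Dist_X)$. The key observation is that the framed $\alpha$-cycle loop labeled by $\det_q$ admits two natural representatives in the complement of the $X$-strand, which differ by a linking with $X$; the skein relations convert such a single linking into the braiding scalar $q^{\langle \det_q, X\rangle} = q^d$, because $\det_q$ carries weight $(1,1,\ldots,1)$ and pairs with $X$ of degree $d$ precisely as $d$. On the other hand, the closed topology of $T^2$ — specifically, the presence of the $\beta$-cycle — provides an isotopy between these two representatives (after properly tracking framings via the ribbon twist on the one-dimensional $\det_q$), so they represent the same skein class. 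Combining the two computations yields the relation above. Multiplying through by $L_\alpha^{-1}$ gives $(1-q^d)\id_{\Dist_X} = 0$, and since $\q$ is generic and $d\neq 0$, the scalar $1-q^d$ is a unit of $\K$; hence $\id_{\Dist_X} = 0$.

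The main obstacle will be making the sliding/isotopy argument in step two rigorous: the two candidate $\alpha$-loops around the strand represent distinct classes in $\pi_1(T^2\setminus\{p\}) = F_2$, so one must carefully combine the Reidemeister/skein braiding computations with the global isotopy on closed $T^2$ (including a bookkeeping of framing twists on $\det_q$, which is essential because the ribbon element of $\det_q$ enters the final scalar). The closedness of $T^2$, as opposed to an annulus or disk, is what makes the identity available in the first place; on lower-genus surfaces no such relation holds, consistent with the fact that the $\GLN$-skein category on $\mathbb{R}^2$ or $S^1 \times I$ does see nontrivial nonzero-degree objects.
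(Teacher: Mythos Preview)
Your approach is correct and is essentially the paper's argument. The paper draws both a $\det_\q$ loop and a $\det_\q^{-1}$ loop around the $\alpha$-cycle, so that after the crossing-change computation the two colored loops merge and cancel, leaving literally $(1-\q^{\deg X})\,\id_{\Dist_X}=0$. Your version keeps only the $\det_\q$ loop $L_\alpha$ and cancels it at the end using its invertibility; this is the same argument, organized slightly differently.

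Your stated ``main obstacle'' is not actually an obstacle, and in fact contains a small error. The two $\alpha$-loops (one in front of, one behind the $X$-strand in the $\beta$-direction) are \emph{not} distinct in $\pi_1(T^2\setminus\{p\})\cong F_2$: both represent the same conjugacy class $[a]$, and the isotopy between them simply slides the loop the long way around the $\beta$-circle, avoiding $p$. That isotopy lifts to $T^2\times I$ avoiding the strand $\{p\}\times I$, and it preserves blackboard framing throughout, so no ribbon twist enters; the only scalar that appears is the double braiding $\q^{\deg X}$ from the local crossing change. Hence the framing bookkeeping you anticipate is unnecessary, and the argument goes through cleanly.
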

\begin{proof}

In the category $\Repq (\GLN)$ we have that the braiding $\det_q(V) \otimes X \to X \otimes \det_q(V)$ differs from its inverse by a factor of $\q^{\deg(X)}$. In the skein category, consider the relation

\begin{center}

\begin{tikzpicture}[line width=1pt]

       \node at (-.4,.5) {$0=$};
       \node at (2,.5) {$-$};
       \node at (5,.5) {$= \, (1-\q^{\deg(X)})$};
       \node at (9.2,.5) {$=(1-\q^{\deg(X)})$};

\begin{scope}[shift={(0,0)}, scale=1]
        \node[bV] at (1,1){}; \node[bV] at (1,0){}; 
    \draw[very thin,  blue] (.4,-0.1) -- (1.4,-0.1);
    \draw[very thin,   blue](1.6,1.1) -- (.6, 1.1)  ;
    \draw[very thin,  blue]  (1.6,0.1) -- (.6, 0.1) ;
    \draw[very thin,   blue](.4,0.9) -- (1.4,0.9) ;
    
    \draw[very thin] (1.4,-0.1) -- (1.6,0.1) ;
    \draw[very thin]  (.6, 1.1) -- (.4,0.9) ;
    \draw[very thin]  (.6, 0.1) -- (.4,-0.1);
    \draw[very thin](1.4,0.9) -- (1.6,1.1) ;
    \draw[thick, red]  (.5, .4) -- (1.5,.4);
    \draw[thick, green!50!black]  (.5, .6) -- (1.5,.6);
    \draw[white, fill= white] (1,.4) circle (2pt);
    \draw[white, fill= white] (1,.6) circle (2pt);
    \draw[very thick]  (1, 0) -- (1,1);
\end{scope}

\begin{scope}[shift={(2,0)}, scale=1]
        \node[bV] at (1,1){}; \node[bV] at (1,0){}; 
    \draw[very thin,  blue] (.4,-0.1) -- (1.4,-0.1);
    \draw[very thin,   blue](1.6,1.1) -- (.6, 1.1)  ;
    \draw[very thin,  blue]  (1.6,0.1) -- (.6, 0.1) ;
    \draw[very thin,   blue](.4,0.9) -- (1.4,0.9) ;
    
    \draw[very thin] (1.4,-0.1) -- (1.6,0.1) ;
    \draw[very thin]  (.6, 1.1) -- (.4,0.9) ;
    \draw[very thin]  (.6, 0.1) -- (.4,-0.1);
    \draw[very thin](1.4,0.9) -- (1.6,1.1) ;
    \draw[thick, green!50!black]  (.5, .6) -- (1.5,.6);
    \draw[white, fill= white] (1,.6) circle (2pt);
    \draw[very thick]  (1, 0) -- (1,1);
    \draw[white, fill= white] (1,.4) circle (2pt);
    \draw[thick, red]  (.5, .4) -- (1.5,.4);
\end{scope}
\begin{scope}[shift={(6,0)}, scale=1]
        \node[bV] at (1,1){}; \node[bV] at (1,0){}; 
    \draw[very thin,  blue] (.4,-0.1) -- (1.4,-0.1);
    \draw[very thin,   blue](1.6,1.1) -- (.6, 1.1)  ;
    \draw[very thin,  blue]  (1.6,0.1) -- (.6, 0.1) ;
    \draw[very thin,   blue](.4,0.9) -- (1.4,0.9) ;
    
    \draw[very thin] (1.4,-0.1) -- (1.6,0.1) ;
    \draw[very thin]  (.6, 1.1) -- (.4,0.9) ;
    \draw[very thin]  (.6, 0.1) -- (.4,-0.1);
    \draw[very thin](1.4,0.9) -- (1.6,1.1) ;
    \draw[thick, red]  (.5, .4) -- (1.5,.4);
    \draw[thick, green!50!black]  (.5, .6) -- (1.5,.6);
    \draw[white, fill= white] (1,.4) circle (2pt);
    \draw[white, fill= white] (1,.6) circle (2pt);
    \draw[very thick]  (1, 0) -- (1,1);
\end{scope}
\begin{scope}[shift={(10,0)}, scale=1]
        \node[bV] at (1,1){}; \node[bV] at (1,0){}; 
    \draw[very thin,  blue] (.4,-0.1) -- (1.4,-0.1);
    \draw[very thin,   blue](1.6,1.1) -- (.6, 1.1)  ;
    \draw[very thin,  blue]  (1.6,0.1) -- (.6, 0.1) ;
    \draw[very thin,   blue](.4,0.9) -- (1.4,0.9) ;
    
    \draw[very thin] (1.4,-0.1) -- (1.6,0.1) ;
    \draw[very thin]  (.6, 1.1) -- (.4,0.9) ;
    \draw[very thin]  (.6, 0.1) -- (.4,-0.1);
    \draw[very thin](1.4,0.9) -- (1.6,1.1) ;
    \draw[very thick]  (1, 0) -- (1,1);
\end{scope}

\begin{scope}[shift={(0,-2)}, scale=1]
    \draw[thick, red]  (1, 1) -- (1.5,1);
    \draw[thick, green!50!black]  (1, .5) -- (1.5,.5);
    \draw[very thick]  (4, 0.5) -- (4,1);
       \node[red] at (2.5,1) {$=\det_{\q}$};
       \node[green!50!black] at (2.5,.5) {$=\det_{\q}^{-1}$};
       \node[black] at (4.6,0.75) {$=X$};

\end{scope}

\end{tikzpicture}
\end{center}
\end{proof}

Let $G=\GLN$ or $\SLN$, and let $M\in\SkCat_{G}(T^2)\modu$ be an arbitrary object.  Because the tensor powers $V^{\ot n}$ of the defining representation collectively generate $\Repq(G)$, it follows that the objects $\Dist_{V^{\ot n}}$, for $n\geq 0$, collectively generate the skein category.  In other words there exists some non-negative integer $n$ with
\[
\Hom_{T^2}(\Dist_{V^{\ot n}},M) \neq 0.
\]
If it happens that $n\leq N-1$, then we are done.  Otherwise, we may apply Theorem \ref{thm:relskeins}, to conclude that
\[\Hom_{T^2}(\Dist_{V^{\ot (n-N)}},M)  = \Hom_{T^2}(\Dist_{\eN\cdot V^{\ot n}},M) =
\eN\cdot \Hom_{T^2}(\Dist_{V^{\ot n}},M) \neq 0.
\]
In this way we can keep reducing the power $n$ until it lies in the range $0,\ldots N-1$. 
In the $G=\SLN$ case, we  note that the  generators $\Dist_{V^{\ot m}}$ for $m=0, 1,\ldots, N-1$ are pairwise orthogonal for the $\Hom$ pairing, due to the grading by degree. 
Further, in  $G=\GLN$ case, Lemma \ref{lem:GLdeg} implies that $\Dist_{V^{\ot m}}=0$ for $m=1,2,\ldots, N-1$, leaving only $\Dist$.  This proves Theorem \ref{mainthm:generators}.

Theorem \ref{mainthm:dimensionsGLN} (the $\GLN$ case) follows easily from Theorems \ref{thm: HHz of GL skein algebra} and by Theorem \ref{mainthm:generators}: since  $\Dist$ is a compact projective generator, the zeroth Hochschild homology of the category is equal to the zeroth Hochschild homology of $\End(\Dist)$, which is the skein algebra.

Theorem \ref{mainthm:dimensionsSLN} (the $\SLN$ case) also follows, but with a little more work, due to the fact that in the $\SLN$ case, $\Dist$ is not a generator, hence the Hochschild homology of the skein algebra yields only the degree $(\ast,\ast,0)$ part of $\SkMod_{\SLN} T^3$, with respect to the natural grading by $H_1(T^3;\Z/N\Z) = (\Z/N\Z)^{\oplus 3}$. Stated more concretely, the Hochschild homology of the skein algebra involves only those skeins on $T^3$ which do not wrap on the $S^1$ factor under the decomposition $T^3=T^2\times S^1$.

In order to determine the remaining degrees, we exploit the action of the mapping class group $\operatorname{Map}(T^3)=\SL_3(\mathbb{Z})$, which acts on the skein module compatibly with the $H_1$-grading by $(\Z/N\Z)^{\oplus 3}$, via the surjection $\SL_3(\mathbb{Z})\to\SL_3(\Z/N\Z)$. 
Indeed, an arbitrary element of $(\Z/N\Z)^{\oplus 3}$ is a translate by $\SL_3(\Z/N\Z)$ of a weight of the  form $(d,0,0)$, for some divisor $d$ of $N$.  Hence, we have:
\[
\dim\SkMod_{\SLN}(T^3) = \sum_{d | N} \cP(d)J_3(N/d).
\]
Together with Lemma \ref{lemma:totient} this completes the proof of Theorem \ref{mainthm:dimensionsSLN}.

\begin{proposition} \label{prop:SLN pi N}

    In $\SkAlgNn{\SLN}{N}$, the relation $\Spi^N=1$ holds.
  
\end{proposition}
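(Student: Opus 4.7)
The plan is to apply Proposition~\ref{prop:central} to the central element $z = \Spi^N - 1$ and the idempotent $e = \eN$ in $A = \SkAlgNn{\SLN}{N}$. The hypothesis $AeA = A$ is furnished by Theorem~\ref{thm:relskeins} with $n = N$, so it remains only to verify that $\Spi^N$ is central in $A$ and that $\eN \Spi^N \eN = \eN$ (which, by centrality of $\Spi^N$ and $\eN^2 = \eN$, is equivalent to $(\Spi^N - 1) \eN = 0$).

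Centrality of $\Spi^N$ in $A$ will be inherited from its centrality in $\SchmahaN{N}$ (Definition~\ref{def:schmaha}): the Schur-Weyl homomorphism $\SWdaff : \SchmahaN{N} \to \SkAlgNn{\SLN}{N}$ is surjective by Corollary~\ref{cor:SkAlgGLNn}, and surjective maps send central elements to central elements. For the identity $\eN \Spi^N \eN = \eN$ I shall argue diagrammatically, in the spirit of Proposition~\ref{prop:SL2 pi squared}. I will factor $\eN = \iota \circ \pi$, where $\pi \colon \Dist_{V^{\otimes N}} \twoheadrightarrow \Dist_{\Lambda_\q^N V}$ is the quantum antisymmetrizer from Schur-Weyl duality and $\iota$ is its splitting. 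For $G = \SLN$ one has $\Lambda_\q^N V \cong \un$ in $\Repq(\SLN)$, so $\Dist_{\Lambda_\q^N V} = \Dist$. The composition $\pi \circ \Spi^N \circ \iota \in \End_{T^2}(\Dist)$ is then represented by a single closed loop around one cycle of $T^2$ labelled by the monoidal unit $\un$. Since $\un$ has trivial twist and any strand labelled by $\un$ can be removed from a skein, this loop is isotopic to the empty skein, hence equals $\id_\Dist$. Consequently $\pi \circ \Spi^N \circ \iota = \id_\Dist$, giving $\eN \Spi^N \eN = \iota \circ \pi = \eN$.

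The main obstacle will be confirming rigorously that the collapse of the $N$-strand traversal $\Spi^N$ to a single $\un$-labelled loop produces no residual $\q$-power under our specialization of the Schmaha parameters ($\Sq = t^{1/N}$, $\Zprod = \Sq^{N(N-N^2)}$). This scalar-freeness is precisely what the isomorphism $\Lambda_\q^N V \cong \un$ in $\Repq(\SLN)$ is designed to ensure: it kills the would-be twist factor on the fused strand, and any normalization ambiguity in the choice of $\iota, \pi$ cancels in the product $\iota \pi = \eN$. With both hypotheses of Proposition~\ref{prop:central} established, the conclusion $\Spi^N - 1 = 0$ in $A$ follows immediately.
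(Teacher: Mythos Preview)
Your approach is essentially the same as the paper's: both establish $\eN \Spi^N \eN = \eN$ by factoring $\eN = \iota\circ\pi$ through $\Dist_{\Lambda_\q^N V} \cong \Dist$ (so that $\pi\circ\Spi^N\circ\iota$ is a $\un$-labelled loop on $T^2$, hence the empty skein), then combine $A\eN A = A$ with Proposition~\ref{prop:central}. The paper presents this pictorially but the content is identical; your citation of Theorem~\ref{thm:relskeins} for $A\eN A = A$ is in fact more precise than the paper's own reference to Proposition~\ref{prop:good-idempotent}, which is the $\SL_2$-specific statement.

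One small point of care: you deduce centrality of $\Spi^N$ from surjectivity of $\SWdaff$ via Corollary~\ref{cor:SkAlgGLNn}, but that corollary is phrased for the map out of the \emph{DAHA}, whose well-definedness in the $\SLN$ case already presupposes $\Spi^N=1$. To avoid any appearance of circularity, either invoke surjectivity of the \emph{Schmaha} map (which follows from Theorem~\ref{mainthm:morita-DAHA} for $\Schmahan$ together with Theorem~\ref{thm:GJV eHe=Sk}, independently of Proposition~\ref{prop:SLN pi N}), or simply note---as the paper does in the $\SL_2$ discussion---that centrality of $\Spi^n$ in $\SkAlgNn{\SLN}{n}$ is immediate from the topology of the torus.
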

\begin{proof}
    The diagrammatic proof below shows that $\ek{N} (\Spi^N-1) \ek{N} = 0$, very similar to our proof of Proposition \ref{prop:SL2 pi squared}. Recall that for $G=\SL_N$ we introduce the picture

\begin{center}

    \begin{tikzpicture}
 \begin{scope}[shift={(0,0)}, scale=1]
          \node at (-.8,.5) {$\eN = $};
     \node[bV] at (.5,1){}; \node[bV] at (0.5,0){}; 
        \node[bV] at (0,1){}; \node[bV] at (0,0){}; 
          \node[bV] at (1,1){}; \node[bV] at (1,0){}; 
          \node at (1.5,1) {$\cdots$};
          \node at (1.5,0) {$\cdots$};
        \node[bV] at (2,1){}; \node[bV] at (2,0){}; 
        \node[bV] at (2.5,1){}; \node[bV] at (2.5,0){}; 
         \draw[line width=2pt] (0,1) -- (1.5,0.6);
         \draw[line width=2pt] (.5,1) -- (1.5,0.6);
         \draw[line width=2pt] (1,1) -- (1.5,0.6);
        \draw[line width=2pt] (2,1) -- (1.5,0.6);
        \draw[line width=2pt] (2.5,1) -- (1.5,0.6);
         \draw[line width=2pt] (0,0) -- (1.5,0.4);
         \draw[line width=2pt] (.5,0) -- (1.5,0.4);
         \draw[line width=2pt] (1,0) -- (1.5,0.4);
        \draw[line width=2pt] (2,0) -- (1.5,0.4);
        \draw[line width=2pt] (2.5,0) -- (1.5,0.4);

        \node[V] at (1.5,.4){}; \node[V] at (1.5,0.6){}; 
        \draw[very thin, blue]  (-.2,0.15) -- (2.9,0.15);
        \draw[very thin, gray] (-.4,-0.15) -- (-.2,0.15);
        \draw[very thin, gray]   (2.9,0.15) -- (2.7,-0.15) ;
        \draw[thin, blue]  (-.4,-0.15) -- (2.7,-0.15);
        \draw[thin, blue]  (-.2,1.15) -- (2.9,1.15);
        \draw[very thin, gray] (-.4,0.85) -- (-.2,1.15);
        \draw[very thin, gray]   (2.9,1.15) -- (2.7,0.85) ;
        \draw[thin, blue]  (-.4,0.85) -- (2.7,0.85);

\end{scope}
\end{tikzpicture}
    
\end{center}
 
 \noindent
 not rescaling it by $\pm [N]_{\q}$ as is the custom in the case $N=2$.
    Hence we get the local  relation

\begin{center}
\begin{tikzpicture}

 \begin{scope}[shift={(5,-0.5)}, scale=1]
     \node[bV] at (.5,1){};  
        \node[bV] at (0,1){}; 
          \node[bV] at (1,1){}; 
          \node at (1.5,1) {$\cdots$};
        \node[bV] at (2,1){}; 
        \node[bV] at (2.5,1){}; 
         \draw[line width=2pt] (0,1) -- (1.5,0.6);
         \draw[line width=2pt] (.5,1) -- (1.5,0.6);
         \draw[line width=2pt] (1,1) -- (1.5,0.6);
        \draw[line width=2pt] (2,1) -- (1.5,0.6);
        \draw[line width=2pt] (2.5,1) -- (1.5,0.6);
\node[V] at (1.5,0.6){}; 
\end{scope}

 \begin{scope}[shift={(5,0)}, scale=1]
        \draw[thin, blue]  (-.2,1.15) -- (2.9,1.15);
        \draw[very thin, gray] (-.4,0.85) -- (-.2,1.15);
        \draw[very thin, gray]   (2.9,1.15) -- (2.7,0.85) ;
        \draw[thin, blue]  (-.4,0.85) -- (2.7,0.85);
        \draw[very thin, blue]  (-.2,0.15) -- (2.9,0.15);
        \draw[very thin, gray] (-.4,-0.15) -- (-.2,0.15);
        \draw[very thin, gray]   (2.9,0.15) -- (2.7,-0.15) ;
        \draw[thin, blue]  (-.4,-0.15) -- (2.7,-0.15);

\end{scope}

 \begin{scope}[shift={(5,0.5)}, scale=1]
         \draw[line width=2pt] (0,0) -- (1.5,0.4);
         \draw[line width=2pt] (.5,0) -- (1.5,0.4);
         \draw[line width=2pt] (1,0) -- (1.5,0.4);
        \draw[line width=2pt] (2,0) -- (1.5,0.4);
        \draw[line width=2pt] (2.5,0) -- (1.5,0.4);

        \node[V] at (1.5,.4){};
\end{scope}

 \begin{scope}[shift={(10,0)}, scale=1]
          \node at (-.8,.5) {$ = \quad  $};
        \draw[thin, blue]  (-.2,1.15) -- (2.9,1.15);
        \draw[very thin, gray] (-.4,0.85) -- (-.2,1.15);
        \draw[very thin, gray]   (2.9,1.15) -- (2.7,0.85) ;
        \draw[thin, blue]  (-.4,0.85) -- (2.7,0.85);
        \draw[very thin, blue]  (-.2,0.15) -- (2.9,0.15);
        \draw[very thin, gray] (-.4,-0.15) -- (-.2,0.15);
        \draw[very thin, gray]   (2.9,0.15) -- (2.7,-0.15) ;
        \draw[thin, blue]  (-.4,-0.15) -- (2.7,-0.15);

\end{scope}
    
\end{tikzpicture}
\end{center}

    \noindent
    So the picture below depicts
    $ \ek{N} \cdot \Spi^N \cdot  \ek{N} =  \ek{N}$ implying $\eN(\Spi^N -1)\eN = 0$. 
    Thus by Proposition \ref{prop:good-idempotent} and Proposition \ref{prop:central}, we conclude $\Spi^N=1$.

    \medskip
    
\begin{tikzpicture}[line width=2pt, scale=1.0]
 \begin{scope}[shift={(0,2)}, scale=1]
     \node[bV] at (.5,1){}; \node[bV] at (0.5,0){}; 
        \node[bV] at (0,1){}; \node[bV] at (0,0){}; 
          \node[bV] at (1,1){}; \node[bV] at (1,0){}; 
          \node at (1.5,1) {$\cdots$};
          \node at (1.5,0) {$\cdots$};
        \node[bV] at (2,1){}; \node[bV] at (2,0){}; 
        \node[bV] at (2.5,1){}; \node[bV] at (2.5,0){}; 
         \draw[line width=2pt] (0,1) -- (1.5,0.6);
         \draw[line width=2pt] (.5,1) -- (1.5,0.6);
         \draw[line width=2pt] (1,1) -- (1.5,0.6);
        \draw[line width=2pt] (2,1) -- (1.5,0.6);
        \draw[line width=2pt] (2.5,1) -- (1.5,0.6);
         \draw[line width=2pt] (0,0) -- (1.5,0.4);
         \draw[line width=2pt] (.5,0) -- (1.5,0.4);
         \draw[line width=2pt] (1,0) -- (1.5,0.4);
        \draw[line width=2pt] (2,0) -- (1.5,0.4);
        \draw[line width=2pt] (2.5,0) -- (1.5,0.4);

        \node[V] at (1.5,.4){}; \node[V] at (1.5,0.6){}; 
        \draw[thin, blue, dotted]  (-.2,0.15) -- (2.9,0.15);
        \draw[thin, gray, dotted] (-.4,-0.15) -- (-.2,0.15);
        \draw[thin, gray,dotted]   (2.9,0.15) -- (2.7,-0.15) ;
        \draw[thin, blue,dotted]  (-.4,-0.15) -- (2.7,-0.15);
        \draw[thin, blue]  (-.2,1.15) -- (2.9,1.15);
        \draw[very thin, gray] (-.4,0.85) -- (-.2,1.15);
        \draw[very thin, gray]   (2.9,1.15) -- (2.7,0.85) ;
        \draw[thin, blue]  (-.4,0.85) -- (2.7,0.85);
\end{scope}

 \begin{scope}[shift={(0,1)}, scale=1]
          \node at (-1.5,.5) {$\eN \Spi^N \eN = $};
          \node at (3.5,.5) {$=$};
     \node[bV] at (.5,1){}; \node[bV] at (0.5,0){}; 
        \node[bV] at (0,1){}; \node[bV] at (0,0){}; 
          \node[bV] at (1,1){}; \node[bV] at (1,0){}; 
          \node at (1.5,1) {$\cdots$};
          \node at (1.5,0) {$\cdots$};
        \node[bV] at (2,1){}; \node[bV] at (2,0){}; 
        \node[bV] at (2.5,1){}; \node[bV] at (2.5,0){}; 

         \draw[line width=2pt] (0,1) -- (2.8,0.3);
         \draw[line width=2pt] (.5,1) -- (2.8,0.4);
         \draw[line width=2pt] (1,1) -- (2.8,0.5);
        \draw[line width=2pt] (2,1) .. controls (2.6,.7) .. (2.8,0.7);
        \draw[line width=2pt] (2.5,1) .. controls (2.6,.8) .. (2.8,0.8);
         \draw[line width=2pt] (-0.3,0.3) .. controls (-0.1,.25) .. (0,0);
         \draw[line width=2pt] (-0.3,0.4) .. controls (-0.0,.35) .. (.5,0);
         \draw[line width=2pt] (-0.30,0.5) .. controls (0.1,.45) .. (1.0,0);
         \draw[line width=2pt] (-0.30,0.7) -- (2.0,0);
        \draw[line width=2pt] (-0.30,0.8) -- (2.5,0);

\end{scope}
 \begin{scope}[shift={(0,0)}, scale=1]
     \node[bV] at (.5,1){}; \node[bV] at (0.5,0){}; 
        \node[bV] at (0,1){}; \node[bV] at (0,0){}; 
          \node[bV] at (1,1){}; \node[bV] at (1,0){}; 
          \node at (1.5,1) {$\cdots$};
          \node at (1.5,0) {$\cdots$};
        \node[bV] at (2,1){}; \node[bV] at (2,0){}; 
        \node[bV] at (2.5,1){}; \node[bV] at (2.5,0){}; 
         \draw[line width=2pt] (0,1) -- (1.5,0.6);
         \draw[line width=2pt] (.5,1) -- (1.5,0.6);
         \draw[line width=2pt] (1,1) -- (1.5,0.6);
        \draw[line width=2pt] (2,1) -- (1.5,0.6);
        \draw[line width=2pt] (2.5,1) -- (1.5,0.6);
         \draw[line width=2pt] (0,0) -- (1.5,0.4);
         \draw[line width=2pt] (.5,0) -- (1.5,0.4);
         \draw[line width=2pt] (1,0) -- (1.5,0.4);
        \draw[line width=2pt] (2,0) -- (1.5,0.4);
        \draw[line width=2pt] (2.5,0) -- (1.5,0.4);

        \node[V] at (1.5,.4){}; \node[V] at (1.5,0.6){}; 
        \draw[very thin, blue]  (-.2,0.15) -- (2.9,0.15);
        \draw[very thin, gray] (-.4,-0.15) -- (-.2,0.15);
        \draw[very thin, gray]   (2.9,0.15) -- (2.7,-0.15) ;
        \draw[thin, blue]  (-.4,-0.15) -- (2.7,-0.15);
        \draw[thin, blue, dotted]  (-.2,1.15) -- (2.9,1.15);
        \draw[thin, gray,dotted] (-.4,0.85) -- (-.2,1.15);
        \draw[thin, gray,dotted]   (2.9,1.15) -- (2.7,0.85) ;
        \draw[thin, blue,dotted]  (-.4,0.85) -- (2.7,0.85);

\end{scope}

 \begin{scope}[shift={(4.3,2)}, scale=1]
     \node[bV] at (.5,1){};
        \node[bV] at (0,1){};
          \node[bV] at (1,1){};
          \node at (1.5,1) {$\cdots$};
        \node[bV] at (2,1){};
        \node[bV] at (2.5,1){};
         \draw[line width=2pt] (0,1) -- (1.5,0.6);
         \draw[line width=2pt] (.5,1) -- (1.5,0.6);
         \draw[line width=2pt] (1,1) -- (1.5,0.6);
        \draw[line width=2pt] (2,1) -- (1.5,0.6);
        \draw[line width=2pt] (2.5,1) -- (1.5,0.6);
\node[V] at (1.5,0.6){}; 
        \draw[thin, blue]  (-.2,1.15) -- (2.9,1.15);
        \draw[very thin, gray] (-.4,0.85) -- (-.2,1.15);
        \draw[very thin, gray]   (2.9,1.15) -- (2.7,0.85) ;
        \draw[thin, blue]  (-.4,0.85) -- (2.7,0.85);

\end{scope}

 \begin{scope}[shift={(4.3,0)}, scale=1]
 \node[bV] at (0.5,0){}; 
 \node[bV] at (0,0){}; 
 \node[bV] at (1,0){}; 
          \node at (1.5,0) {$\cdots$};
 \node[bV] at (2,0){}; 
 \node[bV] at (2.5,0){}; 
         \draw[line width=2pt] (0,0) -- (1.5,0.4);
         \draw[line width=2pt] (.5,0) -- (1.5,0.4);
         \draw[line width=2pt] (1,0) -- (1.5,0.4);
        \draw[line width=2pt] (2,0) -- (1.5,0.4);
        \draw[line width=2pt] (2.5,0) -- (1.5,0.4);

        \node[V] at (1.5,.4){};
        \draw[very thin, blue]  (-.2,0.15) -- (2.9,0.15);
        \draw[very thin, gray] (-.4,-0.15) -- (-.2,0.15);
        \draw[very thin, gray]   (2.9,0.15) -- (2.7,-0.15) ;
        \draw[thin, blue]  (-.4,-0.15) -- (2.7,-0.15);
\end{scope}
 \begin{scope}[shift={(4.3,.5)}, scale=1]
     \node[bV] at (.5,1){};  
        \node[bV] at (0,1){}; 
          \node[bV] at (1,1){}; 
          \node at (1.5,1) {$\cdots$};
        \node[bV] at (2,1){}; 
        \node[bV] at (2.5,1){}; 
         \draw[line width=2pt] (0,1) -- (1.5,0.6);
         \draw[line width=2pt] (.5,1) -- (1.5,0.6);
         \draw[line width=2pt] (1,1) -- (1.5,0.6);
        \draw[line width=2pt] (2,1) -- (1.5,0.6);
        \draw[line width=2pt] (2.5,1) -- (1.5,0.6);
\node[V] at (1.5,0.6){}; 
\end{scope}
 \begin{scope}[shift={(4.3,1.5)}, scale=1]
         \draw[line width=2pt] (0,0) -- (1.5,0.4);
         \draw[line width=2pt] (.5,0) -- (1.5,0.4);
         \draw[line width=2pt] (1,0) -- (1.5,0.4);
        \draw[line width=2pt] (2,0) -- (1.5,0.4);
        \draw[line width=2pt] (2.5,0) -- (1.5,0.4);

        \node[V] at (1.5,.4){};
\end{scope}

 \begin{scope}[shift={(8.1,1)}, scale=1]
          \node at (3.4,.5) {$=\, \eN  $};
          \node at (-0.7,.5) {$=$};
     \node[bV] at (.5,1){}; \node[bV] at (0.5,0){}; 
        \node[bV] at (0,1){}; \node[bV] at (0,0){}; 
          \node[bV] at (1,1){}; \node[bV] at (1,0){}; 
          \node at (1.5,1) {$\cdots$};
          \node at (1.5,0) {$\cdots$};
        \node[bV] at (2,1){}; \node[bV] at (2,0){}; 
        \node[bV] at (2.5,1){}; \node[bV] at (2.5,0){}; 
         \draw[line width=2pt] (0,1) -- (1.5,0.6);
         \draw[line width=2pt] (.5,1) -- (1.5,0.6);
         \draw[line width=2pt] (1,1) -- (1.5,0.6);
        \draw[line width=2pt] (2,1) -- (1.5,0.6);
        \draw[line width=2pt] (2.5,1) -- (1.5,0.6);
         \draw[line width=2pt] (0,0) -- (1.5,0.4);
         \draw[line width=2pt] (.5,0) -- (1.5,0.4);
         \draw[line width=2pt] (1,0) -- (1.5,0.4);
        \draw[line width=2pt] (2,0) -- (1.5,0.4);
        \draw[line width=2pt] (2.5,0) -- (1.5,0.4);

        \node[V] at (1.5,.4){}; \node[V] at (1.5,0.6){}; 
        \draw[very thin, blue]  (-.2,0.15) -- (2.9,0.15);
        \draw[very thin, gray] (-.4,-0.15) -- (-.2,0.15);
        \draw[very thin, gray]   (2.9,0.15) -- (2.7,-0.15) ;
        \draw[thin, blue]  (-.4,-0.15) -- (2.7,-0.15);
        \draw[thin, blue]  (-.2,1.15) -- (2.9,1.15);
        \draw[very thin, gray] (-.4,0.85) -- (-.2,1.15);
        \draw[very thin, gray]   (2.9,1.15) -- (2.7,0.85) ;
        \draw[thin, blue]  (-.4,0.85) -- (2.7,0.85);

\end{scope}
\end{tikzpicture}

    \end{proof}

    \begin{remark}
        We also conjecture that $\Spi^{\LCM(N,n)}=1$ in $\SkAlgNn{\SLN}{n}$.   For instance the above proof can be easily modified to cover the case $N \mid n$, i.e., $n=kN$, using $\eNk$ for $k \in \Z_{>0}$.
        We have a diagrammatic proof of this conjecture (not included) for $n=1$ and all $N$.
    \end{remark}
    
\mvhide{can also sketch diagrammatic proof $\Spi^N=1$ for the $n=1$ $\SLN$ relative skalg. should  i?}

 \section{Applications to quantum character theory} \label{sec:quantum character}

In this section, we record some consequences of our results for the category $\Dqstr{G}$ of strongly equivariant $\Dq$-modules. We refer to \cite{GJV} for definitions. 

Recall that there is an equivalence of categories
\[
\SkCat_{G}(T^2)\modu \simeq \Dqstr{G}
\]
under which the objects $\Dist_W$ for $W \in \Repq(G)$, as defined in this paper correspond to the objects $\Dist_W$ as defined in \cite{GJV}. In 
 \cite{GJV} we also used the notation $\HKuniv$ to refer to the object $\Dist = \Dist_{\trivial}$ in $\Dqstr{G}$.

The following result (stated as Theorem 1.6 in \cite{GJV}) is a direct translation of Theorem \ref{mainthm:generators} (1) into the language of quantum character theory.

\begin{theorem}\label{thm:HK generates (GJVY)}
 Let $G=\GLN$. The object $\HKuniv = \Dist$ is a (compact, projective) generator of $\Dqstr{G}$. 
\end{theorem}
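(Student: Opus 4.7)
The plan is to observe that this statement is essentially a direct translation of Theorem \ref{mainthm:generators}(1) through the equivalence of categories $\SkCat_{\GLN}(T^2)\modu \simeq \Dqstr{\GLN}$ recalled just above the theorem. Since the equivalence identifies the object $\Dist \in \SkCat_{\GLN}(T^2)$ with $\HKuniv = \Dist_{\trivial} \in \Dqstr{\GLN}$, and since the properties of being compact, projective, and a generator of an abelian category are all intrinsic categorical properties preserved under any equivalence of abelian categories, the conclusion transfers immediately.

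Concretely, I would first cite the equivalence $\SkCat_{\GLN}(T^2)\modu \simeq \Dqstr{\GLN}$ and note the identification of distinguished objects under it. Then I would invoke Theorem \ref{mainthm:generators}(1), which asserts that $\Dist$ is a compact projective generator of the module category $\SkCat_{\GLN}(T^2)\modu$. Combining these two facts gives the claim.

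There is no additional obstacle here: the real content is already in the proof of Theorem \ref{mainthm:generators}(1), which uses the Morita equivalence of Theorem \ref{thm:relskeins} (allowing one to reduce the number of strands by $N$ using the idempotent $\eN$), together with Lemma \ref{lem:GLdeg} (killing all $\Dist_{V^{\otimes m}}$ for $0 < m < N$ in the $\GLN$ case by a braiding argument with $\det_\q$) to collapse the collective family of generators $\{\Dist_{V^{\otimes n}}\}_{n \geq 0}$ down to the single object $\Dist$. The present theorem is simply the character-theoretic reformulation of that result, so the proof consists of this one-line translation.
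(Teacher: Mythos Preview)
Your proposal is correct and matches the paper's own treatment: the paper explicitly states just before the theorem that it ``is a direct translation of Theorem \ref{mainthm:generators} (1) into the language of quantum character theory,'' and gives no further proof. Your explanation of why the translation works (the equivalence $\SkCat_{\GLN}(T^2)\modu \simeq \Dqstr{\GLN}$ identifies the distinguished objects, and compactness, projectivity, and generation are preserved by equivalences) is exactly the intended argument.
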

Combining Theorem \ref{thm:HK generates (GJVY)} above with Theorem 1.1 in \cite{GJV}, we deduce that there is an equivalence of categories
 
   \begin{gather}
   \label{eq:Dq equiv eHe}
    \Dqstr{G} \simeq \ee \cdot \DN \cdot \ee\modu \simeq \DqH^W\modu
    \end{gather} 
for $\ee = \eN$. In fact, by Theorem \ref{mainthm:morita-DAHA} the bimodule $\DN \cdot \ee $ defines a Morita equivalence between $\DN$ and $\ee \cdot \DN \cdot \ee$, 
 so one may replace $\ee \cdot \DN \cdot \ee$ by $\DN$ in \eqref{eq:Dq equiv eHe} above.  Using a similar, well-known Morita equivalence for the symmetric idempotent $\etriv{N}$, one may replace $\DqH^{\SN} = \Coinv$ by $\DqH\# \SN = \SmashD$ as in Proposition \ref{prop:morita smash}.

Part (2) of Theorem \ref{mainthm:generators} translates into the following statement. Let $\Dqstr{G}_{\bar n}$ denote the full subcategory of $\Dqstr{\SLN}$ for which $Z(\SLN) \cong \mu_N$ acts via the character $\bar n$. Note that the category $\Dqstr{\SLN}$ decomposes as an orthogonal direct sum
    \[
    \Dqstr{\SLN} = \bigoplus_{\bar n \in \Z/N\Z} \Dqstr{\SLN}_{\bar n}.
    \]
\begin{theorem}\label{thm:DqstrSLN}
    Let $G=\SLN$. Then $\Dist_{V^{\otimes n}}$ is a (compact, projective) generator for $\Dqstr{G}_{\bar{n}}$.   In particular 
   \[\Dist \oplus \Dist_{V} \oplus \ldots \oplus \Dist_{V^{\otimes (N-1)}}\]  
   is a (compact, projective) generator for $\Dqstr{G}$.
\end{theorem}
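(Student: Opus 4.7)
The plan is to transport Theorem \ref{mainthm:generators}(2) across the equivalence $\SkCat_{\SLN}(T^2)\modu \simeq \Dqstr{\SLN}$ recalled above, making use of the compatibility between the $P/Q = \Z/N\Z$-grading on the skein side and the central character decomposition $\Dqstr{\SLN} = \bigoplus_{\bar n \in \Z/N\Z} \Dqstr{\SLN}_{\bar n}$ on the $\cD_\q$-module side. Since the equivalence of categories identifies the two objects named $\Dist_W$, Theorem \ref{mainthm:generators}(2) immediately gives that $\bigoplus_{n=0}^{N-1}\Dist_{V^{\otimes n}}$ is a compact projective generator of $\Dqstr{\SLN}$, which is the ``in particular'' assertion.

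To refine this to the per-summand statement, the first step is to verify that $\Dist_{V^{\otimes n}} \in \Dqstr{\SLN}_{\bar n}$. This is a direct computation: the construction of $\Dist_W$ records the $Z(G)$-equivariance structure inherited from $W$, and since the defining representation $V$ carries $Z(\SLN) = \mu_N$-weight $1$, the tensor power $V^{\otimes n}$ has weight $\bar n \in \Z/N\Z$. Hence the decomposition $\bigoplus_{n=0}^{N-1}\Dist_{V^{\otimes n}}$ appearing in Theorem \ref{mainthm:generators}(2) is compatible with, and refines to, the orthogonal decomposition by central character, and each summand lies in the corresponding block.

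Given this matching of decompositions, the first assertion follows formally. Each $\Dist_{V^{\otimes n}}$ is a direct summand of a compact projective object and is therefore itself compact projective. For any nonzero object $M \in \Dqstr{\SLN}_{\bar n}$, orthogonality of the blocks forces $\Hom(\Dist_{V^{\otimes m}}, M) = 0$ for all $m \not\equiv n \pmod N$; combined with the fact that $\bigoplus_{m=0}^{N-1}\Dist_{V^{\otimes m}}$ generates the whole category, this gives $\Hom(\Dist_{V^{\otimes n}}, M) \neq 0$, establishing that $\Dist_{V^{\otimes n}}$ is a compact projective generator of $\Dqstr{\SLN}_{\bar n}$.

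The main (and relatively minor) point requiring care is the identification of the two direct sum decompositions, i.e., verifying that the $\Z/N\Z$-grading appearing in the proof of Theorem \ref{mainthm:generators}(2) (via the parity/degree argument on the skein side) really coincides with the $Z(\SLN)$-central character decomposition on the $\Dqstr{\SLN}$ side. This is essentially tautological once one unpacks how the $\mu_N$-action on $V$ translates to equivariance data for $\Dist_V$, but it is the only non-formal ingredient; everything else is a direct consequence of Theorem \ref{mainthm:generators}(2) and the cited equivalence of categories.
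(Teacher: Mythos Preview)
Your proposal is correct and takes essentially the same approach as the paper. The paper does not give a separate proof for this theorem, instead simply stating that ``Part (2) of Theorem \ref{mainthm:generators} translates into the following statement'' after recalling the equivalence $\SkCat_{\SLN}(T^2)\modu \simeq \Dqstr{\SLN}$ and the orthogonal decomposition by central character; your write-up supplies exactly the routine details of that translation, including the matching of the $\Z/N\Z$-gradings and the per-block generator argument.
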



The following result is an improvement on Theorem 1.3 from \cite{GJV}, in which we showed the $M_{\lambdatup}$ appearing below were indecomposable. 

\begin{theorem}
    Let $G=\GLN$ or $\SLN$. In the decomposition from Theorem 1.3 of \cite{GJV}
    \[
    \HKe = \bigoplus_{\lambdatup} M_\lambdatup \boxtimes S^\lambdatup
    \]
    the summands $M_\lambdatup$ are simple $\q$-character sheaves. 
\end{theorem}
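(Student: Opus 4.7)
The plan is to upgrade the indecomposability result from \cite[Theorem 1.3]{GJV} to simplicity by transporting the question through the DAHA–skein Morita equivalences established in this paper. By Theorem \ref{thm:HK generates (GJVY)} (respectively Theorem \ref{thm:DqstrSLN}), the category $\Dqstr{G}$ is equivalent to modules over the endomorphism algebra of an explicit compact projective generator, which by Corollary \ref{cor:SkAlgN} and Theorem \ref{mainthm:morita-DAHA} is in turn Morita equivalent to the DAHA $\DN$ at the specialisation of parameters fixed in Section \ref{sec:apps to skeins}. Under this chain of equivalences, the object $\HKe$ is intertwined with an explicit induced DAHA module of the form $\Mrho = \Ind^{\HH}_{\Y}\qrho$, and the decomposition $\HKe = \bigoplus_\lambdatup M_\lambdatup \boxtimes S^\lambdatup$ matches the isotypic decomposition of $\Mrho$ as a left module over the finite Hecke subalgebra $\Hf_N \subset \DN$.

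Under this matching, the $S^\lambdatup$ are identified with Specht modules $\Sblah{\lambdatup}$, and each $M_\lambdatup$ is identified with the multiplicity space $\Hom_{\Hf_N}(\Sblah{\lambdatup}, \Mrho)$, which is naturally a module over the centralising subalgebra. By the shift-idempotent formalism recalled before Theorem \ref{thm:GJV eHe=Sk}, this centraliser is, up to Morita equivalence, the antispherical subalgebra $\eN \DN \eN$, and by Theorem \ref{mainthm:morita-DAHA} the latter is itself Morita equivalent to all of $\DN$. Thus each $M_\lambdatup$ corresponds to a $\Y$-finite, indecomposable $\DN$-module, and simplicity as a $\q$-character sheaf translates into simplicity as such a $\DN$-module. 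Using the multisegment classification of Theorem \ref{thm:L=usq}, every simple $\Y$-finite $\DN$-module has the form $L(\Delta)$ for a right-ordered multisegment $\Delta$, so the problem reduces to showing each $M_\lambdatup$ contains no proper nontrivial submodule and has a unique simple quotient.

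The main obstacle will be controlling the Jordan–Hölder content of $M_\lambdatup$ precisely enough to rule out any nontrivial filtration. The plan here is to combine two inputs: the nonvanishing properties of parabolic sign idempotents from Theorem \ref{theorem:signN neq 0}, which force any would-be proper subobject or quotient to pair nontrivially with $\eN$ and thereby contradict the $\Hf_N$-isotypic structure on the original $\Mrho$; and the explicit multisegment decomposition of $\Mrho$ (induced from the weight $\qrho$) which pins down which $L(\Delta)$'s can appear. In the $\SLN$ case the additional bookkeeping concerns the central character decomposition and the matching of the tuple index $\lambdatup$ with cuspidal data across Levi subgroups as envisioned in Conjecture \ref{conj:SL_N generalized Springer}; the indexing combinatorics ultimately ensures the decomposition is multiplicity-free, so that indecomposability together with the above vanishing constraints promotes each $M_\lambdatup$ to a simple $\q$-character sheaf.
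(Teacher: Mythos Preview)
Your proposal takes a much more circuitous route than necessary, and the final step contains a genuine gap. The paper's proof is essentially two lines: it is already established in \cite{GJV} that each $F_N(M_\lambdatup)$ is a \emph{simple} $\DN$-module (not merely indecomposable). The only new ingredient is that the functor $F_N$ is now known to be an equivalence of categories---this is precisely the content of Theorems \ref{thm:HK generates (GJVY)} and \ref{thm:DqstrSLN} (together with Theorem \ref{mainthm:morita-DAHA}). Since equivalences reflect simplicity, the result follows immediately. In the $\SLN$ case one needs only the trivial observation that every $M_\lambdatup$, being a quotient of $\HKuniv$, lives in the degree-$\bar 0$ block, on which $F_N$ is already an equivalence.

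You correctly set up the Morita equivalence framework, but then you forget to invoke the simplicity of $F_N(M_\lambdatup)$ from \cite{GJV} and instead attempt to prove it from scratch. The argument you sketch for this---combining the nonvanishing of $\eN$ from Theorem \ref{theorem:signN neq 0} with the multisegment classification and a ``multiplicity-free'' claim---does not work as stated. The nonvanishing $\eN M \neq 0$ for every nonzero $M$ is exactly what shows $F_N$ is conservative (hence an equivalence); it says nothing about the length or composition series of any particular $M$. Likewise, ``indecomposable plus multiplicity-free'' does not imply simple in general, and your appeal to ``vanishing constraints contradicting the $\Hf_N$-isotypic structure'' is not a precise argument. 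The missing idea is simply to cite the prior result that these DAHA-modules are already simple, after which the equivalence does all the work.
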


\begin{proof}
    It has already been shown in \cite{GJV} that $F_N(M_\lambda)$ is a simple $\DN$-module. In the $\GLN$-case, $F_N$ is an equivalence of categories, from which it follows that $M_\lambda$ is simple as claimed. 
    In the $\SLN$-case, the $M_\lambdatup$ are all contained in the subcategory $\Dqstr{G}_{\overline{0}}$ (as they are all quotients of $\HKuniv$) on which the functor $F_N$ is an equivalence, so the argument still applies. 
\end{proof}



\printbibliography

@article {Zelevinsky,
    AUTHOR = {Zelevinsky, A. V.},
     TITLE = {Induced representations of reductive {${p}$}-adic
              groups. {II}. {O}n irreducible representations of {${\rm
              GL}(n)$}},
   JOURNAL = {Ann. Sci. \'Ecole Norm. Sup. (4)},
  FJOURNAL = {Annales Scientifiques de l'\'Ecole Normale Sup\'erieure.
              Quatri\`eme S\'erie},
    VOLUME = {13},
      YEAR = {1980},
    NUMBER = {2},
     PAGES = {165--210},
      ISSN = {0012-9593},
   MRCLASS = {22E50},
  MRNUMBER = {584084},
       URL = {http://www.numdam.org/item?id=ASENS_1980_4_13_2_165_0},
}

@article {BZ77,
    AUTHOR = {Bernstein, I. N. and Zelevinsky, A. V.},
     TITLE = {Induced representations of reductive {${p}$}-adic
              groups. {I}},
   JOURNAL = {Ann. Sci. \'Ecole Norm. Sup. (4)},
  FJOURNAL = {Annales Scientifiques de l'\'Ecole Normale Sup\'erieure.
              Quatri\`eme S\'erie},
    VOLUME = {10},
      YEAR = {1977},
    NUMBER = {4},
     PAGES = {441--472},
      ISSN = {0012-9593},
   MRCLASS = {22E50},
  MRNUMBER = {579172},
       URL = {http://www.numdam.org/item?id=ASENS_1977_4_10_4_441_0},
}

@book {CherednikBook,
    AUTHOR = {Cherednik, Ivan},
     TITLE = {Double affine {H}ecke algebras},
    SERIES = {London Mathematical Society Lecture Note Series},
    VOLUME = {319},
 PUBLISHER = {Cambridge University Press, Cambridge},
      YEAR = {2005},
     PAGES = {xii+434},
      ISBN = {0-521-60918-6},
   MRCLASS = {32G34 (05E15 11L05 17B20 20C08 33D52 33D80)},
  MRNUMBER = {2133033},
       DOI = {10.1017/CBO9780511546501},
       %URL = {http://dx.doi.org/10.1017/CBO9780511546501},
}

@article {BGR,
    AUTHOR = {Bellamy, G. and Gunningham, S. and Raskin, S.},
     TITLE = {Projective generation for equivariant {$\mathcal{D}$}-modules},
   JOURNAL = {Transform. Groups},
  FJOURNAL = {Transformation Groups},
    VOLUME = {27},
      YEAR = {2022},
    NUMBER = {3},
     PAGES = {737--749},
      ISSN = {1083-4362,1531-586X},
   MRCLASS = {14F10 (14A20 14F08)},
  MRNUMBER = {4475477},
MRREVIEWER = {Alberto\ Casta\~{n}o Dom\'{\i}nguez},
       DOI = {10.1007/s00031-021-09660-1},
       URL = {https://doi.org/10.1007/s00031-021-09660-1},
}

@misc{GJV,
      title={Quantum Character Theory}, 
      author={Sam Gunningham and David Jordan and Monica Vazirani},
      year={2023},
      eprint={2309.03117},
      archivePrefix={arXiv},
      primaryClass={math.RT}
}

@article  {Cherednik-Selecta,
    AUTHOR = {Cherednik, Ivan},
     TITLE = {Intertwining operators of double affine {H}ecke algebras},
   JOURNAL = {Selecta Math. (N.S.)},
  FJOURNAL = {Selecta Mathematica. New Series},
    VOLUME = {3},
      YEAR = {1997},
    NUMBER = {4},
     PAGES = {459--495},
      ISSN = {1022-1824},
   MRCLASS = {33C55 (05E05 20C99)},
  MRNUMBER = {1613515},
MRREVIEWER = {V. A. Golubeva},
       DOI = {10.1007/s000290050017},
       %URL = {http://dx.doi.org/10.1007/s000290050017},
}

@article{J2008,
    AUTHOR = {Jordan, David},
     TITLE = {Quantum {$D$}-modules, elliptic braid groups, and double
              affine {H}ecke algebras},
   JOURNAL = {Int. Math. Res. Not. IMRN},
  FJOURNAL = {International Mathematics Research Notices. IMRN},
      YEAR = {2009},
    NUMBER = {11},
     PAGES = {2081--2105},
      ISSN = {1073-7928},
   MRCLASS = {20F36 (20C08)},
  MRNUMBER = {2507111},
MRREVIEWER = {Vanessa Miemietz},
       DOI = {10.1093/imrp/rnp012},
       %URL = {http://dx.doi.org/10.1093/imrp/rnp012},
}

@Article{Marshall1999,
  author    = {Dan Marshall},
  journal   = {Annals of Combinatorics},
  title     = {Symmetric and nonsymmetric Macdonald polynomials},
  year      = {1999},
  month     = {jun},
  number    = {2-4},
  pages     = {385--415},
  volume    = {3},
  doi       = {10.1007/bf01608794},
  publisher = {Springer Science and Business Media {LLC}},
}

@Article{Cherednik1995,
  author    = {Ivan Cherednik},
  journal   = {International Mathematics Research Notices},
  title     = {Nonsymmetric Macdonald polynomials},
  year      = {1995},
  number    = {10},
  pages     = {483},
  volume    = {1995},
  doi       = {10.1155/s1073792895000341},
  publisher = {Oxford University Press ({OUP})},
}

@Article{Frohman-Gelca,
  author    = {Charles Frohman and R{\u{a}}zvan Gelca},
  journal   = {Transactions of the American Mathematical Society},
  title     = {Skein modules and the noncommutative torus},
  year      = {2000},
  month     = {jun},
  number    = {10},
  pages     = {4877--4888},
  volume    = {352},
  doi       = {10.1090/s0002-9947-00-02512-5},
  publisher = {American Mathematical Society ({AMS})},
}

@Misc{Cooke,
  author    = {Cooke, Juliet},
  title     = {Factorisation homology and skein categories of surfaces},
  year      = {2019},
  doi       = {10.7488/ERA/2251},
  keywords  = {skein algebras, skein categories, factorisation homology, Kauffman bracket skein algebras, k-linear factorisation homology},
  publisher = {The University of Edinburgh},
}

@article{GJS,
 AUTHOR = {Gunningham, Sam and Jordan, David and Safronov, Pavel},
     TITLE = {The finiteness conjecture for skein modules},
   JOURNAL = {Invent. Math.},
  FJOURNAL = {Inventiones Mathematicae},
    VOLUME = {232},
      YEAR = {2023},
    NUMBER = {1},
     PAGES = {301--363},
      ISSN = {0020-9910,1432-1297},
       DOI = {10.1007/s00222-022-01167-0},
       %URL = {https://doi.org/10.1007/s00222-022-01167-0}
}

@Article{Morton-Samuelson,
  author    = {Hugh Morton and Peter Samuelson},
  journal   = {Duke Mathematical Journal},
  title     = {The {HOMFLYPT} skein algebra of the torus and the elliptic Hall algebra},
  year      = {2017},
  month     = {apr},
  number    = {5},
  volume    = {166},
  doi       = {10.1215/00127094-3718881},
  publisher = {Duke University Press},
}

@article {MS-Daha-skein,
    AUTHOR = {Morton, H. R. and Samuelson, Peter},
     TITLE = {D{AHA}s and skein theory},
   JOURNAL = {Comm. Math. Phys.},
  FJOURNAL = {Communications in Mathematical Physics},
    VOLUME = {385},
      YEAR = {2021},
    NUMBER = {3},
     PAGES = {1655--1693},
      ISSN = {0010-3616},
   MRCLASS = {57K14 (17B37)},
  MRNUMBER = {4283999},
       DOI = {10.1007/s00220-021-04052-8},
       %URL = {https://doi.org/10.1007/s00220-021-04052-8},
}

@article{MWW-skein-lasagna,
  title={Invariants of 4-manifolds from Khovanov-Rozansky link homology},
  author={Morrison, Scott and Walker, Kevin and Wedrich, Paul},
  journal={Geom. Topol},
  volume={26},
  number={8},
  pages={3367--3420},
  year={2022}
}

@article{MN-skein-lasagna,
  title={Skein lasagna modules for 2-handlebodies},
  author={Manolescu, Ciprian and Neithalath, Ikshu},
  journal={Journal f{\"u}r die reine und angewandte Mathematik (Crelles Journal)},
  volume={2022},
  number={788},
  pages={37--76},
  year={2022},
  publisher={De Gruyter}
}

@article{skeins-on-branes,
  title={Skeins on branes},
  author={Ekholm, Tobias and Shende, Vivek},
  journal={arXiv preprint arXiv:1901.08027},
  year={2019}
}

@article{MWW2-skein-lasagna,
  title={Skein lasagna modules and handle decompositions},
  author={Manolescu, Ciprian and Walker, Kevin and Wedrich, Paul},
  journal={Advances in Mathematics},
  volume={425},
  pages={109071},
  year={2023},
  publisher={Elsevier}
}

@article{Ren-Willis,
  title={Khovanov homology and exotic $4 $-manifolds},
  author={Ren, Qiuyu and Willis, Michael},
  journal={arXiv preprint arXiv:2402.10452},
  year={2024}
}

@article{tachikawa2015skein,
  title={On skein relations in class S theories},
  author={Tachikawa, Yuji and Watanabe, Noriaki},
  journal={Journal of High Energy Physics},
  volume={2015},
  number={6},
  pages={1--43},
  year={2015},
  publisher={Springer}
}

@article{gaiotto2023commuting,
  title={Commuting Line Defects At $ q^{N}= 1$},
  author={Gaiotto, Davide and Moore, Gregory W and Neitzke, Andrew and Yan, Fei},
  journal={arXiv preprint arXiv:2307.14429},
  year={2023}
}

@article{Gunningham-Safronov,
  title={Deformation quantization and perverse sheaves},
  author={Gunningham, Sam and Safronov, Pavel},
  journal={arXiv preprint arXiv:2312.07595},
  year={2023}
}

@Article{Mellit-and-co,
  author        = {L{\'e}a Bittmann and Alex Chandler and Anton Mellit and Chiara Novarini},
  title         = {Type $A$ DAHA and Doubly Periodic Tableaux},
  year          = {2021},
  month         = oct,
  abstract      = {Analogously to the construction of Suzuki and Vazirani, we construct representations of the $GL_m$-type Double Affine Hecke Algebra at roots of unity. These representations are graded and the weight spaces for the $X$-variables are parametrized by the combinatorial objects we call doubly periodic tableaux. We show that our representations exhaust all graded $X$-semisimple representations, and the direct sum of all our representations is faithful. Analogously to the construction of Jordan and Vazirani of rectangular DAHA representations, we show that our representations can be interpreted in terms of ribbon fusion categories associated to $U_q(\mathfrak{gl}_N)$ at roots of unity. Combining the ribbon structure with faithfulness we deduce a conjecture of Morton and Samuelson about realization of DAHA as a skein algebra of the torus with base string modulo certain local relations.},
  archiveprefix = {arXiv},
  eprint        = {2110.03258},
  file          = {:http\://arxiv.org/pdf/2110.03258v1:PDF},
  keywords      = {math.RT, math.CO, math.QA},
  primaryclass  = {math.RT},
}

@Article{Jordan-Vazirani,
  author    = {David Jordan and Monica Vazirani},
  journal   = {International Mathematics Research Notices},
  title     = {The Rectangular Representation of the Double Affine Hecke Algebra via Elliptic Schur{\textendash}Weyl Duality},
  year      = {2019},
  month     = {2},
  number    = {8},
  pages     = {5968--6019},
  volume    = {2021},
  doi       = {10.1093/imrn/rnz030},
  publisher = {Oxford University Press ({OUP})},
}

@Article{V-multisegments,
    AUTHOR = {Vazirani, M.},
     TITLE = {Parameterizing {H}ecke algebra modules:
              {B}ernstein-{Z}elevinsky multisegments, {K}leshchev
              multipartitions, and crystal graphs},
   JOURNAL = {Transform. Groups},
  FJOURNAL = {Transformation Groups},
    VOLUME = {7},
      YEAR = {2002},
    NUMBER = {3},
     PAGES = {267--303},
      ISSN = {1083-4362,1531-586X},
   MRCLASS = {20C08 (05E10 20C30 20G05)},
  MRNUMBER = {1923974},
MRREVIEWER = {Alexander\ Kleshchev},
       DOI = {10.1007/s00031-002-0014-1},
       %URL = {https://doi.org/10.1007/s00031-002-0014-1},
}

@misc{kinnear2023skein,
      title={Skein module dimensions of mapping tori of the 2-torus}, 
      author={Patrick Kinnear},
      year={2023},
      eprint={2304.07332},
      archivePrefix={arXiv},
      primaryClass={math.GT},
}

@article{Kaygun2021,
  author   = {A. Kaygun and S. S\"utl\"u},
  journal  = {Journal of Pure and Applied Algebra},
  title    = {On the Hochschild homology of smash biproducts},
  year     = {2021},
  issn     = {0022-4049},
  number   = {2},
  pages    = {106506},
  volume   = {225},
  abstract = {},
  doi      = {https://doi.org/10.1016/j.jpaa.2020.106506},
  %URL      = {https://www.sciencedirect.com/science/article/pii/S0022404920302073},
}

@Book{Montgomery1980,
  author    = {Susan Montgomery},
  publisher = {Springer Berlin Heidelberg},
  title     = {Fixed Rings of Finite Automorphism Groups of Associative Rings},
  year      = {1980},
  doi       = {10.1007/bfb0091561},
}

@Article{Sikora2005,
  author    = {Adam S Sikora},
  journal   = {Algebraic {\&} Geometric Topology},
  title     = {Skein theory for $SU(n)$-quantum invariants},
  year      = {2005},
  month     = {jul},
  number    = {3},
  pages     = {865--897},
  volume    = {5},
  doi       = {10.2140/agt.2005.5.865},
  publisher = {Mathematical Sciences Publishers},
}

@article{Bezrukavnikov-Etingof,
  title={Parabolic induction and restriction functors for rational Cherednik algebras},
  author={Bezrukavnikov, Roman and Etingof, Pavel},
  journal={Selecta Mathematica},
  volume={14},
  number={3-4},
  pages={397--425},
  year={2009},
  publisher={Springer}
}

@book {TuraevBook,
    AUTHOR = {Turaev, V. G.},
     TITLE = {Quantum invariants of knots and 3-manifolds},
    SERIES = {De Gruyter Studies in Mathematics},
    VOLUME = {18},
 PUBLISHER = {Walter de Gruyter \& Co., Berlin},
      YEAR = {1994},
     PAGES = {x+588},
      ISBN = {3-11-013704-6},
   MRCLASS = {57M25 (16W30 17B37 57N10 81R50)},
  MRNUMBER = {1292673},
MRREVIEWER = {Louis\ H.\ Kauffman},
}

@article {Kapustin-Witten,
    AUTHOR = {Kapustin, Anton and Witten, Edward},
     TITLE = {Electric-magnetic duality and the geometric {L}anglands
              program},
   JOURNAL = {Commun. Number Theory Phys.},
  FJOURNAL = {Communications in Number Theory and Physics},
    VOLUME = {1},
      YEAR = {2007},
    NUMBER = {1},
     PAGES = {1--236},
      ISSN = {1931-4523,1931-4531},
   MRCLASS = {14D21 (22E46 32G13 81T45 81T60)},
  MRNUMBER = {2306566},
MRREVIEWER = {Siye\ Wu},
       DOI = {10.4310/CNTP.2007.v1.n1.a1},
       URL = {https://doi.org/10.4310/CNTP.2007.v1.n1.a1},
}

@article {Hoste-Przytycki,
    AUTHOR = {Hoste, Jim and Przytycki, J\'ozef H.},
     TITLE = {The {K}auffman bracket skein module of {$S^1\times S^2$}},
   JOURNAL = {Math. Z.},
  FJOURNAL = {Mathematische Zeitschrift},
    VOLUME = {220},
      YEAR = {1995},
    NUMBER = {1},
     PAGES = {65--73},
      ISSN = {0025-5874,1432-1823},
   MRCLASS = {57M25 (57N10)},
  MRNUMBER = {1347158},
MRREVIEWER = {Sergey\ S.\ Anisov},
       DOI = {10.1007/BF02572603},
       URL = {https://doi.org/10.1007/BF02572603},
}

@article {Carrega,
    AUTHOR = {Carrega, Alessio},
     TITLE = {Nine generators of the skein space of the 3-torus},
   JOURNAL = {Algebr. Geom. Topol.},
  FJOURNAL = {Algebraic \& Geometric Topology},
    VOLUME = {17},
      YEAR = {2017},
    NUMBER = {6},
     PAGES = {3449--3460},
      ISSN = {1472-2747,1472-2739},
   MRCLASS = {57M50},
  MRNUMBER = {3709652},
       DOI = {10.2140/agt.2017.17.3449},
       URL = {https://doi.org/10.2140/agt.2017.17.3449},
}

@article {Gilmer-Masbaum,
    AUTHOR = {Gilmer, Patrick M. and Masbaum, Gregor},
     TITLE = {On the skein module of the product of a surface and a circle},
   JOURNAL = {Proc. Amer. Math. Soc.},
  FJOURNAL = {Proceedings of the American Mathematical Society},
    VOLUME = {147},
      YEAR = {2019},
    NUMBER = {9},
     PAGES = {4091--4106},
      ISSN = {0002-9939,1088-6826},
   MRCLASS = {57M27 (57N10 57R56)},
  MRNUMBER = {3993800},
MRREVIEWER = {Xin\ Liu},
       DOI = {10.1090/proc/14553},
       URL = {https://doi.org/10.1090/proc/14553},
}

@article {Gilmer,
    AUTHOR = {Gilmer, Patrick M.},
     TITLE = {On the {K}auffman bracket skein module of the 3-torus},
   JOURNAL = {Indiana Univ. Math. J.},
  FJOURNAL = {Indiana University Mathematics Journal},
    VOLUME = {67},
      YEAR = {2018},
    NUMBER = {3},
     PAGES = {993--998},
      ISSN = {0022-2518,1943-5258},
   MRCLASS = {57R56 (57M27)},
  MRNUMBER = {3820238},
MRREVIEWER = {Xin\ Liu},
       DOI = {10.1512/iumj.2018.67.7327},
       URL = {https://doi.org/10.1512/iumj.2018.67.7327},
}

@article {Detcherry-Wolff,
    AUTHOR = {Detcherry, Renaud and Wolff, Maxime},
     TITLE = {A basis for the {K}auffman skein module of the product of a
              surface and a circle},
   JOURNAL = {Algebr. Geom. Topol.},
  FJOURNAL = {Algebraic \& Geometric Topology},
    VOLUME = {21},
      YEAR = {2021},
    NUMBER = {6},
     PAGES = {2959--2993},
      ISSN = {1472-2747,1472-2739},
   MRCLASS = {57K31},
  MRNUMBER = {4344875},
MRREVIEWER = {Qi\ Chen},
       DOI = {10.2140/agt.2021.21.2959},
       URL = {https://doi.org/10.2140/agt.2021.21.2959},
}

@article {CKM,
    AUTHOR = {Cautis, Sabin and Kamnitzer, Joel and Morrison, Scott},
     TITLE = {Webs and quantum skew {H}owe duality},
   JOURNAL = {Math. Ann.},
  FJOURNAL = {Mathematische Annalen},
    VOLUME = {360},
      YEAR = {2014},
    NUMBER = {1-2},
     PAGES = {351--390},
      ISSN = {0025-5831,1432-1807},
   MRCLASS = {17B37 (17B10 17B20)},
  MRNUMBER = {3263166},
MRREVIEWER = {Iwan\ Praton},
       DOI = {10.1007/s00208-013-0984-4},
       URL = {https://doi.org/10.1007/s00208-013-0984-4},
}

@article {tubbenhauer,
    AUTHOR = {Tubbenhauer, Daniel},
     TITLE = {{$\mathfrak{gl}_n$}-webs, categorification and
              {K}hovanov-{R}ozansky homologies},
   JOURNAL = {J. Knot Theory Ramifications},
  FJOURNAL = {Journal of Knot Theory and its Ramifications},
    VOLUME = {29},
      YEAR = {2020},
    NUMBER = {11},
     PAGES = {2050074, 96},
      ISSN = {0218-2165,1793-6527},
   MRCLASS = {57K18 (17B10 20C08 57K16)},
  MRNUMBER = {4193867},
MRREVIEWER = {William\ Rushworth},
       DOI = {10.1142/S0218216520500741},
       URL = {https://doi.org/10.1142/S0218216520500741},
}

@misc{Poudel-comparison,
      title={A comparison between $SL_n$ spider categories}, 
      author={Anup Poudel},
      year={2023},
      eprint={2210.09289},
      archivePrefix={arXiv},
      primaryClass={math.GT},
      url={https://arxiv.org/abs/2210.09289}, 
}

@misc{Walker,
      title={TQFTs [early incomplete draft]}, 
      author={Kevin Walker},
      year={2006},
      url={http://canyon23.net/math/tc.pdf.}, 
}

@misc{JF,
    AUTHOR = {Johnson-Freyd, Theo},
     TITLE = {Heisenberg-picture quantum field theory},
 BOOKTITLE = {Representation theory, mathematical physics, and integrable
              systems},
    SERIES = {Progr. Math.},
    VOLUME = {340},
     PAGES = {371--409},
 PUBLISHER = {Birkh\"auser/Springer, Cham},
      YEAR = {[2021] \copyright 2021},
      ISBN = {978-3-030-78147-7; 978-3-030-78148-4},
   MRCLASS = {18N65 (81R50)},
  MRNUMBER = {4391354},
       DOI = {10.1007/978-3-030-78148-4\_13},
       URL = {https://doi.org/10.1007/978-3-030-78148-4_13},
}

@book {Sagan,
    AUTHOR = {Sagan, Bruce E.},
     TITLE = {The symmetric group},
    SERIES = {Graduate Texts in Mathematics},
    VOLUME = {203},
   EDITION = {Second},
      NOTE = {Representations, combinatorial algorithms, and symmetric
              functions},
 PUBLISHER = {Springer-Verlag, New York},
      YEAR = {2001},
     PAGES = {xvi+238},
      ISBN = {0-387-95067-2},
   MRCLASS = {05E10 (05E05 20C30)},
  MRNUMBER = {1824028},
       DOI = {10.1007/978-1-4757-6804-6},
       URL = {https://doi.org/10.1007/978-1-4757-6804-6},
}

@article {mclendon-traces,
    AUTHOR = {McLendon, Michael},
     TITLE = {Traces on the skein algebra of the torus},
   JOURNAL = {Topology Appl.},
  FJOURNAL = {Topology and its Applications},
    VOLUME = {154},
      YEAR = {2007},
    NUMBER = {18},
     PAGES = {3140--3144},
      ISSN = {0166-8641,1879-3207},
   MRCLASS = {57M27},
  MRNUMBER = {2364642},
MRREVIEWER = {Ben\ Webster},
       DOI = {10.1016/j.topol.2007.08.005},
       URL = {https://doi.org/10.1016/j.topol.2007.08.005},
}

@Article{Marcus1995,
  author    = {Marcus, Neil},
  journal   = {Nuclear Physics B},
  title     = {The other topological twisting of N = 4 Yang-Mills},
  year      = {1995},
  issn      = {0550-3213},
  month     = oct,
  number    = {1-2},
  pages     = {331--345},
  volume    = {452},
  doi       = {10.1016/0550-3213(95)00389-a},
  publisher = {Elsevier BV},
}

@article {Stefan,
    AUTHOR = {\c Stefan, Drago\c s},
     TITLE = {Hochschild cohomology on {H}opf {G}alois extensions},
   JOURNAL = {J. Pure Appl. Algebra},
  FJOURNAL = {Journal of Pure and Applied Algebra},
    VOLUME = {103},
      YEAR = {1995},
    NUMBER = {2},
     PAGES = {221--233},
      ISSN = {0022-4049,1873-1376},
   MRCLASS = {16E40 (16W30 18G40)},
  MRNUMBER = {1358765},
MRREVIEWER = {George\ Szeto},
       DOI = {10.1016/0022-4049(95)00101-2},
       URL = {https://doi.org/10.1016/0022-4049(95)00101-2},
}

@incollection {Etingof-Oblomkov,
    AUTHOR = {Etingof, Pavel and Oblomkov, Alexei},
     TITLE = {Quantization, orbifold cohomology, and {C}herednik algebras},
 BOOKTITLE = {Jack, {H}all-{L}ittlewood and {M}acdonald polynomials},
    SERIES = {Contemp. Math.},
    VOLUME = {417},
     PAGES = {171--182},
 PUBLISHER = {Amer. Math. Soc., Providence, RI},
      YEAR = {2006},
      ISBN = {0-8218-3683-8},
   MRCLASS = {16E40 (20C08)},
  MRNUMBER = {2284127},
MRREVIEWER = {Andrea\ Solotar},
       DOI = {10.1090/conm/417/07921},
       URL = {https://doi.org/10.1090/conm/417/07921},
}

@article {AFLS,
    AUTHOR = {Alev, J. and Farinati, M. A. and Lambre, T. and Solotar, A.
              L.},
     TITLE = {Homologie des invariants d'une alg\`ebre de {W}eyl sous
              l'action d'un groupe fini},
   JOURNAL = {J. Algebra},
  FJOURNAL = {Journal of Algebra},
    VOLUME = {232},
      YEAR = {2000},
    NUMBER = {2},
     PAGES = {564--577},
      ISSN = {0021-8693,1090-266X},
   MRCLASS = {16W22 (16E40)},
  MRNUMBER = {1792746},
MRREVIEWER = {Kenneth\ A.\ Brown},
       DOI = {10.1006/jabr.2000.8406},
       URL = {https://doi.org/10.1006/jabr.2000.8406},
}

@article {Shepler-Witherspoon,
    AUTHOR = {Shepler, Anne V. and Witherspoon, Sarah},
     TITLE = {Hochschild cohomology and graded {H}ecke algebras},
   JOURNAL = {Trans. Amer. Math. Soc.},
  FJOURNAL = {Transactions of the American Mathematical Society},
    VOLUME = {360},
      YEAR = {2008},
    NUMBER = {8},
     PAGES = {3975--4005},
      ISSN = {0002-9947,1088-6850},
   MRCLASS = {16E40 (16S80 20C08)},
  MRNUMBER = {2395161},
MRREVIEWER = {Petter\ Andreas\ Bergh},
       DOI = {10.1090/S0002-9947-08-04396-1},
       URL = {https://doi.org/10.1090/S0002-9947-08-04396-1},
}

@article{FGL,
  title={The A-polynomial from the noncommutative viewpoint},
  author={Frohman, Charles and Gelca, R{\u{a}}zvan and Lofaro, Walter},
  journal={Transactions of the American Mathematical Society},
  volume={354},
  number={2},
  pages={735--747},
  year={2002}
}

@article{Garoufalidis,
  title={On the characteristic and deformation varieties of a knot},
  author={Garoufalidis, Stavros},
  journal={Geom. Topol. Monogr},
  volume={7},
  pages={291--304},
  year={2004}
}

@article{garoufalidis2005colored,
  title={The colored Jones function is q-holonomic},
  author={Garoufalidis, Stavros and L{\^e}, Thang TQ},
  journal={Geometry \& Topology},
  volume={9},
  number={3},
  pages={1253--1293},
  year={2005},
  publisher={Mathematical Sciences Publishers}
}

@article{witten1989quantum,
  title={Quantum field theory and the Jones polynomial},
  author={Witten, Edward},
  journal={Communications in Mathematical Physics},
  volume={121},
  number={3},
  pages={351--399},
  year={1989},
  publisher={Springer}
}

@article{reshetikhin1991invariants,
  title={Invariants of 3-manifolds via link polynomials and quantum groups},
  author={Reshetikhin, Nicolai and Turaev, Vladimir G},
  journal={Inventiones mathematicae},
  volume={103},
  number={1},
  pages={547--597},
  year={1991}
}

@article{crane1997state,
  title={State-sum invariants of 4-manifolds},
  author={Crane, Louis and Kauffman, Louis H and Yetter, David N},
  journal={Journal of Knot Theory and Its Ramifications},
  volume={6},
  number={02},
  pages={177--234},
  year={1997},
  publisher={World Scientific}
}

@article{barrett2007observables,
  title={Observables in the Turaev-Viro and Crane-Yetter models},
  author={Barrett, John W and Faria Martins, Jo{\~a}o and Garc{\'\i}a-Islas, J Manuel},
  journal={Journal of Mathematical Physics},
  volume={48},
  number={9},
  year={2007},
  publisher={AIP Publishing}
}

@article{crane1993evaluating,
  title={Evaluating the Crane-Yetter invariant},
  author={Crane, Louis and Kauffman, Louis H and Yetter, David N},
  journal={arXiv preprint hep-th/9309063},
  year={1993}
}

@article {GS,
    AUTHOR = {Gordon, I. and Stafford, J. T.},
     TITLE = {Rational {C}herednik algebras and {H}ilbert schemes},
   JOURNAL = {Adv. Math.},
  FJOURNAL = {Advances in Mathematics},
    VOLUME = {198},
      YEAR = {2005},
    NUMBER = {1},
     PAGES = {222--274},
      ISSN = {0001-8708,1090-2082},
   MRCLASS = {14C05 (14F05 16S99)},
  MRNUMBER = {2183255},
MRREVIEWER = {Maurizio\ Martino},
       DOI = {10.1016/j.aim.2004.12.005},
       URL = {https://doi.org/10.1016/j.aim.2004.12.005},
}

\end{document}